\DeclareMathAlphabet{\mathpzc}{OT1}{pzc}{m}{it}
\numberwithin{equation}{section}
\newtheorem{thm}{Theorem}[section]
\newtheorem{lemma}[thm]{Lemma}
\newtheorem{prop}[thm]{Proposition}
\newtheorem{cor}[thm]{Corollary}
\theoremstyle{definition}
\newcommand{\defeq}{\stackrel{\rm{def}}{=}}
\newcommand{\zsharp}{\zeta^{\flat}}
\renewcommand{\Re}{\operatorname{\rm Re}\nolimits}
\renewcommand{\Im}{\operatorname{\rm Im}\nolimits}
\def \rank {\operatorname{rank}}
\def \dist {\operatorname{dist}}
\def \reg {\operatorname{reg}}
\def \tr {\operatorname{tr}}
\def \mcd {{\mathcal D}}
\def \restrict {\upharpoonright}
\def \mcd {{\mathcal D}}
\def \mch {{\mathcal H}}
\def \Real {{\mathbb R}}
\def \Sphere {\mathbb{S}}
\def \Complex {\mathbb{C}}
\def \Natural {{\mathbb N}}
\def \Sphere {{\mathbb S}}
\def \U+ {U_+}
\def \Integers {{\mathbb Z}}
\def \loc {\operatorname{loc}}
\def \Rzz {R_{00}}
\def \RVzz {R_{V_00}}
\def \RVz {R_{V_0}}
\def \RV {R_V}
\def \Rz {R_0}
\def \Vd {V^\#}
\def \pr{\mathcal{P}}
\def \przhat {\mathpzc{p}}
\def \zhat{\hat{Z}}
\def \mvzz {m_{V_0 0}}
\def \mV {m_V}
\def \Dl {D_l}
\def \spec{\sigma}
\def \chf{\chi_{I_{0}}}
\def \zconj {\zeta^\dagger}
\def \Lz {\Delta_0}
\def \Rd { {\mathbb R}^d}
\def \grz {\nabla_0}
\def \Ran {\operatorname{Ran}}
\newcounter{np} 
\newcommand*{\np}{\refstepcounter{np}\par\arabic{np}. }
\title
 [Schr\"odinger operators on cylinders]
{
Resonances for Schr\"odinger operators on infinite cylinders
and other products
}
   \author { T.J. Christiansen}
\keywords{Schr\"odinger operator, resonance, scattering theory}
\address{Department of Mathematics,
University of Missouri,
Columbia, Missouri 65211, USA} 
\email{christiansent@missouri.edu}
\begin{document}

\begin{abstract}

 We study the resonances of Schr\"odinger operators
on the infinite product $X=\Real^d\times \Sphere^1$, where $d$ is odd,
$\Sphere^1$ is the unit circle, and the 
potential $V\in L^\infty_c(X)$.  This paper shows that at high energy,
resonances of the Schr\"odinger operator $-\Delta +V$ on $X=\Real^d\times \Sphere^1$
which are 
near the continuous spectrum 
are approximated by the resonances of $-\Delta +V_0$ on $X$, where 
the potential $V_0$ given by averaging $V$ over the unit circle.  These resonances
are, in turn, given in terms of the resonances  of a Schr\"odinger
operator on $\Real^d$  which lie in a bounded set.
If the potential is smooth, we obtain improved localization of the resonances,
particularly in the case of simple, rank one poles of the corresponding
 scattering 
resolvent on $\Real^d$.  In that case, we obtain the leading order correction for the 
location of the corresponding high energy resonances.
  In addition to direct results about the location of resonances, we show that at high energies away from the resonances, the resolvent of the model operator 
$-\Delta+V_0$ on $X$
approximates that of 
$-\Delta+V$ on $X$.  If
$d=1$, in certain cases this implies the existence of an asymptotic
expansion of solutions of the wave equation.
Again for  the special case of $d=1$, we obtain a resonant rigidity type result
for the zero 
potential among all real-valued potentials.  
\end{abstract}

\maketitle
\section{Introduction}\label{s:introduction}

We study the Schr\"odinger operator $-\Delta +V$ on the 
manifold $X=\Rd \times \Sphere^1$ with the product metric, where $d$ is odd,
$\Sphere^1$ is the unit circle, 
and $V\in L^\infty_c(X)$. In  the special case $d=1$ $X$ is the infinite 
cylinder $\Real\times \Sphere^1$.   We show that in the large 
energy limit, resonances near the continuous spectrum are well-approximated by
those of $-\Delta +V_0$, where $V_0$ is the average of $V$ over $\Sphere^1$:
$V_0(x)=\frac{1}{2\pi}\int_0^{2\pi}V(x,\theta)d\theta.$  By 
a separation of variables argument, these, in turn, are determined by the 
low energy resonances of 
the Schr\"odinger operator
$-\sum_{j=1}^d \frac{\partial^2}{\partial x^2_j}+V_0$  on $\Real^d$.
In the case of smooth potentials $V$, for simple 
rank one 
poles of the (scattering) resolvent of $-\sum_{j=0}^d\frac{\partial^2}{\partial x_j^2}+V_0$,
we find the leading-order corrections to the location of the corresponding
poles of the resolvent of $-\Delta+V$ on $X$.  Among other things, this
allows us to prove that no other smooth real-valued potential on $\Real \times \Sphere^1$ has 
the same resonances as the zero potential.  For potentials with 
$V_0\equiv 0$, we show the existence of 
large resonance-free regions. 
When $d=1$ and $V\in C_c^\infty(X;\Real)$ under certain hypotheses on the 
potential $V_0$ we are able to give an asymptotic expansion
of solutions of the wave equation.  For the case of $d=1$
we study a simple
example of a nontrivial potential $V$ with $V_0\equiv 0$ and locate 
some of the corresponding resonances.  Some of these results are 
reminiscent of Drouot's results for rapidly oscillating potentials on 
$\Real^d$, \cite{dro}.

  Let $\Delta \leq 0$ denote the 
Laplacian on $X=\Rd \times \Sphere^1$. For  
$V\in L^\infty_c(X)$ the Schr\"odinger operator 
$-\Delta +V$ has continuous spectrum $[0,\infty)$, with 
multiplicity which increases at each {\em threshold} $j^2$, $j\in \Natural_0$.
For $\Im \zeta>0$, set 
$\RV(\zeta)=(-\Delta+V-\zeta^2)^{-1}$.
This (scattering) resolvent has
a meromorphic continuation to $\hat{Z}$, the minimal Riemann surface for which 
$\tau_l(\zeta)\defeq ( \zeta^2 -l^2)^{1/2}$ is a single-valued analytic function for each
 $l\in \Natural_0$.  The resonances are poles of the resolvent $\RV(\zeta)$.
  We refer to the portion of $\zhat$ 
for which $\Im \tau_l(\zeta)>0$ for all $l\in \Natural_0$
as the {\em physical space}.  In this set $\RV$ is a bounded 
operator on $L^2(X)$, away from 
a discrete set of points which correspond to (square roots of) eigenvalues.
 For $l\in \Natural_0,$ $\rho>0$, denote by 
$B_{l}(\rho)$ the connected component of 
$\{ \zeta \in \zhat: |\tau_l(\zeta)|<\rho\}$ 
which nontrivially  intersects both the physical space and the set 
$\{\zeta\in \zhat:\Re \tau_0(\zeta)>0\}$. 
Using as 
the coordinate $\tau_l(\zeta)$, 
$B_{l}(\rho)$ is identified with the disk
of radius $\rho$  in the complex plane, centered at the 
origin, and this 
identification is
compatible with the complex structure of $\zhat\restrict_{B_l(\rho)}$ if 
$\rho<\sqrt{2l-1}$.  The point $\tau_l(\zeta)=0$ in $B_l(\rho)$ corresponds to the $l$th threshold.  We study the resonances of $-\Delta +V$ in $B_l(\rho)$, 
or $B_l(\alpha \log l)$, as $l\rightarrow \infty$.  Results of Section \ref{ss:corollary} show that these are the high energy resonances ``near" the continuous spectrum which have $ \Re \tau_0>0$.

For a function $V\in L^\infty_c(X)$
and $m\in \Integers$ define 
$$V_m(x) = \frac{1}{2\pi } \int_0^{2\pi} V(x,\theta)e^{-i m\theta}d\theta,$$
so that 
$V(x,\theta)=\sum_{m=-\infty}^\infty V_m(x)e^{i m \theta}$.  We use the 
notation $\Lz=\sum_{j=1}^d\frac{\partial ^2}{\partial x_j^2}$ for the Laplacian
on $\Rd$.

\begin{thm}\label{thm:poleexist}
Let $X=\Rd \times \Sphere^1$, $d$ odd, and let $V\in L^\infty_c(X)$ satisfy 
$\|V_m\|_{L^\infty}=O(m^{-\delta})$ for some $\delta$ with
$0<\delta\leq 1/2$.  Suppose
$\lambda_0\in \Complex  $, $\lambda_0\not = 0$ is a resonance of 
$-\Lz+V_0$ on $\Real^d$,
of multiplicity $\mvzz(\lambda_0)$.  Let $\rho\in \Real, $ $\rho>|\lambda_0|$.  
Then there are  $C_0>0$, $L>0$
so that for $l>L$, $l\in \Natural$
 there are exactly $2\mvzz(\lambda_0)$ resonances, when 
counted with multiplicity, of 
$-\Delta +V$ in the set
$$\{ \zeta \in B_{l}(\rho): |\tau_l(\zeta)-\lambda_0|<C_0 l^{-\delta/(\mvzz(\lambda_0))}\}.$$
\end{thm}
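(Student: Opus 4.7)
The plan is to prove Theorem \ref{thm:poleexist} by a Rouch\'e-type perturbation argument in the $\tau_l$-coordinate on $B_l(\rho)$, comparing the resonances of $-\Delta+V$ to those of the model operator $-\Delta+V_0$ in a small disk around $\tau_l=\lambda_0$. The starting point is to locate the model resonances via separation of variables: because $V_0$ is independent of $\theta$, $-\Delta+V_0$ decomposes as $\bigoplus_{m\in\Integers}(-\Lz+V_0+m^2)$, so on the $m$-th Fourier mode $\RVz(\zeta)$ acts as $\RVzz(\tau_m(\zeta))$. Using $\tau_m^2=\tau_l^2+l^2-m^2$, for $\zeta\in B_l(\rho)$ and $l$ large, only $m=\pm l$ can place $\tau_m$ near $\lambda_0$: for every other integer $m$ one has $|l^2-m^2|\geq 2l-1$, which pushes $\tau_m$ outside the bounded set containing all $\Real^d$-resonances of $-\Lz+V_0$ of modulus at most $\rho$. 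Each of the two modes $\pm l$ contributes a pole of order $\mvzz(\lambda_0)$ in $\tau_l$, giving $\RVz$ one resonance in the region, at $\tau_l=\lambda_0$, of total multiplicity $2\mvzz(\lambda_0)$.

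Next I would recast counting as a zero-counting problem for a scalar Fredholm determinant. With a factorization $V=v_1 v_2$, $v_j\in L^\infty_c(X)$ (e.g.\ $v_1=\sgn(V)|V|^{1/2}$, $v_2=|V|^{1/2}$), the resonances of $-\Delta+V$ in $B_l(\rho)$ coincide with multiplicity with the zeros of $D_V(\zeta)\defeq\det(I+v_2\Rz(\zeta)v_1)$, and analogously $D_{V_0}$. Setting $W\defeq V-V_0=\sum_{m\neq 0}V_m(x)e^{im\theta}$, the factorization $I+V\Rz=(I+V_0\Rz)(I+W\RVz)$, together with cutoffs depending only on $x\in\Rd$ so they commute with the $\theta$-Fourier projections, leads to
\[
D_V(\zeta)=D_{V_0}(\zeta)\cdot\det\!\bigl(I+\widetilde W(\zeta)\bigr),
\]
with $\widetilde W(\zeta)$ trace-class and built from $W$ and the cutoff model resolvent. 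By the first step, $D_{V_0}$ vanishes to order $2\mvzz(\lambda_0)$ at $\tau_l=\lambda_0$, and the analysis reduces to controlling $\det(I+\widetilde W)$ on the circle $|\tau_l-\lambda_0|=C_0 l^{-\delta/\mvzz(\lambda_0)}$.

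The heart of the argument is a Fourier-mode analysis of $\widetilde W$. In the $\theta$-Fourier basis the cutoff model resolvent is block-diagonal with $k$-th block $\chi\RVzz(\tau_k(\zeta))\chi$, while $W$ has $(n,k)$-block multiplication by $V_{n-k}(x)$. For $k\neq\pm l$ and $\zeta$ on the chosen circle, one gets $|\tau_k|\gtrsim\sqrt{|l^2-k^2|}$, and $\tau_k$ lies away from the $\Real^d$-resonances of $-\Lz+V_0$ of bounded modulus, so a high-energy resolvent estimate yields $\|\chi\RVzz(\tau_k)\chi\|\lesssim 1/|\tau_k|$. Combining this with $\|V_m\|_{L^\infty}=O(m^{-\delta})$ and a Schur-type estimate, the ``safe'' portion of $\widetilde W$ (acting on the complement of the two near-kernel subspaces attached to modes $m=\pm l$) has operator norm $O(l^{-\delta})$ up to logarithmic factors. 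A Grushin reduction isolating those two $\mvzz(\lambda_0)$-dimensional near-kernels then reduces zero-counting of $D_V$ to a $2\mvzz(\lambda_0)\times 2\mvzz(\lambda_0)$ effective matrix of schematic form $(\tau_l-\lambda_0)\,\mathrm{Id}+A(\tau_l)+B(\tau_l)$, where $A$ is a block-diagonal self-correction through the good modes of size $O(l^{-\delta})$ and $B$ is the direct coupling between modes $l$ and $-l$ of size $O(l^{-\delta})$ mediated by $V_{\pm 2l}$. Standard matrix-perturbation theory for an eigenvalue of algebraic multiplicity $\mvzz(\lambda_0)$ subject to an $O(l^{-\delta})$ perturbation bounds the eigenvalue displacement by $O(l^{-\delta/\mvzz(\lambda_0)})$; for $C_0$ large enough the Rouch\'e inequality $|D_V-D_{V_0}|<|D_{V_0}|$ then holds on the circle, yielding exactly $2\mvzz(\lambda_0)$ zeros inside.

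The principal technical obstacle is precisely this Grushin/perturbation estimate. The hypothesis $\|V_m\|_\infty=O(m^{-\delta})$ with $\delta\leq 1/2$ is \emph{not} a summable decay, so one must exploit the high-energy gain $1/|\tau_k|$ from the good-mode resolvents to recover summability in the Schur test; one must then align the resulting $O(l^{-\delta})$ perturbation size with the $\mvzz(\lambda_0)$-fold algebraic vanishing of $D_{V_0}$ so that the final Rouch\'e radius carries exactly the $1/\mvzz(\lambda_0)$ exponent. The single small factor $\|V_{\pm 2l}\|_\infty=O(l^{-\delta})$ is what drives the direct mode $\pm l$ coupling and is ultimately responsible for the $l^{-\delta/\mvzz(\lambda_0)}$ scale in the statement.
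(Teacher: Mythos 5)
Your overall strategy is the same as the paper's: use separation of variables and $\tau_m^2=\tau_l^2+l^2-m^2$ to see that only the modes $m=\pm l$ can produce resonances of the model $-\Delta+V_0$ in $B_l(\rho)$ near $\tau_l=\lambda_0$ (total multiplicity $2\mvzz(\lambda_0)$), then treat $\Vd=V-V_0$ as a perturbation whose size on the relevant modes is $O(l^{-\delta})$ (coming from $V_{\pm 2l}$ via $\pr_l\Vd\pr_l$, together with the $O(l^{-1/2})$ bound on the off-$l$-mode resolvent), and conclude by a Rouch\'e-type count at radius $\sim l^{-\delta/\mvzz(\lambda_0)}$. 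This is exactly the architecture of Proposition 5.1 combined with Theorem 1.2 in the paper. One simplification you can make: no Schur test over Fourier modes is needed, since multiplication by $\Vd$ is bounded by $\|\Vd\|_{L^\infty}$ and the needed gain comes entirely from $\|\chi(I-\pr_l)\RVz\chi\|=O(l^{-1/2})$ and from $\|\pr_l\Vd\pr_l\|=O(l^{-\delta})$.

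The genuine gap is in your bookkeeping device. The scalar function $D_V(\zeta)=\det(I+v_2\Rz(\zeta)v_1)$ is not defined as a Fredholm determinant: on $X=\Rd\times\Sphere^1$ (dimension $d+1\geq 2$) the cutoff resolvent is not trace class, and even mode-by-mode the operators $\chi\Rzz(\tau_k)\chi$ on $\Rd$ fail to be trace class for $d\geq 3$; summing the blocks over $k$ also fails (the trace-norm contributions behave like $|l^2-k^2|^{-1/2}$, which is not summable). Passing to a regularized determinant $\det_p$ destroys the multiplicativity you rely on in the factorization $D_V=D_{V_0}\cdot\det(I+\widetilde W)$ (correction factors of the form $e^{\tr(\cdots)}$ appear), and the identification ``zeros of $D_V$ with multiplicity $=$ resonances with multiplicity'' is itself a statement that must be proved, not assumed. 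This is precisely the technical burden the paper avoids: it never forms a global determinant, but instead works with $I+\Vd\RVz\chi$ and counts multiplicities with the Gohberg--Sigal functional $M(I+A,\zeta_0)$ and their operator-valued Rouch\'e theorem, using Proposition 4.4 and Lemma 4.5 to equate $M$-counts with resonance multiplicities, Lemmas 4.6--4.7 to reduce to the $\pr_l$ block, and the bound $\|\chi\Xi(\RVzz,\lambda_0)\chi\|\leq C|\lambda-\lambda_0|^{-\mvzz(\lambda_0)}$ to get the radius $l^{-\delta/\mvzz(\lambda_0)}$. Your Grushin reduction is a viable repair — after reducing to a $2\mvzz(\lambda_0)\times 2\mvzz(\lambda_0)$ effective problem the determinant is finite-dimensional and legitimate (this is close to what the paper does in Section 9 with finite-rank $F$) — but as written the step introducing $D_V$, $D_{V_0}$ and their factorization would fail and needs to be replaced, either by the operator Rouch\'e argument or by performing the finite-rank reduction before any determinant is taken.
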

In this paper we refer to any pole of the resolvent as a resonance, including those which correspond to eigenvalues.
We remark that the second part of Theorem \ref{thm:polefree}, for which
$V$ is assumed to be smooth, implies an improved localization of the resonances for smooth
potentials.

The minimal assumption on a potential $V$ in most of this paper will be
\begin{equation}\label{eq:Vhyp}
V\in L^\infty_c(X)\; \text{and}\; \|V_m\|=O(m^{-\delta})\; \text{for some $\delta$ 
with $0<\delta \leq 1/2$}.
\end{equation}
Note that this imposes an assumption on $\delta$ as well, which we shall 
include we invoke the hypothesis (\ref{eq:Vhyp}).

The next theorem shows that, other than possible poles near the 
threshold, the poles as described above are all the poles in $B_l(\rho)$ for
sufficiently large $l$.  Here $\RVzz$ is the (scattering) resolvent of
$-\Lz+V_0$ on $\Real^d$, see Section \ref{ss:Es}, so that the poles of $\RVzz$ are the resonances of $-\Lz+V_0$.
\begin{thm}\label{thm:polefree}
Let $X=\Rd \times \Sphere^1$, $d$ odd, and suppose $V$ satisfies the hypothesis
(\ref{eq:Vhyp}).
Choose $\rho>0$ so that if
$\lambda_j$ is a pole of $\RVzz(\lambda)$, then $|\lambda_j|\not = \rho$.  Set 
\begin{equation*}
\Lambda_\rho= \{ \lambda_j \in \Complex:\; |\lambda_j|<\rho\; \text{and}\; \lambda_j \; \text{is a pole of
$\RVzz(\lambda)$ }\}.
\end{equation*}
Let $\epsilon'>0$ be so that $\epsilon'<\min\{|\lambda_j|:\; 
\lambda_j\in \Lambda_\rho,\; \lambda_j\not=0\}$.
Then there are $\tilde{C}$, $L>0$ so that for $l>L$, $l\in \Natural$, there are no resonances of $-\Delta +V$
in 
$$\{ \zeta \in B_{l}(\rho): \; |\tau_l(\zeta)|>\epsilon' \;\text{and}\;
|\tau_l(\zeta)-\lambda_j|>\tilde{C}l^{-\delta/\mvzz(\lambda_j)}\; 
\text{for all $\lambda_j\in \Lambda_\rho$} \}.$$
Moreover, if $V$ is smooth for perhaps larger $L$ and $\tilde{C}$ if
$l>L$ there are 
no resonances in 
$$\{ \zeta \in B_{l}(\rho): \; |\tau_l(\zeta)|>\epsilon'\; \text{and}\; |\tau_l(\zeta)-\lambda_j|>\tilde{C}l^{-2/(\mvzz(\lambda_j))}\; 
\text{for all $\lambda_j\in \Lambda_\rho$} \}.$$
In addition, if $\RVzz(\lambda)$ is analytic in a neighborhood of the 
origin, then there
 are no poles in $B_l(\epsilon')$ for $l$ sufficiently large.
\end{thm}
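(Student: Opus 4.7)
The plan is to reuse the Birman--Schwinger / Grushin framework used to prove Theorem~\ref{thm:poleexist} and to read off the complementary assertion: the characteristic function whose zeros are the resonances of $-\Delta+V$ is bounded away from zero on the pole-free regions. Whereas Theorem~\ref{thm:poleexist} applied Rouché to count zeros \emph{inside} small circles around each $\lambda_j$, the present statement uses the same size estimates to show that those circles account for \emph{all} of the zeros in $B_l(\rho)$.

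Write $V=V_0+\Vd$ and pick $\chi\in C_c^\infty(X)$ equal to $1$ on $\supp V$. The resolvent identity $\RV(\zeta)=(I+\RVz(\zeta)\Vd)^{-1}\RVz(\zeta)$ identifies the resonances of $-\Delta+V$ in $B_l(\rho)$ with the union of the resonances of $-\Delta+V_0$ (i.e.\ $\tau_l(\zeta)\in\Lambda_\rho$) and the points where the Birman--Schwinger operator $K(\zeta):=\chi\Vd\,\RVz(\zeta)\chi$ has $-1\in\spec K(\zeta)$. Separation of variables on $X$ gives $\RVz(\zeta)=\sum_m\RVzz(\tau_m(\zeta))\otimes\Pi_m$ with $\Pi_m$ the $m$-th $\theta$-Fourier projection; for $\zeta\in B_l(\rho)$ only the indices $m=\pm l$ carry the potentially singular factor $\RVzz(\tau_l(\zeta))$, and for the other $m$ the standard high-energy bound gives $\|\chi\RVzz(\tau_m(\zeta))\chi\|=O(|l^2-m^2|^{-1/2})$. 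Since $\Vd$ has vanishing $0$-th Fourier coefficient, $K(\zeta)$ is strictly off-diagonal in the $\theta$-Fourier basis with entries $\chi V_{k-k'}\RVzz(\tau_{k'}(\zeta))\chi$. Splitting $K=K^{\mathrm{sing}}+K^{\mathrm{reg}}$, where $K^{\mathrm{sing}}$ contains the columns $k'=\pm l$, and combining the resolvent bound above with $\|V_{k-k'}\|_\infty=O(|k-k'|^{-\delta})$ via a Schur test, one obtains $\|K^{\mathrm{reg}}(\zeta)\|=O(l^{-\delta})$ uniformly on $B_l(\rho)$.

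Near each $\lambda_j\in\Lambda_\rho$ of multiplicity $m_j=\mvzz(\lambda_j)$, expand $\chi\RVzz(\lambda)\chi=\sum_{s=1}^{m_j}A_{j,s}/(\lambda-\lambda_j)^s+H_j(\lambda)$ with $A_{j,s}$ of finite rank and $H_j$ holomorphic, and carry out a Grushin (Schur-complement) reduction with respect to the finite-dimensional subspace $\sum_s(\Ran A_{j,s})\otimes(\Pi_l\oplus\Pi_{-l})L^2(X)$. The condition $-1\in\spec K(\zeta)$ becomes the vanishing of a $2m_j\times 2m_j$ determinant $D_j(\zeta)$ which, after clearing denominators, is analytic in $\tau_l$ near $\lambda_j$. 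As established in the proof of Theorem~\ref{thm:poleexist}, $D_j$ has the schematic form
\[
D_j(\zeta) = \bigl[(\tau_l(\zeta)-\lambda_j)^{m_j}+O(l^{-\delta})\bigr]^2 + O(l^{-2\delta}),
\]
the two square factors arising from the decoupled mode-$l$ and mode-$(-l)$ blocks in the $V=V_0$ limit and the $O(l^{-2\delta})$ error capturing the off-diagonal coupling through $V_{2l}$. Rouché's theorem on the circle $|\tau_l-\lambda_j|=\tilde C\,l^{-\delta/m_j}$, with $\tilde C$ sufficiently large, then shows that the leading term strictly dominates the perturbation on and outside the circle, so $D_j$ has no zeros there; this gives the first pole-free assertion.

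For smooth $V$, the rapid decay of $V_m$ combined with an additional integration-by-parts analysis in $\theta$ (essentially a higher-order adiabatic reduction that exploits the mean-zero condition on $\Vd$) upgrades the effective perturbation of $D_j$ to $O(l^{-2})$, and the same Rouché argument yields the improved radius $\tilde C l^{-2/m_j}$. Finally, if $\RVzz$ is holomorphic in a neighbourhood of $0$, then $\chi\RVzz(\tau_l(\zeta))\chi$ is uniformly bounded on $B_l(\epsilon')$, so $K^{\mathrm{sing}}$ is uniformly bounded; combined with $\|K^{\mathrm{reg}}\|\to 0$ and the invertibility of $I+K(\zeta)$ at reference points in the physical space, the analytic Fredholm theorem gives invertibility of $I+K(\zeta)$ throughout $B_l(\epsilon')$ for $l$ sufficiently large. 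The main technical obstacle is the uniform Schur-test bound on $\|K^{\mathrm{reg}}(\zeta)\|$ under the weak hypothesis $\delta\le 1/2$: the relevant sums $\sum_{k'\ne\pm l}|k-k'|^{-\delta}|l^2-{k'}^2|^{-1/2}$ are borderline in $\delta$, and the allocation of columns between $K^{\mathrm{sing}}$ and $K^{\mathrm{reg}}$ must be chosen so that the two decay rates combine correctly; the Grushin computation at higher-order poles also requires careful verification of the claimed near-block-diagonal leading-order structure of $D_j$.
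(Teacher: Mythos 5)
Your mechanism for the first statement is essentially the right one (smallness of order $l^{-\delta}$ coming from $\pr_l\Vd\pr_l$, an $O(l^{-1/2})$ bound on the off-$l$ modes, and the bound $\|\chi\Xi(\RVzz,\lambda_j)\chi\|\le C|\lambda-\lambda_j|^{-\mvzz(\lambda_j)}$ on the singular part), but the step that actually carries the proof near each $\lambda_j$ is delegated to an asserted ``schematic form'' of the $2m_j\times 2m_j$ determinant $D_j$ that you neither derive nor could hope to have literally: already when $\lambda_j$ is a simple pole whose residue has rank $m_j\ge 2$, clearing denominators produces a polynomial in $\tau_l(\zeta)-\lambda_j$ with generically nonzero odd-degree coefficients (e.g.\ a trace term of size $O(l^{-\delta})$ multiplying $(\tau_l-\lambda_j)^{2m_j-1}$), which is incompatible with $[(\tau_l-\lambda_j)^{m_j}+O(l^{-\delta})]^2+O(l^{-2\delta})$; for higher-order poles the block structure is more involved still, and you flag this yourself as unverified. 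Moreover the whole Grushin/determinant detour is unnecessary for a pole-free statement: the paper works with $I+\Vd\RVz\chi$ via (\ref{eq:V0asmodel}), sets $\RVz^{\reg}(\zeta)=\RVz(\zeta)-\Xi(\RVzz,\lambda_j)|_{\lambda=\tau_l(\zeta)}\pr_l$, inverts $I+\Vd\RVz^{\reg}\chi$ by a Neumann series (Lemma \ref{l:Rreg}), and then bounds $\|\pr_l(I+\Vd\RVz^{\reg}\chi)^{-1}\Vd\,\Xi(\RVzz,\lambda_j)|_{\lambda=\tau_l(\zeta)}\pr_l\chi\|\le C\,l^{-\delta}|\tau_l(\zeta)-\lambda_j|^{-\mvzz(\lambda_j)}$, so that invertibility outside the radius $\tilde C l^{-\delta/\mvzz(\lambda_j)}$ is immediate, with no zero counting at all. (Your worry about a borderline Schur test is also moot: $\|\chi\RVz(\zeta)(I-\pr_l)\chi\|=O(l^{-1/2})$ by Lemma \ref{l:lognbhd}, uniformly on $B_l(\rho)$, without any $\delta$-dependent summation.)

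Two further steps are genuinely missing. First, for smooth $V$ the upgrade to radius $\tilde C l^{-2/\mvzz(\lambda_j)}$ is not an ``integration by parts in $\theta$''; what is needed is the quantitative statement that $\pr_l(I+\Vd\RVz^{\reg}\chi)^{-1}\Vd\pr_l=O(l^{-2})$ as a map $H_{(0,s)}\to L^2$ (Proposition \ref{p:firstorderapprox}). This rests on (i) $\pr_l\Vd\pr_l=O(l^{-N})$, so the first term drops out; (ii) the expansion $\chi\RVzz(\lambda)\chi=-\lambda^{-2}\chi^2+O(\lambda^{-4})$ between Sobolev spaces applied at the shifted modes, where $\tau_{l+k}^2=\tau_l^2-2lk-k^2$; and (iii), crucially, a cancellation when the $k$ and $-k$ contributions are paired, since $\tau_{l+k}^{-2}+\tau_{l-k}^{-2}=O(k^{-2}l^{-2})$ (Lemma \ref{l:oneRV0}); without that cancellation the second-order term is only $O(l^{-1})$ and you would obtain radius $l^{-1/\mvzz(\lambda_j)}$, not $l^{-2/\mvzz(\lambda_j)}$. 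Your proposal does not identify this mechanism. Second, your argument for the absence of poles in $B_l(\epsilon')$ when $\RVzz$ is analytic near $0$ is incorrect as stated: boundedness of $K^{\mathrm{sing}}$, smallness of $K^{\mathrm{reg}}$, and the analytic Fredholm theorem together with invertibility at a reference point only give a \emph{meromorphic} inverse, i.e.\ they do not exclude poles inside $B_l(\epsilon')$. One needs quantitative smallness there as well; the paper obtains it from $\|\chi\RVz\Vd\RVz\chi\|=O(l^{-\delta})$ (Lemma \ref{l:VdRVzs}), which makes $(\Vd\RVz\chi)^2$ small and hence $I+\Vd\RVz\chi$ invertible by Neumann series on all of $B_l(\epsilon')$; the same estimate, not just smallness of your $K^{\mathrm{reg}}$, is what handles the part of $B_l(\rho)$ away from all the $\lambda_j$.
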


For Schr\"odinger operators on $\Real^d$, the behavior of the 
singularities of the resolvent at the origin is  delicate.
For example, notions of multiplicity of 
a resonance which agree at points away from the 
origin may differ at the origin.  These same sorts of issues arise
at thresholds in the case under study here, and 
accounts for the fact that this next theorem, which 
concerns resonances very near the thresholds,
is weaker than the previous ones.  

\begin{thm}\label{thm:0}
Let $V$ satisfy (\ref{eq:Vhyp})
and suppose the resolvent of $-\Lz+V_0$ on $\Real^d$
has a pole at $0$ of order $r>0$, and multiplicity $\mvzz(0)$.  
Then there are $C,\;L>0$ so
that  $-\Delta +V$ on $X$ has at least $2\mvzz(0)$ 
resonances, when counted with multiplicity, in $B_l(C l^{-\delta/r})$ when $l>L$,
$l\in \Natural$.  Moreover, there is an $\epsilon>0$ so
that $-\Delta+V$ has no poles in $B_l(\epsilon)\setminus B_l(C l^{-\delta/r})$
when $l>L$.  If $V\in C_c^\infty(X)$, then this 
can be improved to show that there is a $C_1>0$ so that 
$-\Delta+V$ has no poles in $B_l(\epsilon)\setminus B_l(C_1 l^{-2/r})$
when $l>L$.  Moreover, under
the hypothesis (\ref{eq:Vhyp}), if $r=1$ there are
exactly $2\mvzz(0)$ resonances of $-\Delta +V$ in $B_l(C l^{-\delta})$ for $l>L$.
\end{thm}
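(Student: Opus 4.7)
The strategy is to adapt the Grushin/Fredholm-determinant framework behind Theorems \ref{thm:poleexist} and \ref{thm:polefree} to the threshold situation. Parametrize $B_l(\rho)$ by the coordinate $\tau=\tau_l(\zeta)$, so that $\tau=0$ corresponds to the $l$-th threshold. The angular Fourier decomposition splits $-\Delta+V_0$ as $\bigoplus_m(-\Lz+V_0+m^2)$ on $\Rd$, so the model resolvent in the $l$-th block becomes $\RVzz(\tau)$; coupling among modes $m\neq l$ is small thanks to hypothesis (\ref{eq:Vhyp}). Resonances of $-\Delta+V$ in $B_l(\rho)$ then appear as zeros, counted with multiplicity, of a characteristic function $D_V(\tau)$ obtained by a Schur-complement reduction onto the $l$-th channel; for $V=V_0$ this becomes a function $D_{V_0}(\tau)$ built directly from $\RVzz(\tau)$.

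The new ingredient is the Laurent expansion $\RVzz(\lambda)=\sum_{k=-r}^{\infty}A_k\lambda^k$ at the origin, with finite-rank coefficients for $k<0$ and residue rank summing to $\mvzz(0)$. This shows that $D_{V_0}(\tau)$ vanishes at $\tau=0$ to some order $n\geq 2\mvzz(0)$, with equality when $r=1$ (the factor of $2$ reflects the convention of Theorem \ref{thm:poleexist}). The perturbation bounds from Section \ref{ss:Es}, combined with the $|\tau|^{-r}$ size of $\RVzz$ near the origin, yield an estimate $|D_V(\tau)-D_{V_0}(\tau)|\lesssim l^{-\delta}|\tau|^{-c}$ on $\{|\tau|<\rho\}$ for some $c$ depending on $r$ and $\mvzz(0)$. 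The scale $|\tau|\sim l^{-\delta/r}$ is precisely where the $l^{-\delta}$ smallness of the perturbation and the $|\tau|^{-r}$ singularity of the model balance.

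For $\epsilon'$ smaller than the modulus of the nearest nonzero pole of $\RVzz$, Rouch\'e's theorem on the annulus $\{Cl^{-\delta/r}<|\tau|<\epsilon'\}$ gives $|D_V-D_{V_0}|<|D_{V_0}|$ on its boundary, so $D_V$ inherits the absence of zeros of $D_{V_0}$ there: this is the no-resonance claim. Applying Rouch\'e on $\{|\tau|<\epsilon'\}$ shows $D_V$ has the same total zero count as $D_{V_0}$, namely $n\geq 2\mvzz(0)$, all of which must lie in $B_l(Cl^{-\delta/r})$ by the previous step, giving at least $2\mvzz(0)$ resonances there. For smooth $V$ the rapid decay of the Fourier coefficients $V_m$ replaces $l^{-\delta}$ by $l^{-2}$ in the error estimate (as in the smooth part of Theorem \ref{thm:polefree}), giving the tightened exclusion radius $C_1l^{-2/r}$. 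When $r=1$, the pole order and natural multiplicity coincide, so $n=2\mvzz(0)$, and Rouch\'e yields exactly $2\mvzz(0)$ resonances in $B_l(Cl^{-\delta})$.

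The main technical hurdle is the interplay between the $|\tau|^{-r}$ singularity of $\RVzz$ at $\tau=0$ and the $l^{-\delta}$ smallness of the off-diagonal perturbation: both are competing large/small quantities that must be carefully propagated through the determinant to obtain a uniform-in-$l$ Rouch\'e estimate, and one must in particular verify that the blow-up of $|D_V-D_{V_0}|$ as $|\tau|\to 0$ does not overwhelm the vanishing of $D_{V_0}$ on the chosen boundary circles. Tied to this is the subtlety flagged in the paragraph preceding the theorem, namely that different notions of multiplicity at a threshold can diverge once $r>1$; this is why only a lower bound on the resonance count is asserted in general, and why the equality statement is restricted to $r=1$.
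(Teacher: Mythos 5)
There is a genuine gap, and it sits exactly at the threshold subtlety your last paragraph flags but does not resolve. Your scheme hinges on the claim that the model characteristic function $D_{V_0}(\tau)$ vanishes at $\tau=0$ to order $n\geq 2\mvzz(0)$, with equality when $r=1$. That is not what happens at a threshold: because the model resolvent (and, for $d=1$, even the free resolvent $\Rzz$) is itself singular at $\tau=0$, the winding number of the relevant characteristic function at the threshold is $2(\mvzz(0)-m_{00}(0))$, not $2\mvzz(0)$ (this is Lemma \ref{l:thresholdorder1}; the bookkeeping correction by the model's own pole also appears in the identity ``order of vanishing $=M+m_{V_0}$'' used in Section \ref{s:smoothresults}). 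In the basic $d=1$ case of a zero-energy resonance of $-\frac{d^2}{dx^2}+V_0$ one has $\mvzz(0)=m_{00}(0)=1$, so the winding number is $0$; a Rouch\'e transfer of zero counts of $D_{V}$ versus $D_{V_0}$ would then predict $0$ resonances near the threshold, whereas the theorem (and the explicit example of Section \ref{s:example}) gives exactly $2$. The new resonances are created out of the threshold pole of the model resolvent, not out of zeros of a characteristic determinant, so a count based solely on zeros of $D_{V_0}$ cannot produce the lower bound. The paper instead gets the lower bound from lower semicontinuity of rank applied to the contour integrals $\int_{\gamma_l}(1+t\tau_l)R_V\,d\tau_l$ compared with $2\pi i(A_1+tA_2)\pr_l$ (Lemmas \ref{l:multat0} and \ref{l:residues}, via Proposition \ref{p:Rest}), and gets the matching upper bound only for $r=1$ from the threshold inequality $\mV(\zeta_0)\le M(I+V\Rz\chi,\zeta_0)+m_0(\zeta_0)$ together with Lemma \ref{l:thresholdorder1} and the operator Rouch\'e theorem (Lemma \ref{l:poleorder1}); your proposal contains no substitute for either mechanism.

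The quantitative Rouch\'e step is also not closable as stated. On the inner circle $|\tau|=Cl^{-\delta/r}$ you would need $|D_V-D_{V_0}|<|D_{V_0}|$, but with $|D_{V_0}|\sim|\tau|^{n}$ and your error bound $l^{-\delta}|\tau|^{-c}$ this requires roughly $r>n+c$, which fails already for $r=1$, $n=2$. The paper never needs a lower bound on a model determinant near the threshold: the resonance-free annulus $B_l(\epsilon)\setminus B_l(Cl^{-\delta/r})$ comes from a Neumann-series invertibility argument for $I+\pr_l(I+\Vd\RVz^{\reg}\chi)^{-1}\Vd\,\Xi(\RVzz,0)|_{\lambda=\tau_l}\pr_l\chi$, using $\|\chi\Xi(\RVzz,0)\chi\|\le C|\lambda|^{-r}$ (pole \emph{order} $r$, not multiplicity) against the $O(l^{-\delta})$ smallness of $\pr_l\Vd\pr_l$, exactly as in point 2 of the proof of Theorem \ref{thm:polefree}; the smooth improvement $l^{-2/r}$ then follows from Proposition \ref{p:firstorderapprox} rather than from a determinant comparison. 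To repair your argument you would have to (i) replace the zero-count of $D_{V_0}$ by a quantity that sees the threshold pole of the model (e.g.\ the rank of the principal part, as in Lemma \ref{l:residues}), and (ii) replace the inner-circle Rouch\'e inequality by an invertibility estimate of the above type.
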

Suppose for the moment that $V_0$ is real-valued.  In this
case, it is well-known  that if $d=1$ the order of the 
pole of the resolvent of $-\frac{d^2}{dx^2}+V_0$ at $0$ cannot exceed $1$,
and if it is $1$, then $\mvzz(0)=1$.   If $d\geq 3$ is odd, then 
the order of the pole of the resolvent of $-\Lz+V_0$ at $0$ cannot exceed $2$.
For general $V$,  $r$,  the order of the pole at $0$, can be bounded
from above in terms of 
$\mvzz(0)$, and in case $d=1$, $\mvzz(0)$ can be bounded above by $r$.

It is of particular interest to understand poles of the resolvent 
$\RV$ near the 
physical region.  In Section \ref{ss:corollary} we show that there are 
large regions near the physical region that contain 
no resonances.  A consequence of those results is that large energy
resonances ``near'' the continuous spectrum and having $\Re \tau_0(\zeta)>0$ are contained in regions
of the form $B_l(\rho)$, where $\rho$ depends
on how near the continuous spectrum we wish to look.   In Section \ref{ss:corollary} we further justify our 
focus on the resonances in sets $B_l(\rho)$.

 Theorems \ref{thm:poleexist}, \ref{thm:polefree}, and \ref {thm:0},
combined with
results of Section \ref{ss:corollary}
yield the following corollary.  Here $d_{\zhat}$ is a distance
on $\zhat$, defined in Section \ref{ss:corollary}.  The boundary of the 
physical region corresponds to the continuous spectrum.  In the 
corollary, we use $\{\zsharp_j\}$ to denote a sequence of points in 
$\zhat$, to
distinguish from $\zeta_l$ which is used elsewhere to denote a 
particular mapping from
an open  subset of the complex plane into $\zhat$.
\begin{cor}\label{c:approachsequence}
Let $V\in L^\infty_c(X;\Real)$ satisfy (\ref{eq:Vhyp}).
Then
$\RV(\zeta)$ has a sequence $\{\zsharp_j\}_{j=1}^\infty$ of 
poles satisfying both $|\tau_0(\zsharp_j)|\rightarrow \infty$ as $j\rightarrow \infty$ and 
{\em } $d_{\zhat}(\zsharp_j,\text{physical region})\rightarrow 0$
as $j\rightarrow \infty$ if and only if $\RVzz(\lambda)$ has at least one
pole in $i[0,\infty)$. 
\end{cor}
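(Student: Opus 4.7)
The strategy is to combine the three preceding theorems with the geometry of the charts $B_l(\rho)$. A short calculation from the identities $\tau_j(\zeta)^2 = \tau_l(\zeta)^2 + l^2 - j^2$ and the branch choice on $B_l(\rho)$ (which is fixed by the condition $\Re\tau_0>0$) shows that, for $l$ large, the physical region's trace on $B_l(\rho)$ is essentially the open first quadrant $\{\Re\tau_l>0,\Im\tau_l>0\}$ in the $\tau_l$-coordinate. Its closure therefore contains the imaginary segment $i[0,\infty)$, whereas any $\lambda\in\Complex$ with $\Im\lambda<0$ sits at $\tau_l$-distance at least $|\Im\lambda|$ from it. Since $V$ is real, so is $V_0$; combining the standard symmetry of the scattering resolvent with the absence of positive eigenvalues of $-\Lz+V_0$ for $V_0\in L^\infty_c(\Real^d;\Real)$, every pole of $\RVzz$ lies in $i[0,\infty)\cup\{\Im\lambda<0\}$. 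Hence ``$\RVzz$ has no pole in $i[0,\infty)$'' is equivalent to ``every pole of $\RVzz$ has strictly negative imaginary part.''

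For direction $(\Leftarrow)$, I assume $\RVzz$ has a pole at $\lambda_0=i\kappa_0$, $\kappa_0\geq 0$. If $\kappa_0>0$, I choose $\rho>\kappa_0$ and apply Theorem~\ref{thm:poleexist} to produce, for each large $l$, a resonance $\zsharp_l$ of $-\Delta+V$ with $|\tau_l(\zsharp_l)-i\kappa_0|<C_0 l^{-\delta/\mvzz(\lambda_0)}$. Then $\tau_0(\zsharp_l)^2 = l^2 - \kappa_0^2 + o(1)$ forces $|\tau_0(\zsharp_l)|\to\infty$, while $\tau_l(\zsharp_l)\to i\kappa_0$ lies in the closure of the first-quadrant trace of the physical region; uniform comparability of $d_{\zhat}$ with the local $\tau_l$-metric on $B_l(\rho)$ then yields $d_{\zhat}(\zsharp_l,\text{physical region})\to 0$. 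The case $\kappa_0=0$ is handled identically using Theorem~\ref{thm:0} in place of Theorem~\ref{thm:poleexist}: the extracted poles satisfy $\tau_l(\zsharp_l)\to 0$, which is the corner of the first quadrant and thus on the boundary of the physical region.

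For direction $(\Rightarrow)$, I argue the contrapositive. Suppose every pole of $\RVzz$ lies in $\{\Im\lambda<0\}$. Fix $\rho>0$ (avoiding $|\lambda_j|=\rho$) large enough to cover all poles of $\RVzz$ I need, and set $c_\rho=\min\{-\Im\lambda_j:\lambda_j\in\Lambda_\rho\}>0$; pick $\epsilon'\in(0,\rho)$ smaller than the smallest $|\lambda_j|$ in $\Lambda_\rho$. Suppose, toward a contradiction, that $\RV$ has poles $\{\zsharp_j\}$ with $|\tau_0(\zsharp_j)|\to\infty$ and $d_{\zhat}(\zsharp_j,\text{physical region})\to 0$. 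By the results of Section~\ref{ss:corollary} (together with the reality symmetry of $\RV$, which reduces the problem to the half $\Re\tau_0>0$), each $\zsharp_j$ for $j$ large lies in some chart $B_{l_j}(\rho)$ with $l_j\to\infty$. Theorem~\ref{thm:polefree}, combined with the fact that $\RVzz$ is analytic at $0$ (so no poles in $B_{l_j}(\epsilon')$), forces $|\tau_{l_j}(\zsharp_j)-\lambda_{j}|\leq\tilde C l_j^{-\delta/\mvzz(\lambda_{j})}$ for some $\lambda_{j}\in\Lambda_\rho$. Hence $\Im\tau_{l_j}(\zsharp_j)\leq -c_\rho/2$ for $j$ large, placing $\zsharp_j$ at $\tau_{l_j}$-distance at least $c_\rho/2$ from the first-quadrant trace and, by metric comparability, at uniform positive $d_{\zhat}$-distance from the physical region, a contradiction.

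The principal technical obstacle is the uniform-in-$l$ comparison of the Riemann-surface distance $d_{\zhat}$ with the local $\tau_l$-coordinate metric on $B_l(\rho)$, together with the explicit identification of the physical region's trace on $B_l(\rho)$ as (approximately) the first quadrant $\{\Re\tau_l>0,\Im\tau_l>0\}$; both directions of the corollary rest squarely on these geometric facts. A secondary subtlety is handling resonances with $\Re\tau_0<0$, dealt with either by invoking the reality symmetry of $\RV$ or by the Section~\ref{ss:corollary} localization results that confine high-energy poles near the continuous spectrum into charts of the form $B_l(\rho)$.
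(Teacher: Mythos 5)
Your proposal is correct and follows essentially the same route as the paper's proof: reduce to the charts with $\Re\tau_0>0$ via the conjugation symmetry of Lemma \ref{l:symmetrytype}, confine high-energy poles near the physical region to $\cup_l B_l(M)$ using the resonance-free regions of Lemma \ref{l:resfree}, and then apply Theorems \ref{thm:poleexist}, \ref{thm:polefree}, and \ref{thm:0} together with the fact that on $B_l(\rho)$ the distance $d_{\zhat}$ agrees with the $\tau_l$-coordinate distance. The only difference is that you make explicit the (standard, implicitly used) fact that for real-valued $V_0$ every pole of $\RVzz$ in the closed upper half-plane lies on $i[0,\infty)$.
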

 In particular, if $d=1$ by \cite[Theorem XIII.110]{ReSi} if $\int_X V\leq 0$ 
 then $\RV(\zeta)$ has such a sequence of poles approaching
the physical space.  In contrast,
if $V_0(x)\geq 0$ for all $x$ and $V_0$ is nontrivial, $\RV(\zeta)$ does
not have such a sequence of poles.  Note that
for any fixed $k_0\in \Natural$, $|\tau_0(\zsharp_j)|\rightarrow
\infty$ as $j\rightarrow \infty$ if and only if $|\tau_{k_0}(\zsharp_j)|\rightarrow \infty$ as $j\rightarrow \infty$.  We remark that we could prove an 
analog of Corollary \ref{c:approachsequence} for complex-valued potentials
as well.

If we enlarge the region centered at the threshold $l^2$ with increasing
 $l$, we have less 
fine localization of the resonances, see Theorem \ref{thm:biggerdisk}.
 However, when $V_0$, the average of the potential, 
is identically $0$, we can get 
a larger resonance-free region.  The difference in
the next result for $d=1$ and $d\geq 3$ is due to the fact that 
the resolvent of $-\frac{d^2}{dx^2}$ on $\Real$ has a 
pole at the origin, but that of $-\Lz$ on $\Real^d$ for $d\geq 3$ odd 
does not.
\begin{thm}\label{thm:V0iszero}
Let $V\in L^\infty_c(X)$ satisfy (\ref{eq:Vhyp}),
and suppose $V_0\equiv 0$.  If $d=1$ there 
are  $\alpha,c_0>0$ so that for $l\in \Natural$ sufficiently large there are no
resonances of $-\Delta+V$ in the set 
$\{\zeta \in B_l(\alpha \log l): |\tau_l(\zeta)|>c_0/l^{\delta}\}$.
If $d\geq 3$ is odd, there
is an $\alpha>0$ so that for $l$ sufficiently large
there are no resonances of  $-\Delta+V$ in the set
$B_l(\alpha \log l)$. 
\end{thm}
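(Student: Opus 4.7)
The strategy is to show that in the indicated region the characteristic operator whose non-invertibility detects resonances of $-\Delta+V$ is in fact invertible via a Neumann series, by exploiting the special feature that $V_0\equiv 0$ kills the contribution of the singular resolvent factor on the ``diagonal'' of the Fourier decomposition in $\theta$.

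Let $\chi\in C_c^\infty(\Rd)$ be $\theta$-independent and equal to one on the projection of $\supp V$ to $\Rd$, and set $K(\zeta):=\chi\,\Rz(\zeta)\,V$, where $\Rz(\zeta)=(-\Delta-\zeta^2)^{-1}$ is the free resolvent on $X$. As in the setup underlying Theorems \ref{thm:poleexist} and \ref{thm:polefree}, resonances of $-\Delta+V$ correspond to $\zeta\in\zhat$ for which $I+K(\zeta)$ fails to be invertible. In the Fourier basis $\{e^{ik\theta}\}_{k\in\Integers}$ on $\Sphere^1$, $\Rz(\zeta)$ is block-diagonal with $k$th block $\Rzz(\tau_k(\zeta))$ (the free resolvent on $\Rd$), while multiplication by $V$ has block $(j,k)$ equal to $V_{j-k}$; since $\chi$ is $\theta$-independent, the $(j,k)$-block of $K(\zeta)$ is $\chi\,\Rzz(\tau_j(\zeta))\,V_{j-k}$. \emph{Crucially, the hypothesis $V_0\equiv 0$ forces the block-diagonal $j=k$ to vanish}, so the singular factor $\Rzz(\tau_l(\zeta))$ at the $l$th threshold always enters $K(\zeta)$ paired with a nonzero Fourier mode $V_m$, $m\neq 0$, which decays like $|m|^{-\delta}$.

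For $\zeta\in B_l(\alpha\log l)$ and $k\neq l$, $|\tau_k(\zeta)|^2\geq |k^2-l^2|-(\alpha\log l)^2\gtrsim l\,|k-l|$ once $l$ is large and $\alpha$ small, so $|\tau_k(\zeta)|$ grows with $l$ and the standard odd-dimensional bound $\|\chi\,\Rzz(\tau_k(\zeta))\,\chi\|_{L^2\to L^2}\leq C\langle\tau_k\rangle^{-1}e^{C(-\Im\tau_k)_+}$ controls those blocks. For $d\geq 3$ odd, $\Rzz$ is entire in $\tau$, so in addition $\|\chi\,\Rzz(\tau_l(\zeta))\,\chi\|\leq C l^{C\alpha}$ on $B_l(\alpha\log l)$; combined with the enforced off-diagonal pairing (which always brings a factor $|m|^{-\delta}$) and an $L^2$-based operator-norm estimate over the Fourier index in $\theta$ (necessary since the crude $\ell^1$-Schur bound fails when $|m|^{-\delta}$ is non-summable for $\delta\leq 1/2$), this yields $\|K(\zeta)\|<1$ for $\alpha$ sufficiently small compared to $\delta$. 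For $d=1$, the additional singularity $\Rzz(\tau)\sim(2i\tau)^{-1}$ near $\tau=0$ forces the restriction $|\tau_l(\zeta)|>c_0/l^\delta$, which caps the singular factor by $l^\delta/c_0$; the gain $l^{-\delta}$ from the enforced off-diagonal pairing with $V_m$, together with the bound $l^{C\alpha}$ from the remaining blocks, again gives $\|K(\zeta)\|<1$ for $\alpha$ small and $l$ large.

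The principal obstacle is the combination of exponential growth of $\Rzz(\tau_k(\zeta))$ on non-physical portions of the sheet of $\zhat$ with the weak Fourier decay $\|V_m\|_{L^\infty}=O(|m|^{-\delta})$, $\delta\leq 1/2$, which need not even be summable over $m$. The logarithmic radius $\alpha\log l$ is calibrated precisely so that the growth factor $e^{C\alpha\log l}=l^{C\alpha}$ is overcome, after carefully summing over Fourier blocks, by the polynomial gain $l^{-\delta}$ from pairing with a nonzero mode of $V$; this forces $\alpha$ to be chosen small depending on $\delta$ and on the free-resolvent growth constant, and executing the block-summation in $L^2$ without losing summability in the Fourier index is the technical core of the argument.
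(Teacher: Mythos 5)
Your overall strategy (detect resonances via non-invertibility of $I+\chi\Rz(\zeta) V$ or $I+V\Rz(\zeta)\chi$, and exploit $V_0\equiv 0$ plus the decay $\|V_m\|=O(m^{-\delta})$ against the $l^{C\alpha}$ growth of the free resolvent at depth $\alpha\log l$) is the same as the paper's, but the key smallness claim is wrong as stated, and this is a genuine gap. You assert that because the diagonal Fourier blocks vanish, the large factor $\Rzz(\tau_l(\zeta))$ is ``always paired with a nonzero mode $V_m$, which decays like $|m|^{-\delta}$,'' and hence $\|K(\zeta)\|<1$. But $m\neq 0$ does not give any gain in $l$: the block of $K(\zeta)$ from mode $l-1$ to mode $l$ is $\chi\Rzz(\tau_l(\zeta))V_{1}$, whose norm is of size $e^{C(\Im\tau_l)_-}/|\tau_l|\sim l^{C\alpha}$ deep in $B_l(\alpha\log l)$ (and of size $l^{\delta+C\alpha}$ near the threshold when $d=1$), with no compensating factor. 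So $\|K(\zeta)\|<1$ fails throughout most of the region, and the Neumann series for $I+K$ does not converge; no rearrangement of the block summation (your $\ell^1$ versus $L^2$ point) can fix this, since a single near-diagonal block is already large. The gain $l^{-\delta}$ appears only when the total Fourier shift is of size $\sim l$, i.e.\ for transitions between the modes $e^{+il\theta}$ and $e^{-il\theta}$, and therefore only at second order: this is exactly how the paper argues, writing $\chi\Rz V\Rz\chi$ in the four pieces determined by $\pr_l$ and $I-\pr_l$, using $\|\pr_l V\pr_l\|=O(l^{-\delta})$ (only $V_{\pm 2l}$ survives when $V_0\equiv0$) together with $\|\chi\Rz(I-\pr_l)\chi\|=O(l^{-1/2})$ (Lemma \ref{l:lognbhd}), to make $(V\Rz\chi)^2$ small and hence $I+V\Rz\chi$ invertible. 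Your argument needs this second-order mechanism; at first order there is nothing to sum.

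There is a second gap in the $d=1$ case near the threshold. Even the corrected second-order bound only gives $O\bigl(l^{-\delta}e^{2C(\Im\tau_l)_-}/|\tau_l|^2\bigr)$ for the dangerous term, which on the annulus $c_0l^{-\delta}<|\tau_l(\zeta)|\le 1$ is of size $l^{+\delta}$, not small; and your own arithmetic there ($l^{\delta}$ from the singular factor times the claimed $l^{-\delta}$ gain times $l^{C\alpha}$ from the other blocks) does not produce a quantity less than $1$. The paper does not attempt a crude norm bound in this annulus: it invokes Theorems \ref{thm:polefree} and \ref{thm:0} (with $\Lambda_\rho=\{0\}$, simple pole of $\Rzz$ at $0$ for $d=1$), whose proofs subtract the singular part of the resolvent at the threshold ($\RVz^{\reg}$) so that the pole enters only linearly and invertibility holds once $|\tau_l(\zeta)|>Cl^{-\delta}$. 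To repair your proposal you would either have to reproduce that singular-part argument or cite those theorems for the region $|\tau_l(\zeta)|\le 1$, and replace the claim $\|K\|<1$ by an estimate on $\|K^2\|$ (equivalently on $\chi\Rz V\Rz\chi$) for $1\le|\tau_l(\zeta)|\le\alpha\log l$.
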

There is a sense in which this theorem is sharp; see Proposition
\ref{p:exlogl} for 
a computation for the case $d=1$ with
the potential $V(x,\theta)=2\chf(x)\cos \theta $,
where $\chf$ is the characteristic function of the interval $[-1,1]$.

We can find the leading correction term 
for high energy resonances of $-\Delta +V$ which correspond to simple 
resonances of $-\Lz+V_0$.  In the next theorem, $\grz$ is the 
gradient on $\Rd$, so that $\grz f=\left( \frac{\partial}{\partial x_1}f,
\frac{\partial}{\partial x_2}f, ...,\frac{\partial}{\partial x_d}f\right)$.
\begin{thm}\label{thm:leadcorrection}
Let $X=\Rd \times \Sphere^1$, $d$ odd, $V\in C_c^\infty(X)$, and suppose 
$\lambda_0\in \Complex $ is a simple pole of the scattering
resolvent $\RVzz$ of $-\Lz+V_0$ on $\Real^d$, and that
the residue of $\RVzz$ at $\lambda_0$ has 
rank $1$.  Suppose
\begin{equation}\label{eq:singform}
\RVzz(\lambda) -\frac{i}{\lambda- \lambda_0} u\otimes u
\end{equation}is analytic 
near $\lambda =\lambda_0$.  Let $\rho>|\lambda_0|$.  Then there
are $\epsilon,\;L>0$ so that for $l>L$ there are
exactly two poles of $R_V(\zeta)$, when counted with multiplicity,
in $\{\zeta \in B_{l}(|\lambda_0|+1): |\tau_l(\zeta)-\lambda_0|<\epsilon\}$,
and each pole of $R_V(\zeta)$ in this set satisfies
$$\tau_l(\zeta)=\lambda_0- \frac{i}{4l^2} \sum_{k\not = 0}\frac{1}{k^2} \int_\Real
  \left( k^2V_{-k}V_{k}u^2+ \left(\grz V_{-k}\cdot \grz V_k\right) u^2 \right)(x)dx  
   +O(l^{-3}).$$
\end{thm}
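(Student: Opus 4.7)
The strategy is to perform a Feshbach--Grushin reduction of $-\Delta+V-\zeta^2$ onto the two-dimensional $\theta$-Fourier subspace $\mathrm{span}\{e^{\pm il\theta}\}$, expand the resulting self-energy in powers of $1/l$, apply first-order perturbation theory using the assumed rank-one residue structure of $\RVzz$, and simplify using the resonance equation $\Lz u=(V_0-\lambda_0^2)u$. The existence of exactly two poles in the prescribed $\tau_l$-neighborhood is already known from Theorem~\ref{thm:poleexist}, so only their asymptotic location needs to be identified.

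Expand $V(x,\theta)=\sum_m V_m(x)e^{im\theta}$, so that $V_0$ is diagonal in $\theta$-Fourier modes while $\{V_m\}_{m\neq 0}$ produces the inter-mode coupling. Let $P$ project onto $\mathrm{span}\{e^{\pm il\theta}\}$ and $Q=I-P$, and set $\tau=\tau_l(\zeta)$. The Feshbach formula produces the effective $2\times 2$ matrix operator
\begin{equation*}
E_l(\zeta)=P(-\Delta+V-\zeta^2)P - PVQ\bigl[Q(-\Delta+V-\zeta^2)Q\bigr]^{-1}QVP,
\end{equation*}
whose kernel detects the poles of $\RV$ near $\tau=\lambda_0$. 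The off-diagonal entries of $E_l$ (coupling $e^{il\theta}$ to $e^{-il\theta}$) are controlled by Fourier coefficients of $V$ of index of size $\sim 2l$, hence $O(l^{-N})$ for every $N$ by smoothness, and may be absorbed into the final $O(l^{-3})$ error. Each diagonal entry of $E_l$ equals $-\Lz+V_0-\tau^2-W(\tau,l)$, where to leading order in $V-V_0$ the self-energy is
\begin{equation*}
W(\tau,l)=\sum_{k\neq 0}V_{-k}\bigl(-\Lz+V_0+(l+k)^2-\zeta^2\bigr)^{-1}V_k.
\end{equation*}
Writing $(l+k)^2-\zeta^2=2lk+k^2-\tau^2$ and Neumann-expanding the bath resolvent,
\begin{equation*}
\bigl(2lk+k^2-\tau^2+V_0-\Lz\bigr)^{-1}=\frac{1}{2lk}-\frac{k^2-\tau^2+V_0-\Lz}{(2lk)^2}+O(l^{-3})
\end{equation*}
on compactly supported smooth functions, the leading $1/(2lk)$ piece cancels upon summation over $k\neq 0$ because $V_{-k}V_k$ is symmetric under $k\mapsto -k$, leaving
\begin{equation*}
W(\tau,l)=-\frac{1}{4l^2}\sum_{k\neq 0}\frac{V_{-k}\bigl[(k^2-\tau^2)+V_0-\Lz\bigr]V_k}{k^2}+O(l^{-3}).
\end{equation*}

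The equation $E_l(\zeta)u^*=0$ with $u^*\to u$ is equivalent to $u^*=\RVzz(\tau)W(\tau,l)u^*$. Using the expansion $\RVzz(\tau)=i(\tau-\lambda_0)^{-1}u\otimes u+(\text{holomorphic})$ and pairing against $u$ with the bilinear form $(f,g)\mapsto\int fg\,dx$ (the pairing in which the residue reads $iu\otimes u$), one derives the secular equation $\tau-\lambda_0=i\int u\,W(\lambda_0,l)u\,dx+O(l^{-3})$. Substituting the expansion of $W$, using Leibniz with the resonance equation $\Lz u=(V_0-\lambda_0^2)u$ to rewrite $\Lz(V_k u)$, and integrating the cross term $V_{-k}\grz V_k\cdot\grz u$ by parts, the contributions involving $\lambda_0^2$ and $V_{-k}\Lz V_k$ cancel, yielding
\begin{equation*}
\int u\,V_{-k}\bigl[(k^2-\lambda_0^2)+V_0-\Lz\bigr](V_k u)\,dx=\int\bigl(k^2 V_{-k}V_k+\grz V_{-k}\cdot\grz V_k\bigr)u^2\,dx.
\end{equation*}
Dividing by $k^2$, summing over $k\neq 0$, and multiplying by the overall prefactor $-i/(4l^2)$ produces exactly the formula stated in the theorem.

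The main obstacle is making the Feshbach reduction and the secular equation genuinely rigorous in this non-self-adjoint scattering setting: the resonance state $u$ lies in weighted spaces rather than $L^2$, the pairing $\int u\,W u\,dx$ must be interpreted via the same analytic continuation that produced the residue $iu\otimes u$, and the Neumann expansion of $[Q(-\Delta+V-\zeta^2)Q]^{-1}$, together with the estimate on off-diagonal couplings, must be controlled uniformly in $l\to\infty$ and in $\tau$ in a fixed small neighborhood of $\lambda_0$. I expect these analytic estimates to follow from the weighted-resolvent and Grushin framework already developed in the paper for Theorems~\ref{thm:poleexist} and \ref{thm:polefree}.
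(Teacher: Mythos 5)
Your reduction reproduces the paper's leading term, and your integration-by-parts identity (rewriting $\int u V_{-k}[(k^2-\lambda_0^2)-\Lz+V_0](V_k u)\,dx$ as $\int (k^2V_{-k}V_k+\grz V_{-k}\cdot\grz V_k)u^2\,dx$ via the resonance equation) is exactly the paper's Lemma \ref{l:ucomp}. But there is a genuine gap at the heart of your expansion of the self-energy: you keep only the term quadratic in $\Vd=V-V_0$, namely $W=\sum_{k\neq0}V_{-k}(-\Lz+V_0+(l+k)^2-\zeta^2)^{-1}V_k$, and discard the higher-order terms of $PVQ[Q(-\Delta+V-\zeta^2)Q]^{-1}QVP$ ``to leading order in $V-V_0$.'' The cubic-in-$\Vd$ term, $P\Vd Q R\, Q\Vd Q R\, Q\Vd P$ with two intermediate bath resolvents each of size $O(1/l)$, is a priori $O(l^{-2})$ --- the \emph{same} order as the term you retain --- so it cannot be absorbed into the $O(l^{-3})$ error without further argument. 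In the paper this is precisely the content of Lemma \ref{l:2RVzs}: the coefficient of the $l^{-2}$ contribution of $\pr_l(\Vd\RVz^{\reg})^2\Vd\pr_l$ is $\sum_{m,j\neq0,\,m\neq-j}\frac{1}{j(j+m)}V_mV_jV_{-m-j}$, which vanishes identically by the purely algebraic cancellation of Lemma \ref{l:sumis0}; only then is the cubic term $O(l^{-3})$. Your proposal neither identifies nor proves this cancellation, and without it the stated $l^{-2}$ coefficient could be wrong. (Quartic and higher terms also need a uniform bound, as in Lemma \ref{l:bigj}, but that is routine once the cubic term is handled.)

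A secondary point: the rigorous step from the formal secular equation to ``exactly two poles, each within $O(l^{-3})$ of the stated point'' is not supplied by any ``Grushin framework already developed in the paper''; the paper instead reduces to a rank-two Fredholm determinant built from $h_{\pm l}=\frac{1}{\sqrt{2\pi}}\chi u e^{\pm il\theta}$ and runs a quantitative Rouch\'e comparison (Proposition \ref{p:approx2}) between the exact operator $\pr_l(I+\Vd\RVz^{\reg}\chi)^{-1}\Vd\pr_l$ and its explicit $l^{-2}$ model, together with the Gohberg--Sigal multiplicity identities to equate determinant zeros with resonance multiplicities. Your Feshbach reduction is a legitimate alternative packaging of the same mode decomposition, and your symmetry cancellation of the $1/(2lk)$ terms matches the paper's computation; but to make the argument complete you must (i) prove the cubic cancellation above, and (ii) replace the heuristic first-order perturbation pairing by a uniform-in-$l$ zero-counting argument on the $2\times2$ reduced problem, including the off-diagonal $e^{il\theta}\leftrightarrow e^{-il\theta}$ entries you correctly estimate as $O(l^{-N})$.
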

We note that the normalization of the singularity in (\ref{eq:singform}) is
chosen so that if $V$ is real-valued and $\lambda_0\in i[0,\infty)$, 
then $u$ is real-valued.  Proposition \ref{p:exnearthreshold} shows
that the leading correction may be rather different for a non-smooth potential
by considering the special case of the potential on $\Real \times \Sphere^1$
 given by
$V(x,\theta)=2\cos \theta \chf(x)$, where $\chf$ is the characteristic
function of the interval $[-1,1]$.

If $V_0\in L^\infty_c(\Real^d;\Real)$
 and the operator  $-\Lz+V_0$ on $L^2(\Rd)$
 has a simple negative eigenvalue
$-\beta^2$, then this
negative eigenvalue corresponds to a simple pole of $\RVzz$  on the positive
imaginary
axis at $i|\beta|$, and the residue has rank $1$.  
By Theorem \ref{thm:poleexist} (or Corollary \ref{c:approachsequence}), in this 
case $\RV$ has a sequence of poles approaching the physical space.
If $V\in C_c^\infty(X;\Real)$, the poles approach the physical space
very rapidly.
\begin{thm}\label{thm:polesequence} Suppose $V\in C^\infty_c(X;\Real)$ and $\lambda_0\in \Complex$
is a simple pole of $\RVz(\lambda)$ with $\Re \lambda_0=0$,
with residue of $\RVz$ at $\lambda_0$ having 
rank one.  Then there
is an $\epsilon>0$ so that if $\{ \zsharp_l\}_{l=L}^\infty \subset \zhat$
is a sequence of poles of $\RV$ with $\zsharp_l\in B_l(|\lambda_0|+1)$,
$|\tau_l(\zsharp_l)-\lambda_0|<\epsilon$, then $\Re \tau_l(\zsharp_l)=O(l^{-N})$
for any $N$.  In particular this implies, if $\Im \lambda_0> 0$,
$d_{\zhat}(\zsharp_l,\text{physical region})=O(l^{-N})$.
\end{thm}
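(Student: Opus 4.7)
The plan is to refine the Grushin/Schur reduction underlying Theorem \ref{thm:leadcorrection} into a quantitative statement about an effective $2\times 2$ block operator on the Fourier modes $k=\pm l$, and then to combine three observations: (i) the super-polynomial Fourier decay $\|V_p\|_\infty=O_N(|p|^{-N})$ available when $V$ is smooth, (ii) a transpose identity between the two diagonal blocks that follows from the reality of $V$ and $V_0$, and (iii) the anti-linear symmetry $\zeta\mapsto -\bar\zeta$ of the resonance set of $\RV$ when $V$ is real. Decomposing $L^2(X)=\bigoplus_{k\in\Integers} L^2(\Rd)\,e^{ik\theta}/\sqrt{2\pi}$, let $P$ project onto the subspace of modes $k=\pm l$ and $Q=I-P$. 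The Schur complement in $Q$ produces
\[
E_l(\zeta)=\begin{pmatrix}-\Lz+V_0-\tau_l^2+A_{l,l}(\zeta) & A_{l,-l}(\zeta)\\ A_{-l,l}(\zeta) & -\Lz+V_0-\tau_l^2+A_{-l,-l}(\zeta)\end{pmatrix},
\]
whose determinant vanishes at the poles of $\RV$ near the $l^{\mathrm{th}}$ threshold. Each $A_{a,b}(\zeta)$ is a Neumann sum over paths $a\to k_1\to\cdots\to k_n\to b$ with $k_i\neq\pm l$, contributing $V_{a-k_1}\RVzz(\tau_{k_1}(\zeta))V_{k_1-k_2}\cdots V_{k_n-b}$.

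First I would show the off-diagonal blocks are super-polynomially small: in any path contributing to $A_{l,-l}$ or $A_{-l,l}$ the Fourier shifts sum to $\pm 2l$, so at least one factor $V_m$ has $|m|\geq 2l/(n+1)$; together with uniform bounds on $\RVzz(\tau_{k_i}(\zeta))$ (which for $k_i\neq\pm l$ and $l$ large remain away from the pole set of $\RVzz$), this yields $\|A_{l,-l}(\zeta)\|+\|A_{-l,l}(\zeta)\|=O(l^{-N})$ for every $N$ on a neighborhood of $\lambda_0$ in the $\tau_l$ coordinate. Next, substituting $k_i\mapsto -k_i$ in the path sum for $A_{-l,-l}$ and using $\tau_{-k}=\tau_k$, $V_{-m}=\overline{V_m}$, and the kernel symmetry $\RVzz(\tau;x,y)=\RVzz(\tau;y,x)$ (from $V_0$ real), one obtains the path-by-path identity $A_{-l,-l}(\zeta)=A_{l,l}(\zeta)^{\mathsf T}$, where the transpose is taken in the real $L^2(\Rd)$ pairing. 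Since an operator and its transpose have the same spectrum, the two diagonal entries of $E_l(\zeta)$ are non-invertible at the same values of $\tau_l$, and after a further Grushin reduction onto the one-dimensional spectral subspace of $-\Lz+V_0$ at $\lambda_0^2$ spanned by the real eigenfunction $u$, both blocks yield the same scalar effective equation $f(\tau_l,l)=0$ with a unique simple root $\tau_l^\natural(l)$ near $\lambda_0$. Consequently $\det E_l(\zeta)=f(\tau_l,l)^2 g(\tau_l,l)+O(l^{-N})$ near $\tau_l=\lambda_0$ (with $g$ nonvanishing and the remainder coming from the off-diagonal blocks), so the two poles guaranteed by Theorem \ref{thm:leadcorrection} bifurcate from the double root $\tau_l^\natural(l)$ of $f^2$ and are separated by $O(l^{-N})$ for every $N$.

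Finally, the reality of $V$ implies $\overline{\RV(\zeta)}=\RV(-\bar\zeta)$, so poles occur in pairs $\{\zeta,-\bar\zeta\}$; in the $\tau_l$ coordinate near $\lambda_0\in i\Real$ this is the involution $\tau_l\mapsto -\overline{\tau_l}$, which preserves a small disk about $\lambda_0$ (since $-\overline{\lambda_0}=\lambda_0$). By Theorem \ref{thm:leadcorrection} the only two poles in that disk are $\{\tau_l(\zsharp_l),\,-\overline{\tau_l(\zsharp_l)}\}$, and together with the preceding paragraph this forces $2|\Re\tau_l(\zsharp_l)|=|\tau_l(\zsharp_l)+\overline{\tau_l(\zsharp_l)}|=O(l^{-N})$ for every $N$, giving the first assertion. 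The distance-to-the-physical-region statement, when $\Im\lambda_0>0$, then follows because near $\lambda_0$ the boundary of the physical region in $B_l$ is reached by letting $\Re\tau_l\to 0$, and $d_{\zhat}$ in $B_l$ is comparable to the Euclidean distance in the $\tau_l$ coordinate, up to a factor growing like $l$. The principal obstacle is the quantitative analysis of the full Neumann expansion: controlling it uniformly in the path length $n$ and in the $\zeta$-dependence of the intermediate resolvents, and verifying the transpose identity at \emph{every} order despite $\tau_{k_i}(\zeta)$ being genuinely complex, so that the diagonal blocks are only transposes of one another rather than self-adjoint.
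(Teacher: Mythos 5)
Your reduction to a $2\times 2$ effective problem on the modes $e^{\pm il\theta}$, and the $O(l^{-N})$ bound on the off-diagonal blocks coming from the rapid decay of $\|V_m\|_{L^\infty}$, match the structure of the paper's argument (Lemma \ref{l:symmetricapprox} together with Proposition \ref{p:approx2}). The gap is in your final step, where you invoke the global symmetry $\zeta\mapsto-\zconj$ to conclude that the two poles supplied by Theorem \ref{thm:leadcorrection} inside $\{\zeta\in B_l(|\lambda_0|+1):|\tau_l(\zeta)-\lambda_0|<\epsilon\}$ are $\{\tau_l(\zsharp_l),\,-\overline{\tau_l(\zsharp_l)}\}$. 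That involution does not preserve $B_l(|\lambda_0|+1)=B_l^+(|\lambda_0|+1)$: it maps it onto $B_l^-(|\lambda_0|+1)$, which for this fixed radius and $l$ large is a disjoint region of $\zhat$ (on $B_l^+$ one has $\Re\tau_0(\zeta)>0$, while $\Re\tau_0(-\zconj)=-\Re\tau_0(\zeta)<0$). So the reflection partner of $\zsharp_l$ is a pole of $\RV$ lying in $B_l^-$, not in the disk you are counting in, and the two poles in your disk need not be exchanged by $z\mapsto-\bar z$. Without that, your second paragraph only controls the \emph{separation} of the two poles (and, as an aside, a double root of the model perturbed by $O(l^{-N})$ splits a priori only by $O(l^{-N/2})$ --- harmless since $N$ is arbitrary), but the desired conclusion $\Re\tau_l(\zsharp_l)=O(l^{-N})$ does not follow: both poles could sit near a common point with nonzero real part.

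What is needed --- and what the paper proves --- is a symmetry of the block-diagonal model itself, valid on $B_l^+$. Lemma \ref{l:symmetricapprox} produces an approximant $A_N(\tau_l,l)$, analytic in $\tau_l$, with $\pr_{l\pm}A_N\pr_{l\mp}=O(l^{-N})$, which is \emph{symmetric for the real bilinear pairing when $\tau_l\in i\Real$} (this uses the reality of $V$, $V_0$ and of the residue function $u$, plus the fact that for $\tau_l\in i\Real$ all $\tau_k^2=\tau_l^2+l^2-k^2$ are real). Symmetry on the imaginary axis makes the scalar functions $a_\pm(z;l)=\int_X h_{\mp l}\,(T_l(z)h_{\pm l})$ real there, hence $a_\pm(z;l)=\overline{a_\pm(-\bar z;l)}$ by analytic continuation, so the zeros of the model determinant are \emph{exactly} purely imaginary; Proposition \ref{p:approx2} then places the true poles within $O(l^{-N})$ of them. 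Your transpose identity $A_{-l,-l}=A_{l,l}^{\mathsf T}$ relates the two diagonal blocks to each other but says nothing about the behavior of either block under $z\mapsto-\bar z$, so it cannot produce the reality constraint that pins the poles to the imaginary axis. If you replace the global $\zeta\mapsto-\zconj$ argument by this reflection property of a single diagonal block (symmetry at $\tau_l\in i\Real$ plus analyticity), your outline becomes essentially the paper's proof.
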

Proposition \ref{p:exnearthreshold} demonstrates the necessity of the smoothness hypothesis, at least for $d=1$.

This paper was initially motivated by the case $d=1$, as $\Real \times \Sphere^1$ provides a particularly simple example of a manifold with infinite cylindrical
ends and as such provides a testing ground for studying resonances for Schr\"odinger
operators on such manifolds.
  Most of the proofs of the preceding theorems are essentially
identical for any odd dimension of the factor $\Real^d$, so we have included the more 
general results.  However, Theorems \ref{thm:0isrigid} and 
\ref{thm:waveexp} are particular
to the $d=1$ case. 

As a corollary of Theorems \ref{thm:poleexist}, \ref{thm:0},
and \ref{thm:leadcorrection},
we get in the case $d=1$ a uniqueness-type result for the zero potential
among smooth real-valued potentials.
\begin{thm}\label{thm:0isrigid}
Let $V\in C_c^\infty(\Real \times \Sphere^1;\Real)$.  
Suppose for each $\rho>0$ there is a sequence
$\{l_j\}_{j=1}^\infty = \{l_j(\rho) \}_{j=1}^\infty\subset \Natural$ with
$l_j\rightarrow \infty$ when $j\rightarrow \infty$ and so that 
in $B_{l_j}(\rho)$
the resonances of $-\Delta +V$ and $-\Delta$ on $X=\Real\times \Sphere^1$
 are the same.  Then
$V\equiv 0$.
\end{thm}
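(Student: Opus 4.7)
The plan is to first reduce to the case $V_0 \equiv 0$ on $\Real$, and then apply the leading-order correction formula of Theorem~\ref{thm:leadcorrection} at $\lambda_0 = 0$ to force the remaining Fourier modes $V_k$ ($k \neq 0$) to vanish.

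\emph{Reduction to $V_0 \equiv 0$.} For $l$ large, the only pole of the free resolvent $R_{-\Delta}$ in $B_l(\rho)$ is the threshold $\tau_l = 0$, with multiplicity $2$ coming from the $\pm l$ Fourier modes on $\Sphere^1$. If $\lambda_0 \in \Complex\setminus\{0\}$ were a pole of $\RVzz$, then taking $\rho > |\lambda_0|$ and applying Theorem~\ref{thm:poleexist} to the sequence $\{l_j(\rho)\}$ supplied by the hypothesis, $R_V$ would have $2\mvzz(\lambda_0) \geq 2$ poles near $\tau_l = \lambda_0 \neq 0$ in $B_{l_j}(\rho)$ for $l_j$ large, contradicting the assumed equality of resonance sets. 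Hence $\RVzz$ has no nonzero poles. Since $V_0 \in L^\infty_c(\Real;\Real)$, the classical inverse-resonance rigidity of the zero potential on the line (the Jost function of $-d^2/dx^2 + V_0$ is entire of finite exponential type with no zeros and the standard normalization on the real axis, hence identically $1$ by Hadamard factorization) then yields $V_0 \equiv 0$.

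\emph{Extracting the leading correction.} With $V_0 \equiv 0$, $\RVzz$ coincides with the free resolvent $\Rzz$ on $\Real$, whose unique pole is at $\lambda = 0$: simple and rank one, with singular part $\tfrac{i}{2\lambda}(\mathbf{1}\otimes\mathbf{1})$. In the normalization of Theorem~\ref{thm:leadcorrection} one may therefore take $u \equiv 1/\sqrt{2}$, and the theorem (at $\lambda_0 = 0$) says that for $l$ sufficiently large each pole of $R_V$ near $\tau_l = 0$ in $B_l(1)$ satisfies
\[
\tau_l(\zeta) \;=\; -\frac{i}{8l^2}\sum_{k\neq 0}\frac{1}{k^2}\int_\Real\!\Bigl(k^2 V_{-k}(x)V_k(x) + \partial_x V_{-k}(x)\,\partial_x V_k(x)\Bigr)\,dx \;+\; O(l^{-3}).
\]
Reality of $V$ gives $V_{-k} = \overline{V_k}$, so the right-hand side equals $-iS/(8l^2) + O(l^{-3})$ with
\[
S \;:=\; \sum_{k\neq 0}\int_\Real\!\Bigl(|V_k(x)|^2 + k^{-2}|\partial_x V_k(x)|^2\Bigr)\,dx \;\geq\; 0,
\]
and $S$ vanishes if and only if $V_k \equiv 0$ for every $k \neq 0$.

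\emph{Conclusion.} Fix any $\rho > 0$ and let $\{l_j\}$ be the corresponding sequence from the hypothesis. By the smooth case of Theorem~\ref{thm:0} (with $r=1$, $\mvzz(0)=1$), $R_V$ has exactly two poles in $B_{l_j}(\epsilon)$, all contained in $B_{l_j}(C_1 l_j^{-2})$; combined with Theorem~\ref{thm:polefree} (noting $\Lambda_\rho = \{0\}$) these are all the poles of $R_V$ in $B_{l_j}(\rho)$, whereas the only pole of $R_{-\Delta}$ in $B_{l_j}(\rho)$ is $\tau_l = 0$. The assumption that the two resonance sets agree forces each pole of $R_V$ to satisfy $\tau_{l_j}(\zeta) = 0$, so the expansion of the previous paragraph gives $-iS/(8l_j^2) + O(l_j^{-3}) = 0$, whence $S = O(l_j^{-1})$. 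Since $S$ is independent of $l_j$ and $l_j \to \infty$, $S = 0$, so $V_k \equiv 0$ for every $k \neq 0$; combined with the first step, $V \equiv 0$. The main obstacle is the inverse-resonance rigidity step used to pass from ``$-d^2/dx^2 + V_0$ has no nonzero resonances'' to ``$V_0 \equiv 0$'': Theorems~\ref{thm:poleexist}--\ref{thm:leadcorrection} do not supply this directly, but it is a standard consequence of the Jost-function / Hadamard factorization analysis on $\Real$.
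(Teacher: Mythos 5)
Your proposal is correct and follows essentially the same route as the paper: Theorem \ref{thm:poleexist} (together with the threshold analysis) rules out nonzero resonances of $-\frac{d^2}{dx^2}+V_0$, classical one-dimensional resonance rigidity (the paper cites \cite{Zw87} for exactly this) gives $V_0\equiv 0$, and then Theorem \ref{thm:leadcorrection} at $\lambda_0=0$ with $u\otimes u=\tfrac12\,1\otimes 1$ plus $V_{-k}=\overline{V_k}$ forces every $V_k\equiv 0$. The only cosmetic difference is your Jost-function/Hadamard phrasing of the rigidity step (which should allow for a possible zero at the origin, as in Zworski's counting argument) and the unneeded appeal to Theorems \ref{thm:polefree} and \ref{thm:0} in the final step.
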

This result is not true if we omit the hypothesis that $V$ is real-valued.
For example, for $V_1\in C_c^\infty(\Real)$ set $V(x,\theta)=V_1(x)e^{i\theta}$.
 Then the operators
$-\Delta +V$ and $-\Delta$ have the same resonances; see \cite[Section 4]{chr04} or \cite{autin}.  
This example can be easily generalized.

As part of our study of the distribution of resonances, we prove that,
in a suitable sense, near the 
physical region of $\zhat$, $R_V$ is well-approximated by $\RVz$ away from the poles of $\RVz$; see Proposition \ref{p:Rest} and  Lemma \ref{l:bdryphys}.  
In the case $d=1$, this
 and results of \cite{cd2} give a wave expansion, Theorem \ref{thm:waveexp}.

Let $X=\Real\times \Sphere^1$,
$V\in C_c^\infty(X;\Real)$, and suppose $-\Delta+V$
has finitely many eigenvalues $\mu_1,\; \mu_2,\dots,\; \mu_J$, repeated with multiplicity, with 
associated orthonormal eigenfunctions
$\{ \eta_j\}$, so that $(-\Delta+V)\eta_j=\mu_j\eta_j$.  Let $u$ satisfy
\begin{equation}\label{eq:waveq}
\frac{\partial^2}{\partial t^2}u -\Delta u +Vu=0,\; (u,u_t)\restrict_{t=0}=(f_1,f_2)\in C_c^\infty(X)\times C_c^\infty(X).\end{equation}
\begin{thm}\label{thm:waveexp}
Let $X=\Real \times \Sphere^1$,  $V,\;f_1,f_2\in C_c^\infty(X)$, $V$ be real-valued, and suppose $-\frac{d^2}{dx^2}+V_0$ on $\Real$ has no negative eigenvalues and no resonance at $0$. Let $u$ be
the solution of (\ref{eq:waveq}) on $[0,\infty)\times X$.  Then for each $k_0\in \Natural$ we can write
$u(t) = u_e(t) + u_{thr,k_0}(t) + u_{r,k_0}(t)$, 
where
\begin{multline}\label{eq:ue1}
 u_e(t,x,\theta) = \sum_{\substack{\mu_j\in \spec_{p}(-\Delta+V) \\
\mu_j\not =0}} \eta_j(x,\theta) \left( \cos((\mu_j)^{1/2} t) \langle f_1,\eta_j\rangle + 
 \frac{\sin( (\mu_j)^{1/2} t)}{(\mu_j)^{1/2} } \langle f_2,\eta_j\rangle \right)
\\ + \sum_{\substack{\mu_j\in \spec_{p}(-\Delta+V )\\\mu_j =0}} \eta_j(x,\theta) \left( \langle f_1,\eta_j\rangle + t \langle f_2,\eta_j\rangle \right)
\end{multline}
and
\begin{equation*}
u_{thr,k_0} (t,x,\theta)= b_{0,0,+}(x,\theta) +
 \sum_{k=0}^{k_0-1}t^{-1/2-k} 
\sum_{j=1}^\infty (e^{it j}b_{j,k,+}(x,\theta)+ e^{-it j}b_{j,k,-}(x,\theta))  
\end{equation*}
for some  $b_{j,k,\pm}\in \langle r \rangle ^{1/2+2k+\epsilon}L^2(X)$.
For any $\chi \in C_c^\infty(X)$ there is a constant $C$ so that
$$\sum_{j=1}^\infty \|\chi  b_{j,k,\pm}\|_{L^2(X)}<C,\; k=0,1,2,...,k_0-1$$
and
\[
 \|\chi u_{r,k_0}(t)\|_{L^2(X)} \le C t^{-k_0}\; \text{for $t$ sufficiently large}.
\]
\end{thm}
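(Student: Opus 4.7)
The plan is to derive the expansion from a contour deformation applied to the spectral/resolvent representation of the solution of (\ref{eq:waveq}). First I would write
\[
  u(t) = \cos(t\sqrt{H})f_1 + \frac{\sin(t\sqrt{H})}{\sqrt{H}} f_2, \qquad H=-\Delta+V,
\]
as a contour integral of $R_V(\zeta)$ against the multipliers $\cos(t\zeta)$ and $\sin(t\zeta)/\zeta$, with initial contour $\Gamma$ lying in the physical region of $\zhat$ just above the continuous spectrum, indented around the eigenvalues of $H$. Shifting $\Gamma$ onto and then slightly across the continuous spectrum picks up the residues of $R_V$ at the points $\pm(\mu_j)^{1/2}$; these assemble into $u_e(t)$ as in (\ref{eq:ue1}).

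Next I would deform $\Gamma$ into the non-physical sheet. By Corollary \ref{c:approachsequence} and Theorems \ref{thm:polefree}, \ref{thm:0}, the hypothesis that $-\frac{d^2}{dx^2}+V_0$ has no negative eigenvalues and no resonance at $0$ implies that for each fixed $\rho>0$ and all large $l$ the disk $B_l(\rho)$ is free of resonances (except in tiny neighborhoods of the poles of $\RVzz$, which are absent near the origin under our hypothesis). Hence the contour can be pushed a definite distance from the physical region uniformly in $l$, except near each branch point $\zeta=\pm k$, where it must wrap around $B_k$ in the variable $\tau_k$. The local contribution near $\tau_k=0$ is computed by expanding $R_V$ in half-integer powers of $\tau_k$, using the approximation $R_V\approx R_{V_0}$ (Proposition \ref{p:Rest} and Lemma \ref{l:bdryphys}) together with the threshold expansion of the model resolvent $R_{V_0}$ developed in \cite{cd2}. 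Each power $\tau_k^{1/2+k}$ in this expansion, integrated against $e^{\pm it\zeta}$ along a keyhole at $\zeta=\pm k$, contributes an oscillating term $t^{-1/2-k}e^{\pm itj}b_{j,k,\pm}$, while the analytic part feeds either $u_e$ or the remainder. The leftover contour integral, taken far from the branch points and residues, is controlled by repeated integration by parts in $\zeta$: each integration produces a factor $1/t$ together with a derivative of $R_V$ on the multiplier, giving the $O(t^{-k_0})$ compact-set bound on $u_{r,k_0}$ after $k_0$ integrations.

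The main obstacle will be uniform-in-$k$ control of the threshold contributions so that the sums $\sum_{j=1}^\infty \|\chi b_{j,k,\pm}\|_{L^2(X)}$ converge for each $\chi\in C_c^\infty(X)$. This should follow from (i) the quantitative resolvent approximation $R_V\approx R_{V_0}$, with an error decaying rapidly in $k$ since $V\in C_c^\infty(X)$ forces $\|V_m\|_{L^\infty}$ to be rapidly decreasing; (ii) high-frequency decay of the one-dimensional model resolvent $\RVzz$ at energies near $\tau_k=0$; and (iii) the local smoothing supplied by truncation with $\chi$. A second delicate point is the threshold $k=0$: the hypothesis of no resonance and no negative eigenvalue at $0$ for $-\frac{d^2}{dx^2}+V_0$ makes $\RVzz$ analytic at $\lambda=0$, which suppresses the half-integer powers at this branch point and leaves only the bounded contribution $b_{0,0,+}$, matching the theorem's statement. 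The weights $\langle r\rangle^{1/2+2k+\epsilon}L^2(X)$ for the coefficients $b_{j,k,\pm}$ will arise from the standard weighted estimates on the outgoing/incoming resolvent kernels appearing in the threshold expansion.
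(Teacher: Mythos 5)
Your route is genuinely different from the paper's. The paper does not carry out a contour deformation at all: its entire proof consists of verifying the hypotheses of a black-box result, \cite[Theorem 4.1]{cd2}, namely that the compactly cut-off resolvent $\chi R_V(\zeta)\chi$ is uniformly bounded on the boundary of the physical region at high energy. This is done exactly with the ingredients you name — Proposition \ref{p:Rest} and Lemma \ref{l:bdryphys} give $\|\chi(R_V-R_{V_0})\chi\|=O(l^{-1/2})$ on the spectrum, the hypotheses on $-\frac{d^2}{dx^2}+V_0$ give uniform boundedness of $\chi R_{V_0}\chi$ there, and Corollary \ref{c:approachsequence} rules out high-energy poles on the continuous spectrum — after which \cite[Theorem 5.6]{cd} and \cite[Theorem 4.1]{cd2} are quoted and the expansion, including the $t^{-1/2-k}e^{\pm itj}$ threshold terms, the weights $\langle r\rangle^{1/2+2k+\epsilon}$, and the summability in $j$, comes out of the cited theorem. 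So what you propose is essentially a re-proof of \cite[Theorem 4.1]{cd2} rather than a use of it.

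As a standalone argument your sketch has a real gap: the hard analytic content — the expansion of the resolvent in the variable $\tau_k$ near each of the infinitely many thresholds, \emph{uniformly in $k$}, the summability $\sum_j\|\chi b_{j,k,\pm}\|<\infty$, and the $O(t^{-k_0})$ remainder bound obtained from a contour carrying contributions from all thresholds simultaneously — is precisely the content of \cite{cd2}, and nothing in the present paper supplies it; your items (i)--(iii) are plausible but are assertions, not proofs. Two further points need care. First, your parenthetical that the poles of $\RVzz$ "are absent near the origin under our hypothesis" is only true on $i[0,\infty)$: the one-dimensional operator may well have scattering resonances in the open lower half plane, and by Theorem \ref{thm:poleexist} these produce clusters of resonances of $-\Delta+V$ inside every $B_l(\rho)$ with $\rho$ large enough, so the claim that the contour "can be pushed a definite distance from the physical region uniformly in $l$" must be qualified (shallow deformation only, avoiding those clusters), and you then need quantitative bounds on $R_V$ on the deformed contour, which Proposition \ref{p:Rest} gives only on compact sets in the $\tau_l$ coordinate away from poles of $\RVzz$ and Lemma \ref{l:resfree} gives only as invertibility of $I+VR_0\chi$, not as part of your write-up. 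Second, the expansion at a threshold is in integer powers of $\tau_k$ (half-integer powers of $\zeta^2-k^2$), with the odd powers producing the $t^{-1/2-k}$ terms; as stated ("half-integer powers of $\tau_k$") the bookkeeping is off. If you intend to lean on the threshold machinery of \cite{cd2} anyway, the efficient and complete argument is the paper's: prove the uniform high-energy bound on the cut-off resolvent along the spectrum and invoke \cite[Theorem 4.1]{cd2} directly.
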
 
The assumption that  $-\frac{d^2}{dx^2}+V_0$ on $\Real$ has no negative eigenvalues and no resonance at $0$ means, by Theorem \ref{thm:polefree}, that 
$R_V$ has at most finitely many poles on the boundary of the 
physical space.  In particular, this means at most finitely many
eigenvalues of $-\Delta +V$, so that the 
sum in $u_e$ is finite.  Further, there are at most finitely many 
poles at thresholds, and this implies
via results of \cite{cd2} that at most finitely many of the $b_{j,0,\pm}$ are
nonzero.  

If  $-\frac{d^2}{dx^2}+V_0$ on $\Real$ has one
or more negative eigenvalues, it seems plausible that 
there is an asymptotic expansion of
solutions of the wave equation on compact sets. 
 Since in this case by Theorem \ref{thm:polesequence} 
the resolvent $\RV$ may have a sequence of poles rapidly approaching,
but not lying in, the continuous 
spectrum, such an expansion would need to take these into account and is more
complicated--see e.g. \cite{TaZw} for an expansion in a Euclidean
scattering  setting 
with resonances approaching the continuous spectrum.  In our setting proving the existence of such an expansion may use
techniques similar to those 
of \cite{cd2}  but does not follow directly from the results of \cite{cd2}.
Proving this is outside the scope of this paper.

In this paper we have, for simplicity, limited ourselves to the case of 
Schr\"odinger operators on $\Rd \times \Sphere$.  However, many of our
results for $L^\infty$ potentials hold as well for Schr\"odinger 
operators with Dirichlet or Neumann boundary 
conditions on $\Real^{d-1}\times(0,\infty)\times \Sphere$ or on $\Rd \times (0,\pi)$.

\subsection{Relation to previous work}
This paper was inspired in part by two different sets of papers.  
 The first is the paper 
\cite{dro}
of Drouot, which studies the distribution of resonances of Schr\"odinger
operators $-\Lz +V_\epsilon$ on $\Real^d$, $d$ odd.  Here 
$V_\epsilon(x)=V_0(x) +\sum_{k\in \Integers^d,k\not = 0} V_k(x)e^{ik\cdot x/\epsilon}$, 
$x\in \Real^d$.  Drouot shows in quantitative ways 
that in the limit $\epsilon \downarrow 0$, 
resonances of $-\Lz +V_\epsilon$ near the 
continuous spectrum are well-approximated by those of $-\Lz +V_0$.
In addition, he proves some refinements related, for example,
 to the leading order 
correction of the positions of the resonances.
Theorems \ref{thm:poleexist}, \ref{thm:polefree}, \ref{thm:0}, \ref{thm:V0iszero},
and \ref{thm:leadcorrection}, as well as some computations in 
Section \ref{s:example},
  are inspired by results in \cite{dro}.  However,
the proofs are quite different.  In part, this is because the different
setting require different techniques.  Additionally, Drouot's results 
come mainly from studying regularized determinants.  While this
has the potential of producing in some instances  more refined results than we obtain 
here, it requires a 
substantial amount of technical work.  We have chosen instead to mostly
avoid determinants, or to work only with determinants of operators of 
the type $I+F$, where $F$ is finite rank.  Instead, we use 
an operator Rouch\'e Theorem of \cite{go-si}.   In some 
places this may allow for sharper results than 
could be obtained by using a regularized determinant.  We note in addition that
in the setting of \cite{dro}, the resonances lie on the complex 
plane, while for us, the resonances lie on Riemann surface which is a countable
but infinite cover of the complex 
plane, with infinitely many branch points.  This 
means that some of the techniques used in \cite{dro} cannot be applied here.

A less direct source of inspiration is  work done on the 
distribution of eigenvalues of the Schr\"odinger operator $-\Delta_{\Sphere^{n}} +W$ 
 on the sphere 
$\Sphere^{n}$ (and certain other compact manifolds), $n\geq 2$, see e.g. 
\cite{Wei,Wid}.
In this setting, eigenvalues of the Schr\"odinger operator occur in bands.
   Roughly speaking, these
authors show that a suitable ``average'' of the potential $W$ can be used to 
obtain information about the distribution of high-energy eigenvalues of the Schr\"odinger
operator within these bands. This 
average is over closed geodesics, rather than over all of $\Sphere^n$.
 Of course, our function $V_0(x)$ is
the average of the potential $V$ over the cross section $\Sphere^1$,
the unique closed geodesic on $\Sphere^1$.

This paper was originally motivated by the $d=1$ case, which gives $X=\Real
\times \Sphere^1$, a manifold with an infinite cylindrical end.
The spectral and scattering theory of manifolds with infinite cylindrical
ends has been studied in, for example, \cite{gol,gui,tapsit}.  
There is a large literature studying the existence of eigenvalues and,
in certain settings, the locations of resonances for such manifolds and 
the related problems of waveguides which have a ``one-dimensional
infinity'' as our $d=1$ case does; see e.g. the paper \cite{LeMa}
or the recent monograph \cite{ExKo}
and references therein.  This monograph also includes some results for manifolds with 
``higher-dimensional infinity.''
Many of these results focus on low-energy eigenvalues or resonances.
We mention the
 papers \cite{ChZw1,Par,Sub,chr04,Edw,cd} which are more 
directly connected with high-energy 
behavior.

\subsection{Ideas from the proofs}
Our starting point for the study of resonances of $-\Delta +V$
is an identification of the
resonances with the points $\zeta$ for which the operator
$I+(V-V_0)\RVz(\zeta)\chi$ has nontrivial null space.
Here
$R_W(\zeta)$ is the meromorphic continuation of the resolvent of
$-\Delta+W$, and $\chi\in L^\infty_c(X)$ satisfies $\chi V=V$ and is, for
convenience, chosen independent of $\theta$. By separating
variables, we can
understand $\RVz$ in terms of the resolvent of
$-\sum_{j=1}^d\frac{\partial^2}{\partial x_j^2}+V_0(x)$ on $\Real^d$.

We use two well-known and related properties of the resolvent of $-\sum_{j=1}^d \frac{\partial^2}{\partial x_j^2}+V_0(x)$ on $\Real^d$.   One is the
 estimate $\| \tilde{\chi} (-\sum_{j=1}^d\frac{\partial^2}{\partial x_j^2}+V_0 -(\lambda+i0)^2)^{-1}\tilde{\chi} \|=O(|\lambda |^{-1})$ as $\lambda \rightarrow \infty$  for 
 $\lambda \in \Real$ and $\tilde{\chi} \in L^{\infty}_c(\Real^d)$.  The second is
the existence of a logarithmic resonance-free neighborhood of the real axis.

An immediate consequence of this second fact and the fact that the distance between
thresholds of
our operator $-\Delta +V$ on $X$ increases at high energy is that if
$V=V_0$, at high energy near the thresholds the resonances of $-\Delta+V_0$
are determined by low energy resonances of $-\sum_{j=1}^d\frac{\partial^2}{\partial x_j^2}+V_0$
on $\Real^d$.  Moreover, using these facts and an operator Rouch\'e theorem of \cite{go-si}, we are
able to show that at high energy near
the thresholds the zeros of
$I+(V-V_0)\RVz \chi$ are approximated by the poles of $\chi \RVz \chi$.
These ideas underly the proofs of  the $L^\infty$ results of Theorems
\ref{thm:poleexist}-\ref{thm:0} and \ref{thm:V0iszero}.  They
are also central to the  proofs of the smooth versions of these results and of Theorem
\ref{thm:leadcorrection}, although these proofs
require some additional study of the resolvent of $-\Delta +V_0$
when $V_0$ is smooth.

\subsection{Organization}
In Section \ref{ss:Es} we recall some results from Euclidean scattering
and show that the resolvent of $-\Delta+V$ on $X$ has a meromorphic continuation to
$\zhat$.  (We note that this latter is known; see Section 
\ref{ss:Es} for references.)
 We define the multiplicity of a pole of the resolvent,
 and give several
useful indentities  involving it in Section \ref{ss:mpandGS}.  In addition,
this section  introduces some notation
and results related to the operator Rouch\'e Theorem of \cite{go-si}.  
With these preliminaries we prove Theorems \ref{thm:poleexist}
and
 \ref{thm:polefree} in the case of an $L^\infty$ potential $V$, using 
results from \cite{go-si}.
Section \ref{ss:corollary} contains more discussion of the Riemann surface
$\zhat$ and shows the existence of 
  resonance-free regions which are, at high 
energy, near the physical region and away from thresholds.  This provides
the missing pieces of the proof of Corollary \ref{c:approachsequence}.
Combining these with the resolvent estimates of Section \ref{s:thms1and2}
and results of \cite{cd2}
proves Theorem \ref{thm:waveexp}.

Section \ref{s:smoothprelims} contains
 preliminary computations which are needed to refine our results
for smooth potentials.  The smooth case of Theorem \ref{thm:polefree} is 
proved with techniques similar to that of the $L^\infty$ result, but using
in addition results of Section \ref {s:smoothprelims}.  

In Section \ref{s:smoothresults} we prove Theorems \ref{thm:leadcorrection} and \ref{thm:polesequence}.  We do this using Fredholm
determinants, but determinants
of the form $\det(I+F)$, where $F$ is a finite rank operator. 
Theorem \ref{thm:0isrigid} follows rather directly from the earlier results.
 Finally,
in  Section \ref{s:example}, in
the case $d=1$ we give approximations of some of the high
energy resonances for a particularly simple potential which has $V_0\equiv 0$
and which is not smooth.

\section{Notation and conventions}

On $X=\Rd\times \Sphere^1$ we use the coordinates $(x,\theta)$ or $(x',\theta')$, with $x,x'\in \Rd$ and  $\theta,\theta'\in [0,2\pi)$.  

Throughout the paper, $V\in L^\infty_c(X)$, $l\in \Natural_0$, and the 
dimension $d$ of $\Rd$ is odd.  We use $C$ to stand for a positive
constant the value of which may change without comment.

Suppose $A$ and $B$ are linear operators on domains in $L^2(\Rd)$ and $L^2(\Sphere^1)$, respectively, and 
are given by $(Af)(x)=\int_{\Rd} A(x,x')f(x')dx'$ and 
$(Bg)(\theta)=\int_0^{2\pi} B(\theta,\theta')g(\theta')d\theta$.
Then $A$ and $B$ give rise to linear operators on 
domains in $L^2(X)$, which
we again denote by $A$ and $B$, and which are given 
by 
$(Ah)(x,\theta)=\int_{\Rd}A(x,x')h(x',\theta)dx'$ and 
$(Bh)(x,\theta)=\int_{\Rd}B(\theta,\theta')h(x,\theta')d\theta'$.

For $f,g\in L^2(\Rd)$, the operator $f\otimes g:L^2(\Rd)\rightarrow L^2(\Rd)$ is
defined via $((f\otimes g )h)(x)=f(x)\int_{\Rd}g(x')h(x')dx'$. 
 If $f,g\in L^2(X)$, the operator $f\otimes g$ on $L^2(X)$ is defined analogously.

We list some repeatedly used notation for the convenience of the reader.
\begin{itemize}
\item $\Lz=\sum_{j=1}^d\frac{\partial^2}{\partial x_j^2}$ is the Laplacian on 
$\Rd$, and $\Delta=\sum_{j=1}^d\frac{\partial^2}{\partial x_j^2}+\frac{\partial^2}{\partial \theta^2}$ is the
Laplacian on $X$.
\item $V_m(x)=\frac{1}{2\pi}\int_0^{2\pi} V(x,\theta)e^{-im\theta}d\theta$ for $m\in \Integers$
\item $\Vd=\Vd(x,\theta)=V(x,\theta)-V_0(x)$
\item $B_l(\rho)$ and $\Dl(\lambda_0,\rho)$ are an open sets in $\zhat$, defined in Sections \ref{s:introduction} and \ref{s:thms1and2}, respectively.
\item $\RV$ is the (scattering) resolvent of $-\Delta +V$ on $X$; see
Section \ref{s:resolventV}.
\item $\RVzz$ is the (scattering) resolvent of $-\Lz+V_0$ on $\Rd$; see
Section \ref{ss:Es}.
\item $\mV(\zeta_0)$ is the multiplicity of $\zeta_0\in \zhat$ as a pole
of $\RV$; see (\ref{eq:mV}).
\item $\mvzz(\lambda_0)$ is the multiplicity of $\lambda_0\in \Complex$
as a pole of $\RVzz$; see (\ref{eq:mvzz}).
\item $\zeta_l:\{z\in \Complex: |z|<\sqrt{2l-1}\}\rightarrow B_l(\sqrt{2l-1})\subset \zhat$ is  the (local) inverse of $B_l(\sqrt{2l-1})\ni\zeta\mapsto \tau_l(\zeta)\in\{z\in \Complex: |z|<\sqrt{2l-1}\}\subset \Complex. $
\end{itemize}

\section{Odd-dimensional Euclidean scattering and continuation of the resolvent}
\label{ss:Es}
We begin by fixing some notation and recalling some well-known facts from
Euclidean  scattering theory.  We then use these to give a self-contained proof that the resolvent of $-\Delta+V$ on $X$ has a meromophic continuation 
to $\zhat$.

\subsection{The Euclidean resolvent}
Let $V_0\in L^\infty_c(\Rd)$, $d$ odd,
 and set $\RVzz(\lambda)=\left( -\Lz+V_0-\lambda^2\right)^{-1}:L^2(\Real^d)\rightarrow L^2(\Real^d)$ when $\Im \lambda>0$.  The $0$ in the second place in
the subscript is to help us think of this as a model operator, as we 
shall see.  We shall later use the
explicit expression for the resolvent as an integral when $d=1$, 
$f\in L^2(\Real)$ and $\Im \lambda>0$:
\begin{equation}\label{eq:RzzExplicit}
(\Rzz(\lambda)f)(x)=\frac{i}{2\lambda} \int e^{i\lambda|x-x'|}f(x')dx'\; 
 \; \text{for $d=1$}.
\end{equation}
From this we can see immediately that if $\chi \in L^\infty_c(\Real)$,  $\chi \Rzz(\lambda)\chi$
has a meromorphic continuation to $\Complex \setminus \{0\}$. 
The same is true for $d\geq 3$ is odd:
if $\chi\in L^\infty_c(\Rd)$, $\chi \Rzz(\lambda)\chi$
has an analytic continuation to the complex plane, see 
\cite[Theorem 3.3]{DyZw}.  In higher dimensions, the
 Schwartz kernel is given in terms of a Hankel function.
 It is well-known, see \cite[Theorem 3.8]{DyZw}
that if $V_0,\chi\in L^\infty_c(\Rd)$, then
$\chi \RVzz(\lambda)\chi$ has a meromorphic continuation to the complex plane.
Alternatively, restricting the domain and enlarging the range,
 $\RVzz(\lambda):L^2_c(\Rd)\rightarrow H^2_{\loc}(\Rd)$ has a meromorphic
continuation to $\Complex$.

The following lemma is well-known, but we include it for completeness, as it is crucial for our arguments.
\begin{lemma}\label{l:Rvzzresfree} Let $V_0,\chi\in L^\infty_c(\Rd)$.  Then there are
constants $C_0, C_1>0$ so that $\chi \RVzz(\lambda)\chi $ is analytic in 
$\{ \lambda\in \Complex: |\Re \lambda|>C_0,\; \Im \lambda > - C_1 \log (1+|\Re \lambda|)\}$.  
Moreover, in this region $\| \chi \RVzz(\lambda)\chi \|
= O(|\lambda|^{-1})$.
\end{lemma}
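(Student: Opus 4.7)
The plan is to reduce the estimate on $\chi \RVzz(\lambda)\chi$ to the corresponding estimate for the free resolvent $\Rzz(\lambda)=(-\Lz-\lambda^2)^{-1}$ on $\Rd$, and then to invert a small compact perturbation by a Neumann series. Fix a cutoff $\psi\in L^\infty_c(\Rd)$ with $\psi V_0=V_0$ and $\psi\chi=\chi$. Iterating the second resolvent identity $\RVzz(\lambda)=\Rzz(\lambda)-\Rzz(\lambda)V_0\RVzz(\lambda)$, which is valid initially for $\Im\lambda>0$ large, gives
$$
\chi\RVzz(\lambda)\chi=\chi\Rzz(\lambda)\chi-\chi\Rzz(\lambda)\psi V_0\bigl(I+\psi\Rzz(\lambda)\psi V_0\bigr)^{-1}\psi\Rzz(\lambda)\chi,
$$
and it suffices to extend the right-hand side analytically and bound it in the advertised region.

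First I would establish the truncated free-resolvent bound
$$
\|\psi\Rzz(\lambda)\psi\|_{L^2\to L^2}\leq \frac{C}{|\lambda|}\,e^{R_\psi(\Im\lambda)_-},\qquad |\Re\lambda|\text{ large,}
$$
where $R_\psi=\diam(\supp\psi)$ and $(\Im\lambda)_-=\max(-\Im\lambda,0)$. For $d=1$ this is immediate from the Schur test applied to the explicit kernel $(i/(2\lambda))\,e^{i\lambda|x-x'|}$ on the compact support of $\psi$. For odd $d\geq 3$, the kernel is expressed via a Hankel function of half-integer order, and the same bound follows from the standard large-argument asymptotic expansion of Hankel functions; compare \cite[Chapter 3]{DyZw}.

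Next I would choose $C_1>0$ small enough that $R_\psi C_1<1$. Then throughout the region $\{\lambda\in\Complex:|\Re\lambda|>C_0,\ \Im\lambda>-C_1\log(1+|\Re\lambda|)\}$ the previous bound gives $\|\psi\Rzz(\lambda)\psi V_0\|\leq C\|V_0\|_{L^\infty}|\lambda|^{R_\psi C_1-1}$, which can be made smaller than $1/2$ by choosing $C_0$ sufficiently large. A Neumann series then produces an analytic, uniformly bounded inverse of $I+\psi\Rzz(\lambda)\psi V_0$ in this region, and substitution into the identity above simultaneously establishes the analytic continuation of $\chi\RVzz(\lambda)\chi$ through the log strip and the bound $\|\chi\RVzz(\lambda)\chi\|=O(|\lambda|^{-1})$. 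The main technical point is the calibration of $C_1$: it must be chosen small enough (relative to the common support of the cutoffs) that the exponential factor $e^{R_\psi(\Im\lambda)_-}$ along the lower edge of the strip is dominated by the $|\lambda|^{-1}$ prefactor up to a slack the Neumann series can absorb; compactness of $\supp V_0$ and $\supp\chi$ is essential, both for the Schur-type kernel bound and for the compactness needed in the inversion.
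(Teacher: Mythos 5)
Your proposal is essentially the paper's own argument: the paper likewise rests on the truncated free-resolvent bound $\|V_0\Rzz(\lambda)\chi\|\le Ce^{C(\Im\lambda)_-}/|\lambda|$ (explicit kernel for $d=1$, \cite[Theorem 3.1]{DyZw} for odd $d\ge 3$) and inverts by Neumann series in the logarithmic region after choosing $C_1$ small relative to the support and $C_0$ large, the only difference being that it uses the one-line factorization $\chi \RVzz(\lambda)\chi = \chi\Rzz(\lambda)\chi\,(I+V_0\Rzz(\lambda)\chi)^{-1}$ instead of your iterated second-resolvent-identity expression. The calibration of $C_1$ against the exponential factor is treated at the same level of detail in both, so this is the same proof in slightly different packaging.
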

\begin{proof}
Without loss of generality, we may assume $\chi V_0=V_0$. 
Then
$$\chi \RVzz(\lambda)\chi = \chi \Rzz(\lambda)\chi (I+V_0\Rzz(\lambda)\chi)^{-1}.$$
Since from (\ref{eq:RzzExplicit}) when $d=1$ or
\cite[Theorem 3.1]{DyZw} when $d\geq 3$, 
there is a $C>0$ so that $\|V\Rzz(\lambda)\chi\|\leq C e^{C (\Im \lambda)_-}/|\lambda|$, where $(\Im \lambda)_-=\max(0, -\Im \lambda)$, the lemma follows immediately.
\end{proof}


\subsection{The resolvent of $-\Delta +V$ on $X$ and the Riemann surface $\zhat$}
\label{s:resolventV}
Recall that when $d=1$ $X$ is a manifold with infinite cylindrical ends.
For a manifold with infinite cylindrical ends, the space to which the 
resolvent of a Schr\"odinger operator continues is determined by the distinct
eigenvalues of the Laplacian on the cross-section of the end(s).  
Here that means $\{j^2\}_{j\in \Natural_0}$, since this is the set of (distinct)
eigenvalues of $-\frac{d^2}{d\theta^2}$ on $\Sphere^1$.  As we show
below, the resolvent for $-\Delta+V$ on $\Real^d\times\Sphere^1$
has a meromorphic continuation to the same space as that of the resolvent
of $-\Delta +V$ on $\Real\times \Sphere^1$, provided $d$ is odd.

For $j\in \Natural_0$ and $\zeta \in \Complex,\ \Im \zeta >0$,
set $$\tau_j(\zeta)\defeq(\zeta^2-j^2)^{1/2}$$
with $\Im \tau_j(\zeta)>0$. Set $\tau_{-j}(\zeta)=\tau_j(\zeta)$ if 
$j\in \Natural$.

The Riemann surface $\zhat$
is defined to be the minimal Riemann surface on which for 
each $j\in \Natural_0$
 $\tau_j$ is a single-valued analytic function on $\zhat$. We briefly 
describe its construction and some of 
 its properties.
Note that $\tau_0(\zeta)=\zeta $ for $\zeta$ in the upper half plane,
and this has, of course, an analytic continuation to $\Complex$.
Now $\tau_1(\zeta)=\tau_{-1}(\zeta)$ is an analytic function of 
$\zeta \in \Complex\setminus\left( (-\infty,1]\cup [1,\infty)\right)$, and there is a minimal Riemann surface $\hat{Z}_1$ so that $\tau_1$ extends
analytically to  $\hat{Z}_1$.  This is a double cover of $\Complex$, ramified at the points $\pm 1$.
This process can be repeated for each $j\in \Natural$, resulting in a minimal Riemann surface
$\zhat$ on which $\tau_j$ is analytic for each $j\in \Natural_0$.
We define a projection $\przhat:\zhat\rightarrow \Complex$ as
follows. For $\zeta$ in the
physical space, identified with the upper half plane, $\przhat(\zeta)=\zeta$,
and $\przhat$ is in general the analytic continuation of this function.
  Then $\zhat$ has infinitely many ramification points 
which project under $\przhat$ to $j \in\Integers \setminus\{0\}$.
  We call the
set $\{\zeta \in \zhat:\; \Im \tau_j(\zeta)>0\; \text{for all }\; j\in \Natural_0\}$
the {\em physical space,}
or {\em physical region.}  
For further discussion
of this Riemann surface, see \cite[Section 6.6]{tapsit}. 

We shall say that a point $\zeta_0\in \zhat$ {\em corresponds to a 
threshold} if $\tau_0(\zeta_0)\in \Integers$.  Note that with this
definition, all the ramification points of $\zhat$ correspond to 
thresholds.  In addition, the set of points corresponding to thresholds 
includes those points projecting to $0$.  These might naturally also 
be considered ramification points of $\zhat$, as in some sense by choosing
$\zeta^2$ to originally be our spectral parameter we have already made
the cuts corresponding to the $0$ threshold.

In order to separate variables below, we introduce the orthogonal projections 
$\pr_k:L^2(X)\rightarrow L^2(X)$ defined for $k\in \Integers$ by:
\begin{align*}(\pr_k f)(x,\theta)& =
\frac{1}{2\pi}\int_0^{2\pi}  f(x,\theta')\left(e^{ik(\theta-\theta')}+ 
e^{-ik(\theta-\theta')}\right)d\theta'\; \text{if }\;k\in \Natural\\
(\pr_0 f)(x,\theta)& =\frac{1}{2\pi}\int_0^{2\pi}f(x,\theta')\;d\theta'.
\end{align*}
We shall use these throughout the paper.

Let $V\in L^\infty_c(X)$.  
For  $\zeta \in \Complex$ with $\Im \zeta >0$, set 
$R_V(\zeta)=(-\Delta +V-\zeta^2)^{-1}$.  Consider first the special case where $V\in L_c^\infty(X)$ is
independent of $\theta$.  Then $V=V_0$, and we can think of $V_0$ as an element
of $L^\infty_c(X)$ or of  $L^\infty_c(\Rd)$.
In this special case we can separate
variables to obtain
\begin{equation}\label{eq:RVzsep}
\RVz(\zeta)=\sum_{k=0}^\infty 
\RVzz(\tau_k(\zeta))\pr_k.
\end{equation}
The explicit expression (\ref{eq:RVzsep})
for $\RVz$ using separation of variables
shows that if $\chi \in L^\infty_c(X)$, 
$\chi \RVz \chi$  and $\RVz:L^2_c(X)\rightarrow H^2_{\loc}(X)$ have  meromorphic
continuations to $\zhat$.  In fact,
the same is true for $\chi \RV \chi$ and $\RV$ for
general $V\in L^\infty_c(X)$.
This is well known, at least 
when $d=1$, see \cite{gol,gui,tapsit}, though we sketch a proof below, 
valid for all odd $d$.

If $\zeta\in \Complex$, $\Im \zeta>0$, then
$$(-\Delta+V-\zeta^2)R_0(\zeta)= I+V R_0(\zeta).$$
Multiplying by a function $\chi \in L_c^\infty(X)$ with $\chi V=V$,
$$(-\Delta+V- \zeta^2)R_0(\zeta)\chi = \chi( I+V R_0(\zeta)\chi),$$
implying that
\begin{equation}\label{eq:req1}
\chi R_0(\zeta)\chi= \chi  R_V(\zeta)\chi( I+V R_0(\zeta)\chi)
\end{equation}
or 
\begin{equation}\label{eq:req2}
\chi R_V(\zeta)\chi = \chi R_0(\zeta) \chi (I+V R_0(\zeta)\chi)^{-1}\;
\text
{and}\; (I+VR_0(\zeta)\chi)^{-1}= I-VR_V(\zeta)\chi;
\end{equation}
compare \cite[(2.2.15)-(2.2.16)]{DyZw}.  
Likewise, writing 
\begin{equation}
\Vd\defeq V-V_0
\end{equation}
we find, making the additional hypothesis that $\chi \Vd=\Vd$,
\begin{equation}\label{eq:V0asmodel}
\chi\RVz(\zeta)\chi=\chi\RV(\zeta)\chi(I+\Vd \RVz(\zeta)\chi)\; \text{and}\;
(I+\Vd \RVz(\zeta)\chi)^{-1}=I-\Vd \RV(\zeta)\chi.
\end{equation}
Each of these is helpful.
Since $V R_0(\zeta)\chi:L^2(X)\rightarrow L^2(X)$ is compact and 
has a meromorphic extension to $\zhat$, and $I+VR_0(\zeta)\chi$ is 
invertible for $\zeta$ in the 
physical space with $\Im \zeta$ sufficiently large,  meromorphic Fredholm
theory ensures that $(I+V R_0(\zeta)\chi)^{-1}$ is a meromorphic 
operator-valued function on $\zhat$, and each of (\ref{eq:req1}),
(\ref{eq:req2}), and (\ref{eq:V0asmodel}) hold on all of $\zhat$.  Moreover,
writing $I+V\Rz= (I+V\Rz (I-\chi))(I+V\Rz \chi)$ and noting that 
$(I+V\Rz(I-\chi))^{-1}=I-V\Rz(I-\chi)$, this shows that 
$$\RV(\zeta)=\Rz(\zeta)(I+V\Rz(\zeta)\chi))^{-1}(I-V\Rz(\zeta)(I-\chi)):L^2_c(X)\rightarrow H^2_{\loc}(X)$$
 has a meromorphic continuation to $\zhat$.

We note from (\ref{eq:RVzsep}) that 
 $\RVz$ is bounded on $L^2(X)$ when $\zeta$ is in the
physical space and is away from a discrete set of poles (corresponding to eigenvalues).
The same is true of $\RV$.

Throughout this paper we shall mainly work with subsets of $B_l(\sqrt{2l-1})
\subset \zhat$,
$l\in \Natural$.
We recall 
$B_{l}(\rho)$ is defined to be the
the connected component of 
$\{ \zeta \in \zhat: |\tau_l(\zeta)|<\rho \}$ 
which has nonempty intersection
 with
 both the physical space and the portion of $\zhat$  with
$\Re \tau_0(\zeta)>0$.  
  The choice of $\sqrt{2l-1}$ in $B_l(\sqrt{2l-1})$ 
is made because then (for $l\geq 1$),
$B_l(\sqrt{2l-1})$ contains only a single 
point of $\zhat$ corresponding to a threshold, the one associated with the eigenvalue $l^2$ of 
$-\frac{d^2}{d\theta^2}$ on $\Sphere^1$.  
 If $\epsilon>0$, then
$z=\tau_l(\zeta)$ gives the complex structure of 
$\zhat_{\restrict B_l(\sqrt{2l-1}-\epsilon)}$, and $B_l(\sqrt{2l-1}-\epsilon)$
is naturally identified with a disk $B_\Complex(\sqrt{2l-1}-\epsilon)$ of radius   $\sqrt{2l-1}-\epsilon$ in
$\Complex$,
centered at the origin.
In this coordinate $z$,  $z=0$ corresponds to  the threshold $l^2$ and  the intersection of $B_\Complex(\sqrt{2l-1}-\epsilon)$ with the first quadrant corresponds to a region in physical space, and so has $\Im \tau_k>0$ for all $k\in \Natural_0$. 
If $z$ lies in the intersection of $B_\Complex(\sqrt{2l-1}-\epsilon)$ with the fourth quadrant, then
$\Im \tau_k(\zeta(z))<0$ for $0\leq k\leq l$ and $\Im \tau_k(\zeta(z))>0$ for $k>l$.
On the other hand, if $z$ lies in the intersection of $B_\Complex(\sqrt{2l-1}-\epsilon)$ with the second quadrant, then
$\Im \tau_k(\zeta(z))<0$ for $0\leq k\leq l-1$ and $\Im \tau_k(\zeta(z))>0$ for 
$k\geq l$.

On the open set $B_l(\sqrt{2l-1}-\epsilon)$, $z=\tau_l(\zeta)$ is a coordinate
compatible with 
the complex structure of $\zhat$.  Thus it is natural to use $\tau_l$ 
as a local coordinate.  We write 
$$\zeta_l:\{z\in \Complex:\;|z|<\sqrt{2l-1}-\epsilon\}\rightarrow B_l(\sqrt{2l-1}-\epsilon)\subset\zhat $$
as the function satisfying
$$ \zeta_l(\tau_l(\zeta))=\zeta\; \text{ for
all}\; \zeta\in B_l(\sqrt{2l-1}-\epsilon).$$

We note that if $\zeta \in B_l(\sqrt{2l-1}-\epsilon)$, then $\Re \tau_j(\zeta)>0$ if $0\leq j<l$ and $\Im \tau_j(\zeta)>0$ if $j>l$.

The next lemma follows easily from (\ref{eq:RVzsep}) and Lemma \ref{l:Rvzzresfree}, but is fundamental to many of the results of this paper.
\begin{lemma} \label{l:lognbhd}
Let $V_0\in L^\infty_c(\Real)$,
 $\alpha>0$ and $\chi \in L^\infty_c(X)$.  Then for $l$ sufficiently large,
uniformly for $\zeta \in B_{l}(\alpha \log l )$,
$\| \chi(I-\pr_l)\RVz(\zeta)\chi\|=O(l^{-1/2})$.
\end{lemma}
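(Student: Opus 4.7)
The plan is to start from the separation-of-variables formula (\ref{eq:RVzsep}), which gives
\begin{equation*}
(I-\pr_l)\RVz(\zeta) = \sum_{k\neq l} \RVzz(\tau_k(\zeta))\pr_k,
\end{equation*}
and then to estimate each mode, using Lemma \ref{l:Rvzzresfree} for $k<l$ and a direct resolvent bound for $k>l$. First I replace $\chi$ by a $\theta$-independent cutoff: choosing a compact $K\subset\Rd$ that contains the $x$-projection of $\supp\chi$ and setting $\tilde\chi=\mathbf{1}_K\in L^\infty_c(\Rd)$, the identity $\chi=\chi\tilde\chi$ gives $\|\chi(I-\pr_l)\RVz(\zeta)\chi\|\leq \|\chi\|_\infty^2\,\|\tilde\chi(I-\pr_l)\RVz(\zeta)\tilde\chi\|$. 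Since $\tilde\chi$ depends only on $x$ it commutes with every $\pr_k$, and the mutual orthogonality of the $\pr_k$ makes $\tilde\chi(I-\pr_l)\RVz(\zeta)\tilde\chi$ block-diagonal on $L^2(X)=\bigoplus_k \Ran \pr_k$, so
\begin{equation*}
\|\tilde\chi(I-\pr_l)\RVz(\zeta)\tilde\chi\|_{L^2(X)} = \sup_{k\neq l}\|\tilde\chi \RVzz(\tau_k(\zeta))\tilde\chi\|_{L^2(\Rd)}.
\end{equation*}

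It remains to bound each mode uniformly in $\zeta\in B_l(\alpha\log l)$. Writing $\zeta=\zeta_l(z)$ with $|z|<\alpha\log l$ and $z=a+bi$, one has $\tau_k(\zeta)^2=z^2+(l^2-k^2)$; since $|l^2-k^2|\geq 2l-1$ for $k\neq l$, for $l$ so large that $2l-1>2\alpha^2(\log l)^2$ one obtains $|\tau_k(\zeta)|^2\geq l$. For $k<l$ the characterization of $B_l$ forces $\Re\tau_k>0$, and the relations $\Re\tau_k^2=(a^2-b^2)+(l^2-k^2)\geq l$ together with $|\Im\tau_k^2|=2|ab|\leq\alpha^2(\log l)^2$ yield $\Re\tau_k\geq\sqrt{l}$ and $|\Im\tau_k|=|\Im\tau_k^2|/(2\Re\tau_k)=O((\log l)^2/\sqrt{l})$. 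This places $\tau_k(\zeta)$ in the logarithmic region of Lemma \ref{l:Rvzzresfree} for $l$ large, and that lemma gives $\|\tilde\chi \RVzz(\tau_k(\zeta))\tilde\chi\|=O(|\tau_k|^{-1})=O(l^{-1/2})$. For $k>l$, the same identity gives $\Re\tau_k^2\leq -l$, so $\tau_k^2$ lies at distance at least $l/2$ from $\sigma(-\Lz+V_0)$ once $l$ is large enough to dominate the spectral lower bound and the finitely many non-real eigenvalues of $-\Lz+V_0$; the direct resolvent estimate then yields the stronger $\|\tilde\chi\RVzz(\tau_k(\zeta))\tilde\chi\|=O(l^{-1})$. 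Combining the two cases gives the uniform bound $O(l^{-1/2})$.

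The main care is needed in the $k<l$ case, where one must verify the barrier condition $\Im\tau_k>-C_1\log(1+|\Re\tau_k|)$ of Lemma \ref{l:Rvzzresfree}; this is immediate for $l$ large since $|\Im\tau_k|=O((\log l)^2/\sqrt{l})\to 0$ while $|\Re\tau_k|\geq\sqrt{l}\to\infty$. Once this geometric estimate on $\tau_k(\zeta)$ is in place, the rest of the proof is a bookkeeping of the mode decomposition.
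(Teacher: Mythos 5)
Your overall strategy is the same as the paper's: separate variables via (\ref{eq:RVzsep}), reduce to a mode-by-mode estimate, check that for $\zeta\in B_l(\alpha\log l)$ each $\tau_k(\zeta)$, $k\neq l$, has modulus at least $\sqrt{l}$, and invoke Lemma \ref{l:Rvzzresfree} for the modes $k<l$. Your geometric estimates ($\Re\tau_k\geq\sqrt l$, $|\Im\tau_k|=O((\log l)^2/\sqrt l)$ for $k<l$, $\Re\tau_k^2\leq -l$ for $k>l$) and the block-diagonal reduction with a $\theta$-independent cutoff are all correct.

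The one step that does not hold in the stated generality is your treatment of the modes $k>l$: you bound $\|\tilde\chi\RVzz(\tau_k)\tilde\chi\|$ by the reciprocal of the distance from $\tau_k^2$ to $\sigma(-\Lz+V_0)$. That inequality is a spectral-theorem fact valid for self-adjoint (or normal) operators, but the lemma only assumes $V_0\in L^\infty_c$, and elsewhere in the paper $V_0$ is allowed to be complex-valued, so $-\Lz+V_0$ need not be normal and distance to the spectrum does not control the resolvent norm; the parenthetical appeal to ``finitely many non-real eigenvalues'' does not repair this (and such eigenvalues need not even be finite in number for complex $V_0$). The conclusion is nonetheless easy to salvage: either replace the spectrum by the numerical range, which is contained in $\{\Re w\geq -\|V_0\|_\infty,\ |\Im w|\leq\|V_0\|_\infty\}$, so that $\dist(\tau_k^2,\overline{W(-\Lz+V_0)})\geq l/2$ and the numerical-range resolvent bound gives your $O(l^{-1})$; or argue as in the proof of Lemma \ref{l:Rvzzresfree} itself, writing $\tilde\chi\RVzz(\tau_k)\tilde\chi=\tilde\chi\Rzz(\tau_k)\tilde\chi\,(I+V_0\Rzz(\tau_k)\tilde\chi)^{-1}$ and using that for $\Im\tau_k>0$, $|\tau_k|\geq\sqrt l$ one has $\|V_0\Rzz(\tau_k)\tilde\chi\|\leq C/|\tau_k|\leq 1/2$, which yields $O(l^{-1/2})$ and is enough (this is in effect what the paper's one-line proof does for those modes). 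If $V_0$ is real-valued your argument is fine as written, but as a proof of the lemma as stated it needs this patch.
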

\begin{proof}  Set $\tau_l=z$, and $|z|<\alpha \log l$.  
Then using the identity
$$\tau_k^2=\tau_l^2+l^2-k^2,$$  for $l$ sufficiently large 
$|\tau_k(\zeta_l(z))|>\sqrt{l}$ for $k\in \Natural_0$, $k\not = l$. Moreover, $\Im \tau_k(\zeta_l(z))>0$ if $k>l$,
and $|\Im \tau_k(\zeta_l(z))|=O(1)$ if $k<l$.
 Then the lemma follows from Lemma \ref{l:Rvzzresfree} and the 
 representation of $\RVzz$ given by (\ref{eq:RVzsep}).
\end{proof}

\section{Multiplicities of poles and results of \cite{go-si}}
\label{ss:mpandGS}

For an operator $A$ depending meromorphically on $\zeta\in \Complex$ 
or $\zeta \in \zhat$, let $\Xi(A,\zeta_0)$ denote the principal part of the Laurent expansion of $A$ at $\zeta_0$.  
For $V\in L^\infty_c(X)$ and $\zeta_0\in \zhat$, define 
\begin{equation}\label{eq:mV}
\mV(\zeta_0)\defeq
\rank \Xi (R_V,\zeta_0)(L^2_c(X)).
\end{equation}
Suppose $\chi \in L^\infty_c(X)$ satisfies $\chi V=V$ (and, if $V\equiv 0$,
$\chi$ is nontrivial).  Then it follows from an expansion of $\RV$ at its
singularities as in \cite[Theorems 2.5, 2.7, 3.9, 3.17]{DyZw} and a unique continuation 
result, e.g. \cite[Remark 6.7]{JeKe}, that
$m_V(\zeta_0)= \rank \Xi (\chi R_V \chi,\zeta_0)$.  Note in case $R_V$ is 
analytic at $\zeta_0$, then $m_V(\zeta_0)=0$.

If $V_0\in L^\infty_c(\Rd)$ and $\lambda_0\in \Complex$ we define 
\begin{equation}\label{eq:mvzz}
\mvzz(\lambda_0)\defeq \rank \Xi(\RVzz,\lambda_0)(L_c^2(\Real^d)).
\end{equation}
Again, the second $0$ in the subscript is meant to help us think of this as
corresponding to a model.
 As for $\mV$, if $\chi\in L^\infty_c(\Real)$ satisfies
$\chi V=V$ (and $\chi$ is nontrivial if $V_0\equiv 0$), then
$\mvzz(\lambda_0)=  \rank \Xi(\chi \RVzz \chi,\lambda_0)$.

We recall some definitions and 
results of \cite{go-si}, adapted to our setting.

Let  $A$ be a bounded linear operator on a complex Hilbert space $\mch$,
depending meromorphically on $z\in \Omega \subset \Complex$, where $\Omega$
is a domain.  Near a point $z_0\in \Omega$, $A(z)=\sum_{j=-n}^\infty (z-z_0)^jA_j$.  If the operators $A_{-1},\dots,A_{-n}$ are finite rank, then
we say
$A$ is {\em finitely-meromorphic} at $z_0$.  If $A$ is finitely-meromorphic
at each $z_0\in \Omega$, then $A$ is {\em finitely-meromorphic on $\Omega$.}  
Suppose that $A$ is a compact operator on $\mch$, $A$ is finitely-meromorphic
on $\Omega$, and $I+A(z_1)$ is invertible for some $z_1\in \Omega$.
Then by the meromorphic Fredholm theorem, $(I+A(z))^{-1}$ is
finitely-meromorphic on $\Omega$.


Suppose $A$ is a  finitely-meromorphic operator
on a domain $\Omega$, with $(I+A)^{-1}$ also finitely-meromorphic on $\Omega$.
Below we denote the derivative of $A$ with respect to 
$z$ by $\dot{A}$.  Then for $z_0\in \Omega$, define
$$M(I+ A,z_0)\defeq\frac{1}{2\pi i}\tr \int_{\gamma_{z_0}}\dot{A}(z)(I+A(z))^{-1}dz$$
where $\gamma_{z_0}$ is a  positively oriented circle, centered at 
$z_0$ with radius $\epsilon$.  Here we choose $\epsilon$ 
small enough that $\{|z-z_0|\leq \epsilon\}\subset \Omega$
 and neither $A$ nor $(I+A)^{-1}$ 
has poles in the set $\{z: 0<|z-z_0|\leq \epsilon\}$.

Our definition of finitely-meromorphic is local, so makes sense on domains
in $\zhat$ as well, using a local coordinate compatible with the complex
structure of $\zhat$.  
Likewise, we can define $M(I+A, \zeta_0)$ for such operators.  (This requires 
the choice of a circle small enough that it has in its interior at most 
one ramification point of $\zhat$.)

We will say the linear operator $A$ on the Hilbert space $\mch$ 
satisfies hypotheses (H1) on a domain $\Omega 
\subset \Complex$ if $A$ is a finitely-meromorphic, compact operator defined
on $\Omega$, and $I+A$ is invertible for at least one
point in $\Omega$, and hence has a finitely-meromorphic inverse in $\Omega$.

The following lemma is a direct consequence of \cite[Proposition 5]{go-si}.
\begin{lemma}\label{l:Mproduct}( \cite[Proposition 5]{go-si})
Suppose $A,\;B:\mch \rightarrow \mch$ satisfy hypotheses (H1),
and  suppose $B$ and $(I+B)^{-1}$ 
are analytic 
on $\Omega$.  Then for $z_0\in \Omega$, 
$M(I+A, z_0)=M((I+A)(I+B),z_0).$
\end{lemma}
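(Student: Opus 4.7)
The plan is to deduce the identity from the general multiplicativity formula of Gohberg-Sigal for the functional $M$, and then observe that the contribution of the $B$-factor vanishes. Specifically, \cite[Proposition 5]{go-si} asserts that if $I+A$, $I+B$, and their product $(I+A)(I+B)$ all satisfy hypotheses of the type (H1) on $\Omega$, then
\[
M\bigl((I+A)(I+B), z_0\bigr) = M(I+A, z_0) + M(I+B, z_0).
\]
First I would verify that this product formula indeed applies in our setting. The hypothesis (H1) is assumed directly for both $A$ and $B$. For the product, expanding $(I+A)(I+B) = I + (A+B+AB)$, the operator $A+B+AB$ is compact and finitely-meromorphic on $\Omega$ because $A$, $B$, and $AB$ all are; moreover $(I+A)(I+B)$ admits the finitely-meromorphic inverse $(I+B)^{-1}(I+A)^{-1}$, so the hypotheses of the cited proposition are met.

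The remaining step is to show $M(I+B, z_0) = 0$. By definition,
\[
M(I+B, z_0) = \frac{1}{2\pi i}\,\tr \int_{\gamma_{z_0}} \dot{B}(z)\bigl(I+B(z)\bigr)^{-1}\, dz,
\]
where $\gamma_{z_0}$ is a small positively oriented circle around $z_0$. Because $B$ and $(I+B)^{-1}$ are, by hypothesis, analytic on all of $\Omega$, the operator-valued integrand $\dot{B}(z)(I+B(z))^{-1}$ is analytic in a neighborhood of the closed disk bounded by $\gamma_{z_0}$. Cauchy's theorem then forces the operator-valued contour integral to vanish, so its trace is zero and $M(I+B, z_0) = 0$. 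Combined with the product formula, this immediately gives $M((I+A)(I+B), z_0) = M(I+A, z_0)$, which is the claim.

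The argument is essentially bookkeeping; there is no serious obstacle. The only point requiring care is the applicability of the Gohberg-Sigal product formula to the pair $(I+A, I+B)$, which as noted above follows directly from the compactness and finite-meromorphy hypotheses on $A$ and $B$ together with the observed formula for $((I+A)(I+B))^{-1}$.
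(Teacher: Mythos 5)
Your argument is correct. Note, though, that the paper offers no proof at all here: it simply records the statement as a direct consequence of \cite[Proposition 5]{go-si}, which (in the form used) already asserts that multiplying by a factor that is analytic and analytically invertible on $\Omega$ does not change $M$. What you do instead is unpack that citation: you invoke the Gohberg--Sigal additivity of $M$ under products --- which in \cite{go-si} is the result the paper elsewhere cites as Theorem 5.2, not Proposition 5, so your attribution is slightly off --- and then kill the second summand by observing that $\dot{B}(z)(I+B(z))^{-1}$ is analytic near the disk bounded by $\gamma_{z_0}$, so the contour integral vanishes by Cauchy's theorem and $M(I+B,z_0)=0$. Your verification that the product $(I+A)(I+B)=I+(A+B+AB)$ again satisfies (H1), with finitely-meromorphic inverse $(I+B)^{-1}(I+A)^{-1}$, is exactly what is needed for the additivity theorem to apply, and the vanishing step is immediate since the (zero) integral is trivially trace class. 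So your route is a legitimate, slightly more self-contained derivation; what the paper's bare citation buys is brevity, while your argument makes the mechanism explicit and reuses only the additivity statement that the paper needs anyway (e.g.\ in Lemma \ref{l:Mvdrv0}).
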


Let $T:L^2(X)\rightarrow L^2(X)$ be a bounded linear operator. 
 We shall repeatedly 
make use of the straightforward identities
\begin{equation}\label{eq:prlidentity}
I+T\pr_l= (I+\pr_lT\pr_l)(I+(I-\pr_l) T\pr_l)\; \text{and}\; 
(I+(I-\pr_l)T\pr_l)^{-1}=I-(I-\pr_l)T\pr_l.
\end{equation}

\begin{lemma}\label{l:Mpr} Let $A:L^2(X) \rightarrow L^2(X)$ satisfy hypotheses
(H1)  on a domain $\Omega$.
Then for $z_0\in \Omega$
$$M(I+A\pr_l,z_0)= M(I+\pr_l A \pr_l, z_0).$$
\end{lemma}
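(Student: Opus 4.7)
The plan is to exploit the factorization identity (\ref{eq:prlidentity}):
\[
I + A\pr_l = (I + \pr_l A \pr_l)(I + D), \qquad D := (I-\pr_l)A\pr_l,
\]
together with the fact that $D^2 = 0$, so $(I+D)^{-1} = I - D$. This gives
\[
(I + A\pr_l)^{-1} = (I - D)(I + \pr_l A \pr_l)^{-1},
\]
valid on any open subset of $\Omega$ where $I + \pr_l A \pr_l$ is invertible.

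Next I would form the logarithmic-derivative integrand. Differentiating $A\pr_l$ with respect to $z$ and multiplying by the inverse gives
\[
\frac{d}{dz}(A\pr_l)\,(I + A\pr_l)^{-1} = \dot A \pr_l (I - D)(I + \pr_l A \pr_l)^{-1}.
\]
The cross term $\dot A \pr_l \cdot D = \dot A \pr_l (I-\pr_l) A \pr_l$ vanishes since $\pr_l(I-\pr_l) = 0$, so the integrand simplifies to $\dot A \pr_l (I + \pr_l A \pr_l)^{-1}$. The corresponding integrand for $M(I + \pr_l A \pr_l, z_0)$ is $\pr_l \dot A \pr_l (I + \pr_l A \pr_l)^{-1}$, and the difference is
\[
(I-\pr_l)\dot A \pr_l (I + \pr_l A \pr_l)^{-1}.
\]
Because $\pr_l$ commutes with $I + \pr_l A \pr_l$, and therefore with its inverse, this difference may be rewritten as $(I-\pr_l)\dot A (I + \pr_l A \pr_l)^{-1}\pr_l$, which is off-block-diagonal relative to the splitting $L^2(X) = \pr_l L^2(X) \oplus (I-\pr_l) L^2(X)$.

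The final step is to take the trace and integrate around $\gamma_{z_0}$. The finite meromorphy of $A$ and of $(I + \pr_l A \pr_l)^{-1}$ at $z_0$ guarantees that all the Laurent coefficients of both integrands are finite-rank operators. An off-block-diagonal finite-rank operator has trace zero, so the residue of the difference vanishes, which yields
\[
\frac{1}{2\pi i}\oint_{\gamma_{z_0}} \tr\bigl[\dot A \pr_l (I + \pr_l A \pr_l)^{-1}\bigr]\,dz \;=\; \frac{1}{2\pi i}\oint_{\gamma_{z_0}} \tr\bigl[\pr_l \dot A \pr_l (I + \pr_l A \pr_l)^{-1}\bigr]\,dz,
\]
which is precisely $M(I + A\pr_l, z_0) = M(I + \pr_l A \pr_l, z_0)$.

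The main thing to watch is the trace: the operators involved are not trace-class on all of $L^2(X)$, so the cyclic property cannot be invoked blindly. However, $M(\cdot,z_0)$ depends only on the residue of the integrand at $z_0$, and that residue is a sum of traces of finite-rank operators, where both the cyclic property and the block-diagonal vanishing argument are fully rigorous. The invertibility of $I + A\pr_l$ needed to define $M(I+A\pr_l,z_0)$ follows from the block lower triangular structure, which makes it equivalent to the invertibility of $I + \pr_l A \pr_l$.
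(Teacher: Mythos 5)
Your proof is correct and follows essentially the same route as the paper: the factorization $I+A\pr_l=(I+\pr_lA\pr_l)(I+(I-\pr_l)A\pr_l)$ with the nilpotent inverse, cancellation of the cross term via $\pr_l(I-\pr_l)=0$, and then a trace identity to replace $\dot A\pr_l$ by $\pr_l\dot A\pr_l$. The paper phrases the last step as cyclicity, $\tr(B\pr_l)=\tr(\pr_l B\pr_l)$, which is the same fact as your off-block-diagonal residue having vanishing trace; your explicit remark that only the finite-rank residue matters is a welcome (if minor) extra justification.
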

\begin{proof}

Using (\ref{eq:prlidentity}) implies that
\begin{multline}\label{eq:MAprl}
M(I+ A\pr_l,z_0)=\frac{1}{2\pi i}\tr \int_{\gamma_{z_0}}\dot{A}(z)\pr_l(I+A(z)\pr_l)^{-1}dz \\= \frac{1}{2\pi i}\tr \int_{\gamma_{z_0}}\dot{A}(z)\pr_l(I+\pr_lA(z)\pr_l)^{-1}dz
\end{multline}
where $\gamma_{z_0}$ is a small circle centered at $z_0$ as in the definition
of $M(I+A,z_0)$.

Because $\pr_l$ is a projection, using the 
cyclicity of the trace $\tr (B\pr_l)= \tr (\pr_l B \pr_l)$
for a trace class operator $B:L^2(X)\rightarrow L^2(X)$.
Using this in (\ref{eq:MAprl}) gives
$$M(I+ A\pr_l,z_0)=\frac{1}{2\pi i}\tr \int_{\gamma_{z_0}}\pr_l\dot{A}(z)\pr_l(I+\pr_l A(z)\pr_l)^{-1}dz = M(I+\pr_lA\pr_l,z_0).$$
\end{proof}

The following proposition is a variant of a well-known result in the study of 
resonances of Schr\"odinger operators on $\Real^d$; compare e.g. \cite[Theorem 3.15]{DyZw}.  
\begin{prop} \label{p:mvsM} Suppose $V\in L^\infty_c(X)$ is 
nontrivial, and let $\chi \in L_c^\infty(X)$  satisfy
$\chi V=V$. 
Then the operator $R_V(\zeta)$ has a pole at $\zeta_0\in \zhat$  
if and only if the operator $I+V\Rz(\zeta)\chi$ has nontrivial
null space at $\zeta_0$. 
 Moreover, if 
$\zeta_0$ does not correspond to a threshold, then 
$$\mV(\zeta_0)=M(I+VR_0\chi, \zeta_0).$$
\end{prop}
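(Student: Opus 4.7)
My plan is to adapt the standard argument from Euclidean scattering theory that identifies the multiplicity of a resonance with a Gohberg--Sigal characteristic value multiplicity of the associated Fredholm operator (see, e.g., \cite[Theorems 2.5, 3.15]{DyZw}) to the present cylindrical Riemann surface setting. The main inputs are the two resolvent identities (\ref{eq:req1})--(\ref{eq:req2}), the factorization $R_V = R_0(I+VR_0\chi)^{-1}(I - VR_0(I-\chi))$ established in Section \ref{s:resolventV}, and the local factorization theorem of \cite{go-si}.

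For the equivalence, I first establish ``$R_V$ analytic at $\zeta_0$ implies $I + V R_0 \chi$ has trivial null space there.'' Since $V \in L^\infty_c(X)$ and $R_V(\zeta)$ maps $L^2_c(X)$ to $H^2_{\loc}(X)$ analytically in a neighborhood of $\zeta_0$, the operator $VR_V\chi$ is analytic at $\zeta_0$ on $L^2(X)$, so by (\ref{eq:req2}) the inverse $(I+VR_0\chi)^{-1} = I - VR_V\chi$ is analytic at $\zeta_0$; hence $I + VR_0\chi$ is invertible there. Conversely, assume for now $\zeta_0$ is not a threshold, so that $VR_0\chi$ is analytic there; if $I + VR_0\chi$ has trivial null space, meromorphic Fredholm theory gives an analytic inverse in a neighborhood, and then the factorization of $R_V$ above---with $R_0$ and $I - VR_0(I-\chi)$ also analytic at $\zeta_0$---implies $R_V$ is analytic at $\zeta_0$. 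At a threshold point one passes to the local coordinate $\tau_l$ of Section \ref{s:resolventV} and argues analogously, using the separated form (\ref{eq:RVzsep}) to track the structure of the singularities of $R_0$ explicitly.

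For the multiplicity formula at a non-threshold $\zeta_0$, both $\chi R_0 \chi$ and $V R_0 \chi$ are analytic near $\zeta_0$. By the Gohberg--Sigal local factorization theorem applied to the compact, finitely meromorphic operator-valued function $I + VR_0\chi$, in a neighborhood of $\zeta_0$ one has
\[
I + V R_0(\zeta) \chi = E_1(\zeta) D(\zeta) E_2(\zeta),
\]
where $E_1, E_2$ are analytic and boundedly invertible at $\zeta_0$ and $D(\zeta) = P_0 + \sum_{j=1}^m (\zeta - \zeta_0)^{k_j} P_j$ for a finite collection of pairwise orthogonal finite-rank projections summing to the identity. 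Directly from the definition of $M$, $M(I + V R_0 \chi, \zeta_0) = \sum_j k_j$. Inserting this factorization into $\chi R_V \chi = \chi R_0 \chi \cdot (I + V R_0 \chi)^{-1}$ and tracking the Laurent coefficients of the product, I would show that $\rank \Xi(\chi R_V \chi, \zeta_0)(L^2_c(X)) = \sum_j k_j$; combined with the identification $\mV(\zeta_0) = \rank \Xi(\chi R_V \chi, \zeta_0)(L^2_c(X))$ noted in the text, this yields the claimed equality.

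The main obstacle lies in this last rank-counting step: the analytic factor $\chi R_0 \chi$ is typically not invertible as a bounded operator on $L^2(X)$, so one cannot simply conclude that multiplication by it preserves the dimension of the span of the Laurent principal coefficients. The standard remedy is to invoke unique continuation for $-\Delta - \zeta_0^2$ on $X$: any element arising in a Laurent coefficient of $R_V$ is (a multiple of) a generalized eigenfunction of $-\Delta + V$ which is determined by its restriction to $\supp \chi$, so multiplication by $\chi R_0 \chi$ cannot collapse the combined range. Making this bookkeeping precise---together with separately handling the equivalence statement at thresholds, where $R_0$ itself can have a singularity when $d=1$---constitutes the main technical content of the argument.
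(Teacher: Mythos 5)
Your proposal is essentially the paper's own argument: the paper proves this proposition simply by remarking that it follows "by essentially the same method as \cite[Theorem 3.5]{DyZw}," i.e.\ the standard Euclidean identification of resonances with characteristic values of $I+VR_0\chi$ via the resolvent identities, the Gohberg--Sigal factorization of \cite{go-si}, and unique continuation, which is exactly the route you outline. The only caveat is that your sketch, like the paper's citation, leaves the threshold case and the rank-counting bookkeeping at the level of a plan rather than a worked argument, but the approach is the intended one.
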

\begin{proof} 
This proposition can be proved by 
essentially the same method as \cite[Theorem 3.5]{DyZw}. 
\end{proof}

We recall the notation
$\Vd=V-V_0.$

Another useful identity is the following.
\begin{lemma}\label{l:Mvdrv0}
Let $\chi \in L^\infty_c(X)$ satisfy $\chi V=V$ and $\chi V_0=V_0$.
Then for $\zeta_0\in \zhat$ so that $\zeta_0$ does not correspond
to a threshold,
$$\mV(\zeta_0)= M(I+\Vd \RVz \chi,\zeta_0)+m_{V_0}(\zeta_0).$$
\end{lemma}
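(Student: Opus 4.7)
The plan is to reduce the identity to two ingredients: an algebraic factorization of $I+V\Rz\chi$ into a product involving $I+V_0\Rz\chi$, and the multiplicativity of the Gohberg--Sigal function $M$ under operator products, which generalizes Lemma \ref{l:Mproduct}.

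First I will establish the identity of meromorphic operator-valued functions on $\zhat$,
\begin{equation*}
I+V\Rz\chi \;=\; (I+\Vd\,\RVz\chi)(I+V_0\Rz\chi).
\end{equation*}
Expanding the right-hand side produces $I+V_0\Rz\chi+\Vd\,\RVz\chi+\Vd\,\RVz\chi V_0\Rz\chi$. The hypothesis $\chi V_0=V_0$ turns the cross term into $\Vd\,\RVz V_0\Rz\chi$, and combined with $\Vd\,\RVz\chi$ the resolvent identity $\RVz(I+V_0\Rz)=\Rz$ collapses this to $\Vd\,\Rz\chi$; since $V_0+\Vd=V$, the factorization follows. (The identity is first verified for $\Im \zeta$ large, where all operators are bounded on $L^2(X)$, and then extended to $\zhat$ by meromorphic continuation.)

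Second, by Proposition \ref{p:mvsM} applied with potential $V$ and, separately, with $V_0$ in the role of $V$, one has $\mV(\zeta_0)=M(I+V\Rz\chi,\zeta_0)$ and $m_{V_0}(\zeta_0)=M(I+V_0\Rz\chi,\zeta_0)$ for $\zeta_0\in\zhat$ away from thresholds. I then apply the product rule
\begin{equation*}
M\bigl((I+A)(I+B),\zeta_0\bigr)\;=\;M(I+A,\zeta_0)+M(I+B,\zeta_0)
\end{equation*}
for compact, finitely-meromorphic $A,B$ satisfying hypotheses (H1)---the operator-valued generalization of Lemma \ref{l:Mproduct}---with $A=\Vd\,\RVz\chi$ and $B=V_0\Rz\chi$. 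This yields $\mV(\zeta_0)=M(I+\Vd\,\RVz\chi,\zeta_0)+m_{V_0}(\zeta_0)$, as required.

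The main obstacle is justifying the product rule. It will follow by writing
\begin{equation*}
\frac{d}{dz}\bigl[(I+A)(I+B)\bigr]\bigl[(I+A)(I+B)\bigr]^{-1} = \dot A(I+A)^{-1}+(I+A)\dot B(I+B)^{-1}(I+A)^{-1},
\end{equation*}
integrating around a small positively-oriented circle about $\zeta_0$, and taking traces. The first term contributes $2\pi i\,M(I+A,\zeta_0)$. For the second, the contour integral produces a finite-rank operator, since the residues of a finitely-meromorphic operator-valued function are finite rank (meromorphic Fredholm theorem), so cyclicity of the trace applies unambiguously to identify its trace with $2\pi i\,M(I+B,\zeta_0)$. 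The delicate point is invoking trace cyclicity at the level of the finite-rank residues rather than pointwise on the contour, since the integrand is not a priori trace class; this product formula is otherwise standard in Gohberg--Sigal theory (cf.~\cite{go-si}).
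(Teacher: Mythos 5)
Your proposal is correct and follows essentially the same route as the paper: the same factorization $I+V\Rz\chi=(I+\Vd\RVz\chi)(I+V_0\Rz\chi)$ (the paper obtains it via $\Rz\chi(I+V_0\Rz\chi)^{-1}=\RVz\chi$ rather than by expanding the product, but it is the same identity), followed by Proposition \ref{p:mvsM} applied to $V$ and to $V_0$. For the additivity of $M$ under the product the paper simply invokes \cite[Theorem 5.2]{go-si}, which covers exactly the case where both factors may be singular at $\zeta_0$, so your sketched re-derivation of that product rule is unnecessary.
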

\begin{proof}
We first note that 
\begin{align*}
I+V\Rz \chi & =\left (I+\Vd \Rz \chi(I+V_0\Rz \chi)^{-1}\right)(I+V_0\Rz \chi)\\
 & = (I+\Vd \RVz \chi)(I+V_0\Rz \chi).
\end{align*}
Thus using Proposition \ref{p:mvsM} and  \cite[Theorem 5.2]{go-si}
gives
\begin{multline}
\mV(\zeta_0)=M(I+V \Rz \chi,\zeta_0)=
M(I+\Vd \RVz\chi,\zeta_0)+M(I+V_0\Rz \chi,\zeta_0)\\
= M(I+\Vd \RVz\chi,\zeta_0)+m_{V_0}(\zeta_0),
\end{multline}
proving the lemma.
\end{proof}

\begin{lemma}\label{l:multandprl}
Suppose $V,\chi\in L^\infty_c(X)$,
$\chi V=V$, and $\chi$ is independent of $\theta.$  Let
$\alpha>0$. Then there is an $L>0$ so that for $l>L$ 
$$M(I+ V\Rz \chi,\zeta_0)= 
M(I+\pr_l (I+V\Rz  (I-\pr_l)\chi)^{-1}V \Rz \pr_l \chi ,\zeta_0)$$
for any $\zeta_0\in B_l(\alpha \log l).$ 
\end{lemma}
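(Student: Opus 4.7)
The plan is to decompose $V\Rz\chi = V\Rz\pr_l\chi + V\Rz(I-\pr_l)\chi$, show that the second summand is uniformly small on $B_l(\alpha\log l)$ for $l$ large, factor $I+V\Rz\chi$ accordingly, and then combine Lemma \ref{l:Mproduct} with Lemma \ref{l:Mpr}.

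First I would exploit that $\chi$ is $\theta$-independent (so $\chi$ commutes with $\pr_l$) and that $\Rz$ commutes with $\pr_l$ via the separation-of-variables expansion (\ref{eq:RVzsep}) applied with $V_0=0$. Picking a $\theta$-independent cutoff $\tilde\chi\in L^\infty_c(X)$ with $\tilde\chi V=V$ and $\tilde\chi\chi=\chi$, I would rewrite
\[
V\Rz(I-\pr_l)\chi = V\tilde\chi(I-\pr_l)\Rz\chi = V(I-\pr_l)\tilde\chi\Rz\chi,
\]
and bound its norm by $\|V\|_\infty\|\tilde\chi(I-\pr_l)\Rz\chi\|$, which by Lemma \ref{l:lognbhd} (applied with $V_0=0$ and cutoff $\tilde\chi$) is $O(l^{-1/2})$ uniformly for $\zeta_0\in B_l(\alpha\log l)$. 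Consequently, for $l$ sufficiently large, $I+V\Rz(I-\pr_l)\chi$ is invertible on all of $B_l(\alpha\log l)$ with inverse given by a Neumann series; in particular both it and its inverse are analytic there.

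The key algebraic step is the factorization
\[
I+V\Rz\chi=\bigl(I+V\Rz(I-\pr_l)\chi\bigr)\bigl(I+(I+V\Rz(I-\pr_l)\chi)^{-1}V\Rz\pr_l\chi\bigr),
\]
an instance of $I+A+B=(I+B)(I+(I+B)^{-1}A)$ with $B=V\Rz(I-\pr_l)\chi$ and $A=V\Rz\pr_l\chi$. Since $B$ and $(I+B)^{-1}$ are analytic on $B_l(\alpha\log l)$, Lemma \ref{l:Mproduct} (which, via cyclicity of trace, amounts to the additivity of $M$ under composition with an analytic, analytically invertible factor) gives
\[
M(I+V\Rz\chi,\zeta_0)=M\bigl(I+(I+V\Rz(I-\pr_l)\chi)^{-1}V\Rz\pr_l\chi,\zeta_0\bigr).
\]
Using $\pr_l\chi=\chi\pr_l$, I would rewrite the operator inside as $I+A''\pr_l$ with $A''=(I+V\Rz(I-\pr_l)\chi)^{-1}V\Rz\chi$, and invoke Lemma \ref{l:Mpr} to obtain $M(I+A''\pr_l,\zeta_0)=M(I+\pr_l A''\pr_l,\zeta_0)$; commuting a final $\pr_l$ past $\chi$ recovers exactly the right-hand side in the statement.

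The only genuine obstacle is the smallness estimate of step one, as every subsequent step is essentially algebraic; but this is precisely what Lemma \ref{l:lognbhd} delivers. The remaining technical checks that the operators involved satisfy hypotheses (H1) on $B_l(\alpha\log l)$ are routine: compactness of $V\Rz(I-\pr_l)\chi$ follows from norm convergence of its Fourier truncations $\sum_{k\ne l,\,|k|\le N}V\Rzz(\tau_k)\pr_k\chi$ (itself a consequence of the orthogonality of Fourier modes in $\theta$ and the $O(1/|\tau_k|)$ decay from Lemma \ref{l:Rvzzresfree}); finite-meromorphy of $V\Rz\pr_l\chi=V\Rzz(\tau_l(\cdot))\pr_l\chi$ at $\tau_l=0$ follows from the known structure of $\Rzz$ at the origin (trivial for $d\ge 3$, a rank-one pole for $d=1$); and invertibility of each $I+\cdots$ at some point of the connected set $B_l(\alpha\log l)$ is inherited from the factorization together with the meromorphic Fredholm theorem applied to $I+V\Rz\chi$.
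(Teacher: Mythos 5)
Your proposal is correct and follows essentially the same route as the paper: the identical factorization $I+V\Rz\chi=(I+V\Rz(I-\pr_l)\chi)(I+(I+V\Rz(I-\pr_l)\chi)^{-1}V\Rz\pr_l\chi)$, the $O(l^{-1/2})$ bound from Lemma \ref{l:lognbhd} to make the first factor analytically invertible on $B_l(\alpha\log l)$, then Lemma \ref{l:Mproduct} followed by Lemma \ref{l:Mpr}. The extra checks you include (the auxiliary cutoff $\tilde\chi$, commuting $\pr_l$ with the $\theta$-independent $\chi$, and the (H1) verifications) are routine details the paper leaves implicit.
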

\begin{proof}
We begin by writing
$$I+V\Rz \chi = 
(I+V\Rz (I-\pr_l)\chi)(I+ (I+V\Rz (I-\pr_l)\chi)^{-1}V\Rz \pr_l\chi)$$
and noting that since by Lemma \ref{l:lognbhd}
$\| V\Rz (I-\pr_l)\chi\|=O(l^{-1/2})$ uniformly
on $B_l(\alpha \log l)$ 
 there is an $L>0$ so that for $l>L$, $ (I+V\Rz (I-\pr_l)\chi)^{-1}$
is analytic on $B_l(\alpha \log l)$. 
Thus for these $l$ by Lemma \ref{l:Mproduct}
$M(  I+V\Rz \chi, \zeta_0)= M(I+ (I+V\Rz (I-\pr_l)\chi)^{-1}V\Rz \pr_l\chi,
\zeta_0)$ for any $\zeta_0\in B_l(\alpha \log l)$.  An application of 
Lemma \ref{l:Mpr} completes the proof. 
\end{proof}

\begin{lemma} \label{l:RTcheck}
Let $V,\chi\in L^\infty_c(X)$, with $V$ satisfying (\ref{eq:Vhyp}),
$\chi V=V$, and $\chi$ independent of $\theta$.
Set $A_{l,V}= (I+V\Rz  (I-\pr_l)\chi)^{-1}$, 
$B_{l,V}=V \Rz \pr_l \chi$.
Let $K\subset \Complex $ be a compact set such that
$\RVzz$ is analytic on $K$, and suppose $0\not \in K$ if $d=1$.  Choose $\rho>0$
so that $K\subset \{ \lambda \in \Complex: |\lambda|<\rho\}$, and set
 $K_l=\{ \zeta \in B_l(\rho): \tau_l(\zeta)\in K\}$.
Then for sufficiently large $l$ 
\begin{equation}\label{eq:diffest1}
\|\pr_l(A_{l,V}B_{l,V}-A_{l,V_0}B_{l,V_0})\|=O(l^{-\delta})
\end{equation}
and \begin{equation}\label{eq:diffest2}
\| (I+\pr_l A_{l,V_0}B_{l,V_0})^{-1}\pr_l(A_{l,V}B_{l,V}-A_{l,V_0}B_{l,V_0})\|=
O(l^{-\delta})
\end{equation}
uniformly for $\zeta \in K_l$.
\end{lemma}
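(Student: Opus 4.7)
The plan is to decompose $A_{l,V}B_{l,V}-A_{l,V_0}B_{l,V_0}$ via the first-order resolvent identity $A_{l,V}-A_{l,V_0}=-A_{l,V}\Vd R_0(I-\pr_l)\chi A_{l,V_0}$ together with $B_{l,V}-B_{l,V_0}=\Vd R_0\pr_l\chi$, and to exploit two separate smallness mechanisms: the Fourier decay $\|V_m\|_\infty=O(m^{-\delta})$ from (\ref{eq:Vhyp}), and the off-diagonal bound $\|V R_0(I-\pr_l)\chi\|=O(l^{-1/2})$ from Lemma \ref{l:lognbhd}. As a preliminary step, $K_l\subset B_l(\alpha\log l)$ for large $l$ and any fixed $\alpha>0$, so Lemma \ref{l:lognbhd} gives convergent Neumann series for both $A_{l,V}$ and $A_{l,V_0}$, with $\|A_{l,V}\|,\|A_{l,V_0}\|\le 2$ uniformly on $K_l$.

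The decisive new ingredient is a Fourier identification of $\pr_l\Vd\pr_l$. Since $\Vd(x,\theta)=\sum_{k\neq 0}V_k(x)e^{ik\theta}$, the only nonzero $k$ producing $e^{\pm il\theta}$ from $e^{\mp il\theta}$ is $k=\pm 2l$, so $\pr_l\Vd\pr_l$ reduces to multiplication by $V_{\pm 2l}(x)$ with a swap between the $\pm l$ modes, and hypothesis (\ref{eq:Vhyp}) gives $\|\pr_l\Vd\pr_l\|=O(l^{-\delta})$. Writing
$$A_{l,V}B_{l,V}-A_{l,V_0}B_{l,V_0}=A_{l,V}\Vd R_0\pr_l\chi-A_{l,V}\Vd R_0(I-\pr_l)\chi\,A_{l,V_0}B_{l,V_0},$$
the second summand has middle factor $R_0(I-\pr_l)\chi$ of norm $O(l^{-1/2})$ with otherwise uniformly bounded pieces, so is $O(l^{-1/2})=O(l^{-\delta})$ since $\delta\le 1/2$. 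For the first summand I apply $A_{l,V}=I-A_{l,V}V R_0(I-\pr_l)\chi$ together with the separation-of-variables identity $R_0\pr_l=\Rzz(\tau_l)\pr_l$; left-multiplication by $\pr_l$ then produces
$$\pr_l\Vd\pr_l\,\Rzz(\tau_l)\pr_l\chi-\pr_l A_{l,V}V R_0(I-\pr_l)\chi\,\Vd R_0\pr_l\chi.$$
The first piece is $O(l^{-\delta})$ by the Fourier bookkeeping together with uniform boundedness of $\chi\Rzz(\tau_l)\chi$ on $K_l$---this is exactly where the hypothesis $0\notin K$ for $d=1$ enters, to avoid the pole of $\Rzz$ at the origin. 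The second piece is again $O(l^{-1/2})$. This establishes (\ref{eq:diffest1}).

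For (\ref{eq:diffest2}) I would separately show that $\|(I+\pr_l A_{l,V_0}B_{l,V_0})^{-1}\|$ is uniformly bounded on $K_l$ and then multiply (\ref{eq:diffest1}) by it. Because $V_0$ is $\theta$-independent, $V_0$, $R_0$, and hence $A_{l,V_0}$, all commute with $\pr_l$; moreover $V_0 R_0(I-\pr_l)\chi$ vanishes on $\pr_l L^2(X)$ since $(I-\pr_l)\chi$ does, so $A_{l,V_0}$ restricts to the identity on $\pr_l L^2(X)$. Since the range of $B_{l,V_0}$ lies in $\pr_l L^2(X)$, the composition collapses to $\pr_l A_{l,V_0}B_{l,V_0}=B_{l,V_0}=V_0\Rzz(\tau_l)\pr_l\chi$, so $I+\pr_l A_{l,V_0}B_{l,V_0}$ acts as $I+V_0\Rzz(\tau_l)\chi$ on each of the two copies of $L^2(\Rd)$ forming $\pr_l L^2(X)$ and as the identity on the orthogonal complement. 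Proposition \ref{p:mvsM} combined with analyticity of $\RVzz$ on $K$ forces $I+V_0\Rzz(\lambda)\chi$ to be invertible for every $\lambda\in K$, and a compactness/continuity argument then yields a uniform norm bound. The principal obstacle throughout is the Fourier identification of $\pr_l\Vd\pr_l$ and the clean separation of $\pr_l$-diagonal and off-diagonal pieces in the decomposition: this is the sole place where the pointwise Fourier decay of $V$ is converted into the operator-level rate $l^{-\delta}$, with everything else following from Lemma \ref{l:lognbhd} and algebraic resolvent manipulations.
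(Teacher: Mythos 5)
Your proof is correct and takes essentially the same route as the paper: the same two smallness mechanisms ($\|\chi(I-\pr_l)\Rz\chi\|=O(l^{-1/2})$ from Lemma \ref{l:lognbhd} and $\|\pr_l\Vd\pr_l\|=O(l^{-\delta})$ from the Fourier decay), an equivalent (merely regrouped) decomposition for (\ref{eq:diffest1}), and for (\ref{eq:diffest2}) the same reduction to uniform invertibility of $I+V_0\Rzz(\lambda)\chi$ for $\lambda\in K$, which the paper obtains directly from the explicit inverse $I-V_0\RVzz(\lambda)\chi$ (the Euclidean analogue of (\ref{eq:req2})) rather than from Proposition \ref{p:mvsM}, which as stated concerns operators on $X$ — a harmless citation slip on your part. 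Your exact collapse $\pr_lA_{l,V_0}B_{l,V_0}=B_{l,V_0}$ is a correct, slightly cleaner version of the paper's $I+\pr_l V_0\Rz\chi+O(l^{-1/2})$ step.
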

\begin{proof}
We write
\begin{equation}\label{eq:rewrite}
\pr_l(A_{l,V}B_{l,V}-A_{l,V_0}B_{l,V_0})=
\pr_l(A_{l,V}-A_{l,V_0})B_{l,V}+\pr_lA_{l,V_0}(B_{l,V}-B_{l,V_0}).
\end{equation}
By Lemma \ref{l:lognbhd}, $\|A_{l,V}-I\|=O(l^{-1/2})$, $\|A_{l,V_0}-I\|=O(l^{-1/2})$ uniformly on $B_l(\rho)$, so that the first 
term on the left hand side is $O(l^{-1/2})$.  
Moreover, $\pr_lA_{l,V_0}(B_{l,V}-B_{l,V_0})=A_{l,V_0}\pr_l(B_{l,V}-B_{l,V_0})
= A_{l,V_0}\pr_l\Vd \Rz \pr_l$, and  $\|\pr_l\Vd \pr_l\|=O(l^{-\delta})$ by 
our assumption on $V$.  Hence the norm of the second term on the right hand side of 
(\ref{eq:rewrite}) is $O(l^{-\delta})$.  This proves (\ref{eq:diffest1}).

On $K_l$, 
\begin{equation}\label{eq:exp}
I+\pr_lA_{l,V_0}B_{l,V_0}= I+\pr_l B_{l,V_0}+O(l^{-1/2})=I +\pr_l V_0\Rz \chi+O(l^{-1/2}).
\end{equation}  But 
$(I+\pr_l V_0\Rz \chi)^{-1}=I-\pr_l+(I-V_0\RVzz(\tau_l)\chi)\pr_l
=I-\pr_l+T\pr_l$, where 
$T$ 
is given by $T=(I+V_0\Rzz(\tau_l)\chi)^{-1}
= I-V_0\RVzz (\tau_l)\chi.$  By our choice of $K$, $T$ is
uniformly bounded for $\tau_l\in K$,
or for $\zeta \in K_l$, and 
hence $(I+\pr_l V_0\Rz \chi)^{-1}$ is bounded on $K_l$.  Using (\ref{eq:exp}),
this shows $(I+\pr_l A_{l,V_0}B_{l,V_0})^{-1}$ is bounded 
on $K_l$, and thus, by (\ref{eq:diffest1}), we 
get (\ref{eq:diffest2}).
\end{proof}


\section{A resolvent estimate and localizing the resonances in the $L^\infty$ case: Proofs of Theorems \ref{thm:poleexist},  
 \ref{thm:polefree}, and \ref{thm:0}\label{s:thms1and2}}
In this section we  
prove Theorems \ref{thm:poleexist},  
 \ref{thm:polefree}, and \ref{thm:0} in the case of an $L^\infty$ potential $V$, providing a high-energy localization of the resonances 
 in sets $B_l(\rho)$.
We also prove Proposition \ref{p:Rest} and Lemma \ref{l:bdryphys}, which show
 that the resolvent for the potential $V_0$
is, at high energies, a good approximation of the resolvent for the 
potential $V$ away from poles.

\vspace{2mm}
We shall use notation for a disk in the $\tau_l$ coordinate in $B_l(\rho)$.
For $\lambda_0\in \Complex$, $r_0>0$, set $\rho= |\lambda_0|+r_0+1$, and
define, for $2l> \rho^2+1$, $\Dl(\lambda_0,r_0)\subset B_l(\rho)\subset\zhat$ by 
$$\Dl(\lambda_0,r_0)\defeq \{\zeta\in B_l(\rho): \; |\tau_l(\zeta)-\lambda_0|<
r_0\}.$$

A preliminary step in the proof of Theorem \ref{thm:poleexist} is the 
following proposition, which provides an initial localization of the 
resonances.

\begin{prop}\label{p:prelimThm1}
Let $V\in L^\infty_c(X)$ satisfy (\ref{eq:Vhyp}).
Suppose
$\lambda_0\in \Complex  $, $\lambda_0\not = 0$ is a resonance of 
$-\Lz+V_0$ on $\Rd$,
of multiplicity $\mvzz(\lambda_0)$.  Then there are $L, \epsilon>0$
so that
$$\sum_{\substack{\zeta\in \Dl (\lambda_0,\epsilon)\\
m_V(\zeta)> 0}}m_V(\zeta) 
= 2\mvzz(\lambda_0)$$
when $l>L$.
\end{prop}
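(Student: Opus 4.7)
The plan is to identify $m_V(\zeta)$ with the local index $M(I + \pr_l A_{l,V} B_{l,V}, \zeta)$ of a finite-rank meromorphic operator, and then use the operator Rouch\'e theorem of \cite{go-si} on the boundary of $\Dl(\lambda_0,\epsilon)$ to compare with the corresponding quantity for $V_0$, for which a direct computation via separation of variables is available. First, fix $\epsilon > 0$ so small that the closed disk $\{\lambda\in\Complex:|\lambda-\lambda_0|\leq\epsilon\}$ contains no poles of $\RVzz$ other than $\lambda_0$ and is disjoint from $0$ (the disjointness from $0$ matters only when $d=1$, through Lemma \ref{l:RTcheck}). Choose $\chi\in L^\infty_c(X)$ independent of $\theta$ with $\chi V=V$; then automatically $\chi V_0=V_0$. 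For $l$ large enough that $\Dl(\lambda_0,\epsilon)\subset B_l(\alpha\log l)$ for some $\alpha>0$ and $0\notin \Dl(\lambda_0,\epsilon)$, Proposition \ref{p:mvsM} together with Lemma \ref{l:multandprl} yields, at every $\zeta\in \Dl(\lambda_0,\epsilon)$,
$$m_V(\zeta) = M(I + \pr_l A_{l,V} B_{l,V},\zeta),\qquad m_{V_0}(\zeta) = M(I + \pr_l A_{l,V_0} B_{l,V_0},\zeta),$$
with $A_{l,W}$, $B_{l,W}$ as in Lemma \ref{l:RTcheck}.

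Next I would apply the operator Rouch\'e theorem of \cite{go-si} on the contour $\partial \Dl(\lambda_0,\epsilon)$. Lemma \ref{l:RTcheck}, applied with $K=\{\lambda:|\lambda-\lambda_0|=\epsilon\}$, provides both a uniform-in-$l$ bound on $(I+\pr_l A_{l,V_0} B_{l,V_0})^{-1}$ on this contour and the estimate
$$\big\|(I+\pr_l A_{l,V_0} B_{l,V_0})^{-1}\,\pr_l(A_{l,V}B_{l,V}-A_{l,V_0}B_{l,V_0})\big\| = O(l^{-\delta}),$$
which is strictly less than $1$ for $l$ sufficiently large. Hence the Rouch\'e hypothesis is satisfied and the total indices of $I+\pr_l A_{l,V} B_{l,V}$ and $I+\pr_l A_{l,V_0} B_{l,V_0}$ inside $\Dl(\lambda_0,\epsilon)$ agree. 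Combined with the identities above this gives
$$\sum_{\zeta\in \Dl(\lambda_0,\epsilon)} m_V(\zeta) \;=\; \sum_{\zeta\in \Dl(\lambda_0,\epsilon)} m_{V_0}(\zeta).$$

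It remains to evaluate the right-hand side, which I would do using the separation-of-variables identity (\ref{eq:RVzsep}). For $\epsilon$ small and $l$ large, the summand $\RVzz(\tau_l(\zeta))\pr_l$ is the only one that can be singular in $\Dl(\lambda_0,\epsilon)$, because $|\tau_k(\zeta)|\to\infty$ uniformly for $k\neq l$ on that set, so the corresponding terms are analytic there. The unique pole therefore occurs at the point $\zeta$ with $\tau_l(\zeta)=\lambda_0$, and its multiplicity is that of $\RVzz(\tau_l(\zeta))\pr_l$ at this point, namely $2\mvzz(\lambda_0)$: the factor of $2$ arises because $\pr_l$ has rank $2$ for $l\geq 1$, projecting onto $\text{span}\{e^{il\theta},e^{-il\theta}\}$. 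The principal technical point in this argument is the uniform-in-$l$ invertibility of $I+\pr_l A_{l,V_0} B_{l,V_0}$ on $\partial\Dl(\lambda_0,\epsilon)$, since without such a uniform bound the Rouch\'e comparison would break down as $l\to\infty$; this is precisely what Lemma \ref{l:RTcheck} is designed to supply, once one observes that on the contour $|\tau_l-\lambda_0|=\epsilon$ the operator $\RVzz(\tau_l(\zeta))$ is both analytic and uniformly bounded.
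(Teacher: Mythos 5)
Your proposal is correct and follows essentially the same route as the paper: it identifies $m_V(\zeta)$ with the index $M(I+\pr_l A_{l,V}B_{l,V},\zeta)$ via Proposition \ref{p:mvsM} and Lemma \ref{l:multandprl}, compares with the $V_0$ case by the operator Rouch\'e theorem of \cite{go-si} using the uniform bounds of Lemma \ref{l:RTcheck}, and evaluates $\sum m_{V_0}$ by separation of variables, the factor $2$ coming from the two angular modes $e^{\pm il\theta}$ in $\pr_l$. The only cosmetic point is that analyticity of the $k\neq l$ terms on $\Dl(\lambda_0,\epsilon)$ needs not just $|\tau_k|\to\infty$ but also the boundedness of $(\Im\tau_k)_-$ there (i.e., Lemmas \ref{l:Rvzzresfree} and \ref{l:lognbhd}), which is exactly what the paper cites.
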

\begin{proof}  Choose $\epsilon>0$ so that $\RVzz(\lambda)$ is analytic on 
$0<|\lambda -\lambda_0|\leq \epsilon$ and $\epsilon<|\lambda_0|$.
By our expression (\ref{eq:RVzsep}) for $\RVz$ using separation of variables and
Lemmas \ref{l:Rvzzresfree} and \ref{l:lognbhd},
$m_{V_0}(\zeta_l(\lambda_0))= 2\mvzz(\lambda_0)$ for $l$ sufficiently large.  
Choose $\chi \in L^\infty_c(X)$ independent of $\theta$ so that 
$\chi V=V$.
From Proposition \ref{p:mvsM} and our choice of $\epsilon$,  
for $l$ sufficiently
large 
$$m_{V_0}(\zeta_l(\lambda_0))= M(I+V_0\Rz\chi,\zeta_l(\lambda_0))
= \sum_{\substack{\zeta\in \Dl (\lambda_0,\epsilon)\\
M(I+V_0\Rz \chi, \zeta)\not = 0}}M(I+V_0\Rz\chi,\zeta ).$$

Lemma \ref{l:multandprl}
implies that if $W=V_0$ or $W=V$,
\begin{equation}\label{eq:Wintermed}
M(I+W\Rz\chi,\zeta')=
M(I+\pr_l (I+W\Rz  (I-\pr_l)\chi)^{-1}W \Rz \pr_l \chi ,\zeta' )
\;\text{for $\zeta'\in \Dl(\lambda_0,\epsilon)$ }
\end{equation}
if  $l$ is sufficiently large.  

By Lemma \ref{l:RTcheck} and 
an operator Rouch\'e Theorem  (\cite[Theorem 2.2]{go-si}), for
$l$ sufficiently large
\begin{multline}
\label{eq:goal}\sum_{\substack{\zeta\in \Dl (\lambda_0,\epsilon)\\
M(I+\pr_l (I+V\Rz  (I-\pr_l)\chi)^{-1}V \Rz \pr_l \chi , \zeta)\not =0}}M(I+\pr_l (I+V\Rz  (I-\pr_l)\chi)^{-1}V \Rz \pr_l \chi ,\zeta )\\
=\sum_{\substack{\zeta\in \Dl (\lambda_0,\epsilon)\\
M(I+\pr_l (I+V_0\Rz  (I-\pr_l)\chi)^{-1}V \Rz \pr_l \chi , \zeta)\not =
0}} M(I+\pr_l (I+V_0\Rz  (I-\pr_l)\chi)^{-1}V_0 \Rz \pr_l \chi ,\zeta).
\end{multline}
Combining (\ref{eq:Wintermed}) (with $W=V$ and with $W=V_0$),
(\ref{eq:goal}),  and
 another application of Proposition \ref{p:mvsM}, this time with $V$,
 proves the proposition.
\end{proof}

\subsection{Proofs of Theorems \ref{thm:poleexist}
and \ref{thm:polefree} for $V\in L^\infty_c(X)$}
Theorem \ref{thm:poleexist} follows from combining
Proposition \ref{p:prelimThm1} and the result of Theorem
\ref{thm:polefree} for $L^\infty$ potentials.  In this section we prove Theorem 
\ref{thm:polefree} for $L^\infty$ potentials $V$.

Recall by the definition of $\Xi(\RVzz,\lambda_0)$,
if $\lambda_0\in \Complex$ is a pole of $\RVzz$, then 
$\RVzz(\lambda)-\Xi(\RVzz(\lambda),\lambda_0)$ is analytic at $\lambda_0$.  
Define
\begin{equation}\label{eq:RVzreg}
\RVz^{\reg}(\zeta;\lambda_0,l)\defeq \RVz(\zeta)-\Xi(\RVz,\zeta_l(\lambda_0)).
\end{equation}
For $l$ sufficiently large, by (\ref{eq:RVzsep}) and Lemma \ref{l:lognbhd}
\begin{equation}\label{eq:RVzreglsuff}\RVz^{\reg}(\zeta;\lambda_0,l)=
\Xi(\RVzz(\lambda),\lambda_0)|_{\lambda=\tau_l(\zeta)}\pr_l.
\end{equation}
Note that if $\RVz$ is analytic at $\zeta_l(\lambda_0)$, then 
$\RVz^{\reg}(\zeta;\lambda_0,l)= \RVz(\zeta)$.

\begin{lemma}\label{l:Rreg} Suppose
$V,\;\chi \in L^\infty_c(X)$ and $V$ satisfies (\ref{eq:Vhyp}).
Let $\lambda_0\in \Complex$ and 
$\RVz^{\reg}=\RVz^{\reg}(\zeta;\lambda_0,l)$ be the operator defined in (\ref{eq:RVzreg}).
If $\RVzz(\lambda)$ is analytic for $0<|\lambda-\lambda_0|\leq \epsilon$,
then for
$l$ sufficiently large $\Vd\RVz^{\reg}(\zeta)\chi= \Vd \RVz^{\reg}(\zeta;\lambda_0,l)\chi$ is analytic on $
\overline{\Dl}(\lambda_0,\epsilon)$ and 
 as $l\rightarrow \infty$ the estimate
$\left\| \chi \RVz^{\reg}(\zeta)\Vd\RVz^{\reg}(\zeta)\chi\right\|= O(l^{-\delta})$ holds uniformly for
$ \zeta\in \overline{\Dl}(\lambda_0,\epsilon)$.
\end{lemma}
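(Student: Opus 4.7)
The plan is to prove the two claims of the lemma separately, using a decomposition of $\RVz^{\reg}$ into a piece proportional to $\pr_l$ and a piece transverse to $\pr_l$ as the key structural input.

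For the analyticity claim, I will use (\ref{eq:RVzsep}) to expand $\RVz(\zeta)=\sum_{k\ge 0}\RVzz(\tau_k(\zeta))\pr_k$ and argue that when $l$ is large only the $k=l$ summand can contribute a singularity on $\overline{\Dl}(\lambda_0,\epsilon)$. For the $k=l$ term the hypothesis that $\RVzz$ is analytic on $0<|\lambda-\lambda_0|\le\epsilon$ restricts a potential pole to the single point $\zeta_l(\lambda_0)$. For each $k\ne l$, the identity $\tau_k^2=\tau_l^2+l^2-k^2$ forces $|\tau_k(\zeta)|$ to be large (of order $\sqrt{l}$ or more), so Lemma \ref{l:Rvzzresfree} places $\tau_k(\zeta)$ inside a logarithmic resonance-free region of $\RVzz$. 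Since $\RVz^{\reg}$ by construction subtracts the principal part at $\zeta_l(\lambda_0)$, it is analytic on $\overline{\Dl}(\lambda_0,\epsilon)$, and left/right multiplication by the bounded compactly supported functions $\Vd$ and $\chi$ preserves this.

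For the norm bound, I will split $\RVz^{\reg}(\zeta)=S_l(\zeta)+T_l(\zeta)$ with the help of (\ref{eq:RVzreglsuff}), where $S_l=\bigl[\RVzz(\tau_l(\zeta))-\Xi(\RVzz,\lambda_0)|_{\lambda=\tau_l(\zeta)}\bigr]\pr_l$ and $T_l=\RVz(\zeta)(I-\pr_l)$. Expanding $\chi\RVz^{\reg}\Vd\RVz^{\reg}\chi$ produces four products. The three terms containing at least one factor of $T_l$ are each $O(l^{-1/2})$ by Lemma \ref{l:lognbhd}, after inserting an auxiliary cutoff $\chi_1\in L^\infty_c(\Real^d)$ with $\chi_1\Vd=\Vd$ and $\chi_1\chi=\chi$ (viewed as a $\theta$-independent element of $L^\infty_c(X)$, it commutes with $\pr_l$), so that each occurrence of $\RVz(I-\pr_l)$ can be sandwiched between two compactly supported cutoffs before applying the lemma. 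For the remaining term $\chi S_l\Vd S_l\chi$, the commutation of $\pr_l$ with the $x$-only operator $\RVzz(\tau_l)-\Xi(\RVzz,\lambda_0)|_{\lambda=\tau_l}$ allows me to collect the $\pr_l$ factors together, producing a middle factor $\pr_l\Vd\pr_l$. A direct Fourier computation shows $\pr_l\Vd\pr_l$ acts on $g=ae^{il\theta}+be^{-il\theta}$ by $\pr_l\Vd\pr_l g=V_{2l}be^{il\theta}+V_{-2l}ae^{-il\theta}$, so hypothesis (\ref{eq:Vhyp}) yields $\|\pr_l\Vd\pr_l\|=O(l^{-\delta})$. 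Since $\delta\le 1/2$, all four terms are $O(l^{-\delta})$.

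The main obstacle will be managing the fact that $\RVzz(\tau_l)-\Xi(\RVzz,\lambda_0)|_{\lambda=\tau_l}$ is not a bounded operator on $L^2(\Real^d)$ without $x$-cutoffs. To handle $\chi S_l\Vd S_l\chi$ and the mixed terms I will choose $\chi_1$ as above, equal to one on the $x$-projection of $\supp\Vd\cup\supp\chi$, and systematically insert $\chi_1$ on either side of each factor of $\RVzz(\tau_l)-\Xi(\RVzz,\lambda_0)|_{\lambda=\tau_l}$; the compressed operator $\chi_1[\RVzz(\tau_l)-\Xi(\RVzz,\lambda_0)|_{\lambda=\tau_l}]\chi_1$ is then bounded on $L^2(\Real^d)$ uniformly for $\tau_l(\zeta)\in\{\lambda\in\Complex:|\lambda-\lambda_0|\le\epsilon\}$, since its compressed Laurent expansion is analytic on this compact set.
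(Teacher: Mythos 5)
Your proposal is correct and follows essentially the same route as the paper: the splitting $\RVz^{\reg}=S_l+T_l$ is exactly the paper's insertion of $(I-\pr_l)+\pr_l$, the off-$l$ pieces are handled by Lemma \ref{l:lognbhd}, and the diagonal piece is controlled by $\|\pr_l\Vd\pr_l\|=O(l^{-\delta})$ coming from (\ref{eq:Vhyp}). The only difference is presentational: you make explicit the auxiliary cutoff $\chi_1$ and the uniform boundedness of $\chi_1\bigl(\RVzz(\tau_l)-\Xi(\RVzz,\lambda_0)|_{\lambda=\tau_l}\bigr)\chi_1$ on the compact set, which the paper subsumes in the statement that $\RVz^{\reg}$ is analytic and bounded on $\overline{\Dl}(\lambda_0,\epsilon)$.
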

\begin{proof}
Without loss of generality we can assume $\chi$ is 
independent of $\theta$ and $\chi V=V$.  
By (\ref{eq:RVzsep}) and Lemma \ref{l:lognbhd}, for $l$ sufficiently large $\RVz^{\reg}(\zeta)$ is analytic and bounded
in $\overline{\Dl}(\lambda_0,\epsilon)$.  
We write
\begin{multline}
\chi \RVz^{\reg}(\zeta)\Vd\RVz^{\reg}(\zeta)\chi = \chi\RVz^{\reg} \chi(I-\pr_l)\Vd\RVz^{\reg} \chi 
\\+ \chi\RVz^{\reg}\chi \pr_l \Vd \RVz^{\reg} \chi (I-\pr_l)
+ \chi \RVz^{\reg} \chi \pr_l \Vd \RVz^{\reg} \chi \pr_l.
\end{multline}
Now for $l$ sufficiently large and $\zeta \in \overline{\Dl}(\lambda_0,\epsilon)$,
$\| \chi \RVz^{\reg} \chi(I-\pr_l)\|=O(l^{-1/2})$ uniformly in $\overline{\Dl}(\lambda_0,\epsilon)$.
Since $\|V_m\|=O(m^{-\delta}), $ $\|\pr_l\Vd \pr_l\|=O(l^{-\delta})$, and 
so $$\|\pr_l \Vd \RVz^{\reg} \chi \pr_l\|= \|\pr_l \Vd \pr_l \RVz^{\reg} \chi \pr_l\|=O(l^{-\delta}).$$
\end{proof}

A related lemma which we also need is the following.
\begin{lemma}\label{l:VdRVzs}Let $V,\chi \in L^\infty_c(X)$ with 
$V$ satisfying (\ref{eq:Vhyp}).
Let $K\subset \Complex$ be a compact set on which $\RVzz$ is analytic
and suppose 
$K\subset \{\lambda\in \Complex: |\lambda|<\rho\}$. 
Set $K_l\defeq \{ \zeta\in B_l(\rho): \tau_l(\zeta)\in K\}\subset \zhat$.
Then for $l$ sufficiently large, $\|\chi R_{V_0}\Vd \RVz \chi\|=O(l^{-\delta})$
uniformly on $K_l$.
\end{lemma}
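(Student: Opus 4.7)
The plan is to use the separation of variables \eqref{eq:RVzsep} to split the operator relative to the threshold projection $\pr_l$, and then bound each piece using the model-resolvent estimate of Lemma~\ref{l:lognbhd} together with the Fourier-decay hypothesis \eqref{eq:Vhyp}. Without loss of generality I enlarge $\chi$ so that it is $\theta$-independent and $\chi\equiv 1$ on $\supp\Vd$; in particular $\chi$ then commutes with each $\pr_k$ and satisfies $\chi\Vd=\Vd\chi=\Vd$.

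Inserting $I=\pr_l+(I-\pr_l)$ on each side of $\Vd$ gives
\begin{align*}
\chi\RVz\Vd\RVz\chi &= \chi\RVz\pr_l\Vd\pr_l\RVz\chi + \chi\RVz\pr_l\Vd(I-\pr_l)\RVz\chi \\
&\quad + \chi\RVz(I-\pr_l)\Vd\pr_l\RVz\chi + \chi\RVz(I-\pr_l)\Vd(I-\pr_l)\RVz\chi.
\end{align*}
For each of the three off-diagonal terms, use $\Vd=\chi\Vd\chi$ and commute $\chi$ past $\pr_l$ to sandwich every $\RVz$ factor between two $\chi$'s. The ``on-mode'' factor $\chi\pr_l\RVz\chi=\chi\RVzz(\tau_l(\zeta))\chi\,\pr_l$ is uniformly bounded on $K_l$ by the assumed analyticity of $\RVzz$ on the compact set $K$, and the ``off-mode'' factor $\chi(I-\pr_l)\RVz\chi$ is $O(l^{-1/2})$ uniformly on $K_l\subset B_l(\rho)$ by Lemma~\ref{l:lognbhd}. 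Since $\|\Vd\|_{L^\infty}<\infty$, each of these three terms is therefore $O(l^{-1/2})$, which is $O(l^{-\delta})$ because $0<\delta\leq 1/2$.

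The diagonal term, after the same insertions, equals $(\chi\RVzz(\tau_l(\zeta))\chi)\,\pr_l\Vd\pr_l\,(\chi\RVzz(\tau_l(\zeta))\chi)$; the outer factors are bounded uniformly for $\tau_l(\zeta)\in K$, so it remains to bound $\pr_l\Vd\pr_l$. Using $\Vd(x,\theta)=\sum_{m\neq 0}V_m(x)e^{im\theta}$, a direct computation shows that $\pr_l\Vd\pr_l$ couples $e^{\pm il\theta}$ only via the Fourier modes $m=\pm 2l$, so $\|\pr_l\Vd\pr_l\|\leq \|V_{2l}\|_{L^\infty}+\|V_{-2l}\|_{L^\infty}=O(l^{-\delta})$ by \eqref{eq:Vhyp}. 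This gives the diagonal term an $O(l^{-\delta})$ bound as well, completing the proof. The only real care required is the bookkeeping: keeping every $\RVzz(\tau_k(\zeta))$ between two $\chi$'s so that its uniform bound on $K$ (or Lemma~\ref{l:lognbhd} for $k\neq l$) can be invoked, rather than trying to bound $\RVzz$ as an unbounded operator on $L^2$ off the physical sheet.
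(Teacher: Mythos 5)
Your proof is correct, and it is essentially the paper's argument: the paper proves this lemma by ``mimicking the proof of Lemma \ref{l:Rreg}'', which is exactly your decomposition with $I=\pr_l+(I-\pr_l)$, the $O(l^{-1/2})$ bound on the off-mode pieces from Lemma \ref{l:lognbhd}, uniform boundedness of $\chi\RVzz(\tau_l(\zeta))\chi$ on the compact set $K$, and $\|\pr_l\Vd\pr_l\|=O(l^{-\delta})$ from the Fourier-decay hypothesis (\ref{eq:Vhyp}). The only cosmetic difference is that here, since $\RVzz$ is analytic on $K$, you work with $\RVz$ directly rather than the regularized resolvent $\RVz^{\reg}$ used in Lemma \ref{l:Rreg}.
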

\begin{proof}
This lemma can be proved  by mimicking the proof of Lemma \ref{l:Rreg}.
Alternatively, it can be proved by covering $K_l$ with a finite number of 
neighborhoods on which Lemma \ref{l:Rreg} holds.
\end{proof}


\vspace{2mm}
\noindent
{\em Proof of Theorem \ref{thm:polefree} for $V\in L^\infty_c(X)$.} 
We shall use the identities (\ref{eq:V0asmodel}).
Thus poles of $\RV$ in 
$B_l(\rho)$  are the values of  $\zeta\in B_l(\rho)$
such that $I+\Vd\RVz(\zeta)\chi$ is not invertible.  Here $\chi \in C_c^\infty(X)$
satisfies $\chi V=V$ and is independent of $\theta$.

\np For each $\lambda_j\in \Lambda_\rho$, $\lambda_j\not = 0$,
let $\epsilon_j>0$ be as guaranteed
by Proposition \ref{p:prelimThm1},
so that there are exactly $2\mvzz(\lambda_0)$ resonances
(counted with multiplicity) of $-\Delta +V$ in $\Dl(\lambda_j,\epsilon_j)$ for 
$l$ sufficiently large.  Set 
$$K=\{\lambda \in \Complex: \; \epsilon'\leq |\lambda|\leq \rho\; \text{and}\; |\lambda-\lambda_j|\geq \epsilon_j\; \text{for all $\lambda_j\in \Lambda_\rho$}\},$$
and $K_l=\{\zeta\in B_l(\rho+1):\tau_l(\zeta)\in K\}= \overline{B}_l(\rho)
\setminus\left(D_l(0,\epsilon') \cup_{\lambda_j \in \Lambda_\rho }\Dl(\lambda_j,\epsilon_j)\right)$.
By an application of Lemma \ref{l:VdRVzs}, 
for $l$ sufficiently large $I+\Vd \RVz(\zeta)\chi$ is
invertible by its Neumann series on $K_l$.
Thus by (\ref{eq:V0asmodel})
$R_V$ has no poles on $K_l$ for $l$ sufficiently large.

\np Now we work on $\Dl(\lambda_j,\epsilon_j)$, and
set $\RVz^{\reg}(\zeta)=\RVz^{\reg}(\zeta;l,\lambda_j)$,
so that 
$\RVz^{\reg}(\zeta)=\RVz(\zeta)-\Xi(\RVzz(\lambda),\lambda_j)|_{\lambda=\tau_l(\zeta)}\pr_l$
for $l$ sufficiently large.
By our choice of $\epsilon_j$ this
is analytic on $\overline{\Dl}(\lambda_j,\epsilon_j)$ for large enough $l$.
  Then by 
Lemma \ref{l:Rreg} $I+\Vd\RVz^{\reg}(\zeta)\chi$ is invertible in 
$\Dl(\lambda_j,\epsilon_j)$, with  $(I+\Vd \RVz^{\reg}(\zeta)\chi)^{-1}=
 I-\Vd\RVz^{\reg}(\zeta)\chi+O_{L^2(X)\rightarrow L^2(X)}(l^{-\delta})$
for
$\zeta \in \overline{\Dl}(\lambda_j,\epsilon_j)$.
Thus on $\overline{\Dl}(\lambda_j,\epsilon_j)$
\begin{align}\label{eq:factor1}
I+\Vd \RVz\chi & = (I+\Vd \RVz^{\reg}(\zeta)\chi)\Big(I+(I+\Vd\RVz^{\reg}(\zeta)\chi)^{-1}
\Vd \Xi(\RVzz(\lambda),\lambda_0)|_{\lambda=\tau_l(\zeta)} \pr_l\chi\Big).
\end{align}

By (\ref{eq:factor1}) and (\ref{eq:prlidentity}), $I+\Vd\RVz\chi$ is invertible at a point $\zeta \in \Dl(\lambda_j,\epsilon_j) $ if and only if 
$I+\pr_l (I+\Vd\RVz^{\reg}(\zeta)\chi)^{-1}
\Vd \Xi(\RVzz(\lambda),\lambda_0)|_{\lambda=\tau_l(\zeta)} \pr_l\chi$ is invertible at $\zeta$.
There is a $C_j$ so that $\|\chi \Xi(\RVzz,\lambda_j)\chi\|
\leq C_j|\lambda-\lambda_j|^{-\mvzz(\lambda_j)}$ on 
$\{\lambda\in \Complex: \;|\lambda-\lambda_j|\leq \epsilon_j\}$, 
\cite[Theorems 2.5, 3.9]{DyZw}.
 Thus on $\Dl(\lambda_j,\epsilon_j)$ using Lemma \ref{l:Rreg}
\begin{align*}
& \left\|  \pr_l (I+\Vd \RVz^{\reg}(\zeta)\chi)^{-1}
\Vd \Xi(\RVzz(\lambda,\lambda_0))|_{\lambda=\tau_l(\zeta)}\pr_l\chi
\right\| \\
& = \left\|  \sum_{j=0}^\infty
\pr_l (-\Vd \RVz^{\reg}(\zeta))^j \Vd 
\Xi(\RVzz(\lambda,\lambda_0))|_{\lambda=\tau_l(\zeta)}\pr_l\chi
\right\|
\\
 & \leq \left\| \pr_l(I- \Vd \RVz^{\reg}(\zeta)) \Vd \Xi(\RVzz(\lambda,\lambda_0))|_{\lambda=\tau_l(\zeta)}\pr_l\chi
\right\| + C_j'l^{-\delta}|\tau_l(\zeta)-\lambda_j|^{-\mvzz(\lambda_j)}.
\end{align*}
Now we use 
Lemma \ref{l:lognbhd}, $\|V_m\|_{L^{\infty}}=O(m^{-\delta})$ and
that $\pr_l$ commutes with $\RVzz$ 
 so that $\|\pr_l(I- \Vd \RVz^{\reg}(\zeta)) \Vd\pr_l\|=O(l^{-\delta})$ on $\overline{\Dl}(\lambda_j,\epsilon_j)$.  
Thus there is a  (new)$C_j'$ so that  $$\|  \pr_l (I+\Vd \RVz^{\reg}(\zeta))^{-1}
\Vd \Xi(\RVzz(\lambda,\lambda_0))|_{\lambda=\tau_l(\zeta)}\pr_l\chi\| \leq 
C_j'l^{-\delta}|\tau_l(\zeta)-\lambda_j|^{-\mvzz(\lambda_j)}$$
on $\overline{\Dl}(\lambda_j,\epsilon_j)$. Therefore
$I+ \pr_l (I+\RVz^{\reg}(\zeta))^{-1}
\Xi(\RVzz(\lambda,\lambda_0))|_{\lambda=\tau_l(\zeta)}\pr_l\chi$ is 
invertible in
this region if $|\tau_l(\zeta)-\lambda_j|\geq C_j l^{-\delta/\mvzz(\lambda_j)}$,
where we can take  $C_j= (2C_j')^{1/\mvzz(\lambda_j)}$.  Taking $\tilde{C}=\max_{\lambda_j\in \Lambda_\rho}C_j$ 
finishes the proof of Theorem \ref{thm:polefree} away from $\tau_l=0$.

\np If $\RVzz(\lambda)$ does not have a pole at the origin, then there
is a $\delta>0$ so that
for $l$ sufficiently large $\RVz(\zeta)$ is analytic in $\overline{B}_l(\delta)$.
Thus by
 Lemma \ref{l:VdRVzs}, for $l$ sufficiently large $\RV(\zeta)$ is analytic
in $\overline{B}_l(\delta)$.
\qed

\subsection{Approximating the resolvent $\RV$}
In a sense made precise below in Proposition 
\ref{p:Rest} and Lemma \ref{l:bdryphys}, at high energies $R_{V_0}$ approximates $\RV$ well away from resonances.
The first result is useful for neighborhoods of thresholds.
\begin{prop}\label{p:Rest} 
Let $V,\chi \in L^\infty_c(X)$, with $V$ satisfying (\ref{eq:Vhyp}).
Let $K\subset \Complex$ be a compact set on which $\RVzz$ is analytic
and suppose 
$K\subset \{\lambda\in \Complex: |\lambda|<\rho\}$. 
Define $K_l\defeq \{ \zeta\in B_l(\rho): \tau_l(\zeta)\in K\}\subset \zhat$.
 Then 
for $l$ sufficiently large, $R_V$ is analytic on $K_l$.  Moreover,
if $\chi \in L^\infty_c(X)$, then 
$\|\chi (R_V(\zeta)-\RVz(\zeta))\chi\|=O(l^{-\delta})$ 
uniformly for $\zeta \in K_l$.
\end{prop}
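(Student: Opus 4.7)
My plan is to exploit the resolvent identity \eqref{eq:V0asmodel},
\[
\chi R_V(\zeta)\chi = \chi R_{V_0}(\zeta)\chi\bigl(I + \Vd R_{V_0}(\zeta)\chi\bigr)^{-1},
\]
after enlarging $\chi$ to a $\theta$-independent cutoff with $\chi V = V$ (so $\chi\Vd = \Vd$); this reduction is harmless by a standard sandwich argument, since for the original $\chi\in L^\infty_c(X)$ we have $\chi(R_V-R_{V_0})\chi = \chi[\tilde\chi(R_V-R_{V_0})\tilde\chi]\chi$ for any larger $\tilde\chi$ with $\tilde\chi\equiv 1$ on $\supp V\cup\supp \chi$. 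The proposition then reduces to showing, for $\zeta\in K_l$ and $l$ large, that (a) $R_{V_0}$ is analytic on $K_l$, and (b) $I+\Vd R_{V_0}\chi$ is invertible there with uniformly bounded inverse.

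Claim (a) is immediate from \eqref{eq:RVzsep} and Lemma \ref{l:Rvzzresfree}: $R_{V_00}(\tau_l(\zeta))$ is analytic by hypothesis on $K$, and for $k\neq l$ the identity $\tau_k^2 = \tau_l^2 + l^2 - k^2$ places $|\tau_k(\zeta)|$ deep inside the logarithmic resonance-free region of Lemma \ref{l:Rvzzresfree}. For claim (b) I factor
\[
I + \Vd R_{V_0}\chi = \bigl(I + \Vd R_{V_0}(I-\pr_l)\chi\bigr)(I + T'), \quad T' \defeq \bigl(I + \Vd R_{V_0}(I-\pr_l)\chi\bigr)^{-1}\Vd R_{V_0}\pr_l\chi.
\]
By Lemma \ref{l:lognbhd}, $\|\Vd R_{V_0}(I-\pr_l)\chi\| = O(l^{-1/2})$, so the first factor is invertible. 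Because $\chi$ is $\theta$-independent, $\pr_l\chi = \chi\pr_l$, which forces $T' = T'\pr_l$; identity \eqref{eq:prlidentity} then reduces invertibility of $I+T'$ to that of $I + \pr_l T'\pr_l$ on $\pr_l L^2(X)$. The crucial observation is that $\Vd$ has vanishing $\theta$-mean, so the only Fourier modes of $\Vd$ coupling $\pr_l L^2(X)$ to itself are $V_{\pm 2l}$, giving $\|\pr_l\Vd\pr_l\| = O(l^{-\delta})$ under \eqref{eq:Vhyp}. Combined with the uniform boundedness of $R_{V_00}(\tau_l)\chi$ on $K$ and the previous bound on the first factor, this yields $\|\pr_l T'\pr_l\| = O(l^{-\delta})$, so $I+\pr_l T'\pr_l$ is invertible with uniformly bounded inverse.

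For the norm estimate, I apply $(I+A)^{-1}A = A - (I+A)^{-1}A^2$ with $A = \Vd R_{V_0}\chi$ to write
\[
\chi(R_V - R_{V_0})\chi = -\chi R_{V_0}\chi A + \chi R_{V_0}\chi(I+A)^{-1}A^2.
\]
Using $\chi\Vd = \Vd$ at the middle position, the first term simplifies to $-\chi R_{V_0}\Vd R_{V_0}\chi$, which is $O(l^{-\delta})$ by Lemma \ref{l:VdRVzs}. For the second term, the same trick gives $A^2 = \Vd R_{V_0}\Vd R_{V_0}\chi = \Vd\cdot(\chi R_{V_0}\Vd R_{V_0}\chi)$ (inserting $\chi$ after the leftmost $\Vd$ via $\Vd\chi=\Vd$), so $\|A^2\| \le \|\Vd\|_\infty\|\chi R_{V_0}\Vd R_{V_0}\chi\| = O(l^{-\delta})$ by Lemma \ref{l:VdRVzs}; since $\|\chi R_{V_0}\chi\|$ is uniformly bounded on $K_l$ (by separation of variables) and $\|(I+A)^{-1}\|$ is bounded by step (b), the whole second term is also $O(l^{-\delta})$. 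The main obstacle is step (b): the perturbation $\Vd R_{V_0}\chi$ is itself only $O(1)$, not small, because $R_{V_00}(\tau_l)\chi$ is merely bounded (not small) on $K$, so invertibility cannot be obtained from a Neumann series in the full operator; instead one must extract smallness from the Fourier decay of $\Vd$, and the compatibility $\pr_l\chi = \chi\pr_l$ is what permits the reduction to the single finite block $\pr_l L^2(X)$ where $\|\pr_l\Vd\pr_l\| = O(l^{-\delta})$ takes effect.
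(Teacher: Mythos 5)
Your argument is correct, and its skeleton is the paper's: after passing to a $\theta$-independent cutoff with $\chi V=V$, both proofs rest on the identity (\ref{eq:V0asmodel}) together with the key estimate of Lemma \ref{l:VdRVzs}, $\|\chi \RVz \Vd \RVz\chi\|=O(l^{-\delta})$ on $K_l$. The one place you diverge is the invertibility of $I+\Vd\RVz\chi$: you establish it by the block factorization through $I+\Vd\RVz(I-\pr_l)\chi$ and the reduction via (\ref{eq:prlidentity}) to $I+\pr_l T'\pr_l$, using $\pr_l\chi=\chi\pr_l$ and the observation that only the modes $V_{\pm 2l}$ contribute to $\pr_l\Vd\pr_l$, so $\|\pr_l\Vd\pr_l\|=O(l^{-\delta})$ --- essentially the mechanism of Lemma \ref{l:RTcheck}. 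The paper gets the same invertibility more cheaply: writing $A=\Vd\RVz\chi$, Lemma \ref{l:VdRVzs} already gives $\|A^2\|=\|\Vd\,(\chi\RVz\Vd\RVz\chi)\|\leq 1/2$ on $K_l$ for large $l$, and since $\|A\|$ is uniformly bounded, the Neumann series $\sum_j(-A)^j$ converges (grouping terms in pairs) even though $A$ itself is only $O(1)$; so your closing remark that invertibility ``cannot be obtained from a Neumann series in the full operator'' is not quite accurate --- it can, keyed on the smallness of $A^2$ rather than of $A$. After that, your use of $(I+A)^{-1}A=A-(I+A)^{-1}A^2$ and the paper's summation of the series $\sum_{j\geq 1}\RVz(-A)^j$ are two ways of saying the same thing: the leading term is $-\chi\RVz\Vd\RVz\chi=O(l^{-\delta})$ and the remainder is controlled by $\|A^2\|=O(l^{-\delta})$ together with the uniform bounds on $\chi\RVz\chi$ and $(I+A)^{-1}$. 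Your extra step (b) buys nothing beyond what Lemma \ref{l:VdRVzs} provides, but it is correct and self-contained, and it makes explicit the Fourier-mode cancellation that the paper delegates to Lemma \ref{l:RTcheck} in other proofs.
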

\begin{proof}
Without loss of generality we may assume
$\chi$ is independent of $\theta$ and 
satisfies $\chi V=V$.
Then $\chi \RVz \chi = \chi R_V \chi(I+\Vd R_{V_0}\chi)$.
Since by Lemma \ref{l:VdRVzs}  $\| (\Vd R_{V_0}\chi)^2\|\leq 1/2$ on $K_l$  for $l$ sufficiently
large,  $I+\Vd R_{V_0}\chi$ is invertible 
as 
$(I+\Vd R_{V_0}\chi)^{-1}=\sum_{j=0}^\infty(-\Vd R_{V_0}\chi)^j$, and thus
$R_V$ is analytic on $K_l$.  Moreover,
$$\chi(R_V(\zeta)-\RVz(\zeta))\chi
= \chi \sum_{j=1}^\infty  \RVz(\zeta) (-\Vd R_{V_0}(\zeta)\chi)^j .
$$
By applying Lemma \ref{l:VdRVzs} twice,
 this is
 $$\chi(R_V(\zeta)-\RVz(\zeta))\chi= -\chi \RVz(\zeta)\Vd\RVz(\zeta)\chi+O_{L^2\rightarrow L^2}(l^{-\delta})=O_{L^2\rightarrow L^2}(l^{-\delta}).$$
\end{proof}

A similar result with a similar proof is the following lemma.
The points $\zeta\in \zhat$ considered in this lemma lie on the boundary
of the physical space, but are away from the thresholds.  
\begin{lemma}\label{l:bdryphys}
Let $V,\chi\in L^\infty_c(X)$, with 
$V$ satisfying (\ref{eq:Vhyp}).
 Then there are 
constants $M,\; L>0$ so that 
\begin{multline}\text{if}\; l>L, \; 
\zeta \in B_l(\sqrt{2l-1}), \; \
\tau_l(\zeta)\in i[0,\infty),\; \text{and}\;
M<\frac{\tau_l(\zeta)}{i}<\sqrt{2l-1}-\frac{M}{\sqrt{l}},\;\\ \text{then}\; 
\|\chi (\RV(\zeta)-\RVz(\zeta))\chi\|=O(l^{-\delta}).
\end{multline}
Likewise, there are constants $M_1,\; L_1>0$ so that
\begin{multline}\text{if}\;
 l>L_1,  \; \zeta \in B_l(\sqrt{2l-1}), \; 
\tau_l(\zeta)\in [0,\infty),\;\text{and}\;
M_1<\tau_l(\zeta)<\sqrt{2l-1}-\frac{M_1}{\sqrt{l}}, \; \\ \text{then}\;
\|\chi (\RV(\zeta)-\RVz(\zeta))\chi\|=O(l^{-\delta}).
\end{multline}
\end{lemma}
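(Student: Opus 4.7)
The plan is to mimic the proof of Proposition \ref{p:Rest}. Using the identity (\ref{eq:V0asmodel}),
\[
\chi R_V(\zeta)\chi - \chi\RVz(\zeta)\chi = -\chi\RVz(\zeta)\chi\,(I + \Vd R_{V_0}(\zeta)\chi)^{-1}\Vd R_{V_0}(\zeta)\chi,
\]
so the task reduces to showing that on the sets described, $I + \Vd R_{V_0}(\zeta)\chi$ is invertible by Neumann series and $\|\chi \RVz(\zeta)\Vd \RVz(\zeta)\chi\| = O(l^{-\delta})$. Once these are established, the argument concludes exactly as in Proposition \ref{p:Rest}.

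The key step is proving the estimate above, which amounts to an analog of Lemma \ref{l:VdRVzs} valid on the present regions, where $\tau_l(\zeta)$ is unbounded as $l\to\infty$ rather than lying in a fixed compact set. Using the separation of variables (\ref{eq:RVzsep}), one expands
\[
\chi \RVz(\zeta)\Vd\RVz(\zeta)\chi = \sum_{k,j\ge 0}\chi\,\RVzz(\tau_k(\zeta))\,\pr_k \Vd \pr_j\, \RVzz(\tau_j(\zeta))\,\chi.
\]
The middle factor $\pr_k\Vd\pr_j$ has norm bounded by the Fourier coefficients $\|V_{\pm(k-j)}\|_{L^\infty} + \|V_{\pm(k+j)}\|_{L^\infty}$, hence decays in $|k-j|$ by (\ref{eq:Vhyp}); in particular $\|\pr_l\Vd\pr_l\| = O(l^{-\delta})$. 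For the outer resolvent factors, the identity $\tau_k^2 = \tau_l^2 + l^2 - k^2$ combined with the hypothesis $M < \tau_l(\zeta)/i < \sqrt{2l-1} - M/\sqrt{l}$ (and the analogous constraint for the second assertion) ensures $|\tau_k(\zeta)|$ is uniformly bounded below in $k,l$ and grows with $|k-l|$ away from $k=l$. I would choose $M$ (respectively $M_1$) larger than the constant $C_0$ of Lemma \ref{l:Rvzzresfree} and beyond any value at which $\chi\RVzz(\lambda)\chi$ has a pole on the relevant positive-imaginary or positive-real segment of $\Complex$ (only finitely many such poles can occur since $-\Lz + V_0$ has essential spectrum $[0,\infty)$ and only discrete spectrum in the upper half-plane). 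Lemma \ref{l:Rvzzresfree} then yields $\|\chi\RVzz(\tau_k)\chi\| = O(|\tau_k|^{-1})$ for $k$ far from $l$ and uniform boundedness for $k$ close to $l$; in the imaginary-axis case one additionally has the direct high-energy bound $\|\RVzz(is)\| = O(s^{-2})$ for large $s$.

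The main obstacle is the diagonal block of the double sum, where $k,j$ are both close to $l$ and $|\tau_k|, |\tau_j|$ are only bounded below rather than large. Here the Fourier decay of $V$ rescues the estimate: the relevant couplings $\pr_k\Vd\pr_j$ involve Fourier modes $V_m$ with $|m|\sim l$ (as for $\pr_l \Vd \pr_l$, governed by $V_{\pm 2l}$), producing the required $l^{-\delta}$ factor. Assembling these mode-by-mode bounds and summing the double series using the off-diagonal decay in $|k-j|$ and the decay of the resolvent factors when $k$ or $j$ is far from $l$ yields the desired global estimate $\|\chi\RVz\Vd\RVz\chi\| = O(l^{-\delta})$, after which the proof concludes as in Proposition \ref{p:Rest}.
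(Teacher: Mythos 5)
Your reduction is the same as the paper's: invert $I+\Vd\RVz(\zeta)\chi$ by Neumann series and show $\|\chi\RVz(\zeta)\Vd\RVz(\zeta)\chi\|=O(l^{-\delta})$, then conclude as in Proposition \ref{p:Rest}. Your treatment of the frequencies far from the threshold is also fine. The genuine gap is in the block you call the main obstacle. On the segment $\tau_l(\zeta)\in i[0,\infty)$ the two frequencies whose $\tau$'s need not be large are $j=l$ (purely imaginary $\tau_l$) and $j=l-1$ (real $\tau_{l-1}$, since $\tau_{l-1}^2=\tau_l^2+2l-1\ge 0$); in the second statement they are $j=l$ and $j=l+1$. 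The off-diagonal couplings inside this block are \emph{not} governed by Fourier modes of size $\sim l$: for example $\pr_l\Vd\pr_{l-1}$ couples $g(x)e^{i(l-1)\theta}$ to the $e^{il\theta}$ mode through $V_{1}$ (and to $e^{-il\theta}$ through $V_{-(2l-1)}$), so $\|\pr_l\Vd\pr_{l-1}\|$ is of size $\|V_{\pm1}\|+O(l^{-\delta})=O(1)$ in general. Hypothesis (\ref{eq:Vhyp}) therefore gives no smallness for these cross terms; only the strictly diagonal terms $\pr_l\Vd\pr_l$, $\pr_{l\mp1}\Vd\pr_{l\mp1}$ are $O(l^{-\delta})$. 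As written, your assembly of the double sum does not close at exactly the terms the lemma is about.

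The missing idea — and the heart of the paper's proof — is that on these segments the two adjacent frequencies cannot both be small: with $\tau_l\in i[0,\infty)$ one has $|\tau_l|^2+|\tau_{l-1}|^2=2l-1$ (and $|\tau_l|^2+|\tau_{l+1}|^2=2l+1$ in the real case), so either $|\tau_l|>\sqrt{2l-1}/4$ or $|\tau_{l-1}|>\sqrt{2l-1}/4$. Hence for each such $\zeta$ at most one projection must be singled out, every cross term carries at least one resolvent factor of size $O(l^{-1/2})$ (Lemma \ref{l:Rvzzresfree} for the real frequency, or $\|\RVzz(is)\|=O(s^{-2})$ for the imaginary one), which suffices because $\delta\le 1/2$, and the single remaining diagonal term is handled by $\|\pr_{j_0}\Vd\pr_{j_0}\|=O(l^{-\delta})$. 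This is precisely why the paper splits into the cases $|\tau_l(\zeta)|>\sqrt{2l-1}/4$ and $|\tau_{l-1}(\zeta)|>\sqrt{2l-1}/4$ and then follows Lemma \ref{l:Rreg} with $\pr_{l-1}$ (respectively $\pr_l$) in place of $\pr_l$. Two smaller points: the decay $\|V_{k-j}\|=O(|k-j|^{-\delta})$ is not summable since $\delta\le1/2$, so your summation of the far blocks must also exploit the growth $|\tau_k|\gtrsim (l(|k-l|-1))^{1/2}$; and choosing $M$ merely beyond $C_0$ and beyond the poles of $\RVzz$ gives boundedness of the resolvent at the one non-large frequency but not the smallness your Neumann series needs — take $M$ large enough that $\|\Vd\|_{L^\infty}\|\chi\RVzz\chi\|\le 1/2$ there (the role of the bounds $M<\tau_l/i<\sqrt{2l-1}-M/\sqrt{l}$), or argue via $\|(\Vd\RVz\chi)^2\|\le 1/2$ as in Proposition \ref{p:Rest}.
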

\begin{proof}
This proof is very similar to the proof of Proposition \ref{p:Rest}.  We
outline the proof of the first statement only, as the proof of the second
is analogous.

Without loss of generality, we may assume $\chi$ is independent of $\theta$
and satisfies $\chi V=V$.  

We next note that if $\zeta \in B_l(\sqrt{2l-1}),$ then for $l>3$ either
$|\tau_{l}(\zeta)|>\sqrt{2l-1}/4$ or $|\tau_{l-1}(\zeta)| >\sqrt{2l-1}/4$
or both are true.  In either case, if $\tau_l(\zeta)\in i[0,\infty)$,
then there is a $c_0>0$ so that $|\tau_j(\zeta)|>c_0l^{1/2}$ for 
$j\not = l, \; l-1$.  Moreover, again with $\tau_l(\zeta)\in i[0,\infty)$,
 $\Im \tau_j(\zeta) >0$ if $j>l$ 
and $\Im \tau_j(\zeta)=0$ if $0\leq j<l$.

Suppose $\zeta \in B_l(\sqrt{2l-1})$, $\tau_l(\zeta)\in i[0,\infty)$, 
and $|\tau_{l}(\zeta)|>\sqrt{2l-1}/4$.  Then using Lemma \ref{l:Rvzzresfree}
and (\ref{eq:RVzsep}) we see that 
$\|\chi \RVz(\zeta)\chi (I-\pr_{l-1})\|=O(l^{-1/2}).$  By Lemma \ref{l:Rvzzresfree} there is a $C>0$ so that if $\lambda\in \Real$, $|\lambda|>C$ then
$\| \Vd\|_{L^\infty}\|\chi \RVzz(\lambda)\chi\|\leq 1/2$.  Choose $M> C+1$; then 
if $\tau_l(\zeta)\in i[0,\infty)$ with $\tau_l(\zeta)/i <\sqrt{2l-1}-M/\sqrt{l}$, for $l$ sufficiently
large $|\tau_{l-1}(\zeta)|> C$.  
Now we restrict ourselves to $\tau_l(\zeta)\in i[0,\infty)$, $\sqrt{2l-1}/4<\tau_l(\zeta)/i< \sqrt{2l-1}-M/\sqrt{l}$. 
Since $\|\pr_{l-1}\Vd \pr_{l-1}\|=O(l^{-\delta})$ by our assumption on 
$\|V_m\|_{L^\infty}$, $\| \chi \RVz(\zeta)\pr_{l-1} \Vd \RVz(\zeta)\pr_{l-1}\chi\|=O(l^{-\delta})$,
and we can follow the proof of Lemma \ref{l:Rreg} to show that
$\|\chi \RVz(\zeta)\Vd\RVz(\zeta)\chi\|=O(l^{-\delta})$.  Then
\begin{multline*}\| \chi(\RV(\zeta)-\RVz(\zeta))\chi\| =\left\| 
\chi \RVz(\zeta)\chi\left((I+\Vd \RVz(\zeta)\chi)^{-1}-I\right)\right\| 
\\
= \left\|  \chi \RVz(\zeta)\Vd \RVz(\zeta)\chi\right\|+O(l^{-\delta})
= O(l^{-\delta})
\end{multline*}
proving the lemma when $\tau_l(\zeta)\in i[0,\infty)$, $\sqrt{2l-1}/4<\tau_l(\zeta)/i< \sqrt{2l-1}-M/\sqrt{l}$.   A similar argument, 
singling out $\pr_{l}$ rather than
$\pr_{l-1}$,  handles 
the case with $\tau_l(\zeta)\in i[0,\infty)$, 
$\sqrt{2l-1}/4<|\tau_{l-1}(\zeta)|$. 
\end{proof}

\subsection{Proof of Theorem \ref{thm:0}}
Theorem \ref{thm:0}  
concerns
poles of $R_V$ arising as perturbations of threshold poles of $\RVz(\zeta)$.  Using separation of variables as in (\ref{eq:RVzsep}), these threshold poles, in turn,
correspond to a pole 
of $\RVzz(\lambda)$ at $\lambda=0$.

We begin with a lemma about poles of $\RVz(\lambda)$ at the 
origin.  This result is well-known if $V_0$ is real-valued.
\begin{lemma} \label{l:multat0} Suppose $V_0\in L^\infty_c(\Real^d)$, and near
$\lambda=0$
\begin{equation}\label{eq:singexp}
\RVzz(\lambda)=\sum_{k=1}^{k_0}\frac{1}{\lambda^k}A_k+A(\lambda),
\end{equation}
where $A$ is analytic in a neighborhood of  the origin.
Then $\mvzz(0)=\max_{0\leq t\leq1}\rank(A_1+tA_2)$.
\end{lemma}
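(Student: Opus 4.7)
The plan is to translate $\mvzz(0) = \rank \Xi(\RVzz, 0)(L^2_c)$ into an explicit statement about $A_1, A_2$ using the algebraic constraints imposed on the Laurent coefficients by the resolvent equation.

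First I will match powers of $\lambda$ in $(H - \lambda^2)\RVzz(\lambda) = I$ (with $H = -\Lz + V_0$) against the expansion $\RVzz(\lambda) = \sum_{k=1}^{k_0}\lambda^{-k} A_k + A(\lambda)$. Setting the coefficient of $\lambda^{-k}$ equal to zero for $k \ge 1$ yields
\[
HA_k = A_{k+2}\;\text{for}\;1 \le k \le k_0-2,\qquad HA_{k_0-1}=HA_{k_0}=0,
\]
together with the right-sided analogue $A_k H = A_{k+2}$ (and $A_{k_0-1}H = A_{k_0}H = 0$) obtained from $\RVzz(\lambda)(H-\lambda^2) = I$. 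Iterating gives $A_{2m+1} = H^m A_1$ and $A_{2m+2} = H^m A_2$, so $H$ acts nilpotently on $\sum_k \Ran A_k$; in particular $\bigcap_{k=1}^{k_0} \ker A_k = \ker A_1 \cap \ker A_2$.

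The image $\Xi(\RVzz,0)(L^2_c)$, viewed as the image of the map $f \mapsto \sum_{k=1}^{k_0}\lambda^{-k}A_k f$ into the space of operator-valued Laurent polynomials in $\lambda$, therefore has dimension $\dim\bigl(L^2_c/(\ker A_1\cap\ker A_2)\bigr)$. I will next show this equals $\max_{0 \le t \le 1}\rank(A_1+tA_2)$. The inequality $\le$ is straightforward: for every $t$, $\ker(A_1+tA_2)\supseteq \ker A_1\cap\ker A_2$, so $\rank(A_1+tA_2)\le \dim\bigl(L^2_c/(\ker A_1\cap\ker A_2)\bigr)$. For the reverse, I will argue that for $t$ off a finite exceptional set (hence for some $t\in[0,1]$), the kernel of $A_1+tA_2$ reduces to exactly $\ker A_1\cap\ker A_2$; equivalently, $A_1 f = -t A_2 f$ forces $A_1 f = A_2 f = 0$. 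This reduces to the structural claim that $\Ran A_1$ and $\Ran A_2$ intersect trivially inside the ``generalized zero-energy'' subspace of $L^2_{\loc}$ picked out by the relations $HA_j = 0 = A_jH$.

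The principal difficulty will be this last step, establishing the trivial intersection $\Ran A_1 \cap \Ran A_2 = \{0\}$ in an appropriate sense so that the generic pencil kernel drops to $\ker A_1 \cap \ker A_2$. For real $V_0$ in odd dimension $d \ge 3$, this is classical: $\Ran A_2$ consists of square-integrable zero modes, while $\Ran A_1$ is built from threshold resonances living outside $L^2$, so the ranges are trivially disjoint. In the general complex-valued setting this disjointness must be extracted from the joint structure of the chain $A_{k+2} = HA_k = A_k H$ and its termination at $k_0$, most naturally by combining the left and right annihilation identities to decompose the generalized zero-energy space of $H$ into distinct layers occupied by $\Ran A_1$ and $\Ran A_2$.
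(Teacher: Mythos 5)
Your derivation of the chain relations $HA_k=A_kH=A_{k+2}$ (with $H=-\Lz+V_0$) coincides with the paper's first step, but from there your route has two genuine gaps. First, you silently replace the quantity the paper actually computes. The definition (\ref{eq:mvzz}) is read in the paper's proof as $\mvzz(0)=\dim\left(\Ran A_1+\Ran A_2\right)$, i.e.\ the rank of the ``row block'' $(A_1\ A_2)$, the span of the images $A_k(L^2_c)$; you instead compute $\dim\bigl(L^2_c/(\ker A_1\cap\ker A_2)\bigr)$, the rank of the joint map $f\mapsto(A_1f,A_2f)$. These two numbers agree only after invoking extra structure, e.g.\ the transpose symmetry $A_k^{T}=A_k$ inherited from the symmetric Schwartz kernel of $\RVzz$ (valid also for complex $V_0$), or some substitute argument; you never address this, so as written you are proving a statement about a possibly different quantity than $\mvzz(0)$.

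Second, and more seriously, your reverse inequality hinges entirely on the structural claim $\Ran A_1\cap\Ran A_2=\{0\}$, which you concede you have not established. This cannot be extracted from the left and right annihilation identities and the termination of the chain alone: those relations are all satisfied with $A_1=A_2$ (both killed by $H$ on either side when $k_0\le 2$), in which case the ranges coincide and yet the lemma is still true; more generally nothing you use rules out threshold eigenfunctions appearing in $\Ran A_1$ as well as in $\Ran A_2$. So trivial range intersection is neither a consequence of your identities nor necessary for the conclusion, and when it fails your argument provides no lower bound on $\max_t\rank(A_1+tA_2)$. The paper argues on the range side instead: $A_{k+2}=A_kH$ gives $\Ran A_{2j}\subset\Ran A_2$ and $\Ran A_{2j+1}\subset\Ran A_1$, hence $\mvzz(0)=\dim(\Ran A_1+\Ran A_2)$, and then it invokes the identity $\dim(\Ran A_1+\Ran A_2)=\max_{t\in[0,1]}\rank(A_1+tA_2)$, which again rests on the particular structure of the coefficients as in \cite[Theorem 2.5]{DyZw} --- for arbitrary pairs of finite-rank (even transpose-symmetric) operators the generic pencil rank can be strictly smaller than $\dim(\Ran A_1+\Ran A_2)$. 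That structural input is the real content of the lemma, and your sketch does not supply it.
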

Since $A_1,$ $A_2$ are finite rank,
the rank of $A_1+tA_2$ is it equal to its maximum for all but a finite number
of values of $t$ in $[0,1]$.
\begin{proof}
Using the expansion (\ref{eq:singexp}) and the identity 
$(-\Lz+V_0-\lambda^2)\RVzz(\lambda)=I$ shows that for $k>0$,
 $(-\Lz+V_0)A_{k}=A_{k+2}$, 
where we use the convention $A_{k+2}=0$ if $k+2>k_0$.  
 Just as in 
\cite[Theorem 2.5]{DyZw}, one can use this and the fact that $-\Lz+V_0$ 
commutes with $\RVzz$
to show that for $j\in \Natural$,
 $\Ran(A_{2j})\subset \Ran(A_2)$ and $\Ran(A_{2j+1})\subset\Ran(A_1).$  Here  $\Ran(A_k)$ denotes the range of the operator $A_k$ on $L^2_c(\Real^d)$.
Since $\mvzz(0)=\dim\left( \cup_{k=1}^{k_0} \Ran(A_k)\right)$, 
this shows $\mvzz(0)=\dim\left( \Ran A_1 \cup \Ran A_2 \right).$
But 
$$\dim \left( \Ran A_1 \cup \Ran A_2 \right) = 
\max_{t\in [0,1]}\dim \Ran (A_1+tA_2)=\max_{t\in[0,1]}\rank(A_1+tA_2),$$
proving the lemma.
\end{proof}

\begin{lemma}\label{l:residues}
Let $V\in L^\infty_c(X)$ satisfy (\ref{eq:Vhyp}).
 Let $\epsilon>0$ be chosen
so that $\RVzz(\lambda)$ has no poles in
$\{\lambda \in \Complex:0< |\lambda|<2\epsilon\}$, and let $\gamma_l\subset B_l(2\epsilon)\subset \zhat$
 be the curve $\{|\tau_l|=\epsilon\}$ with positive orientation.  Then for $t\in [0,1]$ and $l$ sufficiently large 
$$\rank \int_{\gamma_l} (1+t \tau_l(\zeta))R_V(\zeta)d\tau_l(\zeta)
\geq \rank \int_{\gamma_l} (1+t \tau_l(\zeta))\RVz(\zeta)d\tau_l(\zeta).
$$
\end{lemma}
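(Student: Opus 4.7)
My plan is to compare the two contour integrals directly: show they agree, under a cutoff of the form $\chi\cdots\chi$, up to an operator of operator norm $O(l^{-\delta})$, and then deduce the rank inequality from Weyl's singular-value inequality (equivalently, lower semicontinuity of rank under norm-convergence of finite-rank operators).

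First I would evaluate the $R_{V_0}$ integral explicitly. By \eqref{eq:RVzsep} and the identity $\tau_k^2=\tau_l^2+l^2-k^2$, for $l$ sufficiently large the only summand of $R_{V_0}$ with singularities inside $\gamma_l$ is the $k=l$ term $\RVzz(\tau_l(\zeta))\pr_l$; by our hypothesis on $\epsilon$ this has a single singular point inside $\gamma_l$, at $\tau_l=0$, with Laurent expansion $\sum_{j\geq 1}\tau_l^{-j}A_j\pr_l+(\text{analytic})$. A residue calculation in the coordinate $\tau_l$ then gives
\[
N_{V_0}(t)\defeq\int_{\gamma_l}(1+t\tau_l(\zeta))R_{V_0}(\zeta)\,d\tau_l(\zeta)=2\pi i(A_1+tA_2)\pr_l.
\]

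Next I would compare $R_V$ with $R_{V_0}$ on $\gamma_l$. Fix $\chi\in L^\infty_c(X)$ independent of $\theta$, with $\chi V=V$ and $\supp\chi$ large enough that the same unique-continuation argument used in Section~\ref{ss:mpandGS} to identify $m_V(\zeta_0)$ with $\rank\Xi(\chi R_V\chi,\zeta_0)$ gives $\rank(\chi N_W(t)\chi)=\rank N_W(t)$ for $W=V,V_0$, where $N_V(t)$ is defined analogously. For $l$ large, $\gamma_l$ is free of poles of $R_V$: this follows from the annulus argument in the $L^\infty$ proof of Theorem~\ref{thm:polefree} just completed, applied to $\{\epsilon/2\leq|\tau_l(\zeta)|\leq 3\epsilon/2\}$, on which $\RVzz$ is analytic. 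Applying Proposition~\ref{p:Rest} with $K$ a compact neighborhood of $\{|\lambda|=\epsilon\}$ yields $\|\chi(R_V(\zeta)-R_{V_0}(\zeta))\chi\|_{L^2\to L^2}=O(l^{-\delta})$ uniformly on $\gamma_l$, and integrating, using $|1+t\tau_l(\zeta)|\leq 1+\epsilon$ for $t\in[0,1]$ and $\zeta\in\gamma_l$, we obtain
\[
\|\chi(N_V(t)-N_{V_0}(t))\chi\|_{L^2\to L^2}=O(l^{-\delta})
\]
uniformly in $t\in[0,1]$.

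Finally, let $r=\rank(\chi N_{V_0}(t)\chi)$, so that its $r$-th singular value $\sigma_r(\chi N_{V_0}(t)\chi)>0$. Weyl's inequality gives
\[
\sigma_r(\chi N_V(t)\chi)\geq\sigma_r(\chi N_{V_0}(t)\chi)-\|\chi(N_V(t)-N_{V_0}(t))\chi\|_{L^2\to L^2},
\]
which is strictly positive for $l$ large. Hence $\rank(\chi N_V(t)\chi)\geq r$, and by the cutoff invariance, $\rank N_V(t)\geq\rank N_{V_0}(t)$ as claimed.

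\textbf{Main obstacle.} The subtlest step is ensuring that $\gamma_l$ is pole-free for $R_V$; this requires reusing the annulus portion of the preceding Theorem~\ref{thm:polefree} proof and, if necessary, perturbing the contour slightly within $\{\epsilon/2\leq|\tau_l|\leq 3\epsilon/2\}$ for exceptional values of $l$. Uniformity in $t\in[0,1]$ is a secondary concern: since $\rank(A_1+tA_2)$ takes only finitely many values on $[0,1]$ and the relevant $\sigma_r$ is continuous in $t$, a uniform lower bound on $\sigma_r$ is available on each closed subinterval of constant rank.
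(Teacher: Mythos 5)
Your proposal is correct and follows essentially the same route as the paper: evaluate the $R_{V_0}$ integral by residues as $2\pi i(A_1+tA_2)\pr_l$, use Proposition \ref{p:Rest} to get the $O(l^{-\delta})$ comparison of $\chi R_V\chi$ with $\chi R_{V_0}\chi$ on $\gamma_l$, conclude the rank inequality by lower semicontinuity of rank under norm perturbation (your Weyl-inequality step), and remove the cutoffs by unique continuation. The ``main obstacle'' you flag is in fact a non-issue: Proposition \ref{p:Rest} already asserts that $R_V$ is analytic on $K_l$ for $l$ large, so no contour perturbation or appeal to the annulus argument from Theorem \ref{thm:polefree} is needed.
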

\begin{proof}
We assume $\Vd$ is nontrivial, since otherwise there is nothing to prove.

We first point out that if $\RVzz(\lambda)=\sum_{k=1}^{k_0}\lambda^{-k}A_k +A(\lambda)$, with $A(\lambda)$ analytic near $\lambda =0$, then for $l$ 
sufficiently large
$$\int_{\gamma_l} (1+t \tau_l(\zeta))\RVz(\zeta)d\tau_l(\zeta)=
\int_{\gamma_l} (1+t \tau_l(\zeta))\RVzz(\tau_l(\zeta))\pr_l d\tau_l(\zeta)=2\pi i (A_1+tA_2)\pr_l.\; 
$$  

Let $\chi\in L^\infty_c(X)$ satisfy $\chi V=V$, with $\chi$ independent of $\theta$.  
Using Proposition \ref{p:Rest}, for $l$ sufficiently large
$$\left\| \int_{\gamma_l} (1+t \tau_l(\zeta))\chi \left(R_V(\zeta)-\RVz(\zeta)\right)\chi d\tau_l(\zeta)\right\|= O(l^{-\delta}).$$
Thus 
$$\left\| \int_{\gamma_l} (1+t \tau_l(\zeta))\chi R_V(\zeta)\chi d\tau_l(\zeta)
- 2\pi i \chi  (A_1+tA_2)\pr_l\chi \right\| =O(l^{-\delta}),$$
and this implies that for $l$ sufficiently large
\begin{equation}
\rank \int_{\gamma_l} (1+t \tau_l(\zeta))\chi R_V(\zeta)\chi d\tau_l(\zeta)
\geq 2 \rank(\chi (A_1+tA_2)\chi).
\end{equation}
But since $(-\Lz+V_0)^{k_0}A_j=0$ for $j=1,\;2$,
a unique continuation theorem (e.g. \cite{JeKe}) ensures that 
$\rank (A_1+tA_2)=\rank(\chi (A_1+t A_2)\chi)$, and 
similarily 
$$\rank \int_{\gamma_l} (1+t \tau_l(\zeta))\chi R_V(\zeta)\chi d\tau_l(\zeta)
= \rank \int_{\gamma_l} (1+t \tau_l(\zeta)) R_V(\zeta) d\tau_l(\zeta).$$
\end{proof}

\begin{lemma}\label{l:thresholdorder1} 
Let $V_0,\chi\in L^\infty_c(\Real^d)$, with $\chi V_0=V_0$.  Suppose $\RVz(\lambda)$ has a pole of order $1$ at the origin.  
Then for $l$ sufficiently large $2(\mvzz(0)-m_{00}(0))=M(I+V_0\Rz\chi,\zeta_l(0)).$
\end{lemma}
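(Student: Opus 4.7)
The plan is to reduce the threshold computation on $X$ to a threshold computation on $\Real^d$ via separation of variables, and then to handle the resulting Euclidean identity at the origin.

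Since $V_0$ and $\chi$ can both be taken independent of $\theta$, the operators $V_0$, $\Rz$, $\chi$ preserve the modal decomposition $L^2(X) = \bigoplus_k \Ran(\pr_k)$; in particular, $V_0 \Rz (I-\pr_l)\chi$ vanishes on $\Ran\pr_l$. Applying Lemma \ref{l:multandprl}, for $l$ sufficiently large
\begin{equation*}
M(I+V_0\Rz\chi, \zeta_l(0)) = M\bigl(I + \pr_l (I+V_0 \Rz(I-\pr_l)\chi)^{-1} V_0 \Rz \pr_l\chi,\, \zeta_l(0)\bigr),
\end{equation*}
and the inverse factor restricted to $\Ran\pr_l$ is the identity, so the right-hand side simplifies to $M(I + V_0 R_{00}(\tau_l)\pr_l\chi, \zeta_l(0))$. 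Passing to the local coordinate $z = \tau_l$ and using the product structure $L^2(X) = L^2(\Real^d)\otimes L^2(\Sphere^1)$, we have $V_0 R_{00}(z)\pr_l\chi = (V_0 R_{00}(z)\chi)\otimes \pr_l$, with $\Ran\pr_l$ two-dimensional (spanned by $e^{\pm il\theta}$) for $l\geq 1$. Since $I + (V_0 R_{00}(z)\chi)\otimes \pr_l$ is an orthogonal direct sum of the identity on $\Ran(I-\pr_l)$ and two commuting copies of $I + V_0 R_{00}(z)\chi$ acting on $L^2(\Real^d)$, additivity of $M$ yields
\begin{equation*}
M(I+V_0\Rz\chi, \zeta_l(0)) = 2\, M\bigl(I + V_0 R_{00}(\lambda)\chi,\, 0\bigr),
\end{equation*}
with the right-hand side computed on $L^2(\Real^d)$. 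It thus remains to prove the Euclidean identity $M(I + V_0 R_{00}(\lambda)\chi, 0) = \mvzz(0) - m_{00}(0)$.

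For $d \geq 3$ odd, $R_{00}$ is entire, $m_{00}(0)=0$, and this is the Euclidean analog of Proposition \ref{p:mvsM}, proved as in \cite[Theorem 3.5]{DyZw}. For $d=1$, $R_{00}$ has a simple pole at $0$ with rank-one residue $B_1 = \tfrac{i}{2}(1\otimes 1)$, so $m_{00}(0) = 1$, and by hypothesis combined with Lemma \ref{l:multat0}, $R_{V_0 0}$ has a simple pole at $0$ with residue $A_1$ of rank $\mvzz(0)$. Using the operator identity $(I + V_0 R_{00}(\lambda)\chi)^{-1} = I - V_0 R_{V_0 0}(\lambda)\chi$ together with the elementary fact $M(F,0) + M(F^{-1},0)=0$ (immediate from the integral formula and cyclicity of trace), one reduces the problem to computing $M(I - V_0 R_{V_0 0}(\lambda)\chi, 0)$. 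Since in $d=1$ the operators in question are trace class, the Gohberg--Sigal argument principle applies: $M(F, 0)$ equals the order of zero minus the order of pole of $\det F$ at $0$. The identity $\det(I+V_0 R_{00}\chi) \cdot \det(I - V_0 R_{V_0 0}\chi) = 1$ together with the fact that $I - V_0 R_{V_0 0}(\lambda)\chi$ has a simple pole at $0$ with rank-$\mvzz(0)$ residue $-V_0 A_1 \chi$ (rank preserved by unique continuation, as in the proof of Lemma \ref{l:residues}) gives $M(I - V_0 R_{V_0 0}\chi, 0) = m_{00}(0) - \mvzz(0)$, hence $M(I+V_0 R_{00}\chi, 0) = \mvzz(0) - m_{00}(0)$, as desired.

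The main obstacle is the careful bookkeeping in the $d=1$ threshold determinant computation: the rank-one singularity of $R_{00}$ and the rank-$\mvzz(0)$ simple pole of $R_{V_0 0}$ must be tracked together, and one must verify that the pole of $(I + V_0 R_{00}\chi)^{-1}$ at $0$ has precisely the order $\mvzz(0) - m_{00}(0)$. The essential input is the well-known $d=1$ bound that the pole of $R_{V_0 0}$ at $0$ is of order at most one, which ensures the cancellation is exactly one power of $\lambda$. An alternative, equivalent route is to factor $I + V_0 R_{00}(\lambda)\chi = (I + V_0 B_1 \chi/\lambda)(I + G(\lambda))$ with $G$ analytic, compute the rank-one factor's contribution ($M = -1 = -m_{00}(0)$) from its explicit Fredholm determinant when $\int V_0 \neq 0$ (and handle the mean-zero case by passing the singularity into $G$), and use Lemma \ref{l:Mproduct} together with the nonthreshold Euclidean analog of Proposition \ref{p:mvsM} to identify the analytic factor's contribution as $\mvzz(0)$.
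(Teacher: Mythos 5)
Your opening reduction is fine and matches the paper: since $V_0$ and $\chi$ are independent of $\theta$, everything commutes with the projections $\pr_k$, the $l$-th mode splits into the two copies spanned by $e^{\pm il\theta}$, and the lemma reduces (with the factor $2$) to the Euclidean identity $M(I+V_0\Rzz(\lambda)\chi,0)=\mvzz(0)-m_{00}(0)$. The gap is in your proof of that Euclidean identity, which is the actual content of the lemma. For $d\geq 3$ you assert that, because $\Rzz$ is analytic at $0$, the identity is "the Euclidean analog of Proposition \ref{p:mvsM}, proved as in \cite[Theorem 3.5]{DyZw}." But Proposition \ref{p:mvsM} is stated, and the nonthreshold argument is proved, only away from $\lambda_0=0$; the proof uses $\lambda_0\neq 0$ essentially, because the dictionary between the Laurent coefficients of $\RVzz$ in the variable $\lambda$ and spectral data in $z=\lambda^2$ degenerates at $\lambda=0$ (for instance $\oint 2\lambda\,\RVzz(\lambda)\,d\lambda$ sees only the coefficient of $\lambda^{-2}$ there, and in $d=3$ even $\chi\Rzz(\lambda)^2\chi$ acquires a pole at $0$). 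The identity $M(I+V_0\Rzz\chi,0)=\mvzz(0)$ is in general false at the origin (e.g.\ a zero eigenvalue of multiplicity $k$ makes the determinant vanish to order $2k$, not $k$); its validity under the order-one hypothesis is precisely the threshold theorem \cite[Theorem 3.15]{DyZw}, which is what the paper invokes and is why the paper stresses that the order-one assumption is important.

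The same missing step reappears in your $d=1$ argument. Knowing that $I-V_0\RVzz(\lambda)\chi$ has a simple pole at $0$ with residue of rank $\mvzz(0)$ does not determine $M$: for a family $I+\lambda^{-1}u\otimes v+\mathrm{holomorphic}$, the Fredholm determinant has a pole of order one when the pairing $\int uv$ is nonzero, but can be regular or even vanish at $0$ otherwise, so $M$ could be $-1$, $0$, or positive. The assertion that "the cancellation is exactly one power of $\lambda$" is exactly what has to be proved; it is the content of \cite[(2.2.31)]{DyZw} (a Jost-solution/Wronskian computation), not a consequence of the pole order alone. Your alternative factorization $I+V_0\Rzz\chi=(I+\lambda^{-1}V_0(\tfrac{i}{2}\,1\otimes 1)\chi)(I+G)$ does not close this: when $\int V_0=0$ the inverse of the first factor is itself singular at $0$, so $G$ is not analytic and the factorization breaks down, and even when $\int V_0\neq 0$ the identification $M(I+G,0)=\mvzz(0)$ again applies a nonthreshold statement at the threshold, which is the same gap. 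For comparison, the paper's proof consists of your separation-of-variables reduction followed by citations: \cite[(2.2.31)]{DyZw} for $d=1$ (real $V_0$, with the complex case handled similarly using the simple-pole hypothesis) and the proof of \cite[Theorem 3.15]{DyZw} for $d\geq 3$.
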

\begin{proof} We note here that the requirement that $l$ is
sufficiently large is to ensure that, using (\ref{eq:RVzsep}),
any poles of $\RVz$ at $\zeta_l(0)$
arise from poles of $\RVz$ at the origin.  Then via separation of variables it suffices to show
that $\mvzz(0)-m_{00}(0)=M(I+V_0\Rzz(\lambda)\chi,0)$.
 For $d=1$, then 
$\mvzz(0)=1$ and if $V_0$ is real-valued, this follows immediately from \cite[(2.2.31)]{DyZw}.  For complex-valued $V_0$, the proof is 
similar, if one uses the assumption that $\RVz$ has a simple pole at the origin.
For $d\geq 3$ is odd, the lemma  follows as in the proof of \cite[Theorem 3.15]{DyZw}.  
In each case, the assumption that the  pole is of order one is important.
\end{proof}

\begin{lemma}\label{l:poleorder1}
Let $V\in L^\infty_c(X)$ satisfy (\ref{eq:Vhyp}).
 Let $\epsilon>0$ be chosen
so that $\RVzz(\lambda)$ has no poles in
$\{\lambda \in \Complex:0< |\lambda|<2\epsilon\}$.
Suppose $\RVzz(\lambda)$ has a pole of order $1$ at the origin, with
residue of rank $\mvzz(0)$.  Then for $l$ sufficiently large
$$\sum_{\substack{\zeta\in \Dl(\epsilon)\\
\mV(\zeta)\not = 0}}\mV(\zeta)
\leq 2\mvzz(0).$$
\end{lemma}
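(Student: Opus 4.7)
The plan is to bound the total multiplicity by reducing it to an operator Rouch\'e comparison with the model potential $V_0$, then handling the threshold contribution separately. Throughout, let $\chi\in L^\infty_c(X)$ be independent of $\theta$ with $\chi V=V$.

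First I would use Proposition~\ref{p:mvsM} to identify, for $\zeta\in \Dl(\epsilon)\setminus\{\zeta_l(0)\}$, the multiplicity as $\mV(\zeta)=M(I+V\Rz\chi,\zeta)$, which is nonnegative there since $V\Rz\chi$ is analytic. On the boundary $\partial \Dl(\epsilon)=\{|\tau_l|=\epsilon\}$, $\RVzz$ is analytic by the choice of $\epsilon$, so Lemma~\ref{l:RTcheck} applies to give
$\|(I+\pr_l A_{l,V_0}B_{l,V_0})^{-1}\pr_l(A_{l,V}B_{l,V}-A_{l,V_0}B_{l,V_0})\|=O(l^{-\delta})$ on $\partial\Dl(\epsilon)$. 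Combining this with Lemmas~\ref{l:multandprl} and \ref{l:Mpr} and the operator Rouch\'e theorem of \cite[Theorem~2.2]{go-si}, one obtains for $l$ sufficiently large
\[
\sum_{\zeta\in \Dl(\epsilon)} M(I+V\Rz\chi,\zeta)=\sum_{\zeta\in \Dl(\epsilon)} M(I+V_0\Rz\chi,\zeta)=M(I+V_0\Rz\chi,\zeta_l(0))=2(\mvzz(0)-m_{00}(0)),
\]
where the penultimate equality uses that $V_0\Rz\chi$ is analytic on $\Dl(\epsilon)\setminus\{\zeta_l(0)\}$ for $l$ sufficiently large, and the last uses Lemma~\ref{l:thresholdorder1}.

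The main obstacle, and the heart of the proof, is to establish the threshold bound
\[
\mV(\zeta_l(0))\leq M(I+V\Rz\chi,\zeta_l(0))+2m_{00}(0),
\]
which is the analog of Lemma~\ref{l:thresholdorder1} for the perturbed potential $V$. My plan is to derive this by a local analysis of the principal part of $\chi\RV\chi$ at $\zeta_l(0)$ via the factorization $\chi\RV\chi=\chi\RVz\chi(I+\Vd\RVz\chi)^{-1}$ together with the singular-regular splitting $\chi\RVz\chi=\tau_l^{-1}\chi A_1\pr_l\chi+\chi\RVz^{\reg}\chi$, valid by (\ref{eq:RVzreglsuff}). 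Writing $I+\Vd\RVz\chi=(I+\Vd\RVz^{\reg}\chi)(I+\tilde F)$ where $\tilde F=(1/\tau_l)(I+\Vd\RVz^{\reg}\chi)^{-1}\Vd A_1\pr_l\chi$ is finite rank (of rank at most $2\mvzz(0)$) with a simple pole at $\zeta_l(0)$, Lemma~\ref{l:Rreg} makes the first factor analytically invertible near $\zeta_l(0)$. I would then adapt the Euclidean argument of \cite[Theorem~3.15]{DyZw} used in the proof of Lemma~\ref{l:thresholdorder1}, tracking how the rank-$2\mvzz(0)$ singular contribution from $\chi\RVz\chi$ interacts with the singular part of $(I+\tilde F)^{-1}$ to control the rank of the principal part of $\chi\RV\chi$ at $\zeta_l(0)$ in terms of $M(I+V\Rz\chi,\zeta_l(0))+2m_{00}(0)$.

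Assuming the threshold bound, the proof concludes by combining the two steps:
\begin{align*}
\sum_{\substack{\zeta\in \Dl(\epsilon)\\ \mV(\zeta)\neq 0}}\mV(\zeta)&=\mV(\zeta_l(0))+\sum_{\zeta\in \Dl(\epsilon)\setminus\{\zeta_l(0)\}} M(I+V\Rz\chi,\zeta)\\
&=\mV(\zeta_l(0))+2(\mvzz(0)-m_{00}(0))-M(I+V\Rz\chi,\zeta_l(0))\\
&\leq 2m_{00}(0)+2(\mvzz(0)-m_{00}(0))=2\mvzz(0),
\end{align*}
as required. The hardest ingredient is the local threshold analysis, since separation of variables (which drives Lemma~\ref{l:thresholdorder1}) is not available for $\theta$-dependent $V$; the argument instead relies on the rank bound on $\tilde F$ together with the simple-pole hypothesis to keep the singular part of $\chi\RV\chi$ under control.
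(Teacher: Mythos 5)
Your argument is correct and is essentially the paper's proof: the paper likewise combines the off-threshold identity $\mV=M(I+V\Rz\chi,\cdot)$ from Proposition \ref{p:mvsM}, the Rouch\'e comparison with $V_0$ via Lemmas \ref{l:multandprl}, \ref{l:RTcheck} and \cite[Theorem 2.2]{go-si}, the evaluation $M(I+V_0\Rz\chi,\zeta_l(0))=2(\mvzz(0)-m_{00}(0))$ from Lemma \ref{l:thresholdorder1}, and the threshold upper bound $\mV(\zeta_l(0))\leq M(I+V\Rz\chi,\zeta_l(0))+2m_{00}(0)$ (stated there as $\mV(\zeta_0)\leq M(I+V\Rz\chi,\zeta_0)+m_0(\zeta_0)$). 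The only difference is cosmetic: the paper obtains that last inequality as a simplified adaptation of \cite[Theorem 3.15]{DyZw} with the free model, while you sketch it through the $\RVz$-based finite-rank factorization, which is an equivalent route leaving the same amount of detail to that adaptation.
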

\begin{proof} Let $\chi\in L^\infty_c(X)$ be independent of $\theta$ and
satisfy $\chi V=V$.
 We first claim that for any $\zeta_0\in \zhat$,
\begin{equation}
\label{eq:upbdmult}
\mV(\zeta_0)\leq M(I+V\Rz\chi,\zeta_0)+m_0(\zeta_0).
\end{equation}  If $\zeta_0$ does not correspond
to a threshold, then $m_0(\zeta_0)=0$ and this follows from the stronger Proposition \ref{p:mvsM}.  
If $\zeta_0$ does correspond to a threshold, this follows from a 
simplified adaptation of the proof of \cite[Theorem 3.15]{DyZw}.

Arguing as in the proof of Proposition \ref{p:prelimThm1}, using 
Lemmas \ref{l:multandprl}, \ref{l:RTcheck} and 
an operator Rouch\'e Theorem  (\cite[Theorem 2.2]{go-si}),
for $l$ sufficiently large 
\begin{equation}\label{eq:Mequals}\sum_{\substack{\zeta\in B_l(\epsilon)\\
M(I+ V\Rz \chi , \zeta)\not =0}}
M(I+V \Rz  \chi ,\zeta )\\
=\sum_{\substack{\zeta\in B_l(\epsilon)\\
M(I+V_0 \Rz  \chi , \zeta)\not =
0}} M(I+V_0 \Rz  \chi ,\zeta) = M(I+V_0\Rz \chi,\zeta_l(0)).
\end{equation}
But by our assumptions and Lemma
\ref{l:thresholdorder1}, for $l$ sufficiently large
$M(I+V_0\Rz \chi,\zeta_l(0))=2(\mvzz(0)-m_{00}(0)).$
Using this, (\ref{eq:Mequals}), and applying (\ref{eq:upbdmult}) completes
the proof.
\end{proof}

\vspace{2mm}
\noindent 
{\em Proof of Theorem \ref{thm:0} under
the assumption $\|V_m\|_{L^\infty}=O(m^{-\delta}).$}
Let $\epsilon>0$ be as in the statement of Lemma \ref{l:residues}.
By applying Lemmas \ref{l:multat0} and \ref{l:residues}, we see
that for $l$ sufficiently large,
$$\sum_{\substack{\zeta\in B_l(\epsilon)\\ \mV(\zeta)\not = 0}}\mV(\zeta)\geq 
\sum_{\substack{\zeta\in B_l(\epsilon)\\ m_{V_0}(\zeta)\not = 0}}m_{V_0}(\zeta)=2\mvzz(0).$$
Thus for $l$ sufficiently large
 $\RV$ has at least $2\mvzz(0)$ poles
in $B_l(\epsilon)$.  If 
$\RVzz(\lambda)$ has a simple pole at the origin,
then applying in addition Lemma \ref{l:poleorder1} we see that $\RV$ has at 
exactly $2\mvzz(0)$ poles
in $B_l(\epsilon)$.

To finish the proof of the theorem for the $L^\infty$ case we need to 
refine the estimate on the location of the resonances in $B_l(\epsilon)$.
We do this by showing that there is a $C>0$ so that there are no
resonances in $B_l(\epsilon)\setminus B_l(C l^{-\delta/r}) $ for $l$ 
sufficiently large.
This follows almost exactly the 
proof 
of Theorem \ref{thm:polefree}, point 2, with $\lambda_j$ 
replaced by $0$. The difference
here is that the bound on the singular part of $\chi \RVz \chi$ at the 
origin is given by $\|\chi \Xi(\RVz,0)\chi\|\leq C|\lambda|^{-r}$; that is,
$\mvzz(\lambda_j)$ is replaced by $r$ rather than $\mvzz(0)$.  Having made
this minor adaptation, the remainder of the proof follows without change.
\qed

\section{ Resonance-free regions, poles of $R_V$ and $R_{\overline{V}}$, and
the proofs of Corollary \ref{c:approachsequence} and Theorem \ref{thm:waveexp}}\label{ss:corollary}

Thus far we have focused on resonances
in  the sets $B_l(\rho)$, for $l$ large.  In this section we justify this
by showing that the high energy resonances ``near'' the physical space which
also have $\Re \tau_0(\zeta)>0$ lie in 
$B_l(\rho)$, for $\rho$ sufficiently large.  We do this  by 
showing the existence of large resonance-free regions in $B_l(\sqrt{2l-1})$.
We discuss $\zhat$ further, focusing on the 
region near the physical space.   We describe the relationship 
between the resolvents $R_V$ and $R_{\overline{V}}$, where $\overline{V}$ is the 
complex conjugate of $V$, see Lemma \ref{l:symmetrytype}.  This lemma 
shows that that we can understand the 
poles of $\RV$ which are near the physical space and have $\Re \tau_0(\zeta)<0$ by
understanding the poles of $R_{\overline{V}}$ which are near the physical space
and have $\Re \tau_0(\zeta)>0$.


\begin{lemma}\label{l:resfree} Let $V\in L^\infty_c(X)$.  Then for any 
$0<\gamma<1$ there are
$M_+,c_+>0$ so 
that the region:
$$U_l^+\defeq\{ \zeta\in B_l(\sqrt{2l-1}): M_+<\Re (\tau_l(\zeta))<\gamma \sqrt{2l},\; 
\Im \tau_l(\zeta)>-c_+\log \Re (\tau_l(\zeta))\}$$
contains no poles of $\RV$ for $l$ sufficiently large.
Likewise, for any $\alpha>0$, $0<\gamma<1$, there 
is a constant $M_->0$ so that
$$U_l^-\defeq\{ \zeta\in B_l(\sqrt{2l-1}):M_-<\Im(\tau_l(\zeta))< \gamma\sqrt{2l},\; 
\Re \tau_{l}(\zeta)>-\alpha\}$$
contains no poles of $\RV$ for $l$ sufficiently large.
\end{lemma}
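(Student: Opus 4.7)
The plan is to show that on each of the sets $U_l^{\pm}$, the operator $I + V R_0(\zeta)\chi$ is invertible by a Neumann series argument, where $\chi \in L_c^\infty(X)$ is chosen independent of $\theta$ with $\chi V = V$. By the identities (\ref{eq:req2}), this suffices to conclude that $R_V$ has no poles on $U_l^{\pm}$. Thus the goal is to prove $\|V R_0(\zeta)\chi\|_{L^2(X) \to L^2(X)} < 1$ on $U_l^{\pm}$ for $l$ sufficiently large.

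Via the separation of variables (\ref{eq:RVzsep}) with $V_0 = 0$, together with the orthogonality of the projections $\pr_k$ and the fact that $\chi$ commutes with each $\pr_k$, one obtains
\[
\|V R_0(\zeta)\chi\| \leq \|V\|_{L^\infty}\sup_{k \in \Natural_0} \|\tilde\chi R_{00}(\tau_k(\zeta))\chi\|,
\]
where $\tilde\chi \in L^\infty_c(\Rd)$ satisfies $\tilde\chi V = V$. It therefore suffices to verify that, for $\zeta \in U_l^{\pm}$, each $\tau_k(\zeta)$ lies in a region where $\|\tilde\chi R_{00}(\tau_k)\chi\| = O(|\tau_k|^{-1})$ (the region of Lemma \ref{l:Rvzzresfree}, or equivalently the upper half-plane with $|\tau_k|$ large), and that $|\tau_k(\zeta)|$ is uniformly bounded below by a large constant.

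For $U_l^+$, write $\tau_l(\zeta) = a + ib$, so $a > M_+$ and $b > -c_+ \log a$. Choose $M_+ > C_0$ and $c_+ < C_1$ (the constants of Lemma \ref{l:Rvzzresfree}); the $k = l$ term is then directly covered. For $k < l$, use $\tau_k^2 = \tau_l^2 + (l^2 - k^2)$: the inclusion $\zeta \in B_l(\sqrt{2l-1})$ forces $a^2 + b^2 < 2l - 1$, which yields $\Re(\tau_k^2) = a^2 - b^2 + (l^2-k^2) \geq 2a^2 > 2M_+^2$, hence $\Re\tau_k > \sqrt{2}M_+$; a computation using the lower bound $b > -c_+\log a$ then shows that $\Im\tau_k > -C_1\log|\Re\tau_k|$ provided $c_+$ is small enough relative to $C_1$, so Lemma \ref{l:Rvzzresfree} applies. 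For $k > l$, the constraint $a < \gamma\sqrt{2l}$ yields $\Re(\tau_k^2) \leq 2\gamma^2 l - (2l+1) < 0$ for $l$ large, so $\tau_k$ lies far into the upper half-plane with $|\tau_k|^2 \geq 2(1-\gamma^2)l$, where the standard weighted upper-half-plane resolvent estimate applies. Combining all cases, $\|V R_0(\zeta)\chi\| \leq C\|V\|_{L^\infty}/M_+$, which is less than one for $M_+$ sufficiently large.

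The argument for $U_l^-$ is analogous and in some respects simpler: the hypothesis $\Im\tau_l > M_-$ places $\tau_l$ far into the upper half-plane where the standard estimate gives $\|\tilde\chi R_{00}(\tau_l)\chi\| = O(M_-^{-1})$, while the same algebraic identity with $|\tau_l|^2 \leq \alpha^2 + 2\gamma^2 l$ and $\gamma < 1$ gives $|\tau_k| \gtrsim \sqrt{l}$ for $k \neq l$, with the corresponding $\tau_k$ again in regions covered by Lemma \ref{l:Rvzzresfree} or by the upper-half-plane estimate. The main obstacle in both cases is the bookkeeping for $k < l$ on $U_l^+$ when $\Im z$ sits near its lower bound $-c_+\log a$: there $\Im\tau_k$ can be negative and logarithmic in $l$, and one must verify that the choice $c_+ < C_1$ keeps this quantity above the threshold $-C_1 \log|\Re\tau_k|$ required by Lemma \ref{l:Rvzzresfree}.
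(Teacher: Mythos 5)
Your argument is correct and is essentially the paper's proof: both establish invertibility of $I+VR_0(\zeta)\chi$ by a Neumann series using the separation of variables (\ref{eq:RVzsep}) and the one-dimensional free resolvent bounds, with the same trichotomy $k<l$, $k=l$, $k>l$; the paper merely runs the bookkeeping through the uniform bound $C_1e^{C_2(\Im\tau_j(\zeta))_-}/|\tau_j(\zeta)|$ with explicit choices of $c_+$ and $M_+$ rather than citing the logarithmic region of Lemma \ref{l:Rvzzresfree}. One cosmetic slip: on $U_l^-$ your claim $|\tau_k(\zeta)|\gtrsim\sqrt{l}$ for all $k\neq l$ fails for $k>l$ when $\Re\tau_l(\zeta)$ is near $\sqrt{2l}$ and $\Im\tau_l(\zeta)$ near $M_-$ (there $|\tau_{l+1}(\zeta)|\asymp l^{1/4}$), but since $\Im\tau_k(\zeta)>0$ and $\Re\bigl(\tau_k(\zeta)^2\bigr)<-2M_-^2$ for $k>l$, taking $M_-$ large still makes those terms small, which is exactly how the paper treats them.
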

The region $U_l^+$ is reminiscent of the logarithmic resonance-free regions 
familiar from potential scattering on $\Real^d$.  We note that 
there is substantial overlap between $U_l^+$ and $U_{l+1}^-$.
\begin{proof}
Let $\chi\in L^\infty_c(X)$ be independent of $\theta$ and satisfy 
$\chi V=V$ and $0\leq \chi \leq 1$.
To prove the lemma, we use $\chi \RV(\zeta)\chi= \chi \Rz(\zeta)(I+V\Rz(\zeta)\chi)^{-1}$
and the representation (\ref{eq:RVzsep}) via separation of variables.   

From (\ref{eq:RVzsep}) and the estimate $\|\chi \Rzz(\lambda)\chi\| \leq Ce^{(C\Im \lambda)_-}/|\lambda|$, there are constants $C_1, \; C_2$ so that 
$$\| V \Rz(\zeta)\chi\| \leq \sup_{j\in \Natural_0} \left( 
\frac{C_1e^{C_2(\Im \tau_j(\zeta))_-}}{|\tau_j(\zeta)|}\right).$$

First consider $U_l^+$.
Set $c_+=\frac{1}{C_2}-\delta_+$, where $\delta_+>0$, 
$\delta_+<1/C_2,$
and take $M_+>(2C_1)^{1/(\delta_+ C_2)}$.  Then if $\zeta\in U_l^+$, 
$\frac{C_1e^{C_2(\Im \tau_l(\zeta))_-}}{|\tau_l(\zeta)|}<1/2$.  If 
$j<l$ and $\zeta\in U^+_l$, then $|\tau_j(\zeta)|\geq |\tau_l(\zeta)|$
and  a computation shows
$$\frac{e^{C_2(\Im \tau_j(\zeta))_-}}{|\tau_j(\zeta)|}<\frac{e^{C_2(\Im \tau_l(\zeta))_-}}{|\tau_l(\zeta)|}.$$
On the other hand, for $j>l$, if $\zeta \in U_l^+$, then
$$\Re (\tau_j(\zeta))^2\leq \Re (\tau_{l+1}(\zeta))^2= (\Re \tau_l (\zeta))^2-2l
-(\Im \tau_l(\zeta))^2 -1\leq -2l(1-\gamma^2).$$
Since $\Im \tau_j(\zeta)>0$ for $j>l$ and $\zeta\in B_l(\sqrt{2l-1})$, this is enough to show that 
$\frac{C_1e^{C_2(\Im \tau_j(\zeta))_-}}{|\tau_j(\zeta)|}<1/2$ for $\zeta\in U_l^+$
and $l$ sufficiently large.  Then $\| V \Rz(\zeta)\chi\|<1/2$, and $I+V\Rz(\zeta)\chi$ is invertible.

For $U_l^-$, 
choose $M_->0$ so that $16\|V\|_{L^\infty}<M_-^2$.  Then using 
(\ref{eq:RVzsep}) and $\|\Rzz(\lambda)\| \leq 1/(\dist(\lambda^2,[0,\infty)))$ for $\Im \lambda>0$,
for $\zeta \in U^-_l$,
$$\|V\Rz(\zeta)\chi \sum_{j\geq l}\pr_j\| \leq
 \|V\|_{L^\infty}\sup_{j\geq l} \frac{1}{(\dist \tau_j^2,[0,\infty))}\leq 
8\|V\|_{L^\infty}/ M_-^2\leq 1/2.$$
Next we show that $\| V\Rz(\zeta)\sum_{0\leq j<l}\pr_j\chi\|\leq1/2$ in $U_l^-$ for 
sufficiently large $l$.   Using the orthogonality of the projections $\sum_{j\geq l} \pr_j$ and $\sum_{l^j}\pr_j $ this will complete our proof  that 
$I+V\Rz\chi$ is invertible.
Note
$$\tau_{l-1}^2= 2l-(\Im \tau_l)^2+(\Re \tau_l)^2-1+2i\Re(\tau_l)\Im(\tau_l).$$ 
Thus $|\tau_{l-1}|\geq \sqrt{(1-\gamma^2)2l}+O(1)$, and $-\Im(\tau_{l-1})\leq \frac{2 \alpha}{\sqrt{1-\gamma^2}}+O(l^{-1/2})$, so for $l$ sufficiently large we have 
$\frac{C_1e^{C_2(\Im \tau_{l-1}(\zeta))_-}}{|\tau_{l-1}(\zeta)|}<1/2$ for $\zeta\in U^-_l$.
But if $0\leq j<l-1$ and $\zeta\in U_l^-$, 
$\frac{C_1e^{C_2(\Im \tau_j(\zeta))_-}}{|\tau_j(\zeta)|}<\frac{C_1e^{C_2(\Im \tau_{l-1}(\zeta))_-}}{|\tau_{l-1}(\zeta)|}.$  This ensures that
$\| V\Rz(\zeta)\sum_{0\leq j<l}\pr_l\chi\|<1/2$  so that
$I+V\Rz(\zeta)\chi$ is invertible on $U_l^-$ for $l$ sufficiently large.
\end{proof}
We remark that we have not made an effort to optimize the results of 
Lemma \ref{l:resfree}, as in this paper we are
concentrating instead on regions near the thresholds, where, as we have seen, resonances can occur.

Before proving Corollary \ref{c:approachsequence}, we discuss $\zhat$ and 
the boundary of the physical space a bit more.  To motivate the discussion, 
consider the simpler case of the Schr\"odinger operator $-\Lz+V_0$ on 
$\Rd$, where we use $\lambda^2$ as the spectral parameter in defining
the (scattering) resolvent.  Thus, given a 
value $E>0$, there are two points, $\pm \sqrt{E}$ 
corresponding to the spectral parameter
$E$ on the boundary of the physical space,
with  $\RVzz(\pm \sqrt{E}) = (-\Lz+V_0-(\sqrt{E}\pm i0))^{-1}$.

There is a similar phenomena in the case of $-\Delta+V$ on 
$\Rd\times \Sphere^1$, but it is notationally harder to describe.  Given 
$E>0$, let $\sqrt{E}\pm i0\in \zhat$ be the points on $\zhat$ with
$\RV(\sqrt{E}\pm i0)=(-\Delta +V-E \mp i0)^{-1}$.  Equivalently, we could
define
 $\sqrt{E}\pm i0$ to the the point in $\zhat$ 
with $\tau_j(\sqrt{E}\pm i0) =\pm \sqrt{E-j^2}$ if $j^2\leq E$
and $\tau_j(\sqrt{E}\pm i0) =i\sqrt{j^2-E}$ if $j^2>E$.  
 By our definition of
$B_l(\rho)$, if $l_E=\lfloor \sqrt{E} \rfloor$ and $l_E>0$, then 
$\sqrt{E}+i0\in B_{l_E}(\sqrt{2l_E-1})$, but 
$\sqrt{E}-i0\not \in B_{l_E}(\sqrt{2l_E-1})$.  Thus there is some 
sense in which we have been studying only ``half'' of the boundary of
the physical space.  However, we shall see in Lemma \ref{l:symmetrytype} that this
suffices for understanding the behavior of the resolvent, if we consider both the resolvent of $-\Delta+ V$ and that of $-\Delta+\overline{V}$.


Thus, to fully cover points on the boundary of the physical space, we
need to define another type of open set in $\zhat,$ analogous to $B_l(\rho)$.
For $l\in \Natural,$ $\rho>0$, denote by 
$B_{l}^\pm(\rho)$ the connected component of 
$\{ \zeta \in \zhat: |\tau_l(\zeta)|<\rho \}$ 
which intersects the physical space and includes a region with
$\pm \Re \tau_0(\zeta)>0$.  With the $+$ sign, we get the 
set $B_l(\rho)$ defined in the introduction: $B_l^+(\rho)=B_l(\rho)$.  If $l_E=\lfloor \sqrt{E}\rfloor$ and $\sqrt{E_l}-l_E<\rho$, then the 
point $\sqrt{E}-i0$ corresponding to $E$ on the boundary of the physical space
as defined above has $\sqrt{E}-i0\in B_{l_E}^-(\rho)$.  Hence
any point on the boundary of the physical space lies in 
$B_0^+(1)\cup \left(\cup_{l=1}^\infty B^+_l(\sqrt{2l-1})\right) 
\cup \left( \cup_{l=1}^\infty B_l^-(\sqrt{2l-1})\right)$.  
As before, we make the choice of $\sqrt{2l-1}$ for $\rho$ as that is
the largest value of $\rho$ for which $B_l^{\pm}(\rho)$ contains only 
a single point corresponding to a threshold.
 For 
certain combinations of $l$ and $\rho$ it can happen that $B_l^+(\rho)=B_l^-(\rho)$. 

Consider a Schr\"odinger operator on $d$-dimensional Euclidean space
 with potential $V_0\in L^\infty_c(\Rd)$ and
scattering resolvent $\RVzz(\lambda)$.  When $\Im \lambda>0$, that is, $\lambda$ is 
in the physical space, $$\RVzz(\lambda)=
\left(-\Lz+V_0-\lambda^2\right)^{-1}=
\left( \left(-\Lz+\overline{V_0}-\overline{\lambda}^2\right)^{-1}\right)^*= 
\left(R_{\overline{V_0}0}(-\overline{\lambda})\right)^*.$$
Here $\overline{V_0}$, $\overline{\lambda}$ denote the usual complex conjugates.
For odd $d$ the
 identity $\RVzz(\lambda)=\left(R_{\overline{V_0}0}(-\overline{\lambda})\right)^*$
then holds by meromorphic continuation for all $\lambda \in \Complex$.  In particular,
this implies $\lambda_0$ is a pole of $\RVzz(\lambda)$ if
and only if $-\overline{\lambda_0}$ is a pole of $R_{\overline{V_0}0}(\lambda)$.
For real-valued $V$, this is the well-known symmetry of resonances for
symmetric  Schr\"odinger operators in odd dimensions.

We turn to the analog of this result for $R_V$, which
is shown in a similar way.  Suppose $\zeta$ is in the physical space,
here identified with the upper half plane, so that
$R_V(\zeta)=(-\Delta +V-\zeta^2)^{-1}$.  Thus 
$(R_{\overline{V}}(-\overline{\zeta}))^*= R_V(\zeta)$.  For general
$\zeta\in \zhat$, we define $-\zconj\in \zhat$ to be the 
point in $\zhat$ satisfying 
$\tau_j(-\zconj)=-\overline{\tau_j(\zeta)}$ for all $j$.  This is an 
antiholomorphic mapping, and if $\zeta$ is in the physical space, 
identified with the upper half plane, the mapping $\zeta \mapsto -\zconj$
agrees with the mapping $\zeta\mapsto -\overline{\zeta}$.
Then the identity 
\begin{equation}\label{eq:RVconj}
(R_{\overline{V}}(-\zconj))^*= R_V(\zeta),\; \text{where} \; 
\tau_j(-\zconj)=-\overline{\tau_j}(\zeta),\; \text{for all $j\in \Natural_0$}
\end{equation}
holds for all $\zeta\in \zhat$ by meromorphic continuation.
  In particular, this means that $\zeta_0\in \zhat$ is a pole of $R_V(\zeta)$ 
if and only if $-\zconj_0$ is a pole of $R_{\overline{V}}(\zeta)$. 
Note that if $\zeta\in B_l^+(\rho)=B_l(\rho)$, then 
$-\zconj\in B_l^-(\rho)$.  Thus to study the poles of $R_V(\zeta)$ in 
$B_l^-(\rho)$ it suffices to study the poles of $R_{\overline{V}}(\zeta)$ in $B_l^+(\rho)=B_l(\rho)$.  
Likewise, an estimate on $R_{\overline{V}}$ in $B_l^+(\sqrt{2l-1})$ implies an estimate on $\RV$ in 
$B_l^-(\sqrt{2l-1})$.

We summarize these results in the following lemma.
\begin{lemma}\label{l:symmetrytype}
If $V_0\in L^\infty_c(\Rd)$, then $\lambda_0$ is a pole of $\RVzz(\lambda)$ if
and only if $-\overline{\lambda_0}$ is a pole of $R_{\overline{V_0}0}(\lambda)$.  Let $V\in L^\infty_c(X)$.  Then $\zeta_0\in \zhat$ is a pole of $R_V(\zeta)$ if
and only if $-\zconj_0$ is a pole of $R_{\overline{V}}(\zeta)$.
 Here
$\overline{\lambda_0}$, $\overline{V}$, and $\overline{V_0}$ are respectively the 
complex conjugates of $\lambda_0$, $V$, and $V_0$, and 
$-\zconj$ is as 
defined in (\ref{eq:RVconj}).
\end{lemma}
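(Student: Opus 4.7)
The plan is to verify both identities by direct Hilbert-space adjoint computations in the physical region, where the resolvents are honest bounded operators on $L^2$, and then extend the resulting identities to all of $\Complex$ (respectively $\zhat$) by meromorphic continuation. The pole correspondence will then drop out immediately: $\zeta_0$ is a pole of one side iff its image under the relevant antiholomorphic involution is a pole of the other side.

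For the Euclidean statement, I would fix $\lambda$ with $\Im \lambda>0$, so that both $\RVzz(\lambda)=(-\Lz+V_0-\lambda^2)^{-1}$ and $R_{\overline{V_0}0}(-\overline{\lambda})=(-\Lz+\overline{V_0}-\overline{\lambda}^2)^{-1}$ are bounded operators on $L^2(\Rd)$. Since $-\Lz$ is self-adjoint and the Hilbert-space adjoint of multiplication by $\overline{V_0}$ is multiplication by $V_0$, a direct computation gives $(R_{\overline{V_0}0}(-\overline{\lambda}))^*=\RVzz(\lambda)$ on the upper half plane. Both sides continue meromorphically to $\Complex$ by the results recalled in Section \ref{ss:Es}, so the identity extends to $\Complex$, and the pole correspondence follows. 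For the statement on $X$, I would first check that $\zeta \mapsto -\zconj$, defined by $\tau_j(-\zconj)=-\overline{\tau_j(\zeta)}$ for all $j\in \Natural$, is a well-defined antiholomorphic involution on $\zhat$ that agrees with $\zeta \mapsto -\overline{\zeta}$ on the physical region (identified with the upper half plane). The same adjoint calculation then yields $R_V(\zeta)=(R_{\overline{V}}(-\overline{\zeta}))^*$ on the upper half plane; since $R_V$ and $R_{\overline{V}}$ both continue meromorphically to $\zhat$ (Section \ref{s:resolventV}) and $-\zconj$ is antiholomorphic, the identity (\ref{eq:RVconj}) extends to all of $\zhat$.

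The main technical point to justify is the consistency of the definition of $\zeta\mapsto -\zconj$ on $\zhat$: one must check that simultaneously conjugating and negating all the $\tau_j$ respects the Riemann surface structure, in particular that it permutes sheets correctly near the ramification points projecting to $\Integers\setminus\{0\}$. This is essentially automatic from the construction of $\zhat$ as the minimal surface on which every $\tau_j$ is single-valued, since the relations $\tau_j^2=\tau_0^2-j^2$ that encode the sheet structure are preserved under $(\tau_j)\mapsto (-\overline{\tau_j})$. Once the map is in hand, the two sides of (\ref{eq:RVconj}) are meromorphic on $\zhat$ and agree on the nonempty open physical region, hence coincide globally, completing the proof.
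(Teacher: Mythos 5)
Your proposal is correct and follows essentially the same route as the paper: the identity $R_V(\zeta)=(R_{\overline{V}}(-\overline{\zeta}))^*$ (and its Euclidean analogue) is verified by an adjoint computation on the physical region and then extended to $\Complex$, respectively $\zhat$, by meromorphic continuation, using that $\zeta\mapsto-\zconj$ is the antiholomorphic map determined by $\tau_j(-\zconj)=-\overline{\tau_j(\zeta)}$ for all $j\in\Natural_0$. Your additional remark verifying that this map is well defined on $\zhat$ because the relations $\tau_j^2=\tau_0^2-j^2$ are preserved is a reasonable elaboration of a point the paper treats as immediate.
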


We define a distance on $\hat{Z}$ as follows: for $\zeta,\;\zeta'\in \hat{Z}$,
\begin{equation}
d_{\zhat}(\zeta,\zeta')\defeq \sup_{j}| \tau_j(\zeta)-\tau_j(\zeta')|.
\end{equation}
That this is  well-defined and 
a metric is shown in \cite[Section 5.1]{cd}.  Note that 
if $\zeta,\; \zeta'\in \zhat$ satisfies $\tau_j(\zeta)\not = -\tau_j(\zeta')$,
then since $\tau_j(\zeta)^2-\tau_j(\zeta')^2=\tau_l(\zeta)^2-\tau_l(\zeta')^2$,
 $$|\tau_j(\zeta)-\tau_j(\zeta')|=|\tau_l(\zeta)-\tau_l(\zeta')|
\left| \frac{\tau_l(\zeta)+\tau_l(\zeta')}{\tau_j(\zeta)+\tau_j(\zeta')}\right|.
$$
In particular, this implies that for any $\rho>0$ there is an $L=L(\rho)$
so that if $l\geq L$ and  $\zeta,\; \zeta'\in B_l(\rho)$ then $d_{\zhat}(\zeta,
\zeta')=|\tau_l(\zeta)-\tau_l(\zeta')|$.

\vspace{2mm}
\noindent
{\em Proof of Corollary \ref{c:approachsequence}.}  
Recall our hypotheses include that $V$ is real-valued, ensuring
 that $V_0$ is real-valued as well. 


The operator-valued function
$R_V(\zeta)$ has a sequence $\{ \zsharp_j\}$ of  poles satisfying $|\tau_0(\zsharp_j)|\rightarrow \infty$ as $j\rightarrow \infty$ and 
$d_{\zhat}(\zsharp_j,\text{physical space})
\rightarrow 0$ only if either $\RV(\zeta)$ has infinitely many poles 
in $\cup_{l=1}^\infty B_l(\sqrt{2l-1})$ or infinitely many poles in 
$\cup_{l=1}^\infty B_l^{-}(\sqrt{2l-1})$ (or both).  If $\RV(\zeta)$ has infinitely many poles 
in $\cup_{l=1}^\infty B_l^{-}(\sqrt{2l-1})$, then by Lemma \ref{l:symmetrytype},
$R_{\overline{V}}(\zeta)=\RV(\zeta)$ has infinitely many poles in $\cup_{l=1}^\infty B_l(\sqrt{2l-1})$.
Thus it suffices to study sequences of poles in $\cup_{l=1}^\infty B_l(\sqrt{2l-1})$.

Note that while $B_l(\sqrt{2l-1})$ contains only a single threshold, 
$B_l (\sqrt{2l-1})$ and $B_{l+1}(\sqrt{2l+1})$
are not disjoint and in fact have substantial overlap which contains an interval of the 
continuous spectrum. Moreover, for $l$ sufficiently large the 
sets $U_l^+$ and $U_{l+1}^-$  of Lemma \ref{l:resfree}
have nontrivial intersection.   Applying Lemma \ref{l:resfree} we see that in order to have a sequence of 
resonances contained in $\cup_{l=1}^\infty B_l(\sqrt{2l-1})$ and 
approaching the continuous spectrum (and with $|\tau_0|\rightarrow \infty$),
 the resonances must lie in 
$\cup_{l=1}^\infty B_l(M)$ for some $M$.  But then the corollary follows
from an application of Theorems \ref{thm:poleexist}, \ref{thm:polefree},
and \ref{thm:0}.
\qed

We now have the ingredients we need to prove Theorem \ref{thm:waveexp}.

  \vspace{2mm}
\noindent {\em Proof of Theorem \ref{thm:waveexp}}.
The hypotheses on $-\frac{d^2}{dx^2}+V_0$ and the expression (\ref{eq:RVzsep})
mean that the resolvent $\RVz(\zeta)$ has no poles on the boundary
of the physical space.  Moreover, since for any  
$\tilde{\chi}\in C_c^\infty(\Real)$, 
there is a constant $C$ so that $\| \tilde{\chi} \RVzz(\lambda)\tilde{\chi}\|\leq C$ for all
$\lambda \in \Real\cup i[0,\infty)$, for  
  any $\chi \in C_c^\infty(X)$ 
there is a  $C_1>0$ so that
$\| \chi \RVz(\zeta)\chi\|\leq C_1$ for all $\zeta$ in the boundary of the 
physical space.  

Corollary \ref{c:approachsequence}
shows that there are no poles of the resolvent $\RV$ in the continuous spectrum
at high energy.
 Proposition \ref{p:Rest} and Lemma \ref{l:bdryphys} show
that  when $\zeta$ is 
in the boundary of the physical space and $\zeta \in B_l(\sqrt{2l-1})$, 
the cut-off resolvent of $-\Delta +V$ satisfies
$\| \chi \RV(\zeta)\chi- \chi \RVz(\zeta)\chi\|=O(l^{-1/2}).$ %
Thus $\| \chi \RVz(\zeta)\chi\|$ is uniformly bounded on the boundary
of the physical space when $|\tau_0(\zeta)|$ is 
sufficiently large.  Hence by \cite[Theorem 5.6]{cd} the hypotheses of \cite[Theorem 4.1]{cd2}
hold.  Theorem \ref{thm:waveexp}  then follows directly from \cite[Theorem 4.1]{cd2}.
\qed

\section{Larger neighborhoods of the threshold $l^2$}
In this section we consider poles of $\RV(\zeta)$
 in neighborhoods $B_l(\alpha \log l)$ 
and $B_l(\alpha (\log l)^{1-\epsilon})$ of 
the $l$th threshold.  We prove 
 Theorem \ref{thm:V0iszero} for potentials with $V_0\equiv 0$,
and the related, but weaker, Theorem
\ref{thm:biggerdisk}
which holds for a general potential $V\in L^\infty_c(X)$.

The proof of Theorem \ref{thm:V0iszero} is similar
to that of the proof of Theorem \ref{thm:polefree} for $L^\infty$ potentials.

\vspace{2mm}
\noindent
{\em Proof of Theorem \ref{thm:V0iszero}.}  
Choose $\chi \in L^\infty_c(X)$, $\chi V=V$, and $\chi$ independent of $\theta$.
We write
\begin{multline}\label{eq:rewriteproduct}
\chi \Rz V\Rz \chi = \chi \Rz\pr_l  V\Rz \pr_l \chi+ 
\chi \Rz (1-\pr_l )V\Rz \pr_l\chi\\+\chi \pr_l \Rz V\Rz (1-\pr_l )\chi +
\chi \Rz (1-\pr_l )V\Rz (1-\pr_l )\chi.
\end{multline}
Let $\alpha'>0$,
and let $\zeta \in B_l(\alpha' |\log l|)$, where $l$ is large enough that
$B_l(\alpha' |\log l|)$ contains only a single point of $\zhat$ which
corresponds
to a threshold.
Let $\zeta \in B_l(\alpha'\log l)$ satisfy $|\tau_l(\zeta)|\geq 1$.  Then
by Lemma \ref{l:lognbhd}, $\| \chi \Rz(\zeta) (1-\pr_l )\chi\|=O(l^{-1/2})$, 
and by (\ref{eq:RzzExplicit}) and \cite[Theorem 3.1]{DyZw}, 
$\| \chi \Rz(\zeta)\pr_l\chi\|=O(e^{C (\Im \tau_l(\zeta))_{-}}/|\tau_l(\zeta)|)$ for some $C>0$. 
Using this  estimate and $\pr_l V \pr_l =O(l^{-\delta})$ in (\ref{eq:rewriteproduct}) shows
$ \|\chi \Rz(\zeta)  V\Rz(\zeta) \chi \| = O(l^{-\delta} e^{2C (\Im \tau_l(\zeta))_{-}})$.  Thus from (\ref{eq:rewriteproduct}) there is a $C_1>0$ so that $I+V\Rz(\zeta)\chi$ is invertible if
$l$ is sufficiently large,
$\zeta\in B_l(\alpha' \log l)$,
$|\tau_l(\zeta)|\geq 1$, and 
$e^{2C (\Im \tau_l(\zeta))_{-}}\leq C_1 l^{\delta}$. 
 This last may be ensured by requiring
$ |\tau_l|\leq \alpha \log l$, for suitably chosen $\alpha>0$,
$\alpha\leq \alpha'$, and taking $l$ sufficiently large.  Recall that
$-\Delta +V$ has no resonances in regions where $I+V\Rz \chi$ is invertible,
see Proposition \ref{p:mvsM}.

Applying Theorems \ref{thm:polefree} and \ref{thm:0}
shows that if $d=1 $ there is a $c_0>0$ so that when $l$ is sufficiently large
the region 
$\{\zeta\in B_l(\alpha \log l):\;  1\geq |\tau_l(\zeta)|>c_0l^{-\delta}\}$ 
contains no resonances, and if $d>1$ there are no resonances in $B_l(1)$ 
for $l$ sufficiently large.
\qed

A similar proof gives the next theorem.
\begin{thm}\label{thm:biggerdisk} 
Let $V\in L^\infty_c(X)$ satisfy (\ref{eq:Vhyp})
and let
 $\epsilon>0$.  Then there is a $c_0=c_0(\epsilon, V)>0$ 
so that for $l$ sufficiently large, the region
$$\{ \zeta \in B_l(c_0(\log l)^{1/(d+\epsilon)}): \: 
|\tau_l(\zeta)-\lambda'|\geq (1+|\lambda'|^2)^{-(d+\epsilon)/2}\; \text{for every}
\; \lambda'\in \Complex:\; \mvzz(\lambda')>0\}$$
contains no poles of $\RV(\zeta)$.
\end{thm}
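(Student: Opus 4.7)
The plan is to mimic the proof of Theorem~\ref{thm:V0iszero}, replacing the free resolvent $R_0$ by the model resolvent $\RVz$. Fix $\chi \in L^\infty_c(X)$, independent of $\theta$, with $\chi V = V$, so $\chi V_0 = V_0$ and $\chi \Vd = \Vd$. By (\ref{eq:V0asmodel}) it suffices to show that throughout the region of the theorem both $\RVz(\zeta)$ is analytic and $I + \Vd \RVz(\zeta)\chi$ is invertible. Using the separation-of-variables expansion (\ref{eq:RVzsep}) together with Lemma~\ref{l:Rvzzresfree}, for $\zeta \in B_l(c_0(\log l)^{1/(d+\epsilon)})$ with $l$ large the values $\tau_k(\zeta)$ with $k \neq l$ lie in the logarithmic resonance-free region of $\RVzz$; consequently, the only possible singularities of $\RVz(\zeta)$ come from poles of $\RVzz(\tau_l(\zeta))$, and these are precisely what the hypothesis $|\tau_l(\zeta) - \lambda'| \geq (1+|\lambda'|^2)^{-(d+\epsilon)/2}$ excludes.

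To establish invertibility I use $(I + A)(I - A) = I - A^2$ with $A = \Vd \RVz(\zeta)\chi$: it is enough to show $\|A^2\| < 1$. Since $\chi \Vd = \Vd$, one has $A^2 = \Vd \RVz \Vd \RVz \chi$ and $\|A^2\| \leq \|\Vd\|_{L^\infty}\bigl\|(\chi \RVz \chi)\,\Vd\,(\chi \RVz \chi)\bigr\|$. Decompose $\chi \RVz(\zeta)\chi = \pr_l \chi \RVzz(\tau_l(\zeta))\chi + (I - \pr_l)\chi \RVz(\zeta)\chi$ and expand the sandwich into four pieces. By Lemma~\ref{l:lognbhd}, $\|(I - \pr_l)\chi \RVz(\zeta)\chi\| = O(l^{-1/2})$; a direct Fourier computation shows that $\pr_l \Vd \pr_l$ is built only from the Fourier coefficients $V_{\pm 2l}$ of $V$, so $\|\pr_l \Vd \pr_l\| = O(l^{-\delta})$ by (\ref{eq:Vhyp}).

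The essential new ingredient is an estimate of the form
\[
\|\chi \RVzz(\lambda) \chi\| \leq C \exp(C'|\lambda|^d),
\]
valid for $\lambda \in \Complex$ whose distance from every pole $\lambda'$ of $\RVzz$ is at least $(1+|\lambda'|^2)^{-(d+\epsilon)/2}$. This is classical in odd-dimensional Euclidean scattering: the regularized determinant of $I + V_0 R_0(\lambda)\chi$ is entire of order $d$, vanishing precisely at the poles of $\RVzz$, and a Cartan-type lower bound outside disks of radii $(1+|\lambda'|^2)^{-(d+\epsilon)/2}$ (the sum of which converges because the resonance counting function is $O(R^d)$ and $\epsilon > 0$) combines with the standard upper bound on the cofactor to yield the displayed estimate. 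Within $|\tau_l(\zeta)| \leq c_0(\log l)^{1/(d+\epsilon)}$, since $d/(d+\epsilon) < 1$, $\exp\bigl(C' c_0^d (\log l)^{d/(d+\epsilon)}\bigr) = l^{o(1)}$, so $\|\chi \RVzz(\tau_l(\zeta)) \chi\| \leq l^{\epsilon_1}$ for any $\epsilon_1 > 0$ and $l$ sufficiently large.

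Combining, the four terms of the expansion are bounded respectively by $O(l^{2\epsilon_1 - \delta})$, two of $O(l^{\epsilon_1 - 1/2})$, and $O(l^{-1})$; choosing $\epsilon_1 < \min(\delta/2, 1/2)$ sends all four to zero, so $\|A^2\| < 1$ and $I + \Vd \RVz(\zeta)\chi$ is invertible throughout the region for $l$ large. The main obstacle is the polynomial-exclusion bound above; although standard, its precise form (exclusion radii $(1+|\lambda'|^2)^{-(d+\epsilon)/2}$ paired with growth $\exp(C'|\lambda|^d)$) is exactly what forces both the maximal disk radius $c_0(\log l)^{1/(d+\epsilon)}$ and the exclusion condition on $\lambda'$ in the statement.
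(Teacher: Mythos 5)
Your argument is essentially the paper's proof: reduce via (\ref{eq:V0asmodel}) to invertibility of $I+\Vd\RVz(\zeta)\chi$, split off the $\pr_l$ piece using Lemma \ref{l:lognbhd} together with $\|\pr_l\Vd\pr_l\|=O(l^{-\delta})$, and control $\chi\RVzz(\tau_l(\zeta))\chi$ on the excluded-disk region by the classical exponential bound — the paper simply quotes $\|(I+V_0\Rzz(\lambda)\chi)^{-1}\|\leq Ce^{C|\lambda|^{d+\epsilon}}$ on that region from the proof of \cite[Theorem 3.54]{DyZw} instead of re-deriving it through the determinant/Cartan argument you sketch. The one caveat is your exponent: the classical bound is $e^{C|\lambda|^{d+\epsilon}}$, not $e^{C'|\lambda|^{d}}$, but this is harmless here, since on $|\tau_l(\zeta)|\leq c_0(\log l)^{1/(d+\epsilon)}$ it still yields a factor $l^{Cc_0^{d+\epsilon}}$ that is absorbed by taking $c_0$ small (so your ``for any $\epsilon_1>0$'' should read ``for $c_0$ chosen small enough''), exactly as in the paper.
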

\begin{proof}
We assume $\Vd=V-V_0\not \equiv 0$, since otherwise there is nothing to prove.

Choose $\chi \in L^\infty_c(X)$ so that $\chi V=V$ and $\chi$ is independent of 
$\theta$.  We may think of $\chi \in L^\infty_c(\Rd)$ as well.

Set 
$$A_\epsilon\defeq \{ \lambda \in \Complex:\; |\lambda-\lambda'|\geq (1+|\lambda'|^2)^{-(d+\epsilon)/2}\; \text{for every}
\; \lambda'\in \Complex:\; \mvzz(\lambda')>0\}.$$
We shall use, from the proof of \cite[Theorem 3.54]{DyZw}, 
there is a $C>0$ so that 
\begin{equation}
\| (I+V_0\Rzz(\lambda))^{-1}\| \leq C\exp(C|\lambda|^{d+\epsilon})\; 
\text{if }\; \lambda \in A_\epsilon.
\end{equation}
Choose $\alpha'>0$.  If $\zeta \in B_l(\alpha' \log l)$,
$$\chi \RVz(\zeta)\pr_l\chi = \chi \RVzz(\tau_l(\zeta))\pr_l\chi
= \chi \Rzz(\tau_l(\zeta))\chi(I+V_0\Rzz(\tau_l(\zeta))\chi)^{-1}\pr_l.$$
Thus, if  $\zeta \in B_l(\alpha' \log l)$ with $\tau_l\in A_\epsilon$
and $|\tau_l(\zeta)|\geq 1$, then
\begin{equation}\label{eq:prlbd}
\|\chi \RVz(\zeta)\pr_l\chi\|\leq C \exp(C (\Im \tau_l(\zeta)_-))\exp\left(C|\tau_l(\zeta)|^{d+\epsilon}\right)\leq C \exp(C|\tau_l(\zeta)|^{d+\epsilon}).
\end{equation}
Here and below
 we allow the constant $C$ to change from line to line, and note that 
it depends on $V,\; \epsilon,$ and $\chi$, but not $l$.

Let $\zeta \in B_l(\alpha' \log l)$ with $\tau_l\in A_\epsilon$
and $|\tau_l(\zeta)|\geq 1$.
Writing $\chi \RVz \chi$ as in (\ref{eq:rewriteproduct}) and applying
Lemma \ref{l:lognbhd} and (\ref{eq:prlbd}), we find that 
for these $\zeta$, if $l$ is sufficiently large
\begin{equation}\label{eq:badloc}
\|\chi \RVz(\zeta)\Vd \RVz(\zeta) \chi\|\leq  C_1l^{-\delta} \exp (C_1|\tau_l(\zeta)|^{d+\epsilon})
\end{equation}
for some $C_1$.
Now we can choose $c_0>0$ sufficiently small and $L>0$ sufficiently large so  that
$$
\text{if}\; |\tau_l(\zeta)|\leq c_0(\log l)^{1/(d+\epsilon)}\; \text{and}\; l>L\; \text{then}\;
   C_1 l^{-\delta} \exp (C_1|\tau_l(\zeta)|^{d+\epsilon})\leq 1/2$$
  ensuring that 
$I+\Vd \RVz(\zeta)\chi$ is invertible.

Recalling that with $\Vd$ nontrivial if
 $I+\Vd\RVz(\zeta)\chi$ is invertible then $\zeta$
is not a resonance of $-\Delta+V$ proves the theorem.
\end{proof}

\section{Expansion of $\pr_l(I+\Vd\RVz^{\reg}\chi)^{-1}\Vd\pr_l$ for smooth $V$}
\label{s:smoothprelims}
This section contains preliminary computations which allow
us to refine some of our results when $V$ is smooth.  We begin with a
straightforward lemma about Schr\"odinger operators on $\Real^d$.

\begin{lemma} \label{l:RVzzexpand}Let $V_0,\; \chi\in C^\infty_c(\Rd)$, and $J\in \Natural$.
Then as an operator from $H^s(\Rd)$ to $H^{s-2J}(\Rd)$,
\begin{equation}\label{eq:RVzzexp} \chi \RVzz(\lambda)\chi
=-\sum_{j=1}^J  \frac{1}{\lambda^{2j}}\chi
 \left(-\Lz+V_0\right)^{j-1}\chi +  \frac{1}{\lambda^{2J}}
\chi \RVzz(\lambda) \left(-\Lz+V_0\right)^{J}\chi.
\end{equation}
\end{lemma}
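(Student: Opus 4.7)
The plan is to derive the identity by iterating the basic resolvent equation and then compressing by the cutoff $\chi$. To start, I would work initially in the physical region $\Im\lambda>0$, where $\RVzz(\lambda)$ is a bona fide bounded operator on $L^2(\Rd)$, and then extend the resulting identity to all of $\Complex$ (for $d$ odd) by meromorphic continuation; the final equality is an identity of finitely-meromorphic operator-valued functions, so it suffices to verify it on a nonempty open set.

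The core computation is the following. From $(-\Lz+V_0-\lambda^2)\RVzz(\lambda)=I$ one immediately gets
\begin{equation*}
\RVzz(\lambda) = -\frac{1}{\lambda^2}I + \frac{1}{\lambda^2}(-\Lz+V_0)\RVzz(\lambda).
\end{equation*}
Substituting this identity into its own right-hand side $J-1$ times and using that $(-\Lz+V_0)$ and $\RVzz(\lambda)$ commute (which follows from the defining relation), I obtain
\begin{equation*}
\RVzz(\lambda) = -\sum_{j=1}^{J} \frac{1}{\lambda^{2j}} (-\Lz+V_0)^{j-1} + \frac{1}{\lambda^{2J}} \RVzz(\lambda)(-\Lz+V_0)^{J}.
\end{equation*}
Pre- and post-composing with $\chi$ then yields exactly (\ref{eq:RVzzexp}).

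To interpret the identity between the stated Sobolev spaces, I would observe that each summand $\chi(-\Lz+V_0)^{j-1}\chi$ is a differential operator of order $2(j-1)\le 2J-2$ with smooth compactly supported coefficients, hence bounded $H^s(\Rd)\to H^{s-2J}(\Rd)$. For the remainder $\lambda^{-2J}\chi\RVzz(\lambda)(-\Lz+V_0)^J\chi$, the factor $(-\Lz+V_0)^J\chi$ maps $H^s\to H^{s-2J}$, then $\chi\RVzz(\lambda)$ maps $H^{s-2J}_{\comp}\to H^{s-2J}_{\loc}$ by elliptic regularity of $-\Lz+V_0-\lambda^2$ (applied to the meromorphic continuation) and the outer $\chi$ localizes the result. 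There is no real obstacle: the entire argument is a routine iteration of the resolvent identity combined with a tracking of Sobolev mapping properties, and the only point that warrants mention is the justification of $(-\Lz+V_0)^J\RVzz(\lambda)=\RVzz(\lambda)(-\Lz+V_0)^J$ on the right class of distributions, which is immediate in the physical region and propagates by analytic continuation.
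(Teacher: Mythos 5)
Your proposal is correct and follows essentially the same route as the paper: rearrange the identity $(-\Lz+V_0-\lambda^2)\RVzz(\lambda)=\RVzz(\lambda)(-\Lz+V_0-\lambda^2)=I$ in the physical region, iterate (the paper phrases this as induction on $J$), and extend to all of $\Complex$ by meromorphic continuation of both sides. The additional remarks on Sobolev mapping properties and on commuting $(-\Lz+V_0)$ past $\RVzz(\lambda)$ are consistent with, and slightly more explicit than, the paper's brief argument.
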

\begin{proof}
First assume $\lambda$ is in the physical region, that is, 
$\Im \lambda>0$.  Then the $J=1$ case follows
from rearranging the equality
$$\left( -\Lz+V_0-\lambda^2\right)\RVzz(\lambda)=\RVzz(\lambda)\left( -\Lz+V_0-\lambda^2\right) =I$$
to get 
$$\RVzz(\lambda)=
\frac{1}{\lambda^2}\left(-I + \RVzz(\lambda)\left( -\Lz+V_0\right)\right).$$
The general case follows by induction.

Since both sides of (\ref{eq:RVzzexp}) have meromorphic continuations to the 
complex plane, the equality holds for all $\lambda$.
\end{proof}

 We shall use the following
Hilbert spaces: for $n\in \Natural_0$
$$H_{(0,n)}(X)\defeq\left\{ u\in L^2(X): \; \frac{\partial^\alpha}{\partial x^\alpha}u\in L^2(X) \;\text{if}\;
 |\alpha|\leq n\right\}\; \text{with} \; \|u\|^2_{H_{(0,n)}}= \sum_{|\alpha|\leq n}
 \left\| \frac{\partial ^\alpha}{\partial x^\alpha} u\right\|^2_{L^2(X)}.$$
Here we use the usual muliindex notation for $\alpha=(\alpha_1,...,\alpha_d)$.
This allows us to indicate mapping properties of operators which
act differently in the $x$ and $\theta$ variables.

One of the main results of this section is the following proposition.  Recall
that $\RVz^{\reg}(\zeta)=\RVz^{\reg}(\zeta;\lambda_0,l)$ is defined in (\ref{eq:RVzreg}).

\begin{prop}\label{p:firstorderapprox}
Let $V,\;\chi \in C_c^\infty(X)$ satisfy $\chi V=V$.  In addition,
suppose $\chi$ is independent of $\theta$. Let $\lambda_0\in \Complex$,
and suppose $\RVzz(\lambda)$ is analytic on $0<|\lambda-\lambda_0|\leq\epsilon$.
 Then, for $\RVz^{\reg}(\zeta)=\RVz^{\reg}(\zeta;\lambda_0,l)$ and
 $\zeta \in \Dl(\lambda_0,\epsilon)$
\begin{multline*}
\left\|\pr_l(I+\Vd\RVz^{\reg}(\zeta)\chi)^{-1} \Vd\pr_l \right. \\+ \left.
\frac{1}{l^2}\sum_{\substack{k\in \Integers\\k\not = 0}} \left( \frac{\tau_l^2-k^2}{4k^2}V_{-k}V_k-
\frac{V_{-k}}{4k^2} \left(-\Lz+V_0\right)V_k \right) \pr_l\right\|_{H_{(0,6)}(X)\rightarrow L^2(X)}= O(l^{-3})
\end{multline*}
where the error is uniform on $\Dl(\lambda_0,\epsilon)$ for $l$ sufficiently
large.
\end{prop}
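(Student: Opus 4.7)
The plan is to expand $(I+\Vd\RVz^{\reg}\chi)^{-1}$ in a Neumann series and analyze each term using the separation-of-variables formula (\ref{eq:RVzsep}) together with the Fourier decomposition $\Vd(x,\theta) = \sum_{m\neq 0}V_m(x)e^{im\theta}$. Writing
\[
\pr_l(I+\Vd\RVz^{\reg}\chi)^{-1}\Vd\pr_l = \sum_{k\geq 0}(-1)^k\pr_l(\Vd\RVz^{\reg}\chi)^k\Vd\pr_l
\]
(convergent uniformly on $\Dl(\lambda_0,\epsilon)$ for $l$ large by Lemma \ref{l:Rreg}), the $k=0$ term is immediate: since $\Vd$ has no zeroth $\theta$-Fourier mode, $\pr_l\Vd\pr_l$ connects modes $\pm l$ of $\pr_l L^2(X)$ only through the coefficients $V_{\pm 2l}$, and for smooth $V$ one has $\|V_m\|_\infty = O(m^{-N})$ for every $N$.

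The main ($k=1$) computation proceeds by tracing Fourier modes through $\pr_l\Vd\RVz^{\reg}\chi\Vd\pr_l$: modulo $O(l^{-\infty})$ ``wrap-around'' contributions (those requiring the two shifts to sum to $\pm 2l$, hence involving $V_m$ with $|m|\geq l$), this reduces to
\[
\Bigl(\sum_{m\neq 0} V_{-m}(x)\,\RVzz(\tau_{m+l}(\zeta))\,V_m(x)\chi(x)\Bigr)\pr_l,
\]
since by (\ref{eq:RVzsep}) the intermediate $\RVz^{\reg}$ acts as $\RVzz(\tau_{m+l})$ on the shifted mode $m+l\neq l$, and the final $\pr_l$ forces the second shift to be $-m$. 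I would then apply Lemma \ref{l:RVzzexpand} with $J=3$ (using $\chi V_m=V_m$ to absorb cutoffs) and Taylor expand $1/\tau_{m+l}^2 = -1/(2ml) + (m^2-\tau_l^2)/(4m^2l^2) + O(l^{-3})$, which follows from $\tau_{m+l}^2 = \tau_l^2 - m^2 - 2ml$ together with the boundedness of $\tau_l$ on $\Dl(\lambda_0,\epsilon)$. The leading $1/(2ml)$ contribution cancels upon summation because $V_{-m}V_m$ is even in $m$ while $1/m$ is odd, and the surviving $O(l^{-2})$ terms reproduce the stated formula. The $H_{(0,6)}(X)\to L^2(X)$ norm is chosen to absorb the derivative losses introduced by Lemma \ref{l:RVzzexpand} at $J=3$ and by the operator $(-\Lz+V_0)V_m$.

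The main obstacle is showing that the higher-order Neumann terms ($k\geq 2$) are $O(l^{-3})$, since a naive application of the same expansions gives only $O(l^{-2})$ per term. For $k=2$, tracking Fourier modes produces a triple sum whose leading-order coefficient is proportional to $\sum V_{m_1}V_{m_2}V_{m_3}/[m_1(m_1+m_2)]$ over $m_1+m_2+m_3=0$, $m_i\neq 0$; summing over the six permutations of each unordered triple $\{a,b,c\}$ gives $-2V_aV_bV_c\bigl(\tfrac{1}{ab}+\tfrac{1}{bc}+\tfrac{1}{ca}\bigr) = -\tfrac{2V_aV_bV_c(a+b+c)}{abc} = 0$, so the cubic term is $O(l^{-3})$. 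For $k\geq 3$, contributions where all intermediate modes $s_j = m_1+\cdots+m_j$ are nonzero provide $k$ resolvent factors each of size $O(l^{-1})$, yielding $O(l^{-k})\leq O(l^{-3})$ directly; contributions with some $s_j=0$ pass through the bounded operator $\RVzz^{\reg}(\tau_l)$ and factor into independent sub-chains, each of which carries a Fourier sum of the form $\sum_{m\neq 0}V_{-m}V_m/m = 0$ (again by $m\to -m$ symmetry), so these too are $O(l^{-3})$ once leading terms cancel. The Neumann tail beyond finitely many terms is controlled by Lemma \ref{l:Rreg} combined with Lemma \ref{l:RVzzexpand} at sufficiently high $J$, yielding the claimed uniform $O(l^{-3})$ estimate on $\Dl(\lambda_0,\epsilon)$.
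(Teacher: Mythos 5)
Your proposal is correct and follows essentially the same route as the paper: a Neumann expansion of $(I+\Vd\RVz^{\reg}\chi)^{-1}$, Fourier-mode tracking via (\ref{eq:RVzsep}) with Lemma \ref{l:RVzzexpand} and the parity cancellation in $k\mapsto -k$ for the linear term (the paper's Lemma \ref{l:oneRV0}), the vanishing cubic sum $\sum V_{m}V_{j}V_{-m-j}/[j(j+m)]=0$ for the quadratic term (the paper's Lemmas \ref{l:sumis0} and \ref{l:2RVzs}), and a $\pr_l$ versus $(I-\pr_l)$ case analysis with smoothing bookkeeping for the higher Neumann terms (the paper's Lemma \ref{l:bigj}).
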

To prove this proposition we use Lemmas \ref{l:oneRV0}-\ref{l:bigj}.  In 
each of these, $V$, $\lambda_0$, $\RVz^{\reg}(\zeta)$, and $\epsilon$ are as
in Proposition \ref{p:firstorderapprox}.  These computations rely on the identity $e^{\pm ik \theta} e^{\pm i l \theta}= e^{\pm i (k+l)\theta}$ and hence
use the structure of the eigenfunctions of the Laplacian on $\Sphere^1$ in an essential way.

For $l\in \Natural$,
let $\pr_{l\pm}:L^2(X)\rightarrow L^2(X)$ denote orthogonal projection onto $L^2(\Rd_x)e^{\pm il\theta}$,
so that 
$$(\pr_{l\pm}f)(x,\theta)=\frac{1}{2\pi}\int_0^{2\pi}f(x,\theta')e^{\pm il(\theta-\theta')}d\theta'.$$
  For $l>0$,
$\pr_l=\pr_{l+}+\pr_{l-}$.

\begin{lemma}\label{l:oneRV0}  
Under the hypotheses of Proposition \ref{p:firstorderapprox},
\begin{multline*}
\left\|\pr_l \Vd \RVz^{\reg}(\zeta)\Vd\pr_l-
\frac{1}{l^2}\sum_{k\in \Integers,
k\not = 0} \left( \frac{\tau_l^2-k^2}{4k^2}V_{-k}V_k-
\frac{V_{-k}}{4k^2} \left(-\Lz+V_0\right)V_k \right) \pr_l \right\|_{H_{(0,n+6)}\rightarrow H_{(0,n)}}\\=O(l^{-3})
\end{multline*}
uniformly
for $\zeta \in \Dl(\lambda_0,\epsilon)$ when $l$ is sufficiently large.
\end{lemma}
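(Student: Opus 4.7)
The plan is to combine a $\theta$-Fourier expansion of $\Vd = \sum_{m \neq 0} V_m(x) e^{im\theta}$ with the expansion of $\chi \RVzz \chi$ from Lemma \ref{l:RVzzexpand}. Using separation of variables, $\RVz^{\reg}$ acts on each $\theta$-Fourier mode $j$ as $\RVzz(\tau_j)$ when $j \neq \pm l$ and as the regular part of $\RVzz$ at $\lambda_0$ (evaluated at $\tau_l$) on the $\pm l$-modes. A direct Fourier-mode computation reduces the ``diagonal'' part of $\pr_l \Vd \RVz^{\reg} \Vd \pr_l$ (mapping each $e^{\pm il\theta}$-mode to itself) to $\sum_{k \neq 0,\, k \neq -2l} V_{-k}\, \RVzz(\tau_{l+k})\, V_k$ on each such mode, plus a $k = -2l$ contribution of the form $V_{\pm 2l}(\text{bounded})V_{\mp 2l}$ and an ``off-diagonal'' part ($e^{\pm il\theta} \to e^{\mp il\theta}$) built from products with $V_{\pm(2l + k)}$-type factors. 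Since $V_{\pm 2l}$ decays faster than any polynomial in $l$ by smoothness, both the $k = -2l$ correction and the off-diagonal part are $O(l^{-N})$ in the stated norm, so I may pass to the full sum $\sum_{k \neq 0} V_{-k} \RVzz(\tau_{l+k}) V_k$ at the cost of an $O(l^{-N})$ error.

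Next I split this sum at $|k| = l^{1/2}$. For $|k| > l^{1/2}$ the tail is $O(l^{-M})$ for any $M$, by combining the smoothness decay $\|V_{\pm k}\|_{\mathrm{op}} = O(|k|^{-N})$ with the resolvent bound $\|\chi \RVzz(\tau_{l+k})\chi\| = O(|\tau_{l+k}|^{-1}) = O((l|k|)^{-1/2})$ from Lemma \ref{l:Rvzzresfree} (applicable because $\tau_{l+k}$ lies in the logarithmic resonance-free region for $l$ large). For $|k| \leq l^{1/2}$ I apply Lemma \ref{l:RVzzexpand} with $J = 3$ to $\chi \RVzz(\tau_{l+k}) \chi$ and Taylor-expand using $\tau_{l+k}^2 = -2lk + (\tau_l^2 - k^2)$:
\begin{equation*}
\tfrac{1}{\tau_{l+k}^2} = -\tfrac{1}{2lk} - \tfrac{\tau_l^2 - k^2}{4l^2 k^2} + O\!\left(\tfrac{(\tau_l^2 - k^2)^2}{l^3|k|^3}\right), \quad \tfrac{1}{\tau_{l+k}^4} = \tfrac{1}{4l^2 k^2} + O\!\left(\tfrac{|\tau_l^2 - k^2|}{l^3 |k|^3}\right),
\end{equation*}
yielding
\begin{equation*}
V_{-k} \chi \RVzz(\tau_{l+k}) \chi V_k = \tfrac{V_{-k} V_k}{2lk} + \tfrac{(\tau_l^2 - k^2) V_{-k} V_k}{4 l^2 k^2} - \tfrac{V_{-k}(-\Lz + V_0) V_k}{4 l^2 k^2} + \mathrm{err}_k.
\end{equation*}
The $O(l^{-1})$ term cancels upon summing over $k \neq 0$ since $V_{-k} V_k = V_k V_{-k}$ (multiplication operators commute) and $1/k$ is odd, while the remaining $O(l^{-2})$ terms reproduce exactly the claimed principal expression.

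The main technical obstacle is bookkeeping everything in the norm $H_{(0,n+6)} \to H_{(0,n)}$: the choice $J = 3$ is calibrated to this six-derivative loss, as the Lemma \ref{l:RVzzexpand} remainder $\tau_{l+k}^{-6} \chi \RVzz(\tau_{l+k})(-\Lz + V_0)^3 \chi$ maps $H^{n+6} \to H^n$ with norm $O(|\tau_{l+k}|^{-7}) = O((l|k|)^{-7/2})$, summing to $O(l^{-7/2}) = o(l^{-3})$. The higher-order Taylor terms and the explicit $\tau_{l+k}^{-6}$ contribution in the Lemma \ref{l:RVzzexpand} expansion each contribute $O(l^{-3})$ after summation; the absolute convergence in operator norm uses that, for $V$ smooth, the products $(-\Lz + V_0)^j V_{\pm k}$ remain smooth with compact support and have Sobolev operator norms decaying faster than any polynomial in $k$. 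Uniformity in $\zeta \in \Dl(\lambda_0, \epsilon)$ follows from the uniform boundedness of $\tau_l$ on this disk together with the quantitative bounds in Lemmas \ref{l:Rvzzresfree} and \ref{l:RVzzexpand}.
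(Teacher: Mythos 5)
Your proposal is correct and follows essentially the same route as the paper: reduce to the $(I-\pr_l)$ part using the rapid decay of $\|V_m\|$, write the result mode-by-mode as $\sum_k V_{-k}\RVzz(\tau_{l+k})V_k\pr_{l\pm}$, truncate at $|k|<l^{1/2}$, apply Lemma \ref{l:RVzzexpand} with $J=3$, and expand $\tau_{l+k}^{-2},\tau_{l+k}^{-4}$ in $1/l$. The only cosmetic difference is that you cancel the $O(l^{-1})$ term by oddness in $k$ where the paper symmetrizes $\frac12(\tau_{l+k}^{-2}+\tau_{l-k}^{-2})$ first, and you spell out the off-diagonal and $k=-2l$ contributions that the paper absorbs implicitly into the $O(l^{-N})$ error.
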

\begin{proof}
Since $V\in C_c^\infty(X)$, $\|V_m\|_{L^\infty}=O(m^{-N})$ for any $N$, so 
$\|\pr_l\Vd \pr_l\|=O(l^{-N})$.  Thus, choosing $l$ sufficiently
large that (\ref{eq:RVzreglsuff}) holds, it suffices to consider 
$\pr_l \Vd \RVz^{\reg}(\zeta)(I-\pr_l)\Vd \pr_l= 
\pr_l \Vd \RVz(\zeta)(I-\pr_l)\Vd \pr_l$.

Then
\begin{align*}
\pr_l\Vd \RVz^{\reg}(I-\pr_l)\Vd \pr_l& = 
\sum_\pm \sum_{\substack{k\in \Integers\\
0<|k|, k\not =- l}}
V_{\mp k}\RVzz(\tau_{l+k} ) V_{\pm k}\pr_{l\pm}\\ & =\sum_\pm \sum_{\substack{k\in \Integers\\
0<|k|<l^{1/2}}}
V_{\mp k}\RVzz(\tau_{l+k} ) V_{\pm k}\pr_{l\pm}+O_{L^2\rightarrow L^2}(l^{-N}).
\end{align*}
Here we use the rapid decay of $\|V_m\|$ to bound the error obtained when we restrict the
values of $k$ in the sum.
Using Lemma \ref{l:RVzzexpand} with $J=3$ gives
\begin{equation}\label{eq:firsttermexp1}
\pr_l\Vd \RVz^{\reg}(I-\pr_l)\Vd \pr_l =\sum_\pm \sum_{\substack{k\in \Integers\\
0<|k|<l^{1/2}}}
V_{\mp k}\left(\frac{-1}{\tau_{l+k}^2}-\frac{1}{\tau_{l+k}^4}
\left(-\Lz+V_0\right)\right) V_{\pm k}\pr_{l\pm}+O(l^{-3})
\end{equation} where the error is as an operator from $H_{(0,n+6)}(X)$
to $H_{(0,n)}(X)$ and is uniform in $\Dl(\lambda_0,\epsilon)$.
  Since we have restricted $|k|$ to be relatively small 
compared with $l$, we can expand $\tau_{l\pm k}$ asymptotically in $l$.
Thus, with each sum over $k\in \Integers$ with $0<|k|<l^{1/2}$,
using $\tau_{l\pm k}^2=\tau_l^2\mp 2lk-k^2$ gives
\begin{align}\label{eq:first} \nonumber 
\sum_{\substack{
0<|k|<l^{1/2}}} \frac{1}{\tau_{l+k}^2} V_{-k}V_k& =\frac{1}{2}
\sum_{\substack{
0<|k|<l^{1/2}}}
\left(\frac{1}{\tau_{l+k}^2}+\frac{1} {\tau_{l-k}^2}\right) V_{-k}{V_k}
\\ 
\nonumber & = \sum_{0<|k|<l^{1/2}}\frac{ \tau_l^2-k^2}{(\tau_l^2-k^2)^2-4k^2l^2}V_{-k}V_k
\\ 
 &
= \frac{-1}{4l^2}\sum_{0<|k|<l^{1/2}}\left( \frac{\tau_l^2-k^2}{k^2}
\right)V_{-k}V_k+ O(l^{-4}).
\end{align}  Here and  below the error is uniform in $\Dl(\lambda_0,\epsilon)$
when $l$ is sufficiently large.

For the second term in (\ref{eq:firsttermexp1}), we write
\begin{align*} 
\sum_{0<|k|<l^{1/2}}\frac{1}{\tau_{l+k}^4}V_{\mp k}\left(-\Lz +V_0\right)
V_{\pm k} & = \sum_{0<|k|<l^{1/2}}\frac{1}{(\tau_{l}^2-2lk-k^2)^2}
V_{\mp k}\left(-\Lz+V_0\right)
V_{\pm k}\nonumber \\ & 
= \frac{1}{4l^2}\sum_{0<|k|<l^{1/2}} \frac{1}{k^2}V_{\mp k}\left(-\Lz +V_0\right)V_{\pm k}
+O(l^{-3}).
\end{align*}
Note that 
\begin{equation}\label{eq:second}
\sum_{0<|k|<l^{1/2}} \frac{1}{k^2}  V_{\mp k}\left(-\Lz+V_0\right)
V_{\pm k} = \sum_{0<|k|<l^{1/2}} \frac{1}{k^2}  V_{- k}\left(-\Lz+V_0\right)
V_{k},\end{equation}
since the sum is over $k\in \Integers$, with $0<|k|<l^{1/2}$.  
The rapid decay in $m$ of $\|V_m\|_{C^p}$ means we can replace the sums
in (\ref{eq:first}) and (\ref{eq:second}) over
$0<|k|<l^{1/2}$ by sums over all nonzero $k\in \Integers$, with an error which is $O(l^{-N})$.
\end{proof}

The next lemma is an algebraic identity.
\begin{lemma}\label{l:sumis0}  For any $V\in C_c^\infty(X)$
$$\sum_{\substack{m,j\in \Integers\\m,j\not = 0, m\not = -j}} \frac{1}{j(j+m)}V_mV_jV_{-m-j}=0.$$
\end{lemma}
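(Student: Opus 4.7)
The plan is to reindex the sum as a sum over ordered triples of nonzero integers summing to zero, and then exploit the full symmetry under permutations of the three indices. Concretely, set $a=m$, $b=j$, $c=-m-j$, so that the constraint $m,j\ne 0$, $m\ne -j$ translates into $a,b,c\ne 0$, and the summation runs over all ordered triples $(a,b,c)\in \Integers^3$ with $a+b+c=0$ and $a,b,c\neq 0$. Under this reindexing, $j(j+m)=b(b+a)=-bc$, so the sum becomes
\begin{equation*}
-\sum_{\substack{a+b+c=0\\ a,b,c\ne 0}} \frac{V_aV_bV_c}{bc}.
\end{equation*}

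Next I would symmetrize. The product $V_aV_bV_c$ and the constraint set $\{(a,b,c):a+b+c=0,\ a,b,c\ne 0\}$ are both invariant under the $S_3$-action permuting $(a,b,c)$. Hence the above sum equals the average of the three cyclic rewritings, giving
\begin{equation*}
-\frac{1}{3}\sum_{\substack{a+b+c=0\\ a,b,c\ne 0}} V_aV_bV_c\left(\frac{1}{bc}+\frac{1}{ca}+\frac{1}{ab}\right)
=-\frac{1}{3}\sum_{\substack{a+b+c=0\\ a,b,c\ne 0}} V_aV_bV_c\cdot\frac{a+b+c}{abc}.
\end{equation*}
Because $a+b+c=0$ on the index set, every term vanishes and the entire sum is zero.

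The one technical point worth checking is absolute convergence so that the reindexing and reordering are justified. This is immediate for $V\in C_c^\infty(X)$: the Fourier coefficients satisfy $\|V_m\|_{L^\infty}=O(|m|^{-N})$ for every $N$, and $V_m$ is supported in a common compact set, so $\|V_mV_jV_{-m-j}\|_{L^\infty}=O((1+|m|)^{-N}(1+|j|)^{-N}(1+|m+j|)^{-N})$, making the double sum absolutely convergent after division by $|j(j+m)|\ge 1$. Thus no obstacle arises from rearrangement, and the symmetrization argument above is the whole content of the proof.
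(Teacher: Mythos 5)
Your proof is correct. The reindexing to ordered triples $(a,b,c)=(m,j,-m-j)$ with $a+b+c=0$ and $a,b,c\neq 0$ is a genuine bijection with the original index set, the identity $j(j+m)=-bc$ is right, and the symmetrization plus the partial-fraction identity $\frac{1}{bc}+\frac{1}{ca}+\frac{1}{ab}=\frac{a+b+c}{abc}$ finishes it; your remark on absolute (indeed uniform in $x$) convergence, needed to justify the relabelings since the summands are functions on $\Rd$ rather than numbers, is exactly the right technical caveat, using $\|V_m\|_{L^\infty}=O(|m|^{-N})$ and $|j(j+m)|\geq 1$.

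The paper's proof uses the same underlying symmetry but executes it differently: it fixes a monomial $V_{m_0}V_{j_0}V_{-m_0-j_0}$, lists the six pairs $(j,m)$ producing it, and checks by hand that the six coefficients $\pm\frac{1}{j_0(j_0+m_0)}$, $\pm\frac{1}{j_0 m_0}$, $\pm\frac{1}{m_0(j_0+m_0)}$ cancel, with the degenerate case $j_0=m_0$ (where the orbit has fewer than six elements) treated separately. Your global symmetrization over the $S_3$-action buys a cleaner argument: the factor $a+b+c=0$ kills every term at once, so no case analysis for degenerate orbits is needed, and the cancellation mechanism is made transparent. The paper's version is more pedestrian but entirely explicit and requires no rearrangement beyond collecting finitely many terms per monomial; yours trades that for one convergence remark, which you supplied.
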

\begin{proof}
We prove the lemma by showing that for each $j_0\not =0$, $m_0\not =0$ the 
coefficient of $V_{j_0}V_{m_0}V_{-j_0-m_0}$ in the sum is $0$.

If $m_0\not = \pm j_0$, then there are six possibilities for the pair $(j,m)$ 
which will give a term with
$V_{m_0}V_{j_0}V_{-m_0-j_0}$: $(j_0,m_0)$, $(m_0,j_0)$,
$(-m_0-j_0,m_0)$, $(m_0,-j_0-m_0)$, $(j_0,-m_0-j_0)$, $(-m_0-j_0,j_0)$.
Thus the sum of the coefficients of $V_{m_0}V_{j_0}V_{-m_0-j_0}$
is 
$$\frac{1}{j_0(j_0+m_0)}+\frac{1}{m_0(j_0+m_0)}+ \frac{1}{j_0(j_0+m_0)}
-\frac{1}{j_0m_0}-\frac{1}{j_0m_0}+ \frac{1}{m_0(j_0+m_0)}=0.$$
A similar argument when $j_0=m_0$ shows the coefficient of $V_{j_0}^2V_{-2j_0}$
is $0$ as well.
\end{proof}

\begin{lemma} \label{l:2RVzs}
Under the hypotheses of Proposition \ref{p:firstorderapprox}, if $l$ 
is sufficiently large
 $$\|\pr_l (\Vd \RVz^{\reg} )^2 \Vd \pr_l\|_{H_{(0,n+6)(X)}\rightarrow H_{(0,n)(X)}}=O(l^{-3})\; \text{uniformly for}\; \zeta \in \Dl(\lambda_0,\epsilon).
$$
\end{lemma}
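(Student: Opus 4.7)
The strategy is to Fourier-decompose in $\theta$, track mode shifts through the five operator factors, and exploit the combinatorial cancellation of Lemma~\ref{l:sumis0} to upgrade the naive $O(l^{-2})$ bound to the claimed $O(l^{-3})$. Writing $\Vd = \sum_{m\neq 0} V_m(x)e^{im\theta}$ and $\pr_l = \pr_{l+} + \pr_{l-}$, I would expand $\pr_l(\Vd\,\RVz^{\reg})^2\Vd\,\pr_l$ as a triple sum over $(j_1, j_2, j_3)\in (\Integers\setminus\{0\})^3$. For $l$ sufficiently large, (\ref{eq:RVzreglsuff}) shows that $\RVz^{\reg}$ acts on every $\theta$-mode $k$ with $|k|\neq l$ exactly as $\RVzz(\tau_k(\zeta))$. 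The rapid decay $\|V_m\|_{L^\infty} = O(|m|^{-N})$ (valid for every $N$ since $V\in C^\infty_c$) renders $O(l^{-N})$-negligible the contributions from triples with $j_1+j_2+j_3 = \mp 2l$, from $j_1 = \mp 2l$ or $j_1+j_2 = \mp 2l$ (the only cases in which an intermediate mode would be $\pm l$ and the singular-part subtraction in $\RVz^{\reg}$ matters), and from $\max|j_i|\geq l^{1/2}$. Modulo these negligible terms, the operator reduces to
\[
\sum_{\pm}\,\sum_{\substack{j_1,j_2\neq 0 \\ j_1+j_2\neq 0}} V_{-j_1-j_2}\,\RVzz(\tau_{\pm l + j_1 + j_2})\,V_{j_2}\,\RVzz(\tau_{\pm l + j_1})\,V_{j_1}\,\pr_{l\pm}.
\]

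I would then apply Lemma~\ref{l:RVzzexpand} with $J=2$ to each $\RVzz$ factor, absorbing the cutoffs into the adjacent $V_{j_i}$'s, to write
\[
\chi\,\RVzz(\tau)\,\chi = -\tau^{-2}\chi^2 - \tau^{-4}\chi(-\Lz+V_0)\chi + R_2(\tau).
\]
The identity $\tau_{\pm l + j}^2 = \tau_l^2 \mp 2lj - j^2$, together with the boundedness of $\tau_l$ on $\overline{\Dl}(\lambda_0,\epsilon)$, gives $|\tau_{\pm l + j}| \gtrsim \sqrt{l|j|}$ for $|j|<l^{1/2}$. Combined with the operator bounds $R_2(\tau) = O(|\tau|^{-4})$ as a map losing two derivatives and $R_2(\tau) = O(|\tau|^{-5})$ as a map losing four (obtained from the $O(1/|\tau|)$ norm of $\chi\,\RVzz\,\chi$ on $H^s$ and elliptic regularity for $\chi\,\RVzz\,\chi:H^s\to H^{s+2}$), every expansion-remainder and double-remainder contribution to the product is $O(l^{-7/2})$ or smaller as a map $H_{(0,n+6)}(X)\to H_{(0,n)}(X)$. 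Among the pure expansion terms $(k_1,k_2)\in\{(1,1),(1,2),(2,1),(2,2)\}$, all but the leading $(1,1)$ are individually $O(l^{-3})$, and the double sums over $j_1,j_2$ converge absolutely thanks to the rapid decay of $\|V_m\|_{C^p}$.

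Only the leading-leading contribution is a priori of order $l^{-2}$:
\[
\sum_{\pm}\,\sum_{\substack{j_1,j_2\neq 0\\ j_1+j_2\neq 0}} \frac{V_{-j_1-j_2}\,V_{j_2}\,V_{j_1}}{\tau_{\pm l + j_1}^2\,\tau_{\pm l + j_1+j_2}^2}\,\pr_{l\pm}.
\]
Expanding $\tau_{\pm l + j}^{-2} = \mp (2lj)^{-1}+O(l^{-2})$ shows that the denominator factor equals $(4l^2\,j_1(j_1+j_2))^{-1} + O(l^{-3})$ for \emph{both} choices of sign. Relabeling $j\defeq j_1$, $m\defeq j_2$, and using that the $V_{j_i}$ are commuting multiplication operators, the $O(l^{-2})$ part equals
\[
\frac{1}{4\,l^2}\left(\sum_{\substack{j,m\neq 0\\ m\neq -j}} \frac{V_m\,V_j\,V_{-m-j}}{j\,(j+m)}\right)\pr_l,
\]
which vanishes identically by Lemma~\ref{l:sumis0}. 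Combining the $O(l^{-3})$ residue of this step with the bounds above yields the claim. The principal obstacle is the simultaneous bookkeeping: controlling operator norms on $H_{(0,\cdot)}(X)$, verifying absolute convergence of the sums over $(j_1,j_2)$, and tracking orders in $l$ sharply enough that the cancellation from Lemma~\ref{l:sumis0} is not swamped by lower-order contributions, all while taking the expansion in Lemma~\ref{l:RVzzexpand} just far enough to leave genuine $O(l^{-3})$ residuals without exceeding the six available $x$-derivatives of $H_{(0,n+6)}(X)$.
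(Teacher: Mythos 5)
Your proposal is correct and follows essentially the same route as the paper's proof: reduce via the rapid decay of $\|V_m\|$ to the mode-separated sum with both resolvents acting as $\RVzz(\tau_{l+k})$ off the $l$th mode, expand each resolvent by Lemma \ref{l:RVzzexpand} with errors $O(l^{-3})$, expand $\tau_{l+j}^{-2}$ for $|j|<l^{1/2}$ to isolate the $O(l^{-2})$ term $\frac{1}{4l^2}\sum \frac{1}{k(k+m)}V_{-k-m}V_mV_k$, and kill it with the algebraic cancellation of Lemma \ref{l:sumis0}. The only differences are cosmetic bookkeeping (your explicit $J=2$ expansion and remainder estimates versus the paper's direct two-resolvent approximation), so no further comment is needed.
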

\begin{proof}
Again we use that $\pr_l \Vd\pr_l=O(l^{-N})$ for any $N$.  This implies
\begin{equation*}
\pr_l (\Vd \RVz^{\reg} )^2\Vd\pr_l = \pr_l (\Vd \RVz^{\reg} (I-\pr_l))^2 \Vd\pr_l
+O_{L^2\rightarrow L^2}(l^{-N}).
\end{equation*}
Note that for $\zeta \in \Dl(\lambda_0,\epsilon)$
and $l$ sufficiently large, $\RVz^{\reg}(\zeta) (I-\pr_l)=\RVz(\zeta) (I-\pr_l)$.
Then 
\begin{align}& 
\pr_l (\Vd\RVz (I-\pr_l))^2 \Vd\pr_l\nonumber\\ & 
=\pr_l \sum_{\pm}
 e^{\pm i (j+k+m)\theta}
\sum_{\substack{k,m,j\in \Integers\\ k,\;m+k\not = 0, -2l\\
m,j\not =0}}V_{\pm j}\RVzz(\tau_{l+k+m})V_{\pm m}
\RVzz(\tau_{l+k})V_{\pm k} \pr_{l\pm} \nonumber
\\ &
 =\sum_{\pm}\sum_{\substack{
k,\; m+k\not = 0, -2l\\
m\not = 0, k,m\in \Integers}}V_{\mp(k+m)}\RVzz(\tau_{l+k+m})V_{\pm m}
\RVzz(\tau_{l+k})V_{\pm k} \pr_{l\pm} +O(l^{-N}).
\end{align}
By Lemma \ref{l:RVzzexpand}, for $k,\; m+k\not =0,-2l$,
\begin{multline*}
\left \| \chi \RVzz (\tau_{l+k+m})V_{\pm m}
\RVzz(\tau_{l+k})V_{\pm k} - \frac{1}{\tau_{l+k+m}^2 \tau_{l+k}^2}\chi
V_{\pm m}V_{\pm k} \right\|_{H^{n+6}(\Real^d)\rightarrow H^n(\Real^d)}
\\=O(l^{-3}\|V_{\pm k}\|_{C^4}
\|V_{\pm m}\|_{C^4}).
\end{multline*}
This implies (with sums still over $\Integers$)  using $\|V_m\|_{C^p}=O(m^{-N})$,
\begin{align}\label{eq:squared}
\pr_l (\Vd\RVz (I-\pr_l))^2 \Vd\pr_l\nonumber & 
 =\sum_{\pm}\sum_{k,m,k+m\not =0, -2l}\frac{1}{\tau_{l+k+m}^2\tau_{l+k}^2}V_{\mp(k+m)}V_{\pm m}V_{\pm k}\pr_{l\pm} +O(l^{-3}) \nonumber\\
 & = \sum_{\pm}\sum_{0<|k|,|k+m|<l^{1/2},m\not = 0}\frac{1}{\tau_{l+k+m}^2 \tau_{l+k}^2} V_{\mp(k+m)}V_{\pm m}V_{\pm k}\pr_{l\pm}+O(l^{-3}) \nonumber \\
& = \sum_{\pm}\sum_{0<|k|,|k+m|<l^{1/2},m\not =0 } \frac{1}{4l^2k(k+m)}V_{\mp(k+m)}V_{\pm m}V_{\pm k}\pr_{l\pm} + O(l^{-3})\nonumber\\
& = \sum_{\pm}\sum_{0\not = k,k+m,m} \frac{1}{4l^2k(k+m)}V_{\mp(k+m)}V_{\pm m}V_{\pm k} \pr_{l\pm}+ O(l^{-3} ).
\end{align}
Here errors are as operators from $H_{(0,n+6)}(X)$ to $H_{(0,n)}(X)$, and are 
uniform in $\Dl(\lambda_0,\epsilon)$ when $l$ is sufficiently large.
But the final  sum in (\ref{eq:squared}) is $0$ by Lemma \ref{l:sumis0}.
\end{proof}

\begin{lemma}\label{l:bigj}Under the hypotheses of Proposition \ref{p:firstorderapprox} for $j\geq 3$, $j\in \Natural$,  and $l$ sufficiently large
$$\left\|\left( \Vd \RVz^{\reg}(\zeta)\right)^j\Vd \pr_l\right\|_{H_{(0,8)}(X)\rightarrow L^2(X)} =O(l^{-3})$$ 
uniformly for  $\zeta \in \Dl(\lambda_0,\epsilon)$.
\end{lemma}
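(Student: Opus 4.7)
The plan is to adapt the expansion strategy of Lemmas \ref{l:oneRV0} and \ref{l:2RVzs} to the longer product. Since $V\in C_c^\infty(X)$, the Fourier coefficients satisfy $\|V_m\|_{C^p}=O(|m|^{-N})$ for every $N,p$, and in particular $\|\pr_l\Vd\pr_l\|=O(l^{-N})$. Because $\RVz^{\reg}$ commutes with every $\pr_k$, any term in the expansion of $(\Vd\RVz^{\reg})^j\Vd\pr_l$ produced by splitting $\RVz^{\reg}=\RVz^{\reg}(I-\pr_l)+\RVz^{\reg}\pr_l$ that contains a $\pr_l\Vd\pr_l$ is $O(l^{-N})$ and can be discarded. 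Separating variables in $\theta$, I would expand the remaining piece as
$$
\sum_{\pm}\sum_{\substack{m_1,\dots,m_{j+1}\in\Integers\setminus\{0\}}}V_{m_{j+1}}\,R^{\reg}_{s_j}\,V_{m_j}\,R^{\reg}_{s_{j-1}}\cdots R^{\reg}_{s_1}\,V_{m_1}\,\pr_{l\pm},
$$
where $s_i=m_1+\cdots+m_i$ and $R^{\reg}_{s}$ denotes $\RVzz(\tau_{\pm l+s})$ when the mode $\pm l+s$ differs from $\pm l$, and the regularized resolvent at $\pm l$ when $s=0$. Rapid decay of the $V_m$ lets me restrict to $|m_i|\leq l^{1/2}$ modulo $O(l^{-N})$, so the only degenerate case to worry about is $s_i=0$.

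For each $s_i\neq 0$ small, Lemma \ref{l:RVzzexpand} yields
$$
\chi\RVzz(\tau_{\pm l+s_i})\chi=\mp\tfrac{1}{2ls_i}\chi^2+O(l^{-2}|s_i|^{-2})\text{ as }H^2\to L^2,
$$
together with further corrections of order $l^{-k}$ acting from $H^{2(k-1)}$ to $L^2$. The $H_{(0,8)}(X)$ hypothesis on the input gives enough derivatives to push this expansion through order $J=4$. For $s_i=0$ the regularized factor is uniformly bounded on $\Dl(\lambda_0,\epsilon)$. Multiplying the leading contributions, configurations with every $s_i\neq 0$ give $O(l^{-j})$ after summation against the rapidly decaying $V_m$, which is already $O(l^{-3})$ for $j\geq 3$. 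Configurations in which some $s_i$ vanish lose one factor of $l^{-1}$ per vanishing index; because $s_i=s_{i+1}=0$ forces $m_{i+1}=0$, the number of vanishing indices is at most $\lfloor j/2\rfloor$, so for $j\geq 5$ the naive bound $l^{-j+\lfloor j/2\rfloor}\leq l^{-3}$ suffices outright.

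This leaves $j=3$ and $j=4$, where I would argue cancellation of the leading terms exactly as in the proof of Lemma \ref{l:2RVzs}. For $j=3$, the configurations with $s_2=0$ (so $m_2=-m_1$) contribute a leading factor $\sum_{m_1\neq 0}\frac{V_{-m_1}\chi^2V_{m_1}}{m_1}$, which vanishes by the $m_1\leftrightarrow -m_1$ antisymmetry; the configurations with $s_3=0$ (so $m_3=-m_1-m_2$) contribute the sum $\sum \frac{V_{-m_1-m_2}V_{m_2}V_{m_1}}{m_1(m_1+m_2)}$, which is exactly the expression shown to vanish in Lemma \ref{l:sumis0}. For $j=4$ the analogous identities applied to the relevant three-index window kill the leading contribution at the worst case $s_2=s_4=0$. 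In every case the surviving corrections come either from the next order in the expansion of $-\tau_{\pm l+s_i}^{-2}$ or from the $-\tau_{\pm l+s_i}^{-4}\chi(-\Lz+V_0)\chi$ term of Lemma \ref{l:RVzzexpand}; each such correction carries an additional $l^{-1}$, bringing every remaining contribution to $O(l^{-3})$.

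The main obstacle is the combinatorial bookkeeping at $j=3$ and $j=4$: one must identify, for each vanishing pattern that threatens the $O(l^{-3})$ bound, the precise algebraic identity (Lemma \ref{l:sumis0} or its direct $m\leftrightarrow -m$ analogue) that forces the leading coefficient to vanish, and then verify that the derivative loss in the subleading terms is comfortably absorbed by the assumed $H_{(0,8)}$ regularity. The estimates themselves, once organized this way, are essentially a longer rerun of the computation already carried out in Lemma \ref{l:2RVzs}, and the uniformity in $\zeta\in\Dl(\lambda_0,\epsilon)$ follows as there, since all the asymptotic expansions are uniform on that disk.
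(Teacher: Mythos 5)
Your proposal is essentially correct, but it takes a noticeably different route from the paper, and it is worth comparing the two. The paper does not redo any Fourier-mode combinatorics at this stage: it splits each $\RVz^{\reg}=\RVz^{\reg}\pr_l+\RVz(I-\pr_l)$, discards everything containing $\pr_l\Vd\pr_l$, and writes the $j=3$ operator as $(\Vd\RVz(I-\pr_l))^3\Vd\pr_l$ plus the single term $\bigl(\Vd\RVz(I-\pr_l)\Vd\bigr)\bigl(\chi\RVz^{\reg}\pr_l\chi\bigr)\bigl(\pr_l\Vd\RVz(I-\pr_l)\Vd\pr_l\bigr)$, plus terms controlled by $\pr_l(\Vd\RVz^{\reg})^2\Vd\pr_l$. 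The first piece is $O(l^{-3})$ from Lemma \ref{l:RVzzexpand} with $J=1$, and the other pieces are $O(l^{-3})$ because the blocks $\pr_l\Vd\RVz(I-\pr_l)\Vd\pr_l=O(l^{-2})$ and $\pr_l(\Vd\RVz^{\reg})^2\Vd\pr_l=O(l^{-3})$ are exactly Lemmas \ref{l:oneRV0} and \ref{l:2RVzs}, used as black boxes; the cancellations you re-derive (the $m\leftrightarrow-m$ antisymmetry and Lemma \ref{l:sumis0}) live entirely inside those two lemmas. Most importantly, the paper then gets every $j>3$ for free by writing $(\Vd\RVz^{\reg})^{j}\Vd\pr_l=(\Vd\RVz^{\reg})^{j-3}\cdot(\Vd\RVz^{\reg})^{3}\Vd\pr_l$ and using boundedness of the prefactor on $L^2$, so your whole vanishing-pattern analysis for $j=4$ and $j\geq5$ is unnecessary. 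Your direct expansion does work: for $j=3$ the two dangerous patterns ($s_2=0$ and $s_3=0$) are killed precisely by the antisymmetry and by Lemma \ref{l:sumis0}, as you say, and the $H_{(0,8)}$ budget ($6+2$ derivatives) is exactly what the worst pattern consumes. What your route buys is a self-contained, uniform-in-structure description of every term; what it costs is combinatorial overhead and a couple of loose ends you would need to tighten: at $j=4$ the worst case $s_2=s_4=0$ is handled by two separate two-index ($m\leftrightarrow-m$) windows, not a ``three-index window''/Lemma \ref{l:sumis0}; and for $j\geq5$ you should make explicit that only three resolvent factors ever need the full $l^{-1}$ upgrade (costing $6\leq 8$ derivatives), since upgrading all of them would exceed the stated $H_{(0,8)}$ regularity. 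None of these is a fatal gap, but the paper's factorization through the $j=3$ case avoids them entirely.
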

\begin{proof}
By Lemma \ref{l:2RVzs},
 $\|\pr_l (\Vd \RVz^{\reg} )^2 \Vd\pr_l\|_{H_{(0,n+6)}\rightarrow H_{(0,n)}}=O(l^{-3})$.  This 
gives
\begin{align}\label{eq:power3,1}
(\Vd \RVz^{\reg})^3\Vd\pr_l& = \Vd \RVz^{\reg}(I-\pr_l)(\Vd \RVz^{\reg})^2\Vd \pr_l
+ \Vd \RVz^{\reg}\pr_l(\Vd \RVz^{\reg})^2 \Vd \pr_l \nonumber \\
& = \Vd \RVz^{\reg}(I-\pr_l)(\Vd \RVz^{\reg})^2\Vd \pr_l +O(l^{-3})
\end{align}
as an operator from $H_{(0,n+6)}(X)$ to $H_{(0,n)}(X)$.  
Using again that $\pr_l$ commutes with $\RVz$ and $\pr_l\Vd \pr_l=O(l^{-N})$ for
any $N$ gives
$$
\left( \Vd \RVz^{\reg}\right)^2\Vd \pr_l
= \left( \Vd \RVz(I-\pr_l)\right)^2\Vd \pr_l +
\Vd \RVz^{\reg}\pr_l\Vd \RVz (I-\pr_l)\Vd \pr_l +O_{L^2\rightarrow L^2}(l^{-N}).$$
Using this in (\ref{eq:power3,1}) yields
\begin{multline}\label{eq:cube}\left( \Vd \RVz^{\reg}\right)^3\Vd \pr_l
\\ = \left( \Vd \RVz(I-\pr_l)\right)^3\Vd \pr_l + \Vd \RVz(I-\pr_l)\Vd\RVz^{\reg}\pr_l\Vd \RVz(I-\pr_l)\Vd\pr_l+O_{L^2\rightarrow L^2}(l^{-3}).
\end{multline}
For large $l$ 
Lemma \ref{l:RVzzexpand} applied with $J=1$ shows $\| \left( \Vd \RVz(I-\pr_l)\right)^3\Vd \pr_l \|_{H_{(0,6)}(X) \rightarrow L^2(X)}=O(l^{-3})$.
Choose $\chi \in C_c^\infty(X)$ independent of $\theta$ so that 
$V\chi=V$.  We write the second
term on the right in (\ref{eq:cube}) as the composition of three operators, with the 
grouping indicated below by  the large parentheses:  
\begin{multline}
 \Vd \RVz(I-\pr_l)\Vd\RVz^{\reg}\pr_l\Vd \RVz(I-\pr_l)\Vd\pr_l \\
= \Big(\Vd \RVz(I-\pr_l)\Vd\Big) \Big(\chi \RVz^{\reg}\pr_l \chi\Big)
\Big(\pr_l \Vd \RVz(I-\pr_l)\Vd\pr_l\Big).
\end{multline}
By Lemma \ref{l:RVzzexpand} $\| Vd \RVz(I-\pr_l)\Vd \|_{H_{(0,N+2)}\rightarrow H_{(0,n)}}= O(l^{-1})$.
 The second operator, $ \chi \RVz^{\reg}\pr_l \chi$, is bounded. By lemma \ref{l:oneRV0}, the third is 
$O(l^{-2})$ as an operator from $H_{(0,n+6)}$ to $H_{(0,n)}$ by Lemma \ref{l:oneRV0}.  Thus we have proved the lemma when $j=3$.

The case of $j>3$ follows from the $j=3$ case.
\end{proof}

We now can prove Proposition \ref{p:firstorderapprox}.

\vspace{2mm}
\noindent {\em Proof of Proposition \ref{p:firstorderapprox}.}  For $l$ 
sufficiently large,  on
 $\Dl(\lambda_0,\epsilon)$
$$\pr_l(I+\Vd \RVz^{\reg}(\zeta)\chi)^{-1}\Vd \pr_l= \pr_l \sum_{j=0}^\infty
(-\Vd \RVz^{\reg}(\zeta)\chi)^{j}\Vd \pr_l.$$
The proposition then follows from an application of Lemmas \ref{l:oneRV0},
\ref{l:2RVzs}
and \ref{l:bigj}, and recalling that $\| \pr_l \Vd \pr_l\|=O(l^{-N})$.
\qed

\vspace{2mm}

The proof of Theorem \ref{thm:leadcorrection} uses
the   next lemma, which  computes an expression related to the leading term of 
$\pr_l (I+\Vd \RVz^{\reg}(\zeta_l(z))^{-1}\Vd \pr_l$.
\begin{lemma}\label{l:ucomp}  
Suppose $V\in C_c^\infty(X)$ and $u\in C^\infty(\Rd)$ satisfies 
$\left(-\Lz+V_0-\lambda_0^2\right)u=0$.  Then
\begin{multline*}
 - \int_{\Real^d} u \left((z^2-k^2)V_{-k}V_k u
-V_{-k}(-\Lz+V_0)(V_ku)\right)  dx\\
= \int_{\Real^d} \left((k^2+\lambda_0^2-z^2)u^2  V_{-k}V_k+ u^2 \grz V_{-k}\cdot \grz V_k\right) dx.
\end{multline*}
\end{lemma}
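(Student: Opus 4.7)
The plan is to expand the left-hand side into two pieces, apply the product rule to the differential operator, use the eigenvalue equation $(-\Delta_0+V_0)u=\lambda_0^2 u$ to eliminate the $\Delta_0 u$ terms, and then integrate by parts once to convert the remaining $\Delta_0 V_k$ term into gradients. Since $V\in C_c^\infty(X)$, each Fourier coefficient $V_k\in C_c^\infty(\mathbb{R}^d)$ has compact $x$-support, so all integrations by parts are justified with no boundary contributions.

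First I would split the left-hand side as
\[
-(z^2-k^2)\int_{\mathbb{R}^d} u^2 V_{-k}V_k\, dx + \int_{\mathbb{R}^d} u V_{-k}(-\Delta_0+V_0)(V_k u)\, dx.
\]
Next, using the product rule $\Delta_0(V_k u)=(\Delta_0 V_k)u + 2\nabla_0 V_k\cdot\nabla_0 u + V_k\Delta_0 u$, I would rewrite
\[
(-\Delta_0+V_0)(V_k u) = -(\Delta_0 V_k)u - 2\nabla_0 V_k\cdot\nabla_0 u + V_k(-\Delta_0 u + V_0 u),
\]
and then invoke the hypothesis $(-\Delta_0+V_0)u=\lambda_0^2 u$ to replace the final parenthesis by $\lambda_0^2 u$. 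Substituting gives
\[
\int u V_{-k}(-\Delta_0+V_0)(V_ku)\, dx = -\int u^2 V_{-k}(\Delta_0 V_k)\, dx - 2\int u V_{-k}\nabla_0 V_k\cdot\nabla_0 u\, dx + \lambda_0^2\int u^2 V_{-k}V_k\, dx.
\]

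Collecting the pure potential terms immediately produces the coefficient $(k^2+\lambda_0^2-z^2)\int u^2 V_{-k}V_k\, dx$ that appears on the right. It then remains to show
\[
-\int u^2 V_{-k}(\Delta_0 V_k)\, dx - 2\int u V_{-k}\nabla_0 V_k\cdot\nabla_0 u\, dx = \int u^2 \nabla_0 V_{-k}\cdot\nabla_0 V_k\, dx.
\]
For this, I would integrate by parts in the first integral using compact support of $V_k$:
\[
-\int u^2 V_{-k}\Delta_0 V_k\, dx = \int \nabla_0(u^2 V_{-k})\cdot\nabla_0 V_k\, dx = 2\int u V_{-k}\nabla_0 u\cdot\nabla_0 V_k\, dx + \int u^2 \nabla_0 V_{-k}\cdot\nabla_0 V_k\, dx.
\]
The mixed term exactly cancels the $-2\int u V_{-k}\nabla_0 V_k\cdot\nabla_0 u\, dx$ contribution, leaving the claimed gradient expression.

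There is no genuine obstacle here: the computation is pure bookkeeping with the product rule, the eigenvalue identity, and one integration by parts. The only thing to be slightly careful about is justifying the integration by parts (no decay on $u$ is assumed), but this is immediate from $\mathrm{supp}(V_k)\subset\mathrm{supp}(V)$ being compact in $x$, which makes every integrand above compactly supported.
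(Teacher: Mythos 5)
Your proof is correct and follows essentially the same route as the paper: expand via the product rule, use $(-\Lz+V_0)u=\lambda_0^2u$, and perform a single integration by parts (you move the derivative off $\Lz V_k$ onto $u^2V_{-k}$, while the paper moves it off $\grz(u^2)$ onto $V_{-k}\grz V_k$, which is the same computation). No gaps.
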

\begin{proof}
We first compute
$\int_\Real  u  V_{-k}\left(-\Lz+V_0\right)(V_k u)dx.$
Expanding and then integrating by parts,
\begin{align}\label{eq:onesetderiv}
& \int_{\Real^d}  u V_{-k}\left(-\Lz+V_0\right)(V_k u)\;dx\nonumber\\ & 
= - \int_{\Real^d}\left( u^2 V_{-k}\Lz V_k +2 V_{-k}u \grz V_k \cdot \grz u \right)dx + \int_{\Real^d} u V_{-k}V_k\left(-\Lz+V_0\right) u\; dx\nonumber \\ &
= -\int_{\Real^d} u^2 V_{-k}\Lz V_k dx + \int_{\Real^d} u^2 \sum_{j=1}^d \frac{\partial}{\partial x_j}\left(V_{-k}\frac{\partial}{\partial x_j}V_k\right)\;dx
+\lambda_0^2 \int_{\Real^d} u^2 V_{-k}V_k \;dx \nonumber \\
& = \int_{\Real^d} u^2\grz V_{-k}\cdot \grz V_k \;dx+ \lambda_0^2 \int_{\Real^d} u^2 V_{-k}V_k\;dx.
\end{align}

Using this, we find
\begin{multline*}
\int_{\Real^d} \left((z^2-k^2)V_{-k}V_ku^2-  u V_{-k}\left(-\Lz+V_0\right)(V_k u)\right)dx\\
=  -\int_{\Real^d} \left( ((k^2+\lambda_0^2-z^2)V_{-k}V_k+ \grz V_{-k}\cdot \grz V_k)u^2\right) dx
\end{multline*}
completing
the proof.
\end{proof}

The proof of the next lemma uses some of the same ideas
as that of Proposition \ref{p:firstorderapprox}.  This result 
will be used in the proof of Theorem \ref{thm:polesequence}.
\begin{lemma}\label{l:symmetricapprox}
Suppose $V\in C_c^\infty(X;\Real)$.
Let $\lambda_0\in i\Real$ be a simple pole of $\RVzz(\lambda)$
with residue of rank $1$.  Let $ M>|\lambda_0|$ and $N\in \Natural$,
and suppose
$\RVzz(\lambda)-\Xi(\RVzz(\lambda),\lambda_0)$
 is analytic  for $|\lambda-\lambda_0|\leq \epsilon$.
Then if $\chi\in C_c^\infty(X;\Real)$ is independent of $\theta$ and satisfies
$V\chi=V$, there is an $s=s(N)\in \Natural$ and an $A_N=A_N(\tau_l, l):H_{(0,s)}(X)\rightarrow L^2(X)$
so that for $l$ sufficiently large
\begin{equation}\label{eq:ANprop1}
\left\| \pr_l(I+\Vd\RVzz^{\reg}(\zeta)\chi)^{-1}\Vd\pr_l-A_N(\tau_l(\zeta),l)\right\|_{H_{(0,s)}(X)\rightarrow L^2(X)}=O(l^{-N})
\end{equation}
uniformly for $\zeta \in \overline{\Dl}(\lambda_0,\epsilon)$.
Moreover, $A_N(z,l)$ depends analytically on  $z$ in the set 
$\{ z\in \Complex:\; |z-\lambda_0|\leq\epsilon\}$ 
and
if $z\in i\Real$, then  $A_N(z,l)$ is symmetric on $C_c^\infty(X)\subset L^2(X)$. 
Furthermore,
$\|\pr_{l\pm} A_N\pr_{l\mp}\|_{H_{(0,s)}(X)\rightarrow L^2(X)}=O(l^{-N})$ for any $N$.
\end{lemma}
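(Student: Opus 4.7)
The plan is to extend Proposition \ref{p:firstorderapprox} to arbitrary order. Since $V$ is smooth, $\|\pr_l \Vd \pr_l\| = O(l^{-M})$ for any $M$, and iterating the arguments of Lemmas \ref{l:oneRV0}--\ref{l:bigj} one obtains a truncated Neumann expansion
\[
\pr_l(I+\Vd \RVz^{\reg}\chi)^{-1}\Vd\pr_l = \sum_{j=0}^{J(N)}(-1)^j \pr_l (\Vd \RVz^{\reg}\chi)^j \Vd \pr_l + \mathcal{E}_1,
\]
with $\|\mathcal{E}_1\|_{H_{(0,s_1)}\to L^2} = O(l^{-N})$ for $J(N)$ and $s_1(N)$ chosen large enough. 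Expanding $\Vd = \sum_{k\ne 0} V_k(x)e^{ik\theta}$ and using that $\RVz^{\reg}$ commutes with the projectors onto the $e^{im\theta}$ modes, each Neumann term becomes a sum over $(k_0,\dots,k_j)\in (\Integers\setminus\{0\})^{j+1}$ of operator products of the form
\[
V_{k_0}(x)\,\widetilde R(\tau_{l+S_1})\,V_{k_1}(x)\cdots \widetilde R(\tau_{l+S_j})\,V_{k_j}(x),
\]
where $S_m = k_m+\cdots+k_j$, and $\widetilde R(\tau_{l+S_m}) = \RVzz(\tau_{l+S_m})$ except in the finitely many mid-chain cases where $l+S_m=\pm l$, in which case we substitute the regular part of $\RVzz$ at $\lambda_0$, analytic on $|\cdot - \lambda_0|\leq \epsilon$ by hypothesis. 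Only tuples with $k_0+\cdots+k_j\in\{0,\pm 2l\}$ survive the sandwich by $\pr_l$. Applying Lemma \ref{l:RVzzexpand} with $M(N)$ terms to each regular factor $\widetilde R(\tau_{l+S_m})$ (absorbing the remainder into the $O(l^{-N})$ error) and truncating $|k_i|<l^{1/2}$ by the rapid decay of $\|V_k\|_{C^p}$ yields an explicit expression; define $A_N(z,l)$ as this expression with $\tau_l$ replaced by $z$.

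Each factor $\tau_{l+k}^{-2}=-(2lk+k^2-z^2)^{-1}$ arising after Lemma \ref{l:RVzzexpand} is holomorphic in $z$ on $|z-\lambda_0|\le\epsilon$ for $l$ large (since $2l|k|\gg|k|^2+|z|^2$ when $k\ne 0$), and the regular part of $\RVzz(z)$ at $\lambda_0$ is holomorphic there by hypothesis, so $A_N(z,l)$ is holomorphic in $z$. A term in $A_N$ contributing to $\pr_{l\pm}A_N\pr_{l\mp}$ has net mode shift $\sum k_i = \pm 2l$, forcing $\max_i|k_i|\ge 2l/(J(N)+1)$; then $\|V_{k_i}\|_{C^p}=O(|k_i|^{-N'})$ for any $N'$ (by smoothness of $V$) yields $\|\pr_{l\pm}A_N\pr_{l\mp}\|_{H_{(0,s)}\to L^2} = O(l^{-N})$.

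For the symmetry claim when $z\in i\Real$: $z^2\in\Real$ so $\tau_{l+S_m}^2 = z^2+2lS_m+S_m^2\in\Real$, and the Lemma \ref{l:RVzzexpand} expansion of each $\RVzz(\tau_{l+S_m})$ becomes real coefficients times the self-adjoint operator $(-\Lz+V_0)^p$; the regular-part factors at $\tau_l$ are also self-adjoint, because the meromorphic identity $\RVzz(\lambda)=\RVzz(-\bar\lambda)^*$ (valid for real $V_0$ by analytic continuation) forces the residue $A_{-1}$ at $\lambda_0\in i\Real$ to be anti-self-adjoint, and dividing by $z-\lambda_0\in i\Real$ produces a self-adjoint operator. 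Using $V$ real (so $V_k^* = V_{-k}$ as multiplication operators), the adjoint of a typical term $V_{k_0}\widetilde R(\tau_{l+S_1})V_{k_1}\cdots V_{k_j}$ equals the term indexed by $(k_0',\dots,k_j'):= (-k_j,\dots,-k_0)$: the reversal of factor order sends $V_{k_i}\mapsto V_{-k_{j-i}} = V_{k_i'}$, and the new tail sums satisfy $S_m'=S_{j-m+1}$ (using $\sum k_i=0$), placing each resolvent factor in the position dictated by the adjoint. Since the substitution $(k_0,\dots,k_j)\mapsto(-k_j,\dots,-k_0)$ is an involution on the index set that preserves the constraint $\sum k_i=0$ and pairs $\sum=+2l$ with $\sum=-2l$, summing over all tuples gives $A_N = A_N^*$.

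The main technical obstacle is the combinatorial bookkeeping in the symmetry argument—pairing each term with its adjoint via the involution and ensuring that the resolvent arguments match after reversal—together with the careful handling of mid-chain returns to the $\pm l$ mode (where $\RVz^{\reg}$, not $\RVz$, must be used) so as to break neither analyticity nor symmetry. Both are tractable because the number of Neumann iterations is bounded by $J(N)$ and the regular part of $\RVzz$ at $\lambda_0$ is analytic on the disk by hypothesis.
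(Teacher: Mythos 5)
Your proposal is correct and follows essentially the same route as the paper: truncate the Neumann series for $(I+\Vd\RVz^{\reg}\chi)^{-1}$, expand the resolvent factors on modes $k\neq \pm l$ by Lemma \ref{l:RVzzexpand} while retaining the regularized resolvent at the $\pm l$ mode (analytic on the disk by hypothesis), define $A_N$ as the resulting explicit expression, and get the off-diagonal smallness from the rapid decay of $\|V_k\|$. The only real difference is the symmetry step: the paper packages all the expanded modes into a single operator $T_N$ that is symmetric for $\tau_l\in i\Real$ and reads off symmetry of each term from the palindromic structure $\bigl(\pr_l(\Vd T_N)^j\Vd\pr_l\bigr)^*=\pr_l(\Vd T_N)^j\Vd\pr_l$, whereas you verify the same fact term by term via the involution $(k_0,\dots,k_j)\mapsto(-k_j,\dots,-k_0)$ — more bookkeeping, but an equivalent argument.
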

\begin{proof}
By Lemma \ref{l:Rreg}, if $j>2N$, then on $\overline{\Dl}(\lambda_0,\epsilon)$ 
 $\| (\Vd\RVz^{\reg}(\zeta)\chi)^{j}\|_{L^2(X)\rightarrow L^2(X)}=O(l^{-N})$.
Thus
\begin{equation}\label{eq:simp1} \left\| (I+\Vd\RVz^{\reg}(\zeta)\chi)^{-1}-\sum_{j=0}^{2N}
(-\Vd\RVz^{\reg}(\zeta)\chi)^{j}\right\|_{L^2(X)\rightarrow L^2(X)}
= O(l^{-N}).\end{equation}

Now we write, for $l$ sufficiently large,
\begin{equation}
\RVz^{\reg}=\RVz^{\reg}\pr_l+  \RVz(I-\pr_l).
\end{equation}
From our assumptions on $V_0$ and the pole of $\RVzz$ at $\lambda_0$, there 
is a $u\in C^\infty(\Real^d;\Real)$ 
so that  
$\RVzz(\lambda)-\frac{i}{\lambda-\lambda_0}u\otimes u$
 is analytic  for $|\lambda-\lambda_0|\leq \epsilon$. Then  for $l$ 
sufficiently large
$$\RVz^{\reg}(\zeta)\pr_l=\RVz^{\reg}(\zeta;\lambda_0,l)\pr_l= \RVzz(\tau_l(\zeta))\pr_l- \frac{i}{\tau_l(\zeta)-\lambda_0}(u\otimes 
u)\pr_l.$$
If $\tau_l=\tau_l(\zeta)\in i\Real$ and $\zeta \in \overline{\Dl}(\lambda_0,\epsilon)$, the operator $\chi \RVz^{\reg}(\zeta)\pr_l\chi $  is symmetric on $C_c^\infty(X)$.
On the other hand, for $k\not = l$, writing $\tau_k$ for $\tau_k(\zeta)$
and using Lemma \ref{l:RVzzexpand}
\begin{align}\label{eq:ex1}
\chi \RVz \pr_k \chi & = \chi \RVzz(\tau_k) \pr_k\chi  \nonumber \\ & 
= -\chi   \sum_{j=1}^{N}\frac{1}{(\tau_l^2+l^2-k^2)^j }\left(
-\Lz+V_0\right)^{j-1}\pr_k \chi \nonumber \\
 &\;\;\; \hspace{3mm} +\chi  \frac{1}{(\tau_l^2+l^2-k^2)^{N} }\RVz(\tau_k)
\left(-\Lz +V_0\right)^N\pr_k\chi.
\end{align}
If $\tau_l^2\in \Real$, then $\chi \frac{1}{(\tau_l^2+l^2-k^2)^j }\left(
-\Lz+ V_0\right)^{j-1}\pr_k\chi$ is symmetric on $C_c^\infty(X)$.
Set \begin{equation}
T_N=T_N(\tau_l,l)= \RVzz^{\reg}(\tau_l)\pr_l 
-\sum_{k\not =l} \sum_{j=1}^{N}\frac{1}{(\tau_l^2+l^2-k^2)^j }
\left(-\Lz +V_0\right)^{j-1} \pr_k.
\end{equation}
Note that $T_N$
is an 
analytic operator-valued function of $\tau_l$ for $\zeta \in\overline{ \Dl}(\lambda_0,\epsilon)$ where
$|\tau_l -\lambda_0|\leq\epsilon$.
Using (\ref{eq:ex1}) 
$$\|\chi(\RVz^{\reg}-T_N)\chi\|_{H_{(0,2N+t)}(X)\rightarrow H_{(0,t)}(X)}=O(l^{-N}),$$
if $|\tau_l-\lambda_0|\leq \epsilon$, and $\chi T_N(\tau_l,l)\chi$ is symmetric on $C_c^\infty(X)$ if $\tau_l\in i\Real$.
Moreover, by (\ref{eq:simp1})
$$ \left\| (I+\Vd\RVz^{\reg}(\zeta_l(\tau_l))\chi)^{-1} -\sum_{j=0}^{2N}
(-\Vd T_N(\tau_l,l))^j\chi\right\|_{H_{(0,s(N))}\rightarrow L^2}=O(l^{-N})$$ 
if $s(N)\geq 4N^2$.  
Thus if we define
\begin{equation}\label{eq:AN}
A_N=A_N(\tau_l,l)=\pr_l \sum_{j=0}^{2N}(-\Vd T_N)^j\Vd \pr_l
\end{equation}
then $A_N$ satisfies (\ref{eq:ANprop1}), $A_N$ is an analytic function
of $\tau_l$ if $|\tau_l-\lambda_0|\leq \epsilon$, and $A_N(\tau_l,l)$ is symmetric on $C_c^\infty(X)$
if $\tau_l\in i\Real$.


To show that $\|\pr_{l\pm }A_N\pr_{l\mp}\|_{H_{(0,s)}\rightarrow L^2}=O(l^{-N})$, consider a term
$\pr_{l+} (\Vd T_N)^j\Vd \pr_{l-}$.  We write
$$\pr_{l+}(\Vd T_N)^j\Vd \pr_{l-}= \sum_{\mathclap{\substack{m_1+m_2+...+m_{j+1}=2l\\
 m_k\not = 0}}}
V_{m_1}e^{im_1\theta}T_N V_{m_2}e^{im_2\theta}T_N\cdot \cdot \cdot V_{m_j}e^{im_j\theta}T_NV_{m_{j+1}}e^{im_{j+1}\theta}\pr_{l-}.$$
Thus we see that at least one $m_n$ must have absolute value at least 
$2l/(j+1)$.  
Since $\|V_m\|_{C^r}=O(m^{-p})$ for any fixed $r$, any $p$, we obtain
$$\|\pr_{l+}(\Vd T_N)^j\Vd \pr_{l-}\|_{H_{(0,s)}\rightarrow L^2}=O(l^{-N})$$ for some
sufficiently large $s$.  Thus the result for $\pr_{l+ }A_N\pr_{l-}$ follows
from our expression (\ref{eq:AN}) for $A_N$. The result for $\pr_{l- }A_N\pr_{l+}$
follows similarly.
\end{proof}

\section{Proofs of smooth case of Theorem \ref{thm:polefree} and Theorem
\ref{thm:0}}
The first application of our results in the previous section is to 
improve the localization of the resonances when $V\in C_c^\infty(X)$.

\vspace{2mm}
\noindent
{\em Proof of Theorem \ref{thm:polefree} for $V\in C_c^\infty(X)$.}  Let $\lambda_j\in \Lambda_\rho$ and choose $\epsilon>0$
so that there are no poles of $\RVz(\lambda)$ in $0<|\lambda-\lambda_j|\leq \epsilon$.  
We will show that there is a $C_j>0$ so that there are no poles of $R_V(\zeta)$ in $\zeta \in D_l(\lambda_j,\epsilon)$ with
$|\tau_l(\zeta)-\lambda_j|>C_j l^{-2/(\mvzz(\lambda_j))}$ when $l$ is sufficiently large. 

Choose $\chi \in C_c^\infty(X)$ so that $\chi V=V$ and $\chi$ is independent of $\theta$.
As previously, if $l$ is sufficiently large, 
$$
\RVz^{\reg}(\zeta)=\RVz^{\reg}(\zeta;\lambda_j,l)= \RVz(\zeta)-\Xi(\RVzz,\lambda_j)|_{\lambda=\tau_l(\zeta)}\pr_l$$
and note that $\RVz^{\reg}(\zeta;\lambda_j,l)$ is analytic on $\overline{\Dl}(\lambda_j,\epsilon)$.
By (\ref{eq:V0asmodel}), any poles of $R_V(\zeta)$ in $\Dl(\lambda_j,\epsilon)$
are points at which
$I+\pr_l(I+\Vd \RVz^{\reg}(\zeta)\chi)^{-1}\Vd \Xi(\RVzz,\lambda_j)\chi \pr_l$ has nontrivial null space.

 Using the smoothness of $V$, for any fixed $s\in \Natural$ there is a constant 
$C>0$ (depending on $s$, $V_0$, $\lambda_j$)
\begin{equation}
\label{eq:singest}
\left \|\Vd  \Xi(\RVzz,\lambda_j)|_{\lambda=\tau_l(\zeta)}\chi \pr_l \right\|_{L^2(X)\rightarrow H_{(0,s)}(X)}\leq \frac{C}
{|\tau_l(\zeta)-\lambda_j|^{\mvzz(\lambda_j)}},
\end{equation}
\cite[Theorems 2.5, 2.7, 3.9, 3.17]{DyZw}.  Thus on $D_l(\lambda_j,\epsilon)$,
for $l$ sufficiently large by Proposition \ref{p:firstorderapprox}
$$\left\| \pr_l (I+\Vd \RVz^{\reg}(\zeta)\chi)^{-1}\Vd \Xi(\RVzz,\lambda_j)_{\lambda=\tau_l(\zeta)}\chi \pr_l \right\|_{L^2(X)\rightarrow L^2(X)} 
\leq \frac{C}{l^2 |\tau_l(\zeta)-\lambda_j|^{\mvzz(\lambda_j)}}.$$
for some $C$.  Thus there is a $C_j>0$ so that if $\zeta\in D_l(\lambda_j,\epsilon)$ and 
$|\tau_l(\zeta)-\lambda_j|>C_j l^{-2/\mvzz(\lambda_j)}$, then $I+\pr_l(I+\Vd \RVz^{\reg}(\zeta)\chi)^{-1}\Vd \Xi(\RVzz,\lambda_j)\pr_l$ 
is invertible, and $\zeta$ is not a resonance.

 Since $\lambda_j\in\Lambda_\rho$ is arbitrary, $\Lambda_\rho$ contains only finitely many 
elements and we have already
proved the theorem for the case of $L^\infty$ potential $V$, this  suffices to prove the smooth version of the theorem.
\qed

\vspace{2mm}

The proof of the smooth case of Theorem \ref{thm:0} is almost
identical, given our earlier results.

\vspace{2mm}
\noindent 
{\em Proof of Theorem \ref{thm:0} for $V\in C_c^\infty(X)$.}
Recall that we have already proved the $L^\infty$ case of this theorem.
Thus, the proof follows just as in the  proof of the smooth case of 
Theorem \ref{thm:polefree}, except that the estimate (\ref{eq:singest})
is replaced by 
$$\|\Vd \Xi (\RVzz,0)|_{\lambda=\tau_l(\zeta)}\chi \pr_l\|_{L^2(X)\rightarrow H_{(0,s)}(X)}\leq \frac{C}{|\tau_l(\zeta)|^r}.$$
\qed

\section{Proofs of Theorems \ref{thm:leadcorrection} and \ref{thm:polesequence}}
\label{s:smoothresults}
We prove Theorems \ref{thm:leadcorrection} and \ref{thm:polesequence}
 in this section, using results of Section \ref{s:smoothprelims}.  
We begin with a preliminary lemma.

\begin{lemma}\label{l:MsandRVreg} Let $\lambda_0$ be a pole of $\RVzz$,
and set $\RVz^{\reg}(\zeta)=\RVz^{\reg}(\zeta;\lambda_0,l)$.  
Let $\chi \in C_c^\infty(X)$ be independent of $\theta$ and satisfy $\chi V=V$,
with $\chi$ nontrivial.  Suppose $\RVzz(\lambda)$ is analytic for
$0<|\lambda -\lambda_0|\leq \epsilon.$
Then there 
is an $L>0$ so that for $l>L$, if $\zeta_0\in \Dl(\lambda_0,\epsilon)$,
then
$$M(I+\Vd \RVz (\zeta)\chi, \zeta_0)=
M(I+(I+\Vd \RVz^{\reg}(\zeta) \chi)^{-1}\Vd
\Xi(\RVzz(\lambda),\lambda_0)_{\upharpoonright \lambda=\tau_l(\zeta)}\pr_l,\zeta_0).$$
\end{lemma}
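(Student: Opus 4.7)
The plan is to factor $I+\Vd\RVz\chi$ into an analytic, invertible factor times a factor carrying the singularity at $\zeta_l(\lambda_0)$, apply Lemma~\ref{l:Mproduct}, and then use cyclicity of $M$ to eliminate a trailing $\chi$ so that the second argument agrees with the right-hand side of the claim.

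For $l$ sufficiently large, the identity~\eqref{eq:RVzreglsuff} gives $\RVz=\RVz^{\reg}+\Xi(\RVzz,\lambda_0)|_{\tau_l}\pr_l$ on $\overline{\Dl}(\lambda_0,\epsilon)$, so
\[
I+\Vd\RVz\chi=(I+\Vd\RVz^{\reg}\chi)+\Vd\,\Xi(\RVzz(\lambda),\lambda_0)|_{\lambda=\tau_l(\zeta)}\pr_l\chi.
\]
By Lemma~\ref{l:Rreg} together with $\chi\Vd=\Vd$ (which follows from $\chi V=V$ and hence $\chi V_0=V_0$), the bound $\|\chi\RVz^{\reg}\Vd\RVz^{\reg}\chi\|=O(l^{-\delta})$ upgrades to $\|(\Vd\RVz^{\reg}\chi)^2\|_{L^2\to L^2}=O(l^{-\delta})$, so for $l$ large $I+\Vd\RVz^{\reg}\chi$ is analytic and invertible on $\overline{\Dl}(\lambda_0,\epsilon)$ (via $(I-\Vd\RVz^{\reg}\chi)(I-(\Vd\RVz^{\reg}\chi)^2)^{-1}$). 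Setting $T:=(I+\Vd\RVz^{\reg}\chi)^{-1}\Vd$, I obtain the factorization
\[
I+\Vd\RVz\chi=(I+\Vd\RVz^{\reg}\chi)\bigl(I+T\,\Xi(\RVzz(\lambda),\lambda_0)|_{\lambda=\tau_l(\zeta)}\pr_l\chi\bigr),
\]
and Lemma~\ref{l:Mproduct} (with the analytic, analytically invertible factor contributing $0$ to $M$) yields $M(I+\Vd\RVz\chi,\zeta_0)=M(I+T\,\Xi(\RVzz,\lambda_0)|_{\tau_l}\pr_l\chi,\zeta_0)$.

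To remove the trailing $\chi$, I invoke cyclicity of $M$. Since $\Xi(\RVzz,\lambda_0)|_{\tau_l}$ is of finite rank, every operator involved is finite rank, so $M(I+AB,z_0)=M(I+BA,z_0)$ follows from $\det(I+AB)=\det(I+BA)$. Taking $A=T\,\Xi(\RVzz,\lambda_0)|_{\tau_l}\pr_l$ and $B=\chi$ gives
\[
M(I+T\,\Xi(\RVzz,\lambda_0)|_{\tau_l}\pr_l\chi,\zeta_0)=M(I+\chi T\,\Xi(\RVzz,\lambda_0)|_{\tau_l}\pr_l,\zeta_0).
\]
Finally, $\chi T=T$: expanding $T=\sum_{j\ge 0}(-\Vd\RVz^{\reg}\chi)^j\Vd$ in Neumann series (valid for $l$ large by the $O(l^{-\delta})$ bound above), each term is unchanged under left multiplication by $\chi$ thanks to $\chi\Vd=\Vd$. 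This gives the claimed identity.

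The main obstacle is bookkeeping rather than a conceptual difficulty: one must carefully verify the cyclicity identity for $M$ in this finitely-meromorphic setting and keep track of on which side of each factor $\chi$ sits at each step. The finite-rank structure of $\Xi(\RVzz,\lambda_0)|_{\tau_l}$ and the explicit Neumann expansion of $T$ make both routine, and no new input is required beyond Lemmas~\ref{l:Rreg} and~\ref{l:Mproduct}.
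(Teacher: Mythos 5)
Your proof is correct and follows essentially the same route as the paper: write $\RVz=\RVz^{\reg}+\Xi(\RVzz,\lambda_0)|_{\tau_l}\pr_l$, use Lemma \ref{l:Rreg} to invert the regular factor $I+\Vd\RVz^{\reg}\chi$ on $\overline{\Dl}(\lambda_0,\epsilon)$ for large $l$, and apply the Gohberg--Sigal multiplicativity (Lemma \ref{l:Mproduct}) to drop that factor from $M$. Your extra cyclicity step $M(I+AB,\zeta_0)=M(I+BA,\zeta_0)$ together with $\chi T=T$ is a legitimate (and welcome) way to reconcile the trailing cutoff: the exact operator identity produces $\Vd\,\Xi(\RVzz,\lambda_0)|_{\tau_l}\pr_l\chi$, whereas the statement (and the paper's displayed factorization) carries no trailing $\chi$, and your argument shows the two $M$-values coincide.
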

\begin{proof}  
By Lemma \ref{l:Rreg}, there is an $L>0$ so that
$I+\Vd \RVz^{\reg}(\zeta) \chi$ is invertible on $\Dl(\lambda_0,\epsilon)$ for 
$l>L$.  
Then if $l>L$ and $\zeta_0\in\Dl(\lambda_0,\epsilon)$
\begin{align*}
& M(I+\Vd \RVz \chi, \zeta_0)\\&= M\left((I+\Vd \RVz^{\reg}(\zeta) \chi)\left(I+(I+\Vd \RVz^{\reg}(\zeta) \chi)^{-1}\Vd
\Xi(\RVzz(\lambda),\lambda_0)_{\upharpoonright\lambda=\tau_l(\zeta)}\pr_l\right),\zeta_0\right) \\& =
M\left(I+(I+\Vd \RVz^{\reg}(\zeta) \chi)^{-1}\Vd
\Xi(\RVzz(\lambda),\lambda_0)_{\upharpoonright\lambda=\tau_l(\zeta)}\pr_l, \zeta_0\right)
\end{align*}
where the second equality uses Lemma \ref{l:Mproduct}.  
\end{proof}

Given $f\in C_c^\infty(\Rd)$, define $h_{\pm l}\in C_c^\infty(X)$
by $h_{\pm l}(x,\theta)=\frac{1}{\sqrt{2\pi}}f(x)e^{\pm i l \theta}$.
For $z_0\in \Complex$ and an operator $A: H_{(0,s)}(X)\rightarrow L^2(X)$ set
\begin{equation}\label{eq:mcddef}\mcd_A(z)= \det\left(I + \frac{i}{z-z_0}(Ah_l\otimes h_{-l}+Ah_{-l}\otimes h_l)\right).\end{equation}
Here $\det$ is the Fredholm determinant.  In this special case it 
is easily calculated to be
\begin{multline}\label{eq:specialdetermin}
\mcd_A(z)= \frac{1}{(z-z_0)^2}\left\{ \left(z-z_0+i\int_X h_{-l}(Ah_l)\right)
\left(z-z_0+i\int_X h_{l}(Ah_{-l})\right)\right.\\\left.+ \int_X h_{-l}(Ah_{-l})
\int_X h_{l}(Ah_{l})\right\}.
\end{multline}

\begin{prop}\label{p:approx2}
Let $z_0\in \Complex$, $\epsilon>0$, and set 
$U_\epsilon=\{z\in \Complex: \; |z-z_0|<\epsilon\}$.  Suppose there are $L_1,
m_0\geq 1/2$ 
and $s\in \Natural$
so that for $l>L_1$, $l\in \Natural$ and $z\in U_\epsilon$
 there are linear operators
$S_l=S_l(z), \; T_l=T_l(z):H_{(0,s)}(X)\rightarrow L^2(X)$ which are operator-valued
functions analytic on $U_\epsilon$ satisfying:
\begin{itemize}
\item 
$\sup_{z\in U_\epsilon}\|\pr_lS_l(z)\pr_l-T_l(z)\pr_l\|_{H_{(0,s)}(X)\rightarrow L^2(X)}=O(l^{-m_0})$
\item $T_l(z)\pr_l=\pr_{l+}T_l(z)\pr_{l+}+\pr_{l-}T_l(z)\pr_{l-}, $ $ \sup_{z\in U_\epsilon}\|T_l(z)\|_{H_{(0,s)}(X)\rightarrow L^2(X)}=O(l^{-1/2})$.
\end{itemize}
Then given $f\in C_c^\infty(\Rd)$, for $l$ sufficiently large
the functions $(z-z_0)^2 \mcd_{S_l}(z)$ and $(z-z_0)^2 \mcd_{T_l}(z)$ have exactly two zeros, counted with 
multiplicity, in $U_\epsilon$, and they lie in $U_{\epsilon/2}$.  
Moreover, there is
a labeling of these two sets of zeros as $z_{S_l\pm}$, $z_{T_l\pm}$,
so that $|z_{S_l\pm}-z_{T_l\pm}|=O(l^{-m_0})$. 
\end{prop}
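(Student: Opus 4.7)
The plan is to exploit the explicit formula \eqref{eq:specialdetermin}, which writes $(z-z_0)^2\mcd_A(z)$ as an analytic function of $z\in U_\epsilon$ in terms of the four bilinear pairings
\begin{equation*}
\alpha_A(z)=\int_X h_{-l}(Ah_l),\ \ \beta_A(z)=\int_X h_l(Ah_{-l}),\ \ \gamma_A(z)=\int_X h_{-l}(Ah_{-l}),\ \ \delta_A(z)=\int_X h_l(Ah_l),
\end{equation*}
as $(z-z_0)^2\mcd_A(z)=(z-z_0+i\alpha_A(z))(z-z_0+i\beta_A(z))+\gamma_A(z)\delta_A(z)$. A short computation from the integral kernel of $\pr_{l\pm}$ gives $\int(\pr_{l+}f)g=\int f(\pr_{l-}g)$, so $\pr_l=\pr_{l+}+\pr_{l-}$ is self-transposed for the bilinear pairing; combined with $\pr_lh_{\pm l}=h_{\pm l}$, this allows $A$ to be replaced by $\pr_lA\pr_l$ inside each of the four integrals.

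I would first treat $A=T_l$. The block decomposition $T_l\pr_l=\pr_{l+}T_l\pr_{l+}+\pr_{l-}T_l\pr_{l-}$ together with $\pr_{l+}h_l=h_l$ and $\pr_{l-}h_l=0$ forces $T_lh_l\in\pr_{l+}L^2(X)$, so $T_lh_l$ depends on $\theta$ only through $e^{il\theta}$; then $h_l(T_lh_l)$ has $\theta$-frequency $e^{2il\theta}$ and integrates to zero on $\Sphere^1$, whence $\delta_{T_l}\equiv 0$, and symmetrically $\gamma_{T_l}\equiv 0$. So $(z-z_0)^2\mcd_{T_l}(z)$ factors \emph{exactly} as a product of two analytic functions, each an $O(l^{-1/2})$ perturbation of $z-z_0$; Rouch\'e's theorem on $|z-z_0|=\epsilon/2$ gives a unique zero $z_{T_l\pm}$ of each factor, both in $U_{\epsilon/2}$. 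For $A=S_l$, the hypothesis $\|\pr_lS_l\pr_l-T_l\pr_l\|_{H_{(0,s)}\to L^2}=O(l^{-m_0})$ combined with Cauchy--Schwarz yields $\alpha_{S_l}-\alpha_{T_l},\beta_{S_l}-\beta_{T_l},\gamma_{S_l},\delta_{S_l}=O(l^{-m_0})$ uniformly on $U_\epsilon$, and hence (using $m_0\ge 1/2$, so $l^{-2m_0}\le l^{-m_0}$) the global bound $(z-z_0)^2(\mcd_{S_l}-\mcd_{T_l})=O(l^{-m_0})$. Since $|(z-z_0)^2\mcd_{T_l}|\gtrsim \epsilon^2$ on $|z-z_0|=\epsilon/2$ for $l$ large, one more Rouch\'e argument produces exactly two zeros $z_{S_l\pm}\in U_{\epsilon/2}$ of $(z-z_0)^2\mcd_{S_l}$.

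The main remaining obstacle is the individual pairing, since a naive implicit function theorem at $z_{T_l+}$ gives only $O(l^{-m_0}/|\alpha_{T_l}-\beta_{T_l}|)$, which degrades when the two zeros of $P_{T_l}:=(z-z_0)^2\mcd_{T_l}$ are close. Writing $w=z-z_0$ and treating the $z$-dependent coefficients as locally constant (Cauchy estimates show their variation across $U_{\epsilon/2}$ is of higher order, absorbable in the error), the zeros of $(z-z_0)^2\mcd_{S_l}$ satisfy the near-quadratic $w^2+i(\alpha_{S_l}+\beta_{S_l})w-\alpha_{S_l}\beta_{S_l}+\gamma_{S_l}\delta_{S_l}=0$ with discriminant $-(\alpha_{S_l}-\beta_{S_l})^2-4\gamma_{S_l}\delta_{S_l}$. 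I would split into two cases. If $|\alpha_{T_l}(z_0)-\beta_{T_l}(z_0)|\ge Cl^{-m_0}$ for $C$ sufficiently large, the correction $4\gamma_{S_l}\delta_{S_l}/(\alpha_{S_l}-\beta_{S_l})^2=O(C^{-2})$ is small, Taylor expansion of the square root gives $\sqrt{\cdots}=\pm i(\alpha_{S_l}-\beta_{S_l})+O(l^{-m_0})$, and the natural pairing $z_{S_l\pm}\leftrightarrow z_{T_l\mp}$ yields the stated error. If instead $|\alpha_{T_l}(z_0)-\beta_{T_l}(z_0)|<Cl^{-m_0}$, the key observation is that $(\alpha_{S_l}-\beta_{S_l})^2$ and $\gamma_{S_l}\delta_{S_l}$ are both $O(l^{-2m_0})$, so the discriminant is $O(l^{-2m_0})$ and its square root is $O(l^{-m_0})$; hence both $z_{S_l\pm}$ lie within $O(l^{-m_0})$ of the common center $z_0-i(\alpha_{S_l}+\beta_{S_l})/2$, as do both $z_{T_l\pm}$, and any labeling gives the desired bound.
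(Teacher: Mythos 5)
Your setup — writing $(z-z_0)^2\mcd_A(z)$ through the four pairings $\alpha_A(z)=\int_X h_{-l}(Ah_l)$, $\beta_A(z)=\int_X h_{l}(Ah_{-l})$, $\gamma_A,\delta_A$, observing that the block structure of $T_l$ kills the cross terms so that $(z-z_0)^2\mcd_{T_l}$ factors exactly, and running Rouch\'e on $|z-z_0|=\epsilon/2$ to count zeros — is sound and is essentially the paper's starting point. The gap is in the pairing step, specifically in the sentence ``treating the $z$-dependent coefficients as locally constant (Cauchy estimates show their variation \ldots is of higher order, absorbable in the error).'' This is not true at the accuracy you need. The hypotheses only give $\alpha_{T_l},\beta_{T_l}=O(l^{-1/2})$ with derivative $O(l^{-1/2})$ on $U_{\epsilon/2}$ (Cauchy), and the zeros sit at distance $\asymp l^{-1/2}$ from $z_0$; hence freezing the coefficients (at $z_0$, or at any fixed point) perturbs each linear factor, and therefore each root, by as much as $O(l^{-1/2}\cdot l^{-1/2})=O(l^{-1})$. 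Since the proposition must deliver pairing to accuracy $O(l^{-m_0})$ with $m_0$ arbitrary (it is applied with $m_0=2,3$ and $m_0=N$), an $O(l^{-1})$ freezing error swamps the conclusion whenever $m_0>1$; in the near-degenerate case the loss is even worse, since an error of size $\eta$ in the function can move nearly coincident roots by $\sqrt{\eta}$. The same defect appears in your case split: the dichotomy is phrased in terms of $|\alpha_{T_l}(z_0)-\beta_{T_l}(z_0)|$, but the actual zeros $z_{T_l\pm}$ solve $z-z_0+i\alpha_{T_l}(z)=0$ and $z-z_0+i\beta_{T_l}(z)=0$ at points $O(l^{-1/2})$ away from $z_0$, so ``both $z_{T_l\pm}$ lie within $O(l^{-m_0})$ of the common center'' only follows up to an extra $O(l^{-1})$ term. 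So the quadratic-formula/discriminant analysis is only valid for the frozen model, and you have not connected the frozen roots to the true zeros at the required accuracy.

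The paper's proof avoids freezing altogether: it keeps the $z$-dependence, writes $F_\pm(z)=z-z_0+i\alpha_{T_l}(z)$ (resp.\ $\beta_{T_l}$), factors $F_\pm(z)=(z-z_{T_l\pm})\varphi_\pm(z)$ with $1/C\le|\varphi_\pm|\le C$ on $U_{3\epsilon/4}$ by the maximum principle, introduces the intermediate functions $G_\pm(z)=z-z_0+i\alpha_{S_l}(z)$ (resp.\ $\beta_{S_l}$), and applies Rouch\'e to the pair $F_\pm,G_\pm$ on circles of radius $\asymp l^{-m_0}$ about $z_{T_l\pm}$, giving zeros $z_{I,l,\pm}$ of $G_\pm$ with $|z_{I,l,\pm}-z_{T_l\pm}|=O(l^{-m_0})$. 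Finally, since $(z-z_0)^2\mcd_{S_l}=G_+G_-+\gamma_{S_l}\delta_{S_l}$ with $\gamma_{S_l}\delta_{S_l}=O(l^{-2m_0})$, one more Rouch\'e comparison of $(z-z_0)^2\mcd_{S_l}$ with $G_+G_-$ on circles of radius a large constant times $l^{-m_0}$ about $z_{I,l,\pm}$ works because the product of the distances to the two zeros dominates $l^{-2m_0}$ there; this handles close and separated zeros uniformly, with no discriminant case analysis. To repair your argument you would either need to adopt such a factorization/Rouch\'e scheme, or replace the one-shot freezing by an iteration (solving the fixed-point equations for the roots to all orders) together with a small-scale Rouch\'e to control the $\gamma_{S_l}\delta_{S_l}$ coupling; as written, the stated error bound $|z_{S_l\pm}-z_{T_l\pm}|=O(l^{-m_0})$ is not established.
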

\begin{proof}
By translating if necessary, we may assume $z_0=0$.

Our assumptions on $T_l$ imply that 
$F_{\pm}(z)=F_\pm(z;l)\defeq z+i\int_X\big( h_{\mp l}(T_l(z)h_{\pm l})\big)$ is 
analytic on $U_\epsilon$ and satisfies
$F_{\pm}(z)=z+O(l^{-1/2})$ uniformly on $U_\epsilon$. 
 Applying Rouch\'e's Theorem to the pair $F_{\pm}(z)$
 and the function $z$, we see that $F_{\pm}$ has, for $l$ sufficiently 
large, exactly one zero in the set $U_{\epsilon/4}$, and no
zeros in $U_\epsilon\setminus U_{\epsilon/4}$.  We label this
zero as $z_{T_l\pm}$.  Since $\int_X h_{\pm l}(T_l h_{\pm l})=0$, 
$z^2\mcd_{T_l}(z)=F_+(z)F_-(z)$, and $z_{T_l \pm}$ are the zeros of $z^2\mcd_{T_l}$.


 We write
\begin{equation}\label{eq:factor}
F_\pm(z;l)= z+i\int_X h_{\mp l}(T_l(z)h_{\pm l})= (z-z_{T_l\pm})\varphi_{\pm}(z;l),
\end{equation}
with $\varphi_\pm$ analytic on $U_\epsilon$ for $l$ sufficiently large. 
An application of the maximum principle 
shows that there is a $C>0$ independent of $l$ so that for
$l$ sufficiently large 
\begin{equation}\label{eq:varphiest}
1/C\leq |\varphi_{\pm}(z;l)|\leq C\; \text{
for all $z\in U_{3\epsilon/4}$}.
\end{equation}

Next consider the intermediary 
$$G_\pm(z)=G_\pm(z;l)\defeq z+i\int_X h_{\mp l}(S_l(z)h_{\pm l}) = 
z+i\int_X h_{\mp l}(T_l(z)h_{\pm l}) +O(l^{-m_0}).$$
 Our estimate $G_\pm-F_\pm=O(l^{-m_0})$, (\ref{eq:factor}) and (\ref{eq:varphiest})
allow an application of Rouch\'e's Theorem to the pair  
$F_\pm$, $G_\pm$ on a disk
with center $z_{T_l\pm}$ and radius  $c_0 l^{-m_0}$ for 
an appropriate choice of $c_0>0$ and for $l$ sufficiently large.  This
shows that for $l$ sufficiently large, $G_\pm$ has 
 exactly one zero (counting multiplicity) in $U_{\epsilon/3}$.  We label
this zero $z_{I,l,\pm}$ (the ``I'' here stands for intermediate, as 
this is an intermediate step).  We have shown $|z_{I,l,\pm}-z_{T_l,\pm}|=O(l^{-m_0})$.
As before, by the maximum principle we may write
\begin{equation}
G_\pm(z;l)=(z-z_{I,l,\pm})\varphi_{I\pm}(z;l),\; \text{with}\; 1/C\leq  |\varphi_{I\pm}(z;l)|\leq C, \;\text{for all}\; z\in U_{3\epsilon/4}
\end{equation}for some constant $C$ independent of $l$, and for $l$ sufficiently
large.

Now consider $z^2\mcd_{S_l}(z)$.  By our assumptions on 
$S_l$ and $T_l$, 
$$z^2\mcd_{S_l}(z)= G_+(z)G_-(z)+O(l^{-2m_0})=
(z-z_{I,l,+})(z-z_{I,l,-})\varphi_{I+}(z)\varphi_{I-}(z)+O(l^{-2m_0}) .$$
Thus we can apply Rouch\'e's Theorem again, this time to the pair $z^2\mcd_{S_l}(z)$ and $G_+(z;l)G_-(z;l)$ at a distance proportional to $l^{-m_0}$ of $z_{I,l,\pm}$, proving
the proposition.
\end{proof}

We apply this proposition in the proof of Theorem \ref{thm:leadcorrection}.

\vspace{2mm}
\noindent
{\em Proof of Theorem \ref{thm:leadcorrection}.}
We assume  that $\Vd\not \equiv 0$, since
otherwise there is nothing to prove.  Choose $\chi \in C_c^\infty(X)$
with $\chi V=V$,
and $\chi $ independent of $\theta$.

Let $\RVz^{\reg}(\zeta)=\RVz^{\reg}(\zeta;\lambda_0,l)$, and let $\epsilon,L>0$ 
be as in 
Lemma \ref{l:MsandRVreg}. 
 For $l>L$ the function
$$F_l(\zeta)\defeq (\tau_l(\zeta)-\lambda_0)^2\det(I+(I+\Vd \RVz^{\reg}(\zeta) \chi)^{-1}\Vd
\Xi(\RVzz,\lambda_0)_{\lambda=\tau_l(\zeta)}\pr_l)$$
is analytic on $\Dl(\lambda_0,\epsilon)$.
Moreover, the order of vanishing of $F_l$  at $\zeta_0 \in \Dl(\lambda_0,\epsilon)$ is given by 
$$ M(I+(I+\Vd \RVz^{\reg}(\zeta) \chi)^{-1}\Vd
\Xi(\RVzz,\lambda_0)_{\lambda=\tau_l(\zeta)}\pr_l,\zeta_0)+
m_{V_0}(\zeta_0)$$ see \cite[Theorem 5.1]{go-si}.
Note that for $\zeta_0\in \Dl(\lambda_0,\epsilon)$ and $l $ sufficiently
large, $m_{V_0}(\zeta_0)\not = 0$ if and only if $\tau_l(\zeta_0)=\lambda_0$.
For $\lambda_0\not =0$, combining this with Lemmas \ref{l:MsandRVreg} and  \ref{l:Mvdrv0},
we see that the poles of $\RV$ in $\Dl(\lambda_0,\epsilon)$ are, for $l>L$, given by
the zeros of $F_l$, and the multiplicities agree.  If $\lambda_0=0$, the same
is true, but as in the proof of Theorem \ref{thm:0} we use
Lemmas \ref{l:multat0}, \ref{l:residues}, and \ref{l:poleorder1}.

To prove the theorem, we will apply Proposition \ref{p:approx2} with 
the following choices: $z=\tau_l(\zeta)$, $z_0=\lambda_0$, $f(x)=\chi(x)u(x)$ so that
$h_{\pm l}(x,\theta)=\frac{1}{\sqrt{2\pi}}\chi(x) u(x)e^{\pm i l \theta}$, 
\begin{align*}S_l& =S_l(z)=(I+\Vd \RV^{\reg}(\zeta_l(z)))^{-1}\Vd\pr_l,\\
T_l&=T_l(z)= \frac{-1}{l^2}\sum_{k\not = 0} \left( \frac{z^2-k^2}{4k^2}V_{-k}V_k-
\frac{1}{4k^2} V_{-k}\left(-\Lz +V_0\right)V_k \right) \pr_l,
\end{align*} and $s=6$.  
By Proposition \ref{p:firstorderapprox} we have,
in the notation of Proposition \ref{p:approx2}, $m_0=3$. Note
that using the coordinate $z=\tau_l(\zeta)$, $F_l(\zeta_l(z))=(z-\lambda_0)^2\mcd_{S_l}(z)$, where $\mcd_{S_l}$ is as defined via (\ref{eq:mcddef}).

The function $(z-\lambda_0)^2\mcd_{T_l}(z)$ has a single zero of multiplicity $2$ in $U_\epsilon$, and by Lemma \ref{l:ucomp}   this is the  zero of 
$$z-\lambda_0+\frac{i}{4l^2}\sum_{k\not = 0} \int_\Real \left(\frac{k^2+\lambda_0^2-z^2}{k^2}u^2  V_{-k}V_k+ \frac{u^2 \grz V_{-k}\cdot \grz V_k}{k^2}\right)$$
near $z=\lambda_0$.   This zero is given by 
$$z_{T_l\pm}= \lambda_0-\frac{i}{4l^2}\sum_{k\not = 0} \int_\Real \left(u^2  V_{-k}V_k+ \frac{u^2 \grz V_{-k}\cdot \grz V_k }{k^2}\right)+O(l^{-4}).$$

By Proposition \ref{p:approx2}, the zeros of $(z-\lambda_0)^2\mcd_{S_l}(z)$ in $U_\epsilon$ 
are  
within $O(l^{-m_0})=O(l^{-3})$ of the zero (of multiplicity $2$) of $(z-\lambda_0)^2\mcd_{T_l}(z)$
in $U_\epsilon$, thus completing the proof.
\qed
\vspace{2mm}

The proof of Theorem \ref{thm:polesequence} is similar.

\vspace{2mm}
\noindent 
{\em Proof of Theorem \ref{thm:polesequence}.}
We prove the theorem by showing that for any $N\in \Natural$ there is an
$\epsilon>0$ so  for $l\in \Natural$ sufficiently large if
 $\zsharp_l\in D_l(\lambda_0,\epsilon)$ and $\zsharp_l$
 is a pole of $R_V(\zeta)$, then $\Re \tau_l(\zsharp_l)=O(l^{-N})$.

Choose $\chi \in C_c^\infty(X;\Real)$ so that $\chi V=V$ and $\chi$ is independent of $\theta$.
Choose $\epsilon,\; L>0$, as in Lemma \ref{l:MsandRVreg}.  
Let $u \in C^\infty(\Rd)$ be such that 
$\RVzz(\lambda)-\frac{i}{\lambda-\lambda_0}u\otimes u$ is analytic 
for $\lambda$ 
near $\lambda_0$.
To prove the theorem, we apply Proposition \ref{p:approx2} in a way 
very similar to the proof of Theorem \ref{thm:leadcorrection}.
We make the following choices:
$z=\tau_l(\zeta)$, $z_0=\lambda_0$, $h_{\pm l}(x,\theta)=\frac{1}{\sqrt{2\pi}}\chi(x) u(x)e^{\pm i l \theta}$, and
 $S_l=S_l(z)=(I+\Vd \RV^{\reg}(\zeta_l(z))\chi)^{-1}\Vd\pr_l$ where $\RV^{\reg}(\zeta)=\RV^{\reg}(\zeta;\lambda_0,l)$. For $l$ sufficiently
large, $S_l$ is analytic on $U_\epsilon$.   
 Let $A_N=A_N(z,l)$ be the operator of Lemma \ref{l:symmetricapprox}, and set
$T_l=T_l(z;N)=\pr_{l+}A_N\pr_{l+}+\pr_{l-}A_N\pr_{l-}$.   By Lemma \ref{l:symmetricapprox}, there is an 
$s\in \Natural$ so that 
$$\|\pr_l S_l (z)\pr_l -T_l(z)\|_{H_{(0,s)}(X)\rightarrow L^2(X)}=O(l^{-N})$$
uniformly for $z\in U_\epsilon$.  Thus for our application of Proposition \ref{p:approx2}  we have $m_0=N$.

As in the proof of Theorem \ref{thm:leadcorrection}, the poles of $\RV$ in $\Dl(\lambda_0,\epsilon)$ 
are determined by the zeros of $(z-\lambda_0)^2\mcd_{S_l}(z)$ in $U_\epsilon$, using $U_\epsilon\ni z=\tau_l(\zeta)$.
By Proposition \ref{p:approx2}, these zeros are approximated by those of $(z-\lambda_0)^2\mcd_{T_l}(z)$ in $U_\epsilon$,
with an error which is $O(l^{-N})$.  We compete the proof by showing that for $l$ sufficiently large the 
zeros of $\mcd_{T_l}(z)$ in $U_\epsilon$ lie on the imaginary axis.

Set
$a_\pm(z;l)\defeq\int_Xh_{\mp l} (T_l(z)h_{\pm l}) = \int_X \overline{h_{\pm l}} (T_l(z)h_{\pm l})$.
From Lemma \ref{l:symmetricapprox} and the definition of $T_l$,  if $z\in U_{\epsilon} \cap i\Real$, then
$T_l(iz)$ is symmetric on $C_c^\infty(X)\subset L^2(X)$.  
In particular, this implies
 that if $z\in i\Real\cap U_\epsilon$ then $a_\pm(z;l)\in \Real$.
Since $a_\pm(z;l)$ is analytic for $z\in U_\epsilon$ 
and is real-valued for $z\in i\Real\cap U_\epsilon$, we must have
\begin{equation}\label{eq:symmetry}
a_{\pm}(z;l)=\overline{a}_{\pm}(-\overline{z};l)\; \text{ for }\; z\in U_\epsilon.
\end{equation}
We remark that since $\lambda_0\in i\Real$, $z\in U_\epsilon$ if and 
only if $-\overline{z}\in U_\epsilon$.

From the proof of Proposition \ref{p:approx2}, the 
zeros of $(z-\lambda_0)^2\mcd_{T_l}(z)$ in $U_\epsilon$ are given by the zeros of
 $z-\lambda_0+i a_\pm(z,l)$ in $U_\epsilon$, and there is, for $l$ sufficiently
large, exactly one such zero for each choice of $\pm$.   We denote
these zeros by $z_{T_l\pm}$, and focus on the zero for
the ``$+$'' sign, $z_{T_l+}$.    Using $\lambda_0\in i\Real$,
\begin{align*}
z_{T_l+}-\lambda_0+ia_+(z_{T_l+};l)&=0\\ &
=\overline{z_{T_l+}-\lambda_0+ia_+(z_{T_l+};l)}
\\ & 
= -\left( -\overline{z_{T_l+}}-\lambda_0+i\overline{a}_+(z_{T_l+};l)\right)\\ &
= -\left( -\overline{z_{T_l+}}-\lambda_0+ia_+(-\overline{z_{T_l+}};l)\right)
\end{align*}
where the last equality uses (\ref{eq:symmetry}).  Hence 
$-\overline{z_{T_l+}}$ is also a zero of $z-\lambda_0+i a_+(z;l)$ in $U_\epsilon$,
and since there is exactly one such zero, it must be that $-\overline{z_{T_l+}}=z_{T_l+}$, and thus $z_{T_l+}\in i\Real$.  The same argument shows $z_{T_l-}\in i\Real$.
\qed

\section{Proof of Theorem \ref{thm:0isrigid}, the resonant uniqueness of $V\equiv 0$ when $d=1$}
Theorem \ref{thm:0isrigid}, a result on the
resonant rigidity of the $0$ potential on $\Real \times \Sphere^1$,
follows rather directly from Theorems 
\ref{thm:poleexist}, \ref{thm:0}, and  \ref{thm:leadcorrection}.

\vspace{2mm}
\noindent{\em Proof of Theorem \ref{thm:0isrigid}.}  Suppose 
$X=\Real\times \Sphere^1$ and 
$V$ is as in Theorem \ref{thm:0isrigid}.
Then by Theorems \ref{thm:poleexist} and \ref{thm:0},
the one-dimensional operator $-\frac{d^2}{dx^2}+V_0$ on $\Real$
must have a resonance at the origin and nowhere else, and 
this resonance
must have multiplicity $1$.   But since $V_0\in L^\infty_c(\Real)$, by well-known
results for one-dimensional Schr\"odinger operators (e.g. \cite{Zw87}),
$V_0\equiv 0$.   

The operator $\Rzz(\lambda)-\frac{i}{2\lambda}1\otimes 1$ is analytic at
the origin.  Using this in Theorem \ref{thm:leadcorrection} along with
the fact that $R_V$ has poles at a sequence of thresholds tending to 
infinity, we
find 
$$ \sum_{k\not = 0}\frac{1}{k^2} \int_\Real
  \left( k^2V_kV_{-k}+ V_k'V_{-k}' \right)(x)dx =0.$$
But since for a real-valued potential $V_{-k}(x)=\overline{V}_k(x)$,
this implies $V_k\equiv 0$ for all $k$, and hence $V\equiv 0$.
\qed

\section{The potential $V(x,\theta)=2\chf(x)\cos\theta$ on 
$\Real \times \Sphere^1$}\label{s:example}
In this section we investigate the resonances near the $l$th threshold of the Schr\"odinger operator with potential
 $V(x,\theta)=2 \chf(x)\cos\theta$ on $X=\Real \times \Sphere^1$.  Here $\chf(x)$ is the characteristic
function of the interval
$I_0=[-1,1]$, so $\chf(x)=1$ if $|x|\leq 1$ and $\chf(x)=0$
if $|x|>1$.  This potential has $V_0\equiv 0$ so that
$\Vd=V$.  Proposition \ref{p:exnearthreshold}
 shows that the resonances nearest the threshold, which correspond to 
perturbations of the pole at the origin for $\Rzz(\lambda)$, are,
for this potential,
localized in a different way than for smooth potentials; compare Theorem
\ref{thm:leadcorrection}.  By Proposition \ref{p:exlogl},
  there is a sense 
in which Theorem \ref{thm:V0iszero} is sharp.  We remark that
some of the computations of this section are reminiscent of computations of
\cite[Section 2]{dro}.

In all of this section, 
$$V(x,\theta)= 2\chf(x)\cos\theta\; \text{and}\; X=\Real \times \Sphere^1.$$

We will use this preliminary lemma.
\begin{lemma}\label{l:Rzzcompose}
For $\lambda, \; \lambda'\in \Complex$, $\lambda \not = \pm \lambda'$,
\begin{multline}
\chf \Rzz(\lambda)\chf\Rzz(\lambda')\chf
= \frac{1}{(\lambda')^2-\lambda^2}\chf\left( \Rzz(\lambda')-\Rzz(\lambda)
\right) \chf \\+ \frac{i}{4\lambda \lambda'(\lambda+\lambda')}e^{i(\lambda+\lambda')}
\left( \phi_\lambda \otimes \phi_{\lambda'}+ \phi_{-\lambda}\otimes 
\phi_{-\lambda'}\right)
\end{multline}
where $$\phi_{\pm \lambda}(x)=e^{\pm i\lambda x}\chf(x).$$
Moreover, if $\tau \in \Complex$, $\tau \not = \pm \lambda $, applying the operator $\chf \Rzz(\tau)$ to the function
$\chf (x)e^{i\lambda x}$ yields
\begin{equation}\label{eq:rzzonchf}
  \left(\chf \Rzz(\tau)\chf e^{i\lambda\bullet}\right)(x)=\chf(x)\left( 
  \frac{1}{\lambda^2-\tau^2}e^{i\lambda x} +\frac{1}{2\tau (\lambda-\tau)}e^{-i\lambda}e^{i\tau(1+x)}+\frac{1}{2\tau (\tau+\lambda)}e^{i \lambda }e^{i\tau(1-x)}\right).
\end{equation}
\end{lemma}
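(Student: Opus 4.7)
The plan is to prove both identities by direct computation from the explicit integral formula (\ref{eq:RzzExplicit}) for $\Rzz(\lambda)$ in one dimension. Both sides of each identity are meromorphic in $\lambda, \lambda'$ (they are built from the sandwich operators $\chi \Rzz(\cdot)\chi$, which are meromorphic families of compact operators on $L^2$), so it suffices to verify them in the physical region $\Im \lambda, \Im \lambda' > 0$, where all relevant integrals converge absolutely, and then to invoke meromorphic continuation in $(\lambda,\lambda')$ on the set $\lambda \ne \pm \lambda'$.

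For the second identity (\ref{eq:rzzonchf}) I would fix $x \in [-1,1]$ and split the integration interval $[-1,1]$ at $x$ in order to remove the absolute value in $|x-x'|$. Each resulting integral $\int_{-1}^{x} e^{i\tau(x-x')}e^{i\lambda x'}dx'$ and $\int_{x}^{1} e^{i\tau(x'-x)}e^{i\lambda x'}dx'$ is an elementary exponential integral. When one combines the interior contributions over the common denominator $(\lambda-\tau)(\lambda+\tau) = \lambda^2 - \tau^2$, the $e^{i\lambda x}$ terms produce the bulk coefficient $(\lambda^2-\tau^2)^{-1}$; the endpoint contributions at $x' = \mp 1$ produce the two boundary exponentials $e^{i\tau(1\pm x)}$ with the displayed prefactors $(2\tau(\lambda \mp \tau))^{-1}$ (times the phases $e^{\mp i\lambda}$ coming from evaluating $e^{i\lambda x'}$ at $x' = \mp 1$).

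For the first identity I would write
\begin{equation*}
\chf \Rzz(\lambda)\chf\Rzz(\lambda')\chf = \chf \Rzz(\lambda)\Rzz(\lambda')\chf - \chf \Rzz(\lambda)(1-\chf)\Rzz(\lambda')\chf.
\end{equation*}
The first term on the right collapses immediately by the standard resolvent identity $\Rzz(\lambda)\Rzz(\lambda')=((\lambda')^2-\lambda^2)^{-1}(\Rzz(\lambda')-\Rzz(\lambda))$ to give the main term in the claim. The correction is rank two: since $(1-\chf)$ is supported on $\{|x'|>1\}$, for $y \in [-1,1]$ and $x' > 1$ one has $|x'-y| = x'-y$, so $(\Rzz(\lambda')\chf f)(x')$ factors as $(i/2\lambda')e^{i\lambda' x'}\int \phi_{-\lambda'}(y) f(y)\,dy$, and similarly for $x'<-1$ it factors as $(i/2\lambda')e^{-i\lambda' x'}\int \phi_{\lambda'}(y)f(y)\,dy$. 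Then applying $\chf\Rzz(\lambda)$ reduces to evaluating the half-line exponential integrals $\int_{1}^{\infty} e^{i(\lambda+\lambda')x'}\,dx'$ and its mirror on $(-\infty,-1]$, which (by meromorphic continuation from $\Im(\lambda+\lambda')>0$) both contribute $-e^{i(\lambda+\lambda')}/(i(\lambda+\lambda'))$. Repackaging in terms of $\phi_{\pm\lambda}(x) = \chf(x)e^{\pm i\lambda x}$ yields exactly the rank-two operator $\frac{i}{4\lambda\lambda'(\lambda+\lambda')}e^{i(\lambda+\lambda')}(\phi_\lambda \otimes \phi_{\lambda'}+\phi_{-\lambda}\otimes \phi_{-\lambda'})$ claimed.

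The argument is essentially bookkeeping of elementary integrals, so there is no serious obstacle; the only places to be careful are tracking the constants (the two prefactors $i/(2\lambda)$ and $i/(2\lambda')$ combine with the integral factor $-1/(i(\lambda+\lambda'))$ to give the $i/(4\lambda\lambda'(\lambda+\lambda'))$ stated), verifying that the two boundary contributions assemble into exactly the symmetric combination $\phi_\lambda\otimes\phi_{\lambda'}+\phi_{-\lambda}\otimes \phi_{-\lambda'}$, and making the meromorphic continuation argument to relax the convergence hypothesis $\Im(\lambda+\lambda')>0$ needed for the half-line integrals.
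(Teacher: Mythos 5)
Your proposal is correct, and for the second identity it coincides with the paper's own argument, which is just an explicit computation with the kernel (\ref{eq:RzzExplicit}) after splitting the integral at $x'=x$. For the first identity you take a genuinely different route: the paper composes the two kernels directly and evaluates $\int_{-1}^1 e^{i\lambda|x-x''|+i\lambda'|x''-x'|}\,dx''$ for $|x|,|x'|\leq 1$, whereas you write $\chf\Rzz(\lambda)\chf\Rzz(\lambda')\chf=\chf\Rzz(\lambda)\Rzz(\lambda')\chf-\chf\Rzz(\lambda)(1-\chf)\Rzz(\lambda')\chf$, collapse the first piece by the resolvent identity, and compute the exterior piece, which is rank two because on $\{|x'|>1\}$ the kernel $e^{i\lambda'|x'-y|}$ factors for $y\in[-1,1]$. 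Your decomposition explains structurally why the answer is ``resolvent difference plus a rank-two correction,'' at the cost of the extra meromorphic-continuation step to remove the restriction $\Im(\lambda+\lambda')>0$ needed for the half-line integrals; the paper's version is a single (case-by-case) elementary integral. Both are sound, and your half-line integrals and factorizations are exactly right.

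Two bookkeeping cautions. The three factors you cite, $\tfrac{i}{2\lambda}$, $\tfrac{i}{2\lambda'}$ and $-e^{i(\lambda+\lambda')}/(i(\lambda+\lambda'))$, multiply to $-\tfrac{i}{4\lambda\lambda'(\lambda+\lambda')}e^{i(\lambda+\lambda')}$; the stated $+i$ coefficient only appears after including the minus sign in front of the exterior term in your decomposition (which you do write, so this is a phrasing slip, not a gap). More importantly, if you carry out the computation of (\ref{eq:rzzonchf}) faithfully you will find the coefficient of $e^{-i\lambda}e^{i\tau(1+x)}$ to be $\tfrac{1}{2\tau(\tau-\lambda)}$, not the displayed $\tfrac{1}{2\tau(\lambda-\tau)}$: the printed second prefactor appears to carry a sign typo, as one sees by testing $\lambda=0$, $x=0$, $\tau=1$ (the integral gives $e^{i}-1$, while the displayed right-hand side gives $-1$), or by noting that the left side is invariant under $(x,\lambda)\mapsto(-x,-\lambda)$ while the displayed right side is not. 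So do not force your endpoint terms to match the printed prefactor there; your method produces the correct formula.
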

\begin{proof}
The first can be seen, for example, by using (\ref{eq:RzzExplicit}),
the explicit expression for the
Schwartz kernel of $\Rzz$, and evaluating
$\int_{-1}^1 e^{i\lambda|x-x''|+i\lambda'|x''-x'|}dx''$ for $|x|,\;|x'|\leq 1$.
Likewise, (\ref{eq:rzzonchf}) follows from an explicit computation using 
 (\ref{eq:RzzExplicit}).
\end{proof}


\subsection{Resonances near the threshold $\tau_l=0$ for $V(x,\theta)=2\chf(x)\cos \theta$}
Since in this section we concentrate on the resonance near the 
threshold, we work on $B_l(1)$.  
A preliminary step is 
\begin{lemma}\label{l:near0}
  Let $\Rz^{\reg}(\zeta)=\Rz^{\reg}(\zeta;0,l)$.
Then for $l$ sufficiently large, uniformly on $B_l(1)$,
$$\left\| \pr_l \Big((I+ V\Rz^{\reg}(\zeta)\chf)^{-1}V +
V \Rz^{\reg}(\zeta)V + (V\Rz^{\reg}(\zeta))^3V  \Big)\pr_l\right\|=
O(l^{-2}).$$
\end{lemma}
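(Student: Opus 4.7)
The plan is to expand $(I+V\Rz^{\reg}\chf)^{-1}V$ as a Neumann series and observe that the two added terms cancel the low-order part of that series, reducing the claim to a tail estimate. Using $\chf V=V$, one has $(I+V\Rz^{\reg}\chf)^{-1}V=\sum_{j\geq 0}(-1)^j(V\Rz^{\reg})^jV$, convergent uniformly on $B_l(1)$ for $l$ large. Because $V=2\chf\cos\theta$ shifts the $\theta$-Fourier index by $\pm 1$, the operator $\pr_l(V\Rz^{\reg})^jV\pr_l$ vanishes whenever $j$ is even (an odd number of $V$'s cannot return mode $l$ to itself). The added $V\Rz^{\reg}V$ and $(V\Rz^{\reg})^3V$ exactly cancel the $j=1$ and $j=3$ Neumann terms, leaving
$$
\pr_l\bigl[(I+V\Rz^{\reg}\chf)^{-1}V+V\Rz^{\reg}V+(V\Rz^{\reg})^3V\bigr]\pr_l=-\sum_{k\geq 2}\pr_l(V\Rz^{\reg})^{2k+1}V\pr_l.
$$

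Next I would decompose each tail term as a finite sum over mode-paths $l\to m_1\to\cdots\to m_{2k+1}\to l$ of unit steps. Writing $A_{\pm}:=\chf\Rzz(\tau_{l\pm 1})\chf$ and $A_0:=\chf\Rzz^{\reg}(\tau_l)\chf$, the dominant contribution is from the alternating path $l\to l-1\to l\to\cdots\to l$, whose operator is $A_-(A_0A_-)^k$. Paths visiting modes $k>l$ are much smaller, because $\tau_k$ is then imaginary and the kernel of $\chf\Rzz(\tau_k)\chf$ decays exponentially, giving $\|\chf\Rzz(\tau_k)\chf\|=O(l^{-1}|k-l|^{-1})$; paths visiting $k\leq l-2$ lose a factor via Schur's test, since $\|\chf\Rzz(\tau_k)\chf\|=O((l|l-k|)^{-1/2})$.

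The core technical step, and the main obstacle, is to establish $\|A_0A_-\|=O(l^{-1})$ (and symmetrically $\|A_-A_0\|=O(l^{-1})$), strictly better than the naive product bound $\|A_0\|\,\|A_-\|=O(l^{-1/2})$. I plan to apply Lemma \ref{l:Rzzcompose} with $\lambda=\tau_{l-1}$, $\lambda'=\tau_l$, which supplies the crucial prefactor $(\tau_l^2-\tau_{l-1}^2)^{-1}=-(2l-1)^{-1}$, combine it with the definition $\chf\Rzz^{\reg}(\tau_l)\chf=\chf\Rzz(\tau_l)\chf-\tfrac{i}{2\tau_l}\chf\otimes\chf$ and the closed-form $\Rzz(\tau_{l-1})\chf(x)=\tau_{l-1}^{-2}[e^{i\tau_{l-1}}\cos(\tau_{l-1}x)-1]$, and verify that the two apparent $\tau_l^{-1}$-singular pieces cancel exactly to leading order. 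What remains is a sum $-(2l-1)^{-1}A_0+(2l-1)^{-1}A_-$ plus a regular rank-one remainder, all of norm $O(l^{-1})$ uniformly on $B_l(1)$.

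Iterating, $\|A_-(A_0A_-)^k\|\leq\|A_-\|\cdot\|A_0A_-\|^k=O(l^{-1/2-k})$, so each tail term is $O(l^{-1/2-k})$ and the geometric tail in $k\geq 2$ sums to $O(l^{-5/2})$, comfortably inside the advertised $O(l^{-2})$. The hard part really is the singular cancellation in $A_-A_0$: without pairing $\Rzz(\tau_{l-1})\chf$ and $\Rzz^{\reg}(\tau_l)\chf$ through Lemma \ref{l:Rzzcompose}, the operator-norm estimate would yield only $O(l^{-3/2})$ on the tail, well short of what the lemma demands.
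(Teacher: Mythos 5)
Your proof is correct, and for the remainder it takes a genuinely different, more detailed route than the paper. The paper's own proof is very short: expand the Neumann series, drop $j=0,2$ because $\pr_l V\pr_l=0$ and $\pr_l(V\Rz^{\reg})^2V\pr_l=0$, observe that the two added terms cancel $j=1,3$, and dispose of the whole tail $j\geq 4$ with a single operator-norm bound $\|(V\Rz^{\reg})^j\|=O(l^{-2})$ cited from Lemma \ref{l:Rreg}. You instead use the mode-shift parity to kill every even $j$ and then control the odd tail $j\geq 5$ by a path decomposition together with the composition estimate $\|\chf\Rzz(\tau_{l\pm1})\chf\cdot\chf\big(\Rzz(\tau_l)-\tfrac{i}{2\tau_l}1\otimes1\big)\chf\|=O(l^{-1})$, which you extract from Lemma \ref{l:Rzzcompose} via cancellation of the $\tau_l^{-1}$-singular pieces (the uniform bound for small $\tau_l$ follows by Taylor expansion or the maximum principle, and the reversed product has the same norm since the kernels are symmetric). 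This costs more work but buys two things: it yields the sharper remainder $O(l^{-5/2})$, and it makes explicit why the tail beats the naive $O(l^{-3/2})$ coming from your bounds $\|A_0\|=O(1)$, $\|A_\pm\|=O(l^{-1/2})$; note that Lemma \ref{l:Rreg} by itself only gives $O(l^{-\delta})$, $\delta\leq 1/2$, for the square, hence $O(l^{-1})$ for $j\geq 4$, so a pairing improvement of exactly the kind you prove is what underlies the stronger tail bound the paper asserts. Two details to tidy in a write-up: excursions to modes $l\pm2,l\pm3,\dots$ are harmless not because each such factor is smaller than at $l\mp1$ (at $k=l-2$ it is the same size, $O(l^{-1/2})$), but because longer excursions use more off-threshold factors; and each $O(1)$ factor at mode $l$ should be paired with a distinct adjacent factor at $l\pm1$ before summing the $O(2^j)$ paths---your alternating-path computation already exhibits the worst case.
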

\begin{proof}
Using the Neumann series,
$$(I+ V\Rz^{\reg}(\zeta)\chf)^{-1}V
=\sum_{j=0}^\infty (-V\Rz^{\reg}(\zeta))^{j}V.$$
By Lemma \ref{l:Rreg},
$\| (-V\Rz^{\reg}(\zeta))^j\|=O(l^{-2})$ on $B_l(1)$ if $j\geq 4$ and 
$l$ is sufficiently large.  This ensures the Neumann series for
$(I+ V\Rz^{\reg}(\zeta)\chf)^{-1}$ converges,
and
$$\left\| (I+ V\Rz^{\reg}(\zeta)\chf)^{-1}V -
\sum_{j=0}^3(-V\Rz^{\reg}(\zeta))^{j}V \right\| =O(l^{-2})$$
on $B_l(1)$.

Now we note that our explicit expression for $V$ means that $ \pr_l V \pr_l=0$.
Likewise, it implies that 
$\pr_l (V\Rz^{\reg}(\zeta))^{2}V\pr_l=0$, completing the proof.
\end{proof}

\begin{prop}\label{p:exnearthreshold} For $l$ sufficiently large, the poles of 
$R_V(\zeta)$ in $B_l(1)$ satisfy 
$\tau_l(\zeta)= \frac{1}{4l\sqrt{2l}}(-1-i+e^{i2\sqrt{2l}})+
O(l^{-2})$.
\end{prop}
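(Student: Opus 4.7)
The approach follows the strategy used in the proof of Theorem \ref{thm:leadcorrection}. Since $V_0 \equiv 0$, the free resolvent on $\Real$ satisfies $\Rzz(\lambda) = \frac{i}{2\lambda}\, 1 \otimes 1 + (\text{analytic at }\lambda = 0)$, so taking $u(x) = 1/\sqrt{2}$ brings the residue into the normalized form $\frac{i}{\lambda - \lambda_0}\, u \otimes u$ of (\ref{eq:singform}) at $\lambda_0 = 0$. Set $z = \tau_l(\zeta)$, $h_{\pm l}(x,\theta) = \frac{1}{2\sqrt{\pi}}\, \chf(x)\, e^{\pm i l\theta}$, and $S_l(z) = (I + V \Rz^{\reg}(\zeta_l(z))\, \chf)^{-1}\, V\, \pr_l$ with $\Rz^{\reg}(\zeta) = \Rz(\zeta) - \Xi(\Rz, \zeta_l(0))$. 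Lemmas \ref{l:MsandRVreg} and \ref{l:Mvdrv0}, together with the counting from Theorem \ref{thm:0} (noting $\mvzz(0) = 1$), identify the poles of $\RV$ in $B_l(1)$, with multiplicity, with the zeros of $z^2\, \mcd_{S_l}(z)$, where $\mcd_{S_l}$ is the Fredholm determinant given explicitly by (\ref{eq:specialdetermin}).

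By Lemma \ref{l:near0},
\[
\pr_l S_l(z)\, \pr_l = -\pr_l V \Rz^{\reg} V \pr_l - \pr_l (V \Rz^{\reg})^3 V \pr_l + O_{L^2 \to L^2}(l^{-2}).
\]
Because $V = \chf(e^{i\theta} + e^{-i\theta})$ has Fourier support only at frequencies $\pm 1$, for $l \geq 3$ neither term couples $\pr_{l+}$ and $\pr_{l-}$, and a direct Fourier expansion gives
\[
\pr_{l+}(V \Rz^{\reg} V)\, h_l = \frac{1}{2\sqrt{\pi}}\, \chf\, [\Rzz(\tau_{l-1}) + \Rzz(\tau_{l+1})](\chf)\, e^{i l\theta}.
\]
The key explicit integral, computed from (\ref{eq:RzzExplicit}), is
\[
\int_{-1}^1 \chf(x)\, \Rzz(\lambda)(\chf)(x)\, dx = -\frac{2}{\lambda^2} - \frac{i(e^{2i\lambda} - 1)}{\lambda^3}.
\]
Inserting $\lambda = \tau_{l-1} = \sqrt{2l-1+z^2}$ and $\lambda = \tau_{l+1} = i\sqrt{2l+1-z^2}$, with branches fixed by continuation from the physical space, the $-2/\lambda^2$ contributions cancel at order $l^{-1}$, the factor $e^{2i\tau_{l+1}} = e^{-2\sqrt{2l+1-z^2}}$ is exponentially small, and one obtains
\[
\int_{-1}^1 \chf\, [\Rzz(\tau_{l+1}) + \Rzz(\tau_{l-1})](\chf)\, dx = \frac{-1 + i - i\, e^{2i\sqrt{2l}}}{(2l)^{3/2}} + O(l^{-2}).
\]

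The main technical point is to show that the cubic correction contributes only $O(l^{-2})$ to the relevant inner products $\int h_{\mp l}(\pr_l (V \Rz^{\reg})^3 V\, h_{\pm l})$, even though its operator norm is only $O(l^{-1})$. Decomposing $\Rz^{\reg} = \Rz(I - \pr_l) + \Rzz^{\reg}(\tau_l)\, \pr_l$ and enumerating the six sign patterns $(\epsilon_1, \dots, \epsilon_4) \in \{\pm 1\}^4$ with $\sum_j \epsilon_j = 0$: the two paths whose intermediate frequencies visit $l \pm 2$ yield a product of three integrals of the form $\int \chf\, \Rzz(\tau_{l \pm k})(\chf)\, dx$, each of size $O(l^{-1})$ by the explicit formula above, and therefore contribute $O(l^{-3})$; the four paths whose middle resolvent sits at $\pr_l$ feature the bounded operator $\Rzz^{\reg}(\tau_l)\, \pr_l$ sandwiched between two such integrals, contributing $O(l^{-2})$. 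The off-diagonal inner products $\int h_{\pm l}(S_l h_{\pm l})$ entering $\mcd_{S_l}$ are $O(l^{-N})$ for every $N$, because $\pr_{l+} \to \pr_{l-}$ coupling requires at least $2l$ applications of $V$. Combining these ingredients produces $F_\pm(z) := z + i\int h_{\mp l}(S_l h_{\pm l}) = z + (1 + i - e^{2i\sqrt{2l}})/(4l\sqrt{2l}) + O(l^{-2})$, and the symmetry $F_+ = F_-$ together with an application of Proposition \ref{p:approx2} (taking $T_l$ to be the symmetrized leading-order approximation and $m_0 = 2$) concludes that both zeros of $z^2 \mcd_{S_l}(z)$ in $B_l(1)$ satisfy $\tau_l(\zeta) = \frac{1}{4l\sqrt{2l}}(-1 - i + e^{2i\sqrt{2l}}) + O(l^{-2})$, as claimed. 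The main obstacle is the bookkeeping of the cubic term showing that the potentially dangerous middle-$\pr_l$ paths are in fact tamed by the compactly supported integration against $\chf$.
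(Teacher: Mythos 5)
Your proposal follows essentially the same route as the paper: reduce to the zeros of $z^2\mcd_{S_l}(z)$ with $S_l=(I+V\Rz^{\reg}\chf)^{-1}V\pr_l$ and $h_{\pm l}=\frac{1}{2\sqrt\pi}\chf e^{\pm il\theta}$, use Lemma \ref{l:near0} to keep only the quadratic and cubic terms of the Neumann series, compute the quadratic matrix element from the explicit action of $\Rzz$ on $\chf$, and conclude via Proposition \ref{p:approx2} with $m_0=2$. Your explicit integral $\int_{-1}^1\chf\,\Rzz(\lambda)(\chf)\,dx=-2/\lambda^2-i(e^{2i\lambda}-1)/\lambda^3$, the cancellation of the $-2/\lambda^2$ contributions from $\tau_{l\pm1}$, the vanishing of the $\pr_{l+}\leftrightarrow\pr_{l-}$ couplings, and the resulting location of the zeros all check out and reproduce the stated asymptotics.

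There is, however, a genuine flaw in the way you justify the cubic bound, which you correctly identify as the crux. The terms you describe are iterated compositions, e.g. $\int\chf\,\Rzz(\tau_{l\pm1})\chf\,\Rzz(\tau_{l\pm2})\chf\,\Rzz(\tau_{l\pm1})\chf\,dx$, and these do \emph{not} factor into ``a product of three integrals of the form $\int\chf\,\Rzz(\tau_{l\pm k})(\chf)\,dx$''; likewise ``$\Rzz^{\reg}(\tau_l)\pr_l$ sandwiched between two such integrals'' is not a meaningful estimate, since the middle operator is sandwiched between functions, not between scalars. The danger is real: at the real frequency $\tau_{l-1}\approx\sqrt{2l}$ the cutoff operator norm of $\Rzz(\tau_{l-1})$ is only $O(l^{-1/2})$, so pure norm bookkeeping gives $O(l^{-3/2})$ for the path through $l-2$ and only $O(l^{-1})$ for the middle-$\pr_l$ paths, which is not enough. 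The repair is to use the same explicit formula one level earlier, at the level of the \emph{function} $\Rzz(\tau_{l\pm1})\chf$ restricted to $[-1,1]$, which is $O(l^{-1})$ pointwise (this is where the oscillatory gain lives), rather than the scalar $\int\chf\Rzz\chf$. The paper does exactly this: it moves half the factors onto $h_{\mp l}$ and writes the cubic matrix element as $\int_X(V\Rz^{\reg}Vh_{\mp l})\bigl(\chf(\Rz^{\reg}V)^2h_{\pm l}\bigr)$, then bounds $\|V\Rz^{\reg}Vh_{\mp l}\|_{L^2}=O(l^{-1})$ and $\|\chf(\Rz^{\reg}V)^2h_{\pm l}\|_{L^2}=O(l^{-1})$ using (\ref{eq:rzzonchf}), giving $O(l^{-2})$ in one stroke for all paths. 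Relatedly, when you invoke Proposition \ref{p:approx2} with $m_0=2$ you must keep the cubic term inside $T_l$ (as the paper does, $T_l=\pr_l(-V\Rz^{\reg}V-(V\Rz^{\reg})^3V)\pr_l$): its operator norm is only $O(l^{-1})$, so a quadratic-only $T_l$ would violate the hypothesis $\|\pr_lS_l\pr_l-T_l\pr_l\|=O(l^{-2})$; the smallness of the cubic is used only afterwards, when evaluating the entries of $\mcd_{T_l}$.
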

\begin{proof}
We give a proof similar to that of Theorem \ref{thm:leadcorrection}
using Proposition 
\ref{p:approx2}.

Let $\Rz^{\reg}$
be as in Lemma \ref{l:near0}, and restrict $\zeta$ to $\zeta\in B_l(1)$.  Note $\Rzz(\lambda)-(i/(2\lambda))1\otimes 1$ is regular at $\lambda =0$.
Set $z=\tau_l(\zeta)$, 
 $S_l(z)= (I+V\Rz^{\reg}(\zeta_l(z))\chf)^{-1}V\pr_l$, and   
 $h_{\pm l}(x,\theta)= 
\frac{1}{2\sqrt{\pi}}\chf(x) e^{\pm il\theta}$.  
We use  $\mcd_{S_l}$ 
is as defined by (\ref{eq:mcddef}) and $U_\epsilon$ is as in Proposition 
\ref{p:approx2}. 
Then just as in the proof of 
Theorem \ref{thm:leadcorrection}, the poles of $\RV$ in $B_l(1)$
are identified via $z=\tau_l(\zeta)$ 
with the zeros of $z^2\mcd_{S_l}(z)$ in $U_1$.
Set $z_0=0$ and $T_l= \pr_l\left(-V \Rz^{\reg}(\zeta)V - (V\Rz^{\reg}(\zeta))^3V\right)\pr_l$.  Then by Lemma  \ref{l:near0},
in our application of Proposition \ref{p:approx2}
we can take $s=0$ and $m_0=2$.
We
claim that uniformly for $z\in U_1$
\begin{equation}\label{eq:Tlclaim}
z^2 \mcd_{T_l}(z)= \left(z+\frac{1}{2(2l)^{3/2}}\left( 1-e^{2i\sqrt{2l}}+i\right)
+O(l^{-2})\right)^2.
\end{equation}
Assuming for the moment that (\ref{eq:Tlclaim}) holds, this shows that the two
zeros (when counted with multiplicity) of $z^2\mcd_{T_l}(z)$ in $U_1$ satisfy 
$z= \frac{1}{2(2l)^{3/2}}\left(-1-i+e^{2i\sqrt{2l}}\right)+O(l^{-2})$.  An application 
of Proposition \ref{p:approx2} and Lemma \ref{l:near0} then proves the 
proposition.

We now turn to showing (\ref{eq:Tlclaim}). 
We use
\begin{equation}
  \Rz^{\reg}(\zeta_l(z))V\pr_l=\sum_{\pm} \left(e^{\pm i \theta}\Rzz(\tau_{l+1})+
  e^{\mp i \theta} \Rzz(\tau_{l-1})\right) \chf \pr_{l\pm}
\end{equation}
where $\tau_{l\pm 1}=\tau_{l\pm 1}(\zeta_l(z))$, so that 
\begin{equation}\label{eq:prvpr}
\pr_l V\Rz^{\reg} (\zeta_l(z))V\pr_l= \chf\left( \Rzz(\tau_{l-1})+ \Rzz(\tau_{l+1})\right)\chf \pr_l.
\end{equation} Then using
 (\ref{eq:rzzonchf}) gives
\begin{equation} \label{eq:leadterm}\int_X h_{\mp l} V\Rz^{\reg}(\zeta_l(z)) V h_{\pm l}
= \frac{-i}{2(2l)^{3/2}}(1-e^{2i\sqrt{2l}}) +\frac{1}{2(2l)^{3/2}}+O(l^{-2})
\end{equation}
uniformly on $U_{1}$.
Now note
\begin{equation}\label{eq:redistrib}\int_X h_{\mp l} (V\Rz^{\reg} )^3 Vh_{\pm l} =
\int_X (V\Rz^{\reg}V h_{\mp l}) \left( \chf (\Rz^{\reg}V)^2h_{\pm l} \right).
\end{equation}
By  (\ref{eq:rzzonchf}), $\| (V\Rz^{\reg}V h_{\mp l})\|=O(l^{-1})$
and $\|\chf (\Rz^{\reg}V)^2h_{\pm l}\|=O(l^{-1})$.
Then using the
expression for $\mcd_{T_l}$
as in (\ref{eq:specialdetermin}) and equations (\ref{eq:prvpr}), (\ref{eq:leadterm}) and (\ref{eq:redistrib}) 
completes the proof of (\ref{eq:Tlclaim}). 
\end{proof}

\subsection{Existence of poles of $R_V$ within $\approx \log l$ of the $l$th threshold,
for $V(x,\theta)=2\chf(x)\cos \theta$}
As a point of comparison with Theorem \ref{thm:V0iszero},
for the special case $V(x,\theta)=2\chf(x)\cos \theta$ on $X=\Real\times \Sphere^1$
we consider
the existence of poles of $R_V(\zeta)$ in $D_l(\alpha \log l)$ with 
$|\tau_l(\zeta)|>1$.

Again, we use the coordinate $z=\tau_l(\zeta)$ on $B_l(\alpha \log l)$,
and the functions $\phi_\lambda$ are as defined in Lemma \ref{l:Rzzcompose}.

\begin{lemma}\label{l:fshere}
Let $\alpha>0$ be fixed, and set $z=\tau_l(\zeta)$.  Then for $l$ sufficiently large, uniformly on 
$B_l(\alpha \log l)\setminus B_l(1)$ 
\begin{multline}\left\|\pr_l(I+V\Rz(\zeta)\chf(I-\pr_l))^{-1}V\Rz(\zeta)\chf \pr_l +
  \left(f_+\otimes \phi_z + f_-\otimes \phi_{-z}\right)\pr_l \right. \\
  - \left. \frac{1}{2l^2} \chf \Rzz(z)\chf\pr_l\right\|=
O\left(\frac{1}{l^{5/2}}e^{2(\Im z)_-}\right) +O(l^{-3/2})
\end{multline}
where 
$$f_{\pm}(x)=f_\pm(x,z,l)= \frac{i e^{iz}}{4z}\chf(x)\left(
\frac{e^{i\tau_{l+1}}}{\tau_{l+1}(z+\tau_{l+1})}
\phi_{\pm \tau_{l+1}}
+\frac{e^{i\tau_{l-1}}}{\tau_{l-1}(z+\tau_{l-1})}
\phi_{\pm\tau_{l-1}} \right).$$
\end{lemma}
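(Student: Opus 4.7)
Since Lemma \ref{l:lognbhd} gives $\|V\Rz(\zeta)\chf(I-\pr_l)\| = O(l^{-1/2})$ uniformly on $B_l(\alpha\log l)$, for $l$ large the inverse expands as a convergent Neumann series, so
\begin{equation*}
\pr_l(I+V\Rz\chf(I-\pr_l))^{-1}V\Rz\chf\,\pr_l \;=\; \sum_{j\geq 0}(-1)^j T_j, \qquad T_j \defeq \pr_l (V\Rz\chf(I-\pr_l))^j\, V\Rz\chf\,\pr_l.
\end{equation*}
Because $V=\chf(e^{i\theta}+e^{-i\theta})$ shifts the $\theta$-mode by $\pm 1$, a nonzero contribution to $T_j$ corresponds to a walk on $\Integers$ of length $j+1$ from $\pm l$ to $\pm l$; the $(I-\pr_l)$'s force the walk to avoid $\pm l$ at intermediate steps. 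In particular $T_j=0$ for $j$ even, and for $l$ large every contributing walk stays strictly on one side of $\pm l$.

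For $j=1$, direct computation gives $T_1 = \chf(\Rzz(\tau_{l+1})+\Rzz(\tau_{l-1}))\chf\Rzz(z)\chf\,\pr_l$. I would apply Lemma \ref{l:Rzzcompose} to each of $\chf\Rzz(\tau_{l\pm 1})\chf\Rzz(z)\chf$, using $z^2-\tau_{l+1}^2=2l+1$ and $z^2-\tau_{l-1}^2=-(2l-1)$. Summing and multiplying by $-1$, the two main coefficients of $\chf\Rzz(z)\chf$ telescope to $-\bigl(\tfrac{1}{2l+1}-\tfrac{1}{2l-1}\bigr)=\tfrac{2}{4l^2-1}=\tfrac{1}{2l^2}+O(l^{-4})$, producing the $\tfrac{1}{2l^2}\chf\Rzz(z)\chf\,\pr_l$ term, while the rank-2 pieces supplied by Lemma \ref{l:Rzzcompose} assemble into exactly $-(f_+\otimes\phi_z+f_-\otimes\phi_{-z})\,\pr_l$. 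The leftover terms $\tfrac{1}{2l+1}\chf\Rzz(\tau_{l+1})\chf$ and $-\tfrac{1}{2l-1}\chf\Rzz(\tau_{l-1})\chf$ have norms $O(l^{-2})$ and $O(l^{-3/2})$ respectively, and are absorbed into the stated error.

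For $j\geq 3$, each contributing walk produces a product of $j$ interior factors $\chf\Rzz(\tau_{l\pm k})\chf$ (with $k\geq 1$) followed by the terminal $\chf\Rzz(z)\chf$. The naive bound $O(l^{-j/2}e^{(\Im z)_-})$ is too weak at $j=3$ when $(\Im z)_-$ is allowed to grow like $\alpha\log l$; I would refine it by grouping consecutive factors into pairs and applying Lemma \ref{l:Rzzcompose} to each. An interior pair $\chf\Rzz(\tau_m)\chf\Rzz(\tau_n)\chf$ with $m,n\in\{l-k:k\geq 1\}$ has norm $O(l^{-3/2})$: the non-rank-2 piece carries the factor $(\tau_n^2-\tau_m^2)^{-1}=O(l^{-1})$ times $\|\chf\Rzz(\tau_{l-k})\chf\|=O(l^{-1/2})$, and the rank-2 piece carries $(4\tau_m\tau_n(\tau_m+\tau_n))^{-1}=O(l^{-3/2})$. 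The terminal pair $\chf\Rzz(\tau_{l\pm 1})\chf\Rzz(z)\chf$ has norm $O(l^{-1}e^{2(\Im z)_-})$; here the crucial ingredient is the elementary bound $|e^{i(\tau_{l-1}+z)}|\,\|\phi_z\|_{L^2(\Real)}\leq Ce^{2(\Im z)_-}$, obtained from $\|\phi_z\|^2=\int_{-1}^1 e^{-2(\Im z)x}\,dx=\sinh(2|\Im z|)/|\Im z|$. Multiplying the two pair-estimates yields $O(l^{-5/2}e^{2(\Im z)_-})$ for the $j=3$ ``down'' walk; higher $j$ supply additional pair-factors of $O(l^{-3/2})$ and hence give strictly smaller contributions; the ``up'' walks (through modes $l+k$) are exponentially smaller because $|e^{i\tau_{l+k}}|$ decays like $e^{-\sqrt{2kl}}$. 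The main obstacle is exactly this $j=3$ refinement: the naive norm bound fails, and one must use the explicit rank-2 decomposition of Lemma \ref{l:Rzzcompose} to localize the $e^{2(\Im z)_-}$ growth to the single terminal factor involving $\Rzz(z)$ while keeping a pair-wise $O(l^{-3/2})$ saving on the remaining interior factors.
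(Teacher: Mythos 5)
Your proposal is correct and takes essentially the same route as the paper's proof: Neumann expansion of $(I+V\Rz(\zeta)\chf(I-\pr_l))^{-1}$, vanishing of the even terms because $V$ shifts the $\theta$-mode by $\pm 1$, explicit evaluation of the $j=1$ term via Lemma \ref{l:Rzzcompose} with the telescoping $-\frac{1}{2l+1}+\frac{1}{2l-1}=\frac{1}{2l^2}+O(l^{-4})$ producing the $\frac{1}{2l^2}\chf\Rzz(z)\chf\pr_l$ and $-(f_+\otimes\phi_z+f_-\otimes\phi_{-z})\pr_l$ terms, and control of the $j\geq 3$ tail by pairing adjacent resolvent factors through Lemma \ref{l:Rzzcompose} — exactly the paper's two estimates $\|\chf\Rz(\zeta)(I-\pr_l)V\Rz(\zeta)\chf\pr_l\|=O\bigl(e^{2(\Im z)_-}/(l|z|)\bigr)$ and $\|\chf(V\Rz(\zeta)(I-\pr_l))^2\chf\pr_j\|=O(l^{-3/2})$, the paper then absorbing the remainder of the series into the $O(1)$ factor $(I+V\Rz(\zeta)\chf(I-\pr_l))^{-1}$. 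One minor inaccuracy: the ``up'' walks are not exponentially smaller (on the diagonal there is no exponential gain; rather $\|\chf\Rzz(\tau_{l+k})\chf\|=O((kl)^{-1})$ since $\tau_{l+k}$ is nearly purely imaginary of size $\sqrt{2kl}$), but your pair estimates apply equally to modes above $l$, so this does not affect the conclusion.
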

For notational simplicity, we have written $\tau_{l\pm 1}$ for $\tau_{l\pm1}(\zeta_l(z))$.
\begin{proof}
We use 
$(I+V\Rz(\zeta)\chf(I-\pr_l))^{-1}= \sum_{j=0}^\infty (-V\Rz(\zeta)\chf(I-\pr_l))^{j}$ since $\| V\Rz(\zeta)\chf(I-\pr_l)\|=O(l^{-1/2})$.  This
estimate, and others in this proof, are uniform for $\zeta \in B_l(\alpha \log l)\setminus B_l(1)$.
By Lemma \ref{l:Rzzcompose}, (\ref{eq:RzzExplicit}), and the 
explicit expression for $V$, we 
see that 
$$\| \chf \Rz(\zeta)(I-\pr_l)V \Rz(\zeta)\chf \pr_l\|=
O(e^{2(\Im z) _-}/(l|z|))\; \text{ for $\zeta\in B_l(\alpha \log l)$}$$
for $l$ 
sufficiently large.  Moreover, this same lemma implies
that if $|j-l|\leq 2$, then
$\|\chf (V\Rz(\zeta)(I-\pr_l))^2 \chf \pr_j\|=O(l^{-3/2})$
uniformly on $B_l(\alpha \log l)$.  
This ensures
that 
\begin{multline}
\left \| \left( (I+V\Rz(\zeta)\chf(I-\pr_l))^{-1} -\sum_{j=0}^2(-V\Rz(\zeta)\chf(I-\pr_l))^j\right) 
 V\Rz(\zeta)\chf \pr_l \right\|\\=O\left(\frac{1}{l^{5/2} |z |} e^{2(\Im z)_-}\right) .
\end{multline}
Since, as in the proof of Proposition \ref{p:exnearthreshold},
 $\pr_l V\pr_l=0$ and $\pr_l(V \Rz(I-\pr_l))^2 V\pr_l=0$, it suffices
to use $-\pr_lV\Rz(\zeta)\chf(I-\pr_l)V\Rz(\zeta)\pr_l$ to approximate $\pr_l(I+V\Rz(\zeta)\chf(I-\pr_l))^{-1}V\Rz(\zeta)\chf \pr_l$ with desired accuracy. 

Using Lemma \ref{l:Rzzcompose} and its notation 
\begin{align}\label{eq:firstpower}
& \pr_l V \Rz(\zeta_l(z))(I-\pr_l) V \Rz(\zeta_l(z))\chf \pr_l  \nonumber \\ & 
=\chf \left( \Rzz(\tau_{l+1})\chf \Rzz(z)+\Rzz(\tau_{l-1})\chf\Rzz(z)\right)\chf \pr_l
\nonumber \\&  = \frac{1}{\tau_{l+1}^2-z^2}\chf\left( \Rzz(\tau_{l+1})-\Rzz(z)\right)\chf \pr_l + \frac{ie^{i(z+\tau_{l+1})}}{4z \tau_{l+1}(z+\tau_{l+1})}
\left( \phi_{\tau_{l+1}} \otimes \phi_{z} + \phi_{-\tau_{l+1}}\otimes 
\phi_{-z}\right)\pr_l \nonumber \\ & 
+ \frac{1}{\tau_{l-1}^2-z^2}\chf\left( \Rzz(\tau_{l-1})-\Rzz(z)\right)\chf\pr_l
 +\frac{ie^{i(z+\tau_{l-1})}}{4z \tau_{l-1}(z+\tau_{l-1})}
\left( \phi_{\tau_{l-1}} \otimes \phi_{z}+ \phi_{-\tau_{l-1}}\otimes 
\phi_{-z}\right)\pr_l. \nonumber 
\end{align}
 Note that 
$\left\|\frac{1}{\tau_{l\pm 1}^2-z^2}\chf \Rzz(\tau_{l\pm1})\chf\right\|=O(l^{-3/2})$ and $$\left\| \left( \frac{1}{\tau_{l+1}^2-z^2}+ \frac{1}{\tau_{l-1}^2-z^2} \right) \chf\Rzz(z)\chf -\frac{1}{2l^2}\chf \Rzz(z)\chf \right\| = O(l^{-4}|z|^{-1}e^{2(\Im z)_-}).$$  This gives
\begin{multline}
\pr_l(V\Rz(\zeta_l(z))(I-\pr_l)V\Rzz(\zeta_l(z))\chf\pr_l
=  \frac{ie^{i(z+\tau_{l+1})}}{4z\tau_{l+1}(z+\tau_{l+1})}\left(
\phi_{\tau_{l+1}}\otimes \phi_z+\phi_{-\tau_{l+1}}\otimes \phi_{-z}\right)\pr_l \\
+\frac{ie^{i(z+\tau_{l-1})}}{4z\tau_{l-1}(z+\tau_{l-1})}\left(
\phi_{\tau_{l-1}}\otimes \phi_z+\phi_{-\tau_{l-1}}\otimes \phi_{-z}\right) \pr_l
-\frac{1}{2l^2} \Rzz(z)\pr_l\\
+O_{L^2\rightarrow L^2}\left(\frac{1}{l^{5/2}|z|}e^{2(\Im z)_-}\right) +O_{L^2\rightarrow L^2}(l^{-3/2}).
\end{multline}
\end{proof}
Note that the functions $f_\pm$, $\phi_\pm$ in Lemma \ref{l:fshere}
depend holomorphically on $z$ in the set $\{ z\in \Complex: 1\leq z\leq \alpha \log l\}$.

The function $g_l$ of the next lemma appears in the proof of
 Proposition \ref{p:exlogl},
as its zeros
 approximate the locations
of the poles of $R_V(\zeta)$ away from the threshold
in $B_l(\alpha \log l)$, if $\alpha<1$.
A discussion of the Lambert function can be found, for example, in 
\cite{CGHJK}.  This next lemma is very similar to \cite[Lemma 2.4]{dro}.
\begin{lemma}\label{l:g}
The zeros of 
$$g_l(z)\defeq\left(1-\frac{1}{z8l\sqrt{2l}}e^{2i(\sqrt{2l}+z)}\right)^{2}
- \left(\frac{1}{8lz\sqrt{2l}}\left( ie^{2iz}+
e^{2iz}\right)\right)^{2}$$
are given by 
$z_\nu^{\pm}=z_\nu^{\pm}(l)=\frac{i}{2}\mathcal{W}_\nu\left(\frac{1}{4l\sqrt{2l}}
\left(-ie^{2i\sqrt{2l}}\mp i \pm 1\right)\right)$,
 where $\mathcal{W}_\nu$ is the $\nu$-th branch of the Lambert function.  In particular, we have 
$z^+_1\sim  -(3i/4)\log l$.  Moreover, for $l$ sufficiently large there
is a $r_0>0$ independent of $l$ so that if $w\in \Complex $ and $|w|<r_0$,
then
\begin{equation}\label{eq:glest}
|g_l(z_1^+(l)+w)|\geq 2|w|/3.\end{equation}
\end{lemma}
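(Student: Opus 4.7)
\medskip

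\noindent
\emph{Proof plan for Lemma \ref{l:g}.}
The starting observation is that $g_l$ is a difference of squares and so factors.  Writing $A=1-\frac{e^{2i(\sqrt{2l}+z)}}{8lz\sqrt{2l}}$ and $B=\frac{(i+1)e^{2iz}}{8lz\sqrt{2l}}$, one has $g_l=(A-B)(A+B)=F_+(z)F_-(z)$, with
\[
F_\pm(z)=1-\frac{e^{2iz}\bigl(e^{2i\sqrt{2l}}\pm(i+1)\bigr)}{8lz\sqrt{2l}}.
\]
Thus a zero of $g_l$ is a zero of one of the two factors $F_\pm$, i.e.\ satisfies $ze^{-2iz}=\frac{e^{2i\sqrt{2l}}\pm(i+1)}{8l\sqrt{2l}}$. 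The natural substitution $w=-2iz$ converts this to $we^w=\eta_\pm(l)$, where a short computation using $-i(i+1)=1-i$ yields
\[
\eta_\pm(l)=\frac{-ie^{2i\sqrt{2l}}\mp i\pm1}{4l\sqrt{2l}}.
\]
By definition of the Lambert function, the complete set of solutions is $w=\mathcal{W}_\nu(\eta_\pm(l))$, $\nu\in\mathbb{Z}$, and so $z=iw/2$ gives the stated formula for $z_\nu^\pm(l)$.

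\medskip

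For the asymptotic of $z_1^+(l)$, note that $|\eta_+(l)|=O(l^{-3/2})$ since $|e^{2i\sqrt{2l}}|=1$.  The standard asymptotic for the non-principal branches as $\eta\to0$ gives
\[
\mathcal{W}_1(\eta)=\log\eta+2\pi i-\log(\log\eta+2\pi i)+o(1),
\]
so $\mathrm{Re}\,\mathcal{W}_1(\eta_+(l))=\log|\eta_+(l)|+O(\log\log l)=-\tfrac{3}{2}\log l+O(\log\log l)$, while the imaginary part remains bounded.  Multiplying by $i/2$ rotates this to $z_1^+(l)=-\tfrac{3i}{4}\log l+o(\log l)$, which is what $z_1^+\sim-(3i/4)\log l$ is meant to record.

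\medskip

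For the lower bound \eqref{eq:glest}, I will estimate each factor of $g_l=F_{+}F_-$ separately near $z_1^+(l)$.  Say $z_1^+$ is a zero of $F_{+}$ (the other case is identical).  Differentiating $F_+(z)=1-\frac{c_+\,e^{2iz}}{8lz\sqrt{2l}}$ with $c_+=e^{2i\sqrt{2l}}+(i+1)$ and using $F_+(z_1^+)=0$ gives
\[
F_+'(z_1^+)=-(1-F_+(z_1^+))\!\left(2i-\tfrac{1}{z_1^+}\right)=\tfrac{1}{z_1^+}-2i,
\]
whose modulus tends to $2$ as $l\to\infty$ because $|z_1^+|\to\infty$.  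For the other factor, the relation $e^{2iz_1^+}=8lz_1^+\sqrt{2l}/c_+$ (from $F_+(z_1^+)=0$) substituted into $F_-(z_1^+)$ produces the exact identity
\[
F_-(z_1^+)=1-\frac{c_-}{c_+}=\frac{2(i+1)}{c_+},
\qquad |F_-(z_1^+)|=\frac{2\sqrt{2}}{|c_+|}\geq\frac{2\sqrt{2}}{\sqrt{2}+1}=4-2\sqrt{2},
\]
uniformly in $l$.  Choosing $r_0>0$ small enough (independent of $l$) that on $|w|<r_0$ one has $|F_+(z_1^++w)|\geq(2-\varepsilon)|w|$ and $|F_-(z_1^++w)|\geq(4-2\sqrt{2})-\varepsilon$ for a suitable $\varepsilon>0$ then yields $|g_l(z_1^++w)|\geq 2|w|/3$.

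\medskip

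The only part of the argument requiring any care is locating the uniform constants in the final step: the derivative $F_+'(z_1^+)$ and the value $F_-(z_1^+)$ must be shown to be bounded below by constants independent of $l$, which works precisely because $|c_+|$ is pinned in the compact interval $[\sqrt{2}-1,\sqrt{2}+1]$ and $1/z_1^+\to 0$.  Everything else is algebraic: the factorisation, the substitution $w=-2iz$, and the appeal to the Lambert $\mathcal{W}_1$ asymptotic near the origin.
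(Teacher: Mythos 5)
Your proposal is correct and follows essentially the same route as the paper: factor $g_l$ as a difference of squares, reduce each factor's zero set to $ze^{-2iz}=\frac{1}{8l\sqrt{2l}}(e^{2i\sqrt{2l}}\pm1\pm i)$ and hence to the Lambert branches, obtain $z_1^+\sim-(3i/4)\log l$ from the standard small-argument asymptotics of $\mathcal{W}_1$, and bound $g_l$ near $z_1^+$ by the product of the two factors using the relation $z_1^+e^{-2iz_1^+}=\frac{1}{8l\sqrt{2l}}(e^{2i\sqrt{2l}}+1+i)$. Your derivative/value bookkeeping ($F_+'(z_1^+)=\frac{1}{z_1^+}-2i$ and $F_-(z_1^+)=\frac{2(i+1)}{e^{2i\sqrt{2l}}+1+i}$) is just a repackaging of the paper's expansion $g_l(z_1^++w)=\left(-2iw+O(|w|/|z_1^+|)+O(|w|^2)\right)\left(\frac{2(i+1)}{e^{2i\sqrt{2l}}+1+i}+O(|w|)\right)$, with the same uniformity coming from $|e^{2i\sqrt{2l}}+1+i|\in[\sqrt{2}-1,\sqrt{2}+1]$.
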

\begin{proof}
The zeros of $g_l$ are solutions of 
$$1-\frac{1}{z8l\sqrt{2l}}e^{2i(\sqrt{2l}+z)} = \pm \frac{1}{8lz\sqrt{2l}}
\left(
 i e^{2iz} + e^{2iz}\right)$$
and so satisfy
$$ze^{-2iz}= \frac{1}{8l\sqrt{2l}}(e^{2i\sqrt{2l}}\pm 1 \pm i).$$
Solutions of this equation are given by 
$$z_\nu^\pm=\frac{i}{2}\mathcal{W}_\nu\left(\frac{1}{4l\sqrt{2l}}
( -ie^{2i\sqrt{2l}}\mp i\pm 1)\right).$$
From \cite[(4.20)]{CGHJK}, $z_1^+\sim \frac{-3i}{4} \log l$ as $l\rightarrow
\infty$.

To finish the proof, we set $\gamma=1/(8l\sqrt{2l})$ and write
$$g_l(z)= \left(1+\frac{\gamma}{z}e^{2iz}(-e^{2i\sqrt{2l}}-1-i)\right)
\left(1+\frac{\gamma}{z}e^{2iz}(-e^{2i\sqrt{2l}}+1+i)\right)
.$$
Now we evaluate at $z=z_1^++w$, with $w\in \Complex $, $|w|$ small, to find
\begin{align*}
g_l(z^+_1+w)& = 
 \left(1+\frac{z^+_1 e^{2iw} }{z^+_1+w}
\frac{\gamma}{z^+_1e^{-2iz^+_1}}(-e^{2i\sqrt{2l}}-1-i)\right)
\left(1+\frac{z^+_1  e^{2iw}}{z^+_1+w}\frac{\gamma}{z^+_1e^{-2iz^+_1}}
(-e^{2i\sqrt{2l}}+1+i)\right)
\\ & 
= \left(1-\frac{z^+_1 e^{2iw}\ }{z^+_1+w}\right)
\left(1+\frac{z^+_1 e^{2iw}}{z^+_1+w}\frac{-e^{2i\sqrt{2l}}+1+i}{e^{2i\sqrt{2l}}+1+i}\right)
\end{align*}
where for the second equality we have used 
$z_1^+e^{-2iz^+_1}=\gamma (e^{2i\sqrt{2l}}+1+i)$.  
This gives, then, recalling $|z_1^+|\rightarrow \infty$ as 
$l\rightarrow \infty$, 
$$g_l(z^+_1+w)= \left(-2iw+O(|w|/|z_1^+|)+O(|w|^2)\right)
\left( \frac{2(i+1)}{e^{2i\sqrt{2l}}+1+i}+O(|w|)\right)$$
for $|w|$ small.  Then there is a $r_0>0$ independent of $l$ so that for
$l$ sufficiently large and $|w|<r_0$, $|g_l(z^+_1+w)|>2|w|/3$.
\end{proof}

\begin{prop}\label{p:exlogl}
For $V(x,\theta)=2\chf(x)\cos \theta$ and $l$ sufficiently large,
$R_V(\zeta)$ has a pole at a point  $\zeta^+_l\in B_l(7/8 \log l)$ with
$\zeta_{l}^+$ satisfying $\tau_l(\zeta^+_l)= 
\frac{i}{2}\mathcal{W}_1\left(\frac{1}{4l\sqrt{2l}}
\left(ie^{2i\sqrt{2l}}-i + 1\right)\right)+O(l^{-1/2+\epsilon})$ for any
$\epsilon>0$.
\end{prop}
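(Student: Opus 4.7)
The plan is to adapt the strategy used in Proposition \ref{p:exnearthreshold}, but working in $B_l(\tfrac{7}{8}\log l)\setminus B_l(1)$ with Lemma \ref{l:fshere} taking the role that Lemma \ref{l:near0} played near the threshold. First I would use the factorization
$$I+V\Rz\chf = \bigl(I+V\Rz\chf(I-\pr_l)\bigr)\Bigl(I+\bigl(I+V\Rz\chf(I-\pr_l)\bigr)^{-1}V\Rz\chf\pr_l\Bigr);$$
since $\|V\Rz\chf(I-\pr_l)\|=O(l^{-1/2})$ uniformly on $B_l(\alpha\log l)$, the first factor is invertible for large $l$, and (as in Lemma \ref{l:Mpr} and the arguments surrounding Proposition \ref{p:mvsM}) poles of $\RV$ in this region correspond, with multiplicity, to the zeros $z=\tau_l(\zeta)$ of the Fredholm determinant
$$\mcd_l(z)\defeq\det\!\Bigl(I+\pr_l\bigl(I+V\Rz(\zeta_l(z))\chf(I-\pr_l)\bigr)^{-1}V\Rz(\zeta_l(z))\chf\pr_l\Bigr),$$
which is holomorphic on $\{z:1<|z|<\tfrac78\log l\}$.

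Next I would apply Lemma \ref{l:fshere} to replace the operator inside the determinant by the rank--$4$ expression $-(f_+\otimes\phi_z+f_-\otimes\phi_{-z})\pr_l+\tfrac{1}{2l^2}\chf\Rzz(z)\chf\pr_l$, with an error of size $O(l^{-5/2}e^{2(\Im z)_-})+O(l^{-3/2})$. Expanding the determinant via the finite-rank identity behind (\ref{eq:specialdetermin}) produces a $2\times 2$ determinant whose entries are the inner products $\int f_\pm\phi_{\pm z}$, $\int f_\pm\phi_{\mp z}$. Using the explicit formulas for $f_\pm$ and $\phi_{\pm z}$, computing $\int_{-1}^1 e^{i(\mu+\nu)x}\,dx$, and expanding $\tau_{l\pm 1}=\pm i\sqrt{2l}+O(l^{-1/2})$ and $e^{i\tau_{l-1}}=e^{i\sqrt{2l}}(1+O(l^{-1/2}))$ (and noting that $e^{i\tau_{l+1}}$ is exponentially small and so drops out of the leading order), the $2\times 2$ determinant simplifies, after a short algebraic manipulation, to $g_l(z)$ as defined in Lemma \ref{l:g}, modulo a holomorphic remainder $E_l(z)$ bounded by the same sort of estimate as above.

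To conclude, I would apply Rouché's theorem to $g_l$ and $g_l+E_l=\mcd_l$ on a circle of radius $\rho_l$ about $z_1^+\sim -\tfrac{3i}{4}\log l$. Since $|\Im z_1^+|\sim\tfrac34\log l$, in a disk of radius $r_0$ about $z_1^+$ we have $e^{2(\Im z)_-}=O(l^{3/2+\epsilon})$, so the error satisfies $|E_l(z)|=O(l^{-1+\epsilon})$. The lower bound $|g_l(z_1^++w)|\geq 2|w|/3$ from Lemma \ref{l:g} then lets us take $\rho_l=l^{-1/2+\epsilon}$; Rouché gives a unique zero of $\mcd_l$ in this disk, located within $l^{-1/2+\epsilon}$ of $z_1^+=\frac{i}{2}\mcalW_1\!\bigl(\tfrac{1}{4l\sqrt{2l}}(ie^{2i\sqrt{2l}}-i+1)\bigr)$, which corresponds to the desired pole $\zeta_l^+$ of $\RV$.

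The main obstacle is the rank--$2$ determinant calculation: one must identify the explicit combination of trigonometric and exponential factors arising from the integrals $\int f_\pm\phi_{\pm z}$ with the two factors of $g_l$, while tracking the subleading corrections from replacing $\tau_{l\pm 1}$ by $\pm i\sqrt{2l}$ precisely enough to keep $E_l$ below the Rouché threshold $|w|=l^{-1/2+\epsilon}$. The choice of radius $\tfrac78\log l$ (rather than $\log l$) is driven precisely by the requirement that $e^{2(\Im z)_-}$ stay small enough relative to $l^2$ for the approximation of Lemma \ref{l:fshere} and the $\chf\Rzz(z)\chf/(2l^2)$ term to be manageable in a neighborhood of $z_1^+$.
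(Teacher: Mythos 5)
Your overall architecture (factor out $I+V\Rz\chf(I-\pr_l)$, reduce to a $2\times 2$ determinant built from the rank-two part of Lemma \ref{l:fshere}, identify it with $g_l$ from Lemma \ref{l:g}, then run Rouch\'e on a circle of radius $l^{-1/2+\epsilon}$ about $z_1^+$) is the paper's, but there is a genuine error in the determinant computation: you discard the closed-channel terms on the grounds that $e^{i\tau_{l+1}}\approx e^{-\sqrt{2l}}$ is exponentially small. The function it multiplies, $\phi_{\pm\tau_{l+1}}(x)=e^{\pm i\tau_{l+1}x}\chf(x)$, is exponentially large on $[-1,1]$, and in the pairings $\int (f_\mp)\phi_{\pm z}$ the integral $\int_{-1}^1 e^{\mp i\tau_{l+1}x}e^{\pm i z x}dx$ contributes a factor of size $e^{\sqrt{2l}}/\sqrt{2l}$ that exactly cancels $e^{i\tau_{l+1}}$. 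This channel is precisely what produces the $ie^{2iz}$ term in $g_l$ (the $i$ coming from $\tau_{l+1}\approx i\sqrt{2l}$), and near $z_1^+$, where $\Im z\approx -\tfrac34\log l$, that term is of the same size as the open-channel term $e^{2iz}$ coming from $\tau_{l-1}\approx\sqrt{2l}$ (note also that your expansion $\tau_{l\pm1}=\pm i\sqrt{2l}+O(l^{-1/2})$ is wrong for $\tau_{l-1}$, which is real to leading order). Dropping the closed channel changes the constant $\pm 1\pm i$ inside the argument of $\mathcal{W}_1$ by an $O(1)$ multiplicative factor and hence moves the predicted zero by $O(1)$, far outside the claimed $O(l^{-1/2+\epsilon})$ window; with that term discarded the $2\times2$ determinant does not ``simplify to $g_l$,'' so the central step of your argument fails as written.

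A secondary gap concerns how the term $\tfrac{1}{2l^2}\chf\Rzz(z)\chf\pr_l$ and the operator error are fed into the determinant. That term is not finite rank, so the expression you call ``rank-$4$'' is not, and the identity behind (\ref{eq:specialdetermin}) cannot be applied to it directly; nor can it simply be absorbed into the error without comment, since its norm near $z_1^+$ is of order $l^{-2}e^{2(\Im z)_-}/|z|$, only barely below your Rouch\'e threshold. The paper handles it multiplicatively, factoring out $S_l=(I+\tfrac{1}{2l^2}\chf\Rzz(z)\chf\pr_l)^{-1}$ and replacing $f_\pm$ by $S_lf_\pm$ in the $2\times2$ entries. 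Likewise, your $\mcd_l(z)$ is a Fredholm determinant of the full (non-finite-rank) operator, and passing an operator-norm error bound directly into a bound on the determinant is not justified as stated; the paper avoids this by taking determinants only of the finite-rank piece $S_lF\pr_l$ and then comparing $I+S_lF\pr_l$ with $I+S_lF\pr_l+S_lA$ via the operator Rouch\'e theorem of \cite{go-si}, using an invertibility bound for $I+S_lF\pr_l$ on a circle of radius comparable to $l^{-1/2+\epsilon}$. Finally, the zero you locate should be counted with multiplicity two (one for each of the $e^{\pm il\theta}$ modes), not as a unique simple zero.
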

\begin{proof} We  continue to use $z=\tau_l(\zeta)$, and work in a region with $1<|z|<(7/8)\log l$.

  Using Lemma \ref{l:fshere},
  $$ \pr_l(I+V\Rz(\zeta)\chf(I-\pr_l))^{-1}V\Rz(\zeta)\chf \pr_l= F\pr_l+ \frac{1}{2l^2 }\chf \Rzz(z)\chf \pr_l +A$$
  where, with notation from Lemma \ref{l:fshere},
   $$F=F(z,l)=
  -f_+\otimes \phi_z-f_-\otimes \phi_{-z}$$
  and $\|A\|=O(l^{-5/2} e^{(2 \Im z)_-}) + O(l^{-3/2})$ on $B_l((7/8) \log l)\setminus B_1(l)$.  We recall that the poles of $\RV$ in 
  $B_l((7/8) \log l)\setminus B_1(l)$ are the zeros of
  $I + \pr_l(I+V\Rz(\zeta)\chf(I-\pr_l))^{-1}V\Rz(\zeta)\chf \pr_l$ in
  $B_l((7/8) \log l)\setminus B_1(l)$.  We write
  \begin{multline}
    I + \pr_l(I+V\Rz(\zeta)\chf(I-\pr_l))^{-1}V\Rz(\zeta)\chf \pr_l \\
  = \left( I+ \frac{1}{2l^2 }\chf \Rzz(z)\chf \pr_l \right)
  \left( I+ \left( I+ \frac{1}{2l^2 }\chf \Rzz(z)\chf \pr_l \right)^{-1}
  (F\pr_l+A)\right)
  \end{multline}
  since $I+ (1/2l^2)\chf \Rzz(z)\chf \pr_l$ is invertible here.
  For notational convenience, set
  $S=S_l= (I+ (1/2l^2)\chf \Rzz(z)\chf \pr_l)^{-1}$, and note that
  $S= I - (1/2l^2)\chf \Rzz(z)\chf \pr_l +O_{L^2\rightarrow L^2}(l^{-4} e^{4(\Im z)_-})$.
  
  We first  consider the poles
  of $I+S F\pr_l$.  
These poles are given by the zeros of the function
\begin{multline*}
\tilde{\mcd}_l(z)\defeq \det(I+ S F\pr_{l\pm})\\ = 
\left(1-\int_\Real (Sf_+)\phi_z\right)\left(1-\int_\Real (S f_-)\phi_{-z}\right)- 
\left(\int_\Real (S f_-)\phi_z\right)\left(\int_\Real( S f_+)\phi_{-z}\right)
\end{multline*}
with twice the multiplicity.
A computation and a use of the approximations 
$\tau_{l+1}=i\sqrt{2l}+O(l^{-1/2})$ and $\tau_{l-1}=\sqrt{2l}+O(l^{-1/2})$
 show that 
$\tilde{\mcd}_l(z)=g_l(z)+O(l^{-3/2})+O(l^{-2}\log l e^{2(\Im z)_-})$, where
 $g_l$ is the function of Lemma \ref{l:g}.  We note that both $g_l$ and
 $\tilde{\mcd}_l$ 
are analytic in $z$ if $1<|z|<(7/8)\log l$. 
We use
$z_1^+(l)$ is as in Lemma \ref{l:g}. Recalling that $\Im z^+_1\sim -(3/4)\log l$, the estimate (\ref{eq:glest}) combined with
Rouch\'e's Theorem shows that $\tilde{\mcd}_l(z)$ has a zero within 
$O(l^{-1/2+\epsilon})$, any $\epsilon>0$ of 
$z_1^+(l).$    This, in turn, means that 
$(I+SF\pr_l)^{-1}=(I+(I+(1/2l^2)\chf \Rzz(z)\chf)^{-1}F\pr_l)^{-1}$ has a single pole of multiplicity two at a point
satisfying $z=z_1^+(l)+O(l^{-1/2+\epsilon})$.  Moreover, we can find a $c_0=c_0(\epsilon)$ so that 
$\|(I+(I-(1/2l^2)\chf \Rzz(z)\chf)F\pr_l)^{-1}\|=O(l^{1+\epsilon})$  when the distance from $z$
 to the pole 
 is given by $c_0 l^{-1/2+\epsilon}$.

 Now using our estimate on $\|A\|$ we can apply the operator Rouch\'e Theorem to the pair
 $I+SF\pr_l$ and $I+SF\pr_l+SA$, to find that
 $I+SF\pr_l+SA$ has two poles
 (when counted with multiplicity)
 which are, using the $z$ coordinate, within $O(l^{-1/2+\epsilon})$ of $ z_1^+(l)$.
%
\end{proof}

{\bf Acknowledgments.}   The author gratefully acknowledges the 
partial support of a University of Missouri Research Leave and a  Simons Foundation Collaboration Grant for 
Mathematicians.  Thank you to Kiril Datchev, 
Alexis Drouot, Adam Helfer, and Alejandro Uribe
for helpful conversations.

\end{document}